\newtheorem{theorem}{Theorem}[section]
\newtheorem{prop}[theorem]{Proposition}
\newtheorem{lemma}[theorem]{Lemma}
\newtheorem{definition}[theorem]{Definition}
\newtheorem{remark}[theorem]{Remark}
\numberwithin{equation}{section}
\newcommand{\E}{\mathbb{E}}
\newcommand{\dd}{{\rm d}}
\newcommand{\SA}{\mathbf{S}}
\newcommand{\bbX}{\mathbb{X}}
\newcommand{\R}{\mathbb{R}}
\newcommand{\N}{\mathbb{N}}
\newcommand{\ba}{\begin{array}}
	\newcommand{\ea}{\end{array}}
\newcommand{\be}{\begin{equation}}
	\newcommand{\ee}{\end{equation}}
\newcommand{\bea}{\begin{eqnarray}}
	\newcommand{\eea}{\end{eqnarray}}
\newcommand{\beaa}{\begin{eqnarray*}}
	\newcommand{\eeaa}{\end{eqnarray*}}
\newcommand{\eps}{\varepsilon}
\def \M{\mathbb{M}}
\def \R{\mathbb{R}}
\def \X{\mathbb{X}}
\def\P{\mathbb{P}}
\def\Occ{\mathcal{O}}
\def\gaman{\gamma^{an}}
\def\gampr{\gamma^{pr}}
\def\cA{\mathcal{A}}
\def\cB{\mathcal{B}}
\def\cF{\mathcal{F}}
\def\cN{\mathcal{N}}
\def\cP{\mathcal{P}}
\def\cX{\mathcal{X}}
\def \tx{\tilde{x}}
\def \ty{\tilde{y}}
\def \md{\mathrm{d}}
\def\qed{ \hfill \vrule width.25cm height.25cm depth0cm\smallskip}
\def\bN{{\mathbf{N}}}
\newcommand{\dbtilde}[1]{\accentset{\approx}{#1}}
\newcommand{\remove}[1]{}
\def\namedlabel#1#2{\begingroup
    #2%
    \def\@currentlabel{#2}%
    \phantomsection\label{#1}\endgroup
}
\title{\bf Spectra of Poisson functionals \\ and applications in continuum percolation}
\author[1]{C. Bhattacharjee}
\affil[1]{Department of Mathematics, University of Hamburg}
\author[2]{G. Peccati}
\affil[2]{Department of Mathematics, University of Luxembourg}
\author[3]{D. Yogeshwaran} 
\affil[3]{Theoretical Statistics and Mathematics Unit, Indian Statistical Institute, Bangalore}
\date{\vspace{-2.5ex}}
\begin{document}
	
	\maketitle
\begin{abstract}  Let $\eta$ be a Poisson random measure (defined on some Polish space), and let $F(\eta)$ be a square-integrable functional of $\eta$. In this paper we define and study a new notion of {\it spectral point process} associated with $F(\eta)$, and use such an object to study sharp noise instability and sensitivity properties of planar critical continuum percolation models under spatial birth-death (Ornstein-Uhlenbeck) dynamics --- the notion of sharp noise instability being a natural strengthening of the absence of noise stability. The concept of spectral point process is defined by exploiting the Wiener-It\^o chaos expansion of $F$, and represents a natural continuum counterpart to the notion of {\it spectral sample}, as introduced in Garban, Pete and Schramm (2010), in the context of discrete percolation models. In the particular case where $\eta$ is a marked Poisson measure, we use Hoeffding-ANOVA decompositions to establish an explicit connection with the notion of {\it annealed spectral sample}, introduced in Vanneuville (2021) in the context of Poisson-Voronoi percolation. We also relate spectral processes with an appropriate notion of {\it pivotal processes}. As applications, we show sharp noise instability of crossing events in the critical Poisson Boolean model with unit-radius balls and, using an observation of Vanneuville, we obtain sharp noise sensitivity (as well as sharp noise instability) for crossing events in the Poisson Voronoi percolation model. As an important ingredient, we prove quasi-multiplicativity of the $4$-arm probabilities in the critical Poisson Boolean percolation model.
\end{abstract}
\noindent{\bf Keywords}: Spectral process, Pivotal process, Poisson process, Continuum percolation, Noise stability, Noise sensitivity.
\\
\noindent{\bf AMS 2020 Classification:} 
  60D05, % Geometric probability and stochastic geometry
  60G55, % Point processes (e.g., Poisson, Cox, Hawkes processes)
  60J25,  %Continuous-time  Markov  processes  on  general state spaces
  60H99,  %	None of the above, but in this section  - stochastic analysis
  82B43.  %	Percolation  
  82C43 % Time-dependent percolation in statistical mechanics
  60K35 %Interacting random processes; statistical mechanics type models; percolation theory

	{
		\hypersetup{linkcolor=black}
		\tableofcontents
	}
	
	\section{Introduction}

\subsection{Overview}\label{ss:overintro}

In this paper we introduce and study a new notion of {\bf spectral point process} for generic (square-integrable) functionals of Poisson random measures, and use it to analyze noise stability and noise sensitivity properties of continuum percolation models. We will see in Section \ref{sec:comparison} that our definition of spectral point process is markedly different from that of {\bf annealed spectral sample} introduced in \cite{Vann2021} -- for which one needs the underlying structure of a marked point process -- and rather provides a direct counterpart to the notion of {\bf discrete spectral sample} for Boolean functions on discrete cubes (see \cite{Garban2011, GPS10, GS}), an object which is defined as follows.

\smallskip

For $n \in \N$, endow the discrete cube $\{-1,1\}^n$ with the uniform probability measure,  
and consider a Boolean mapping $f : \{-1,1\}^n \to \{-1,1\}$ with its Fourier-Walsh expansion given by
	$$
	f(x) = \sum_{S\subseteq [n]} \hat{f}(S) \chi_S, \quad x=(x_1,...,x_n)\in \{-1,1\}^n,
	$$ 
	where $\chi_S := \prod_{i\in S} x_i$ and $\hat{f}(S) := \langle f,\chi_S \rangle = \E [f \chi_S]$ (see e.g. \cite[Ch.\ 4]{GS}).  According to \cite{GPS10} (see also \cite[Chs.\ 9-10]{GS}), the {\bf spectral probability measure} $\hat{\mathbb{P}}_f$ on $2^{[n]}$ (the class of all subsets of $[n]:=\{1,2,\hdots,n\}$) is defined as
	$$
	\hat{\mathbb{P}}_f(S) := \hat{f}^2(S), \quad S\subseteq [n],$$
 where the fact that the total mass of $\hat{\mathbb{P}}_f$ is one follows from the orthogonality of the Fourier-Walsh components. 
 The canonical random element distributed according to $\hat{\mathbb{P}}_f$, which is therefore a random subset of $[n]$, is called the {\bf spectral sample} of $f$, and it is typically denoted by the symbol $\mathscr{S}_f$. The spectral sample $\mathscr{S}_f$, which is an analytic object, is tightly related to the so-called {\bf pivotal set} of $f$, which corresponds to the (random) collection of those integers $i\in [n]$ such that changing $x_i$ to $-x_i$ yields a change of sign for $f(x)$; see e.g.\ \cite[Section 1.3]{GS}. Due to its geometric nature, the study of the pivotal set is often much easier than that of the spectral sample. It is important to notice, however, that these two random sets have in general different distributions, although their one- and two-dimensional marginals do coincide, see \cite[Sections 9.1-9.2]{GS}.

\smallskip  

Spectral samples and pivotal sets are convenient tools for encoding the sensitivity of a given Boolean mapping to variations in its argument. As such, they are particularly well-adapted for addressing questions of sharp {\bf noise sensitivity} and {\bf noise stability} in critical percolation models, see for instance \cite{BGS, Garban2011, GPS10, GPS13, GPS18, GS}. As discussed e.g.\ in \cite{Garban2011, GS}, even though techniques related to hypercontractivity, influence inequalities and randomized algorithms can be successful in showing noise sensitivity \cite{BKS99,SS10}, they are in general not able to provide strong enough bounds for deducing sharp noise sensitivity; see \cite[Section 4.3]{Garban2011}.
The definition of spectral sample given in \cite{GPS10} has been the crucial element for deducing the first full proof of sharp noise sensitivity for a critical percolation model. The motivation for studying questions on noise sensitivity stems naturally from dynamical percolation \cite{Haggstrom1997,vandenberg1997,Steif2009}, as well as from computer science and statistical physics -- see e.g. \cite{BKS99,Garban2011,GS}. A powerful new strategy for deducing sharp noise sensitivity via differential inequalities has been recently inaugurated in \cite{Tassion2023}; see the end of the forthcoming Section \ref{ss:appintro} for further remarks on this approach.

\smallskip

The study of noise sensitivity in continuum percolation models is more recent.  In this context, the prevailing strategy has been based on discretization techniques (see e.g. \cite{Ahlberg18, ABGM14, GR22, HJM22, HV22}), while an intrinsic continuum approach via stopping sets (continuum counterpart of randomized algorithms) was initiated in \cite{LPY2021}. Similarly to the discrete case, neither discretization nor stopping set techniques seem to be enough in order to prove sharp noise sensitivity/stability in continuum percolation models. To overcome such a difficulty, the seminal work \cite{Vann2021} introduced the already mentioned notion of annealed spectral sample (relying upon the discrete spectral sample approach, see Definition \ref{d:annealed} below) to deduce sharp noise sensitivity for the Voronoi percolation model.\footnote{Note that \cite{ABGM14,LPY2021} consider the Ornstein-Uhlenbeck (OU) or spatial birth-death dynamics, as in our paper, but \cite{Ahlberg18,Vann2021} consider frozen and other dynamics as well} As already mentioned, the construction of annealed spectral samples from \cite{Vann2021} requires one to work within the framework of marked point processes, and does not naturally extend e.g.\ to Boolean percolation.

\smallskip

In this paper we introduce a notion of (continuum) spectral sample that applies to a generic (possibly marked) Poisson point process $\eta$, defined on a generic Polish space $(\mathbb{X},\cX)$ and with a $\sigma$-finite intensity measure $\lambda$. The starting point of our construction is the fact that the well-known {\bf Wiener-It\^o chaos expansion} (see \cite[Section 18.4]{LastPenrose17}) yields an isomorphism between $L^2(\sigma(\eta))$ (the space of square-integrable functionals of $\eta$) and the symmetric Fock space ${\bf H} = \bigoplus_{k = 0}^\infty H^{\odot k}$ associated with the Hilbert space $H = L^2(\mathbb{X},\cX, \lambda)$. Using such a result, one can attach a random point configuration to any Boolean random variable $F(\eta)\in \{-1,1\}$, according to the following procedure:

\begin{enumerate}[(i)]
\item Consider the image $(u_0, u_1,u_2, ...) \in {\bf H}$ of $F$ through the Wiener-It\^o isomorphism;
\item Select a random number $N\in \mathbb{N} = \{0,1,2,...\}$ by using the probability measure on $\mathbb{N}$ having mass function $k\mapsto \|u_k\|^2_{H^{\odot k}} = k!\|u_k\|^2_{H^{\otimes k}} $;
\item Sample a random element of $\mathbb{X}^N$ by using the probability measure
$$
A \mapsto\frac{\langle u_N, u_N \mathds{1}_A \rangle_{H^{\otimes N}}}{\| u_N\|_{H^{\otimes N}}^2}, \quad A\in \mathcal{X}^N. 
$$
\end{enumerate}
The resulting point process on $(\mathbb{X},\cX)$, that we will denote by $\gamma_F$, coincides with the notion of {spectral point process} formally introduced in Definition \ref{def:spproc} below (in more generality). We will see in Remark \ref{r:quantumtsirelson} that such a notion is in resonance with Tsirelson's definition of the {\bf spectral measure} associated with a given continuous factorization \cite{Tsir1, Tsir2}, as well as with a certain interpretation of Bosonic Fock spaces emerging in quantum field theory \cite{glimm, meyer}. Among the contributions of the forthcoming Section \ref{s:specpp}, we mention the following:

\begin{itemize}
\item[--] Proposition \ref{p:ell} provides an explicit characterization of the density of the factorial moment measures of $\gamma_F$ in terms of {\bf add-one cost} and {\bf remove-one cost} operators.

\item[--] Proposition \ref{p:ell2} shows that -- up to a multiplicative constant -- the intensity of $\gamma_F$ coincides with that of the {\bf pivotal point process} associated with $F$ (see Definition \ref{def:pivproc}). Our analysis also shows that -- in contrast to discrete spectral samples -- such a relation does not extend to factorial moment measures of higher orders, and in particular it does not hold for second-order factorial moment measures.

\item[--] When $\eta$ is a marked Poisson point process with a mark space given by the two-point set $\{-1,1\}$, Theorem \ref{t:kgampran} yields a full description of the factorial measures associated with {\bf the annealed spectral sample} of $F$ (as defined in \cite{Vann2021}, as well as in Definition \ref{d:annealed} below) and those of its {\bf projected spectral sample} (see Definition \ref{d:gammastella}). In particular, and somehow surprisingly, our results show that such factorial moment measures {\it do not} coincide and are explicitly related through the notion of {\bf Hoeffding-ANOVA} decomposition \cite{efronstein, serfling}. 

\item[--] Further, our explicit characterizations allow us to derive an identity for intensities of annealed and projected spectral samples of increasing functions; see \eqref{e:onepointcomparison}. In contrast, we show in Proposition \ref{p:comp_Pivotal} that the $k$-th factorial moment measures of pivotal point process and {\bf quenched pivotal process} differ only by a factor of $2^k$. 

\end{itemize}

\subsection{Applications to continuum percolation}\label{ss:appintro}
    
As announced, our second goal in this paper is to develop a general framework -- based on the notions of spectral and pivotal point processes -- allowing one to study sharp noise instability and noise sensitivity in continuum percolation models under the Ornstein-Uhlenbeck (OU) (or spatial birth-death) dynamics. As discussed below, the notion of ``sharp noise instability'' describes a natural strengthening of the property of being {\it not} stable with respect to noise -- see e.g.\ \cite{Garban2011,GS}. Towards this goal, in Section~\ref{ss.gen_sns_cont} we provide four geometric conditions for the so-called {\bf arm probabilities} and spectral/pivotal point processes that are sufficient to prove sharp noise instability in a dynamical continuum percolation model. The content of Section~\ref{ss.gen_sns_cont} can be regarded as a continuum percolation counterpart to the strategy of proof outlined in \cite[Section 5.2]{Garban2011}. 

 The {\bf Poisson Boolean model} and the {\bf Voronoi percolation model} are arguably the two most commonly studied continuum percolation models; we will now briefly introduce them. Consider a stationary Poisson process $\eta$ of intensity $\lambda > 0$ in $\R^2$. In the Boolean model, one defines $\Occ(\eta) = \cup_{x \in \eta}B_1(x)$ to be the {\bf occupied region}, where $B_1(x)$ denotes the closed ball of unit radius around $x \in \R^2$. For Voronoi percolation we take $\lambda=1$ and, for each $x \in \eta$, we independently colour the corresponding Voronoi cell in black, with probability $p$, or in white, with probability $1-p$. The occupied region $\Occ(\eta)$ is then defined as the union of the black cells. The occupied regions in both models are increasing (in distribution), when regarded as functionals of the parameters $\lambda$ and $p$, respectively.  As a consequence, one can show the existence of a critical parameter $\lambda_c>0$ (resp.\ $p_c = \frac{1}{2}$), such that the occupied region in the Boolean model (resp.\ Voronoi percolation) percolates (i.e., it contains an unbounded component with probability one) whenever $\lambda > \lambda_c$ (resp $p > p_c$), and does not percolate otherwise -- see e.g.\ \cite{hall85} and \cite{BO06} for classical proofs of these facts. We will consider both models at criticality i.e., the Boolean model at $\lambda=\lambda_c$ and the Voronoi percolation at $p = 1/2$, and study a {\bf spatial birth-death} (also called {\bf Ornstein-Uhlenbeck} (OU)) dynamical version of both, according to which points appear independently at rate $\lambda_c$ (and new points are coloured black with probability $p_c$ in the Voronoi model) and existing points disappear independently at rate $1$ -- see Section \ref{ss.gen_sns_cont} for further details. We denote the resulting collection of dynamically evolving random point configurations as $\{\eta^t : t \geq 0\}$, with $\eta^0 = \eta$. Observe that $\eta^t=_d \eta$ at any time $t$, that is, the law of $\eta$ is a stationary measure for the OU dynamics.

As anticipated, given a critical continuum percolation model, one is typically interested in gauging its sensitivity to small perturbations of $\eta$ (the question becomes easier away from criticality). To this end, one commonly considered observable is the event that there is a left-right (L-R) crossing of the box $W_L: =  [-L,L]^2$, $L>0$, through its occupied region $\Occ(\eta)$. Let $f_L$ denote the $\pm 1$-indicator that there is such an L-R occupied crossing of $W_L$ in $\Occ(\eta)$. The collection $\{ f_L : L \ge 1\}$ 
is said to be {\bf noise sensitive} if,  $\forall \, t>0$,
			$$
			\lim_{L \to \infty} {\rm Cov}(f_L(\eta), f_L(\eta^t)) =0.
			$$
Since $f_L$ is a Boolean function, this indeed implies asymptotic independence. Such a definition represents a natural counterpart to the classical notion of noise sensitive Boolean functions on discrete cubes discussed e.g.\ in \cite{Garban2011, GS}, and appears for instance in \cite{Ahlberg18, ABGM14, LPY2021, Vann2021}. The collection $\{ f_L : L \ge 1\}$ is said to exhibit {\bf sharp noise sensitivity} at the time-scale $1/A_L$, for some $\{A_L\}$ verifying $A_L \to \infty$ as $L \to \infty$, if
		\begin{equation}\label{eq:sNSens}
		\lim_{L \to \infty} {\rm Cov}(f_L(\eta), f_L(\eta^{t_L})) =0, \quad \text{when $t_L A_L \to \infty$},
		\end{equation}
and there exists a constant $c > 0$ such that the above limit is at least $c$ for any sequence $t_L$ verifying $t_L A_L \to 0$ as $L \to \infty$. See \cite[Cor.\ 1.2]{GPS10} and \cite[Ch.\ 11]{GS} for early appearances of analogous notions of sharp noise sensitivity in a discrete setting, as well as \cite{Tassion2023} for a comprehensive discussion of the existing literature.

Instead of noise sensitivity, a second and complementary approach to study sensitivity to noise is via noise stability. The collection $\{ f_L : L \ge 1\}$ is {\bf noise stable} if
		\begin{equation*}\label{e:noisestable}
		\lim_{t \to 0} \sup_{L} \mathbb{P}\{f_L(\eta) \neq f_L(\eta^{t})\} =0.
				\end{equation*}
Such a definition is a continuum percolation counterpart to the classical definition of noise stable Boolean function; see \cite{Garban2011, GS} for a discussion of noise stability in a discrete setting, as well as \cite{LPY2021} for characterizations of noise stability in a continuum framework. 

Note that, a non-degenerate sequence of $\pm 1$-indicators cannot simultaneously be both noise sensitive and noise stable: in this sense, one can regard the absence of noise stability as a weaker form of noise sensitivity. Similarly as for noise sensitivity, we will be interested in characterizing those sequences of $\pm 1$-indicators that escape noise stability according to a sharp mechanism. To this effect, we will say that that $\{ f_L : L \ge 1\}$ exhibits {\bf sharp noise instability} at time-scale $1/A_L$, for some $A_L \to \infty$, if
	\begin{equation}\label{eq:sNStab}
	    \lim_{L \to \infty} \mathbb{P}\{f_L(\eta) \neq f_L(\eta^{t_L})\}  =0 \quad \text{when $t_L A_L \to 0$},
	\end{equation}
		while the limit is uniformly bounded away from $0$ when $t_L A_L \to \infty$ as $L \to \infty$. 
  
  \begin{remark}{\rm Let $\{f_L\}$ be a non-degenerate sequence of $\pm 1$-indicators as above. We immediately emphasize the following points:
\begin{enumerate}
\item[(i)] As sharp noise sensitivity implies noise sensitivity, one has that the sharp noise instability of $\{f_L\}$ implies that $\{f_L\}$ is {\it not} noise stable.\footnote{Indeed, choosing $t_L=g(L)$ for some monotonically decreasing and continuous function $g: \R_+ \to \R_+$ with $\lim_{L \to \infty} g(L)=0$ (in particular, $g$ is invertible in some neighbourhood of $0$), such that $t_L A_L \to \infty$ (we can always choose such a function since $1/A_L \to 0$ as $L \to \infty$), sharp noise instability implies that
  $
  \liminf_{L \to \infty} \mathbb{P}\{f_L(\eta) \neq f_L(\eta^{g(L)})\}  \ge c
  $
  for some $c>0$, which implies
  $$
  \liminf_{t \to 0} \sup_{L} \mathbb{P}\{f_L(\eta) \neq f_L(\eta^{t})\} \ge \liminf_{t \to 0} \mathbb{P}\{f_{g^{-1}(t)}(\eta) \neq f_{g^{-1}(t)}(\eta^{t})\} \ge c.
  $$
  %is also bounded below by $c$, yielding no noise stability.
  } In particular the sequence $\{f_L\}$ can enjoy simultaneously sharp noise instability and sharp noise sensitivity -- see e.g.\ Theorem \ref{t:sharpNS_voronoi}, where the two properties are proved to hold for indicators of L-R occupied crossings in Voronoi percolation. Such a behaviour is expected in many other percolation models as well.
\item[(ii)] In general, sharp noise sensitivity and sharp noise instability are not comparable, the two provable implications being the following: (a) relation \eqref{eq:sNSens} (as $t_L A_L \to \infty$) implies that $\liminf_{L \to \infty} \mathbb{P}\{f_L(\eta) \neq f_L(\eta^{t_L})\} >0$ (yielding noise instability above the time-scale $1/A_L$), and (b) as $\lim_{L \to \infty} t_LA_L=0$, relation \eqref{eq:sNStab} implies that $$\liminf_{L \to \infty} {\rm Cov}(f_L(\eta), f_L(\eta^{t_L})) >0$$ (yielding no noise sensitivity below the time-scale $1/A_L$).
  \end{enumerate}
  }
  \end{remark}
  
  In applications, for both sharp noise sensitivity and instability, one typically chooses the sequence $\{ A_L : L \ge 1\}$ to be such that $A_L \sim L^2\alpha_4(L)$ where $\alpha_4(L)\equiv \alpha_4(1,L)$ is the so-called \textit{$4$-arm probability} i.e., the probability that there are $4$ arms of alternating types (occupied and vacant) from $\partial W_1$ to $\partial W_L$ inside the annulus $W_L \setminus W_1$ (see Section \ref{ss.gen_sns_cont} for a more detailed definition).
\smallskip 

As a main application of spectral and pivotal processes, we show sharp noise instability for the crossing functionals in the Boolean percolation model under the OU dynamics. 
 \begin{theorem}\label{t:sharpNS_Boolean}
		Let $\Occ(\eta)$ be the occupied region in a critical Poisson Boolean percolation model with intensity $\lambda_c$ and unit radius balls. Let $f_L$ be the $\pm 1$-indicator of L-R crossing in $\Occ(\eta) \cap W_L$, $L \geq 1$. Then $f_L$ exhibits sharp noise instability at time-scale $\frac{1}{L^2\alpha_4(L)}$, as $L \to \infty$. More precisely,  we have that $\lim_{L \to \infty} \mathbb{P}\{f_L(\eta) \neq f_L(\eta^{t_L})\}  =0$ if $t_L L^2\alpha_4(L) \to 0$, and $\liminf_{L \to \infty} \mathbb{P}\{f_L(\eta) \neq f_L(\eta^{t_L})\}  >0$, if $t_L L^2\alpha_4(L) \to \infty$. 
	\end{theorem}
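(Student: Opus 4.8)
The plan is to route the statement through the spectral point process $\gamma_{f_L}$ of $f_L$ (Definition~\ref{def:spproc}) and to identify $1/(L^2\alpha_4(L))$ as --- up to constants --- the typical number of points of $\gamma_{f_L}$, conditionally on $\gamma_{f_L}\neq\varnothing$. Expand $f_L=\sum_{n\ge0}I_n(f_n^{(L)})$ in Wiener--It\^o chaos, so that $\P\{|\gamma_{f_L}|=n\}=n!\,\|f_n^{(L)}\|^2$ and in particular $\P\{\gamma_{f_L}\neq\varnothing\}={\rm Var}(f_L)$. Since the OU/birth--death dynamics multiplies the $n$-th chaos by $e^{-nt}$ (see Section~\ref{ss.gen_sns_cont}) and $f_L$ is $\{-1,1\}$-valued, one has
\[
\P\{f_L(\eta)\neq f_L(\eta^{t})\}=\tfrac12\bigl({\rm Var}(f_L)-{\rm Cov}(f_L(\eta),f_L(\eta^{t}))\bigr)=\tfrac12\,\E\bigl[1-e^{-t\,|\gamma_{f_L}|}\bigr].
\]
Thus both halves of the theorem are statements about the law of $|\gamma_{f_L}|$; concretely the proof amounts to verifying the geometric hypotheses of the general criterion of Section~\ref{ss.gen_sns_cont} for the critical Boolean model, whose substance I sketch next.

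\emph{The regime $t_L L^2\alpha_4(L)\to0$.} Using $1-e^{-x}\le x$ gives $\P\{f_L(\eta)\neq f_L(\eta^{t_L})\}\le\tfrac{t_L}{2}\,\E|\gamma_{f_L}|$, and $\E|\gamma_{f_L}|$ is the total mass of the intensity measure of $\gamma_{f_L}$, which by Proposition~\ref{p:ell2} equals --- up to a universal constant --- that of the pivotal point process of $f_L$. Integrating the pivotal intensity over a bounded neighbourhood of $W_L$, using the standard reduction of pivotality for the L-R crossing to a localized four-arm event from the unit scale out to the distance to $\partial W_L$, together with quasi-multiplicativity of the four-arm probabilities in the critical Boolean model, yields $\E|\gamma_{f_L}|\le C\,L^2\alpha_4(L)$; hence the change probability is at most $C\,t_L L^2\alpha_4(L)\to0$. (Alternatively one bounds directly the expected number of ``pivotal switches'' of the dynamics on $[0,t_L]$ --- deaths of pivotal points and births at pivotal locations --- via the Mecke equation and the stationarity $\eta^t=_d\eta$, with the same outcome.)

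\emph{The regime $t_L L^2\alpha_4(L)\to\infty$.} Using $1-e^{-x}\ge(1-e^{-1})\mathds{1}\{x\ge1\}$, it suffices to show $\liminf_L\P\{|\gamma_{f_L}|\ge1/t_L\}>0$. Put $\eps_L:=1/(t_L L^2\alpha_4(L))\to0$; from $\P\{|\gamma_{f_L}|\ge1/t_L\}\ge\P\{\gamma_{f_L}\neq\varnothing\}-\P\{0<|\gamma_{f_L}|<\eps_L L^2\alpha_4(L)\}$ the problem splits into (a) $\liminf_L{\rm Var}(f_L)>0$, i.e.\ the L-R crossing probability of $W_L$ stays bounded away from $0$ and $1$ (RSW for critical Boolean percolation), and (b) the continuum analogue of the Garban--Pete--Schramm lower-tail estimate for the spectral sample, $\P\{0<|\gamma_{f_L}|<\eps_L L^2\alpha_4(L)\}\to0$ for every $\eps_L\to0$. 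I would prove (b) along the lines of \cite{GPS10} (see also \cite[Ch.~11]{GS}, \cite[Section~5]{Garban2011}): for a sub-box $B\subseteq W_L$ control the first and --- crucially --- the second factorial moment of $|\gamma_{f_L}\cap B|$ in terms of four-arm probabilities, using the add-one/remove-one description of the factorial moment measures of $\gamma_{f_L}$ (Proposition~\ref{p:ell}) together with quasi-multiplicativity; deduce a scale-by-scale bound $\P\{\gamma_{f_L}\cap B\neq\varnothing\}\gtrsim\E[|\gamma_{f_L}\cap B|]^2/\E[|\gamma_{f_L}\cap B|^2]$ by Paley--Zygmund; and run the recursive multi-scale covering argument of \cite{GPS10} to upgrade these into the claimed lower-tail bound.

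I expect the main obstacle to be precisely the two inputs that make step (b) work in the continuum. First, quasi-multiplicativity of the four-arm probabilities for critical Boolean percolation is not classical and has to be proved separately (one of the key technical contributions of the paper); without it the moment estimates fail to telescope across scales and the multi-scale bootstrap breaks. Second, one must check that the continuum second-moment estimates for $\gamma_{f_L}$ restricted to boxes are of the same polynomial order as their discrete counterparts --- i.e.\ that no anomalous amount of spectral mass escapes to very small scales, something invisible at the level of the first moment handled by Proposition~\ref{p:ell2} --- and this is where the explicit factorial-moment formulas of Section~\ref{s:specpp} are genuinely needed. Granting these, along with the classical a priori polynomial arm bounds and RSW for the Boolean model, the combinatorial skeleton of \cite{GPS10} transfers and steps (a)--(b) combine to give the theorem.
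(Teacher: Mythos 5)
Your reduction to the spectral point process, the Mehler identity $\P\{f_L(\eta)\neq f_L(\eta^t)\}=\tfrac12\E[1-e^{-t|\gamma_{f_L}|}]$, and the first-moment estimate $\E|\gamma_{f_L}|\asymp L^2\alpha_4(L)$ via the pivotal process and quasi-multiplicativity, all match the paper (Propositions~\ref{p:NStabgen}, \ref{p:ell}, \ref{p:ell2}, condition \ref{A3}, and Theorem~\ref{t:quasimPBM}). The easy direction $t_LL^2\alpha_4(L)\to0$ is handled exactly as you say.

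The gap is in your treatment of the regime $t_LL^2\alpha_4(L)\to\infty$, where you aim at too strong a target. You write $\P\{|\gamma_{f_L}|\ge1/t_L\}\ge\P\{\gamma_{f_L}\neq\varnothing\}-\P\{0<|\gamma_{f_L}|<\eps_LL^2\alpha_4(L)\}$ with $\eps_L\to0$ and propose to prove that the last term goes to $0$ via a multi-scale GPS-style lower-tail bound. That estimate is precisely what the paper says it \emph{cannot} currently prove for continuum Boolean percolation (see the penultimate paragraph of Section~\ref{ss:appintro}, where it is explained that such lower-tail bounds are outside the scope of the spectral estimates proved in the paper), and crucially it is not what the theorem needs. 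The theorem only asserts sharp noise \emph{instability}, not sharp noise \emph{sensitivity}, and you have effectively conflated the two. For instability it suffices to have, for some fixed $c,c_0>0$ independent of $L$, $\P(|\gamma_L|\ge c\,L^2\alpha_4(L))\ge c_0$ (Proposition~\ref{p:noNStabgen}); since $t_LL^2\alpha_4(L)\to\infty$ forces $1/t_L\le c\,L^2\alpha_4(L)$ eventually, this already bounds $\P\{|\gamma_{f_L}|\ge1/t_L\}$ below by $c_0$ with no limit involving $\eps_L$ at all. That fixed-scale bound is exactly what follows from a single application of Paley--Zygmund once you control the \emph{full} second moment $\E\big[|\gamma_L\cap\widehat W_{\rho L}|^2\big]\lesssim\big(\E|\gamma_L\cap\widehat W_{\rho L}|\big)^2$ (Lemma~\ref{l:2ndmomspectravoronoi}), using the second factorial-moment formula of Proposition~\ref{p:ell}, the bound \ref{A4} on $\E[(D_{\tx}D_{\ty}f_L)^2]$ in terms of $\alpha_4(|x-y|/4)^2$, and quasi-multiplicativity to sum over scales. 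No box-by-box Paley--Zygmund with a recursive covering is needed, and indeed the multi-scale bootstrap you invoke would require additional conditional-expectation estimates in the spirit of \cite[Section~4]{GPS10} that are not established here. Replacing step (b) of your plan with this single second-moment Paley--Zygmund argument closes the gap and recovers the paper's proof.
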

The content of Theorem \ref{t:sharpNS_Boolean} complements the well-known (non-sharp) noise sensitivity results from \cite{ABGM14,LPY2021}. We remark here that it should be possible to extend Theorem~\ref{t:sharpNS_Boolean} to the case of balls with random but bounded radius. The value of $\lambda_c$ will be different in this case, but with some extra work one could verify the general sufficient conditions \ref{A1} -- \ref{A3} for sharp noise instability of continuum percolation models, stated in Section \ref{ss.gen_sns_cont}. On the other hand, extending our findings to Boolean percolation with unbounded radii with suitable tail decay (as e.g. in \cite{Ahlbergsharp18, hall85}) represents a significant challenge. Incidentally, it is known that Boolean models with finite $(2+\epsilon)$-moment assumption on the random radius of balls (which is close to the optimal condition for the existence of a critical intensity -- see \cite{hall85}) exhibit (non-sharp) noise sensitivity, see \cite[Theorem 10.1]{LPY2021}.

\smallskip

Similarly to the definition of $\alpha_4$, for $0<r<R$, denote by $\alpha_3^+(r,R)$ the so-called \textit{3-arm probability in the half-plane,} that is, the probability that there are $3$ arms of alternating types from $\partial W_r$ to $\partial W_R$ inside the annulus $W_R \setminus W_r$ restricted to the upper half-plane. A fundamental tool in the proof of Theorem \ref{t:sharpNS_Boolean} is the {\bf quasi-multiplicativity property} of the four arm probabilities, as well as the of the three arm probabilities in the half-plane, which are of independent interest. Our proof, which is detailed in Section \ref{sec:quasim_boolean} below, is based on a highly non-trivial adaptation of arguments first presented by H.\ Kesten in \cite{Kes87} in the framework of $2$-dimensional discrete percolation.
\begin{theorem}[Quasi-multiplicativity of $\alpha_4$ and $\alpha_3^+$] 
\label{t:quasimPBM}
Consider the critical Boolean percolation as above. Then for $1 \leq r_1 \leq r_2 \leq r_3$,
	$$
	\alpha_4(r_1,r_3) \asymp \alpha_4(r_1,r_2) \alpha_4(r_2,r_3),
	$$
and
 $$
	\alpha_3^+(r_1,r_3) \asymp \alpha_3^+(r_1,r_2) \alpha_3^+(r_2,r_3).
	$$
\end{theorem}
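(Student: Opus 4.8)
The plan is to transport to the continuum Boolean setting Kesten's classical route to quasi-multiplicativity for two-dimensional critical percolation \cite{Kes87}. The three inputs used throughout are: positive association (FKG), since every arm or crossing event is a monotone Boolean combination of increasing and decreasing events of $\eta$; the finite range of the model, the occupied set inside a region $D$ being a deterministic function of $\eta$ restricted to the (Euclidean) unit neighbourhood of $D$, so that events supported on regions at mutual distance $>2$ are independent; and the Russo--Seymour--Welsh (RSW) theory for critical Poisson Boolean percolation, providing, uniformly in scales bounded below by an absolute constant, two-sided bounds on crossing probabilities of quads of bounded aspect ratio, together with the planar-duality fact that an occupied left--right crossing of a rectangle obstructs a vacant top--bottom one. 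At the cost of absolute constants one may assume the radii well separated, say $r_2 \ge 100\, r_1$ and $r_3 \ge 100\, r_2$, as a bounded ratio makes the corresponding factor $\asymp 1$ by RSW. For the upper bound $\alpha_4(r_1,r_3) \lesssim \alpha_4(r_1,r_2)\,\alpha_4(r_2,r_3)$ one restricts the four alternating arms: a continuous crossing of $W_{r_3}\setminus W_{r_1}$, truncated at its first visit to the relevant circle, induces four alternating sub-arms in the correct cyclic order in each of $W_{r_2-1}\setminus W_{r_1}$ and $W_{r_3}\setminus W_{r_2+1}$; by finite range these events are independent, whence $\alpha_4(r_1,r_3) \le \alpha_4(r_1,r_2-1)\,\alpha_4(r_2+1,r_3)$, and it remains to absorb the unit buffers via the arm-separation lemma below. (In the discrete setting this step is vacuous, states living on single edges; the unit-width correction is the only genuinely new feature of the easy direction.)

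The lower bound $\alpha_4(r_1,r_2)\,\alpha_4(r_2,r_3) \lesssim \alpha_4(r_1,r_3)$ is the substance, and its engine is an \emph{arm-separation lemma}: conditionally on four alternating arms from $\partial W_r$ to $\partial W_R$ with $R \ge C r$, with probability bounded below by an absolute constant one may take them \emph{well separated}, that is, with endpoints on $\partial W_R$ (and on $\partial W_r$) in four prescribed disjoint arcs, in the prescribed cyclic order and colour pattern, each arm running alone through a small box attached to its endpoint, so that the well-separated event still has probability $\asymp \alpha_4(r,R)$. I would obtain this from an \emph{extension-and-steering} step, proved by RSW and FKG: arms reaching $\partial W_\rho$ at arbitrary positions are prolonged through the dyadic shell $W_{2\rho}\setminus W_\rho$ by building, with probability bounded below, four disjoint alternating ``corridors'' (two occupied, two vacant) that capture the incoming arms and funnel them into prescribed landing arcs of $\partial W_{2\rho}$, each corridor insulated from its neighbours by crossings of the opposite colour; iterating over $O(1)$ shells near each boundary circle gives the lemma, the conditioning staying benign because $\alpha_4(\rho,2\rho) \asymp 1$.

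Granting separation, the hard direction is completed by gluing: one combines independent well-separated realisations of the four-arm events for $W_{r_2}\setminus W_{r_1}$ and for $W_{r_3}\setminus W_{2r_2}$ with a gluing event of probability bounded below in the intermediate shell of bounded aspect ratio --- built by RSW and FKG, joining the four landing arcs on $\partial W_{r_2}$ to the four on $\partial W_{2r_2}$ by four correctly coloured, interleaved tubes --- the three regions being pairwise at distance $>2$ after the usual unit-buffer adjustments; this yields $\alpha_4(r_1,r_2)\,\alpha_4(2r_2,r_3) \lesssim \alpha_4(r_1,r_3)$, and hence the claim via the trivial monotonicity $\alpha_4(r_2,r_3) \le \alpha_4(2r_2,r_3)$. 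The proof for $\alpha_3^+$ runs in parallel, with half-plane RSW and a combinatorially simpler half-plane separation lemma; it is natural to treat the half-plane estimate first, as half-plane arm bounds also feed the full-plane construction.

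The main obstacle is the arm-separation lemma in the continuum: the bookkeeping of arm endpoints, their cyclic order and colour pattern, and of the ``alone in a box'' property through the surgery, and --- crucially --- obtaining constants that do not degrade as $r \downarrow 1$, where the unit balls cease to be small relative to the scale and the RSW inputs must be invoked with care. Two further model-specific points will require attention: making rigorous the notion of an ``arm'' and of its endpoint for a random union of closed Euclidean balls, and the associated planar-duality statements, which here are topological rather than combinatorial; and the omnipresent unit-width buffers forced by the finite range, to be absorbed at bounded multiplicative cost at every restriction and gluing.
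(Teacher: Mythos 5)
Your plan follows essentially the same route as the paper's proof: both transport Kesten's argument to the continuum, introducing restricted/well-separated four-arm events (the paper's $\widetilde A_4$ and $\dbtilde A_4$, with arms forced into prescribed rectangles near the boundary), proving that these have probability comparable to $\alpha_4$ (the arm-separation step), gluing independent realisations across an intermediate shell by RSW and FKG, and inserting unit-width buffers throughout so that the finite range of the Boolean model yields independence; the half-plane three-arm case is then handled in parallel exactly as you propose. The one place where your sketch understates the difficulty is the separation lemma itself: the paper (following Kesten) does not steer the arms through $O(1)$ shells under a ``benign'' conditioning on the arm event, but instead works with lowest crossings equipped with $(\theta,k)$-fences and a recursion over \emph{all} dyadic scales, choosing $\theta$ so that the probability of a crossing lacking a fence is small enough to be absorbed geometrically against the per-scale cost of the restricted event.
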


Finally, we prove sharp noise sensitivity, as well as sharp noise instability for the critical Voronoi percolation model under the OU dynamics by comparing our spectral point process with the annealed spectral sample\footnote{Such a comparison was indicated to us by H.\ Vanneuville \cite{Vann2022}.} introduced in \cite{Vann2021} and using our general comparison of annealed and projected spectral samples from Section~\ref{sec:comparison}. 
\begin{theorem}
		\label{t:sharpNS_voronoi}
		Let $f_L$ be the $\pm 1$-indicator of LR crossing of $\Occ(\eta) \cap W_L$, $L \ge 1$, in a critical Voronoi percolation model as described above. Then, $f_L$ exhibits both sharp noise instability and sharp noise sensitivity at time-scale $\frac{1}{L^2\alpha_4(L)}$ as $L \to \infty$. More precisely,  we have that $\lim_{L \to \infty} \mathbb{P}\{f_L(\eta) \neq f_L(\eta^{t_L})\}  =0$ if $t_L L^2\alpha_4(L) \to 0$, and $\lim_{L \to \infty} {\rm Cov}(f_L(\eta), f_L(\eta^{t_L})) =0$ when $t_L L^2\alpha_4(L) \to \infty$.
	\end{theorem}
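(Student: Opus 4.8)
The plan is to reduce everything to two one-sided statements about the spectral point process $\gamma_{f_L}$ and then exploit the chaos grading of the OU dynamics. Since $\{f_L\}$ is non-degenerate (by RSW theory for critical Voronoi percolation), part (ii) of the Remark above shows that the theorem follows once we prove \textbf{(A)} $\mathbb{P}\{f_L(\eta)\neq f_L(\eta^{t_L})\}\to 0$ whenever $t_L L^2\alpha_4(L)\to 0$, and \textbf{(B)} ${\rm Cov}(f_L(\eta),f_L(\eta^{t_L}))\to 0$ whenever $t_L L^2\alpha_4(L)\to\infty$: indeed, (A) is sharp noise instability below the time-scale and, via the Remark, gives $\liminf_L{\rm Cov}(f_L(\eta),f_L(\eta^{t_L}))>0$ below it, while (B) is sharp noise sensitivity above the time-scale and, again via the Remark, gives $\liminf_L\mathbb{P}\{f_L(\eta)\neq f_L(\eta^{t_L})\}>0$ above it. To prove (A) and (B) I will use that the OU semigroup $(P_t)$ acts on the $k$-th Wiener-It\^o chaos as multiplication by $e^{-kt}$ (see e.g.\ \cite{LPY2021}): writing $f_L=\sum_{k\geq 0} I_k(u_k)$ for its chaos expansion and recalling from Definition \ref{def:spproc} that $\mathbb{P}(|\gamma_{f_L}|=k)=\|u_k\|^2_{H^{\odot k}}$ --- a bona fide probability mass function since $\E[f_L^2]=1$ --- the identity $\E[f_L(\eta)f_L(\eta^{t})]=\E[f_L\,P_tf_L]=\sum_{k\geq 0}e^{-kt}\|u_k\|^2_{H^{\odot k}}$ yields
\begin{equation*}
\mathbb{P}\{f_L(\eta)\neq f_L(\eta^{t})\}=\tfrac12\,\E\!\left[1-e^{-t|\gamma_{f_L}|}\right],\qquad {\rm Cov}\!\left(f_L(\eta),f_L(\eta^{t})\right)=\E\!\left[e^{-t|\gamma_{f_L}|}\,\mathds{1}\{|\gamma_{f_L}|\geq 1\}\right].
\end{equation*}

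\textbf{The easy direction (A).} Here I would use $1-e^{-x}\leq x$ to get $\mathbb{P}\{f_L(\eta)\neq f_L(\eta^{t_L})\}\leq \tfrac12 t_L\,\E|\gamma_{f_L}|$, and then invoke Proposition \ref{p:ell2}: the intensity measure of $\gamma_{f_L}$ is a universal constant times that of the pivotal point process of $f_L$, whence $\E|\gamma_{f_L}|\asymp\E[\#\{\text{pivotal points of }f_L\}]\asymp L^2\alpha_4(L)$, the last step being the standard ``expected number of pivotals'' estimate for critical Voronoi percolation (RSW together with quasi-multiplicativity of the $4$-arm and half-plane $3$-arm probabilities; see e.g.\ \cite{Vann2021,ABGM14} and the references therein). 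This already gives $\mathbb{P}\{f_L(\eta)\neq f_L(\eta^{t_L})\}\lesssim t_L L^2\alpha_4(L)\to 0$.

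\textbf{Reducing (B) to a lower-tail estimate.} The crucial input will be the following estimate on the lower tail of $|\gamma_{f_L}|$: for every sequence $(\lambda_L)$ with $\lambda_L=o(L^2\alpha_4(L))$,
\begin{equation}\label{e:lowertailplan}
\mathbb{P}\!\left(0<|\gamma_{f_L}|\leq\lambda_L\right)\ \longrightarrow\ 0\qquad\text{as }L\to\infty.
\end{equation}
Granting \eqref{e:lowertailplan}, statement (B) follows at once: for a fixed $N\geq 1$, bounding $e^{-t_L|\gamma_{f_L}|}$ by $e^{-N}$ on the event $\{|\gamma_{f_L}|>N/t_L\}$ gives
\begin{equation*}
{\rm Cov}\!\left(f_L(\eta),f_L(\eta^{t_L})\right)=\E\!\left[e^{-t_L|\gamma_{f_L}|}\,\mathds{1}\{|\gamma_{f_L}|\geq 1\}\right]\leq\mathbb{P}\!\left(0<|\gamma_{f_L}|\leq N/t_L\right)+e^{-N},
\end{equation*}
and since $t_L L^2\alpha_4(L)\to\infty$ forces $N/t_L=o(L^2\alpha_4(L))$, \eqref{e:lowertailplan} yields $\limsup_L{\rm Cov}(f_L(\eta),f_L(\eta^{t_L}))\leq e^{-N}$ for every $N$, hence the limit is $0$.

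\textbf{The main obstacle.} Proving \eqref{e:lowertailplan} directly for $\gamma_{f_L}$ would essentially require redoing the multiscale spectral analysis of \cite{GPS10} in the continuum. Instead --- following an observation of H.\ Vanneuville --- I would transfer it from the corresponding lower-tail estimate for the \textbf{annealed spectral sample} $\gaman_{f_L}$ of Voronoi percolation (Definition \ref{d:annealed}), which is the exact analogue of \cite[Theorem 1.6]{GPS10} and was established in \cite{Vann2021}. Since $\eta$ here lives on $\R^2\times\{-1,1\}$, the bridge is the projected spectral sample $\gampr_{f_L}$ of Definition \ref{d:gammastella}: distinct points of $\gamma_{f_L}$ almost surely have distinct spatial coordinates, so that $|\gampr_{f_L}|=|\gamma_{f_L}|$ a.s., while Theorem \ref{t:kgampran} and the first-intensity identity \eqref{e:onepointcomparison} describe, via the Hoeffding-ANOVA decomposition, precisely how the law of $|\gampr_{f_L}|$ differs from that of $|\gaman_{f_L}|$. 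I expect this transfer to be the main difficulty, precisely because the two random sets are \emph{not} equidistributed --- already their second factorial moment measures disagree --- so one must quantitatively control the discrepancy between the two laws in order to push Vanneuville's lower-tail bound from $\gaman_{f_L}$ through to $\gamma_{f_L}$; the tools for this are the explicit combinatorial identities of Section \ref{sec:comparison}, together with (where needed) the add-one/remove-one cost representation of the factorial moment measures of $\gamma_{f_L}$ from Proposition \ref{p:ell} and the arm estimates already used in (A).
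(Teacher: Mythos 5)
Your reduction and part (A) are fine and essentially identical to the paper's route: the identities $\E[f_L(\eta)f_L(\eta^t)]=\E[e^{-t|\gamma_{f_L}|}]$ and $\E|\gamma_{f_L}|\asymp L^2\alpha_4(L)$ (via Propositions \ref{p:ell}, \ref{p:ell2}, \ref{p:comp_Pivotal} and the pivotal estimates from \cite{Vann2019,Vann2021}) give noise stability below the time-scale exactly as in Propositions \ref{p:NStabgen} and \ref{p:exppivotalsetvoronoi}.

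For part (B), however, there is a genuine gap, and it is one the paper itself flags. Your plan reduces (B) to the lower-tail estimate \eqref{e:lowertailplan} for $|\gamma_{f_L}|$ and proposes to transfer it from Vanneuville's lower-tail bound on $|\gaman_{f_L}|$. But the comparison results of Section \ref{sec:comparison} only control the first factorial moment measures exactly (via \eqref{e:onepointcomparison}) and give a one-sided inequality $\Psi_k \leq \Phi_k$ for $k\geq 2$; these say nothing about lower tails. Indeed, lower-tail control would require something like an \emph{upper} bound on the correlation functions of $\gamma^{pr}_{f_L}$ in terms of those of $\gaman_{f_L}$, in the direction opposite to what Theorem \ref{t:kgampran} and Lemma \ref{lem:z1max} furnish, and the paper explicitly states (end of Section \ref{ss:appintro}) that the lower-tail estimates needed for STEP 4 of Remark \ref{rem:outlineNS} are ``outside the scope of the spectral bounds deduced in the present work.'' So you have correctly identified the obstacle but offered no way around it: as written, \eqref{e:lowertailplan} remains unproved, and the argument does not close.

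The paper's actual route for (B) avoids the spectral sample entirely. Proposition \ref{p:covcomp}, via a conditional-expectation argument (the OU perturbation reveals strictly less information about $\eta$ than the frozen perturbation, since one conditions on $\eta_1$ rather than on $\eta_1\cup(\eta\setminus\eta_1)^{pr}$), yields the pointwise covariance domination $\mathrm{Cov}(f_L(\eta),f_L(\eta^t))\leq \mathrm{Cov}(f_L(\eta),f_L(\eta_t))$ for the frozen process $\eta_t$. Combining this with the sharp noise sensitivity under frozen dynamics already proved in \cite[Theorem 1.7]{Vann2021} immediately gives $\mathrm{Cov}(f_L(\eta),f_L(\eta^{t_L}))\to 0$ when $t_L L^2\alpha_4(L)\to\infty$, with no need to compare the laws of $\gamma_{f_L}$ and $\gaman_{f_L}$ beyond the first moment. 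I would suggest replacing your lower-tail transfer with this covariance comparison.
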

A consequence of the arguments used in our proof is that $f_L$ is also noise stable under frozen dynamics at time-scale $t_L$ if $t_L L^2\alpha_4(L) \to 0$. Hence, it also follows that it is not noise sensitive at this time-scale, which was shown in \cite[Theorem 1.7]{Vann2021}.

\smallskip 

Although in this paper we obtain many analogues of the results on discrete spectral sample (including certain conditional probability results) in \cite{GPS10}, these still fall short of establishing sharp noise sensitivity for general continuum percolation models, such as the Boolean model. The reason is that sharp noise sensitivity typically requires one to prove very fine bounds on lower tail probabilities associated with the spectral sample (see e.g. \cite[Proposition 5.12]{Garban2011} and \cite[Theorem 2.5]{Vann2021}, as well as STEP 4 in Remark \ref{rem:outlineNS}): for the time being, these estimates are outside the scope of the spectral bounds deduced in the present work, which only allow us to prove the weaker concentration inequality stated in Proposition \ref{p:noNStabgen}; see also the outline of the proof approach to sharp noise sensitivity in \cite[Section 5]{Garban2011}. It is important to mention here that the new approach to sharp noise sensitivity in Bernoulli percolation recently introduced in \cite{Tassion2023}, via differential inequalities and using a dynamical version of quasi-multiplicativity, is likely to apply to continuum percolation models as well, and this might yield an alternate route for extending the results of the present paper.

The rest of the paper is organized as follows. After some point process preliminaries in Section \ref{ss:prem}, in Section \ref{sec:Spec} we introduce the notion of spectral and pivotal point processes and explicitly characterize their factorial moment measures. In Section \ref{sec:comparison}, we recall the notion of annealed spectral sample from \cite{Vann2021}, we characterize its factorial moment measure, and compare it with the so-called projected spectral sample (see Definition \ref{d:gammastella}). Additionally, we also compare factorial moment measures of pivotal and quenched pivotal processes. In Section \ref{s:sharp_Nst_cont_perc}, we develop applications to continuum percolation models. Firstly, we give four general geometric conditions to verify sharp noise instability in continuum percolation models in Section \ref{ss.gen_sns_cont}, and exploit them to show the same for the Poisson Boolean model, as stated in Theorem \ref{t:sharpNS_Boolean}. The conditions are verified in Sections \ref{sec:quasim_boolean}--\ref{ss:proofBoolean}, including the proof  of the already evoked quasi-multiplicativity property in Theorem \ref{t:quasimPBM}. Finally, in Section \ref{s:sharpNSvor} we show sharp noise sensitivity of Voronoi percolation (Theorem \ref{t:sharpNS_voronoi}). For convenience, some technical proofs are deferred to the Appendices.

	\section{Spectral point processes}
 \label{s:specpp}

The aim of this section is to introduce and study the notion of {\bf spectral point process} associated with a given functional of a Poisson point process. We start by presenting some useful preliminary material in Section \ref{ss:prem}. Section \ref{sec:Spec} discusses the definitions and basic properties of spectral samples and pivotal point processes. Finally, Section \ref{sec:comparison} establishes an explicit connection (together with several comparisons) between a projection of the spectral point process and the {\bf annealed spectral sample} introduced in \cite{Vann2021}, and between the {\bf pivotal} and {\bf quenched pivotal processes}.

For the rest of the paper, every random element is assumed to be defined on a common probability space $(\Omega,\cF,\mathbb{P})$, with $\mathbb{E}$ denoting the expectation with respect to $\mathbb{P}$.

 \subsection{Preliminaries on point processes}\label{ss:prem}
 Throughout the paper, we let $(\mathbb{X},\cX)$ denote a Polish (that is, metrizable, separable and complete) space endowed with its Borel $\sigma$-field. A measure on $(\mathbb{X}, \cX)$ is called {\bf locally finite} if it is finite on bounded sets (and therefore $\sigma$-finite). Write $\N_0 := \{0,1,2,\hdots\}$ and denote by $\bN\equiv \bN(\mathbb{X})$ the class of all measures on $(\mathbb{X},\cX)$ that are $\N_0$-valued when restricted to bounded sets (that is, $\bN$ is the collection of positive, integer-valued locally finite measures). For the rest of the paper, we endow $\bN$ with the smallest $\sigma$-field $\cN \equiv \mathcal{N}(\mathbb{X})$ such that $\mu\mapsto \mu(B)$ is measurable for all $B\in\cX$.\footnote{In some parts of the paper we will be led to work simultaneously with two distinct Polish spaces $\mathbb{X}$ and $\mathbb{X}'$ (for instance, in Section \ref{sec:comparison}, $\mathbb{X} = \R^d \times \{-1,1\}$ and $\mathbb{X}' = \R^d$); in this case, it will always be clear from the context whether the notation $(\bN, \mathcal{N})$ refers to $(\bN(\mathbb{X}), \mathcal{N}(\mathbb{X}))$ or $(\bN(\mathbb{X} '), \mathcal{N}(\mathbb{X}'))$.}

 A {\bf point process} is a random element $\eta$ taking values in $\bN$.
The {\bf intensity measure} of $\eta$ is the
measure $\mathbb{E}[\eta]$ defined by $\mathbb{E}[\eta](B):=\mathbb{E}[\eta(B)]$, $B\in\cX$. Since $\mathbb{X}$ is Polish, every point process $\eta$ is {\bf proper}, that is it admits the representation
$ \eta = \sum_{n=1}^{\eta(\mathbb{X})}\delta_{Y_n}$, where $\delta_y$ is the Dirac mass at $y$, and $\{Y_n : n\geq 1\}$ is a collection of random elements with values in $\mathbb{X}$ -- see e.g.\ \cite[Section 6.1]{LastPenrose17}.

\smallskip

Fix a locally finite measure $\lambda$ on $(\mathbb{X}, \cX)$ with no atoms. A {\bf  Poisson process with intensity} $\lambda$ is a point process $\eta$
satisfying the following properties: (i) for every
$A\in \mathcal{X}$, the random variable $\eta(A)$ has a Poisson distribution with parameter $\lambda(A)$ (with obvious conventions whenever $\lambda(A)\in \{0, \infty\}$), and (ii) given $r\in\N$ and
disjoint sets $A_1,\hdots,A_r\in \mathcal{X}$, the random variables
$\eta(A_1),\hdots,\eta(A_r)$ are stochastically independent. It is a well-known fact (see e.g.\ \cite[Ch.\ 2]{LastPenrose17}) that, 
since $\lambda$ is non-atomic, then $\mathbb{P}(\eta\in\bN_s)=1$, where $\bN_s \equiv \bN_s(\mathbb{X})$ is the set of all $\mu\in\bN$ such that $\mu(\{x\})\le 1$ for each $x\in\mathbb{X}$ (a point measure of this type is called {\bf simple}). Given a measurable mapping $F: \bN \rightarrow \R$, we let
	\begin{equation}\label{e:add1}
	D_x F (\eta) := F(\eta + \delta_x)  - F(\eta), \quad x \in \X,
	\end{equation}
	and
	\begin{equation}\label{e:remove1}
	D_x^{-} F (\eta) := (F(\eta) - F(\eta - \delta_x))\mathds{1}_{\{x\in \eta\}}, \quad x\in \mathbb{X}
	\end{equation}
	denote, respectively, the {\bf add-one cost} and the {\bf remove-one cost} operators associated with $\eta$, where we write $x\in \eta$ to indicate that $x$ is in the support of $\eta$ (that is, $\eta(\{x\})>0$). Such a notational convention will be maintained throughout the rest of the paper.

\smallskip

Given $\mu\in \bN$ and $n\geq 1$, we write $\mu^{(n)}$ to indicate the $n$-th {\bf factorial measure} associated with $\mu$, as defined e.g.\ in \cite[Ch.\ 4]{LastPenrose17}. In the special case where $\mu\in\bN_s$, one has that
$\mu^{(n)}$ is the measure on $(\mathbb{X}^n, \mathcal{X}^{ n})$ obtained by
removing from the support of $\mu^n$ every point $(x_1,\hdots,x_n)$ such
that $x_i = x_j$ for some integers $i\neq j$ (with $\mu^{(1)} = \mu$). If $\eta$ is a point process on $(\mathbb{X}, \cX)$, the (symmetric and deterministic) measure on $(\mathbb{X}^n, \mathcal{X}^n)$ given by $A\mapsto \mathbb{E}[\eta^{(n)}(A)]$, $A\in \mathcal{X}^n$ is called the $n$-th {\bf factorial moment measure} of $\eta$ -- see e.g.\ \cite[Section 4.4]{LastPenrose17} or \cite[Section 5.4]{daleyverejones}. If the factorial moment measure has a density $f$ with respect to $\lambda^n$, then $f$ is called  the $n$-\textbf{point correlation function} associated with $\eta$. Note that the $1$-point correlation function is simply the {\bf intensity function} of $\eta$. 

\smallskip

We will often
apply the following {\bf multivariate Mecke formula}
(see \cite[Theorem 4.4]{LastPenrose17}). If $\eta$ is a Poisson
process with intensity $\lambda$ as above, then, for all $n\geq 1$ and all
measurable mappings $f\colon\mathbb{X}^n \times \bN \to [0,\infty)$,
\begin{align} \label{e:mecke}
 &\mathbb{E}\left[ \int_{\mathbb{X}^n}  f(x_1,\ldots,x_n, \eta) \, \eta^{(n)}(\md x_1,\ldots,\md x_n) \right] \nonumber\\ 
& \qquad \qquad=\mathbb{E}\left[ \int_{\mathbb{X}^n} f(x_1,\ldots,x_n, \eta+\delta_{x_1} +\cdots +\delta_{x_n} ) 
\, \lambda^{n}(\md x_1,\ldots,\md x_n)  \right].
\end{align}
This in particular shows that $\lambda^n$ is the $n$-th factorial moment measure of $\eta$.
\smallskip

One of the staples of our approach is the following {\bf chaotic representation property} of a Poisson process $\eta$ with intensity $\lambda$: every measurable function
$F\colon \bN \to \R$ such that $\mathbb{E}[F(\eta)^2] < \infty$ admits a unique
{\bf chaotic decomposition} of the type
\begin{equation}\label{e:chaos}
F = F(\eta) = \sum_{k=0}^\infty I_k(u_k),
\end{equation}
where $I_0(u_0) = u_0: = \mathbb{E}[F(\eta)]$, and for $k \in \N$, the symbol $I_k$ indicates a {\bf multiple
Wiener-It\^o integral} of order $k$ with respect to the compensated measure $\hat{\eta} := \eta-\lambda$, the kernels
$u_k\in L^2(\lambda^k)$ (with $L^2(\lambda^0) = \R$) are $\lambda^k$-a.e.\ symmetric,
and the convergence of the series takes place in $L^2(\mathbb{P})$. The reader is referred
\cite[Chs.\ 12 and 18] {LastPenrose17} and \cite[Ch.\ 5]{PTbook} for more details. Observe that, for all $k,m\geq 0$ and all
pairs of a.e.\ symmetric kernels $u \in L^2(\lambda^k)$,
$v \in L^2(\lambda^m)$,
\begin{equation}\label{e:onreln}
\mathbb{E}[I_k(u) I_m(v)] = \mathds{1}_{\{k=m\}}\,k!\, \langle u, v \rangle_{L^2(\lambda^k)}.
\end{equation}
According to \cite[Eq.\ (18.20)]{LastPenrose17}, if  $F : \bN \to \R$ is as in \eqref{e:chaos},  then 
\begin{equation}
\label{e:lpformula}
u_k(x_1,\ldots,x_k) := \frac{1}{k!}\mathbb{E}[D^k_{x_1,\ldots,x_k}F(\eta)],
\end{equation}
{where $D^k$ is the $k$-th {\bf iterated difference operator}, defined as
\begin{equation}\label{e:addoneincexc}
D^k_{x_1,\ldots,x_k}F(\eta) := D_{x_1}(D^{k-1}_{x_2,\ldots,x_k}F(\eta)) = \sum_{A\subseteq [k]} (-1)^{k-|A|}F\left(\eta +\sum_{\ell\in A} \delta_{x_\ell} \right) ,
\end{equation}
where $D_x$ denotes the add-one cost operator defined at \eqref{e:add1}, and the last equality is easily deduced e.g.\ by recursion on $k$ -- see \cite[Section 18.1]{LastPenrose17}.
Combining \eqref{e:chaos}, \eqref{e:onreln} and \eqref{e:lpformula}, one obtains the relation
\begin{equation}
\label{e:FSident}
\mathbb{E}[F(\eta)^2] = \sum_{k=0}^{\infty} k! \|u_k\|^2_{L^2(\lambda^k)} = \mathbb{E}[F(\eta)]^2 + \sum_{k=1}^{\infty}\int_{\mathbb{X}^k}\frac{1}{k!}\mathbb{E}[D^k_{x_1,\ldots,x_k}F(\eta)]^2\lambda^k(\md x_1,\ldots,\md x_k),
\end{equation}
which will play a crucial role in the forthcoming definition of a spectral point processes.

\smallskip

To conclude the section, we record a useful result whose proof (left to the reader) can be deduced by a slight modification of the arguments in \cite[Proof of Lem.\ 3.1]{pecth}. For integers $1\leq\ell\leq k$, we write $k_{(\ell)} := k(k-1)\cdots (k-\ell+1)$.

\begin{lemma}\label{l:domd} Let $\eta$ be a Poisson process on $(\mathbb{X}, \mathcal{X})$ with a locally finite and non-atomic intensity $\lambda$. Let $F : \bN\to \R$ be a measurable mapping such that $\mathbb{E} [F(\eta)^2]<\infty$, and let \eqref{e:chaos} be the chaotic expansion of $F(\eta)$. Then, for every $\ell\geq 1$ and every measurable $C\subseteq \mathbb{X}$, 
\begin{eqnarray}\label{e:derchaos1}
&&\int_{\mathbb{C}^\ell} \mathbb{E}[( D^\ell_{x_1,\hdots,x_\ell} F(\eta) )^2] \, \lambda^\ell({\rm d}x_1,\ldots,{\rm d}x_\ell) \\
&&= \sum_{k=\ell}^\infty k_{(\ell)} k! \int_{C^\ell}  \|u_k (z_1,\ldots,z_\ell, \cdot )\|_{L^2(\lambda^{k-\ell})}^2{\rm d}\lambda^{\ell}(z_1,\ldots,z_\ell).\notag
\end{eqnarray}
Moreover, if either side of \eqref{e:derchaos1} is finite, one has that, $\lambda^\ell\otimes \mathbb{P}$--a.e. on $C^\ell\times \Omega$,
\begin{equation}\label{e:derchaos2}
D^\ell_{x_1,\ldots,x_\ell} F(\eta) = \sum_{k=\ell}^{\infty} k_{(\ell)} I_{k-\ell}\left(u_k(x_1,\ldots,x_\ell, \cdot)\right),
\end{equation}
where the infinite series converges in $L^2(C^\ell\times \Omega, \lambda^\ell \otimes \mathbb{P})$.
\end{lemma}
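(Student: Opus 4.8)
\textbf{Proof plan for Lemma \ref{l:domd}.}

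The statement is an extension of \cite[Lem.\ 3.1]{pecth} from the case $\ell=1$ to general $\ell\geq 1$, so the plan is to proceed by induction on $\ell$ (or, equivalently, to iterate the first-order identity and then resymmetrize), keeping careful track of the combinatorial factors $k_{(\ell)}$ that appear.

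\emph{Step 1: the chaos expansion of the $\ell$-th difference operator.} First I would establish \eqref{e:derchaos2}, assuming finiteness of the right-hand side of \eqref{e:derchaos1}. The base case $\ell=1$ is exactly the content of \cite[Lem.\ 3.1]{pecth}: since $F(\eta)=\sum_{k\geq 0}I_k(u_k)$ with convergence in $L^2(\mathbb{P})$, one has, $\lambda\otimes\mathbb{P}$-a.e.\ on $C\times\Omega$,
\begin{equation*}
D_x F(\eta) = \sum_{k\geq 1} k\, I_{k-1}(u_k(x,\cdot)),
\end{equation*}
the convergence taking place in $L^2(C\times\Omega)$, using that $\widehat\eta$-chaoses are stable under the add-one cost operator and that $D_x I_k(u_k) = k I_{k-1}(u_k(x,\cdot))$ for symmetric $u_k$. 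For the inductive step, one applies the first-order identity to the functional $\eta\mapsto D^{\ell-1}_{x_2,\ldots,x_\ell}F(\eta)$, whose chaos expansion is $\sum_{k\geq \ell-1} k_{(\ell-1)} I_{k-\ell+1}(u_k(x_2,\ldots,x_\ell,\cdot))$ by the induction hypothesis; differentiating once more in $x_1$ and using $k_{(\ell-1)}\cdot(k-\ell+1) = k_{(\ell)}$ together with the symmetry of $u_k$ yields \eqref{e:derchaos2}. The only subtlety here is justifying that the interchange of $D_{x_1}$ with the (a.e.-convergent) series is legitimate: this is handled by the $L^2$-convergence bound supplied by finiteness of the right-hand side of \eqref{e:derchaos1}, exactly as in \cite{pecth}, so I would invoke that argument rather than repeat it.

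\emph{Step 2: the $L^2$-norm identity.} Given \eqref{e:derchaos2}, identity \eqref{e:derchaos1} follows by squaring and integrating. Fixing $(x_1,\ldots,x_\ell)$, the isometry relation \eqref{e:onreln} applied to the $\widehat\eta$-integrals $I_{k-\ell}(u_k(x_1,\ldots,x_\ell,\cdot))$ gives
\begin{equation*}
\mathbb{E}\big[(D^\ell_{x_1,\ldots,x_\ell}F(\eta))^2\big] = \sum_{k\geq \ell} k_{(\ell)}^2\,(k-\ell)!\,\|u_k(x_1,\ldots,x_\ell,\cdot)\|^2_{L^2(\lambda^{k-\ell})},
\end{equation*}
and then integrating in $(x_1,\ldots,x_\ell)$ over $C^\ell$ against $\lambda^\ell$ and using $k_{(\ell)}^2(k-\ell)! = k_{(\ell)}\cdot k!$ produces the right-hand side of \eqref{e:derchaos1}. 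Running this computation in reverse (Tonelli, since all terms are nonnegative) shows that the two sides of \eqref{e:derchaos1} are equal as elements of $[0,\infty]$, which is what makes the finiteness hypothesis symmetric in the two sides.

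\emph{Main obstacle.} The genuinely delicate point is Step 1: one must justify differencing term-by-term in a series that a priori only converges in $L^2(\mathbb{P})$ for each fixed argument, and conclude a.e.\ convergence of the differenced series in $L^2(C^\ell\times\Omega)$. This is precisely where the finiteness of \eqref{e:derchaos1} is used — it controls the $L^2(C^\ell\times\Omega)$-norm of the tail $\sum_{k\geq M} k_{(\ell)} I_{k-\ell}(u_k(x_1,\ldots,x_\ell,\cdot))$ uniformly, so that the partial sums form a Cauchy sequence there — and it is the only step that requires genuine care beyond bookkeeping; the rest is the (orthogonality-driven) modification of \cite[Proof of Lem.\ 3.1]{pecth} alluded to in the statement. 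Everything else (the combinatorial identities $k_{(\ell-1)}(k-\ell+1)=k_{(\ell)}$ and $k_{(\ell)}^2(k-\ell)!=k_{(\ell)}k!$, the applications of Tonelli, and the use of \eqref{e:onreln}) is routine.
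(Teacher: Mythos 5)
Your plan has the right skeleton — identify the chaos expansion of the iterated difference, use the isometry \eqref{e:onreln}, track the combinatorial factors — and both identities $k_{(\ell-1)}(k-\ell+1)=k_{(\ell)}$ and $k_{(\ell)}^2(k-\ell)!=k_{(\ell)}\,k!$ are correct. But the logical structure, as written, has two gaps.

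First, the induction in Step~1. To apply the first-order result to $G:=D^{\ell-1}_{x_2,\ldots,x_\ell}F$ you need (i) $G\in L^2(\mathbb{P})$ for $\lambda^{\ell-1}$-a.e.\ $(x_2,\ldots,x_\ell)\in C^{\ell-1}$, and (ii) the chaos kernels of $G$. You justify (ii) "by the induction hypothesis", but that hypothesis — \eqref{e:derchaos2} at order $\ell-1$ — is itself only available under the order-$(\ell-1)$ finiteness of \eqref{e:derchaos1} over $C$, and you never check that this follows from the order-$\ell$ finiteness. In fact it does propagate (splitting $\int_{C^{\ell-1}\times\mathbb{X}^{k-\ell+1}}u_k^2$ into integrals over $C^m\times(\mathbb{X}\setminus C)^{k-m}$ and comparing coefficients shows the order-$(\ell-1)$ sum is bounded by $(\ell-1)!\,\mathbb{E}[F^2]$ plus the order-$\ell$ sum), but this is a nontrivial lemma your induction silently relies on. Moreover, (i) is precisely the content of \eqref{e:derchaos1} at order $\ell-1$, so what you actually need is a \emph{simultaneous} induction on \eqref{e:derchaos1} and \eqref{e:derchaos2}, which the Step~1/Step~2 split does not set up. Second, your Step~2 only proves \eqref{e:derchaos1} when the right side is finite, since it is derived from \eqref{e:derchaos2}; the remark about ``running the computation in reverse with Tonelli'' does not actually establish the infinite-infinite case, because there is no \eqref{e:derchaos2} to run through. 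A cleaner route that avoids both issues is non-inductive: use \eqref{e:lpformula} directly to read off the chaos kernels of $D^\ell_{x_1,\ldots,x_\ell}F$ as $v_m=\tfrac{(m+\ell)!}{m!}u_{m+\ell}(x_1,\ldots,x_\ell,\cdot)$, prove \eqref{e:derchaos1} first as an identity in $[0,\infty]$ by truncating to $F_N:=\sum_{k\le N}I_k(u_k)$ (where $D^\ell F_N$ is a finite sum and everything is exact) and passing $N\to\infty$ by monotone convergence on the right and a Fatou-type argument on the left, and only then deduce \eqref{e:derchaos2} under finiteness.
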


	\subsection{Spectral and pivotal point processes}\label{sec:Spec}
	
	\noindent In this section, we let $\eta$ be a Poisson point process on the Polish space $(\mathbb{X}, \cX)$, with non-atomic and locally finite intensity $\lambda$ (see Section \ref{ss:prem}). Consider a measurable mapping $F : {\bf N}\to \R$ such that $0<\mathbb{E}[F(\eta)^2]<\infty$, and let \eqref{e:chaos} be its chaotic decomposition. 
	In what follows, given such a mapping $F$, we will denote by $N = N_F$ the $\mathbb{N}_0$-valued random variable with distribution
	\begin{equation}\label{e:integer}
	\mathbb{P}[N = k] = k!\,\frac{\|u_k\|_{L^2(\lambda^k )}^2}{\E [F^2] }, \quad k \in \N_0,
	\end{equation}
where the fact that \eqref{e:integer} defines a probability measure on the integers is a consequence of \eqref{e:FSident}. The next definition introduces one of the central objects of our paper.	
	\begin{definition}\label{def:spproc}{\rm 
		The {\bf spectral point process} associated with $F$, denoted by $\gamma_F$, is the random point process on $(\mathbb{X}, \mathcal{X})$ given by the random measure
		$$
		\gamma_F (B) := \sum_{i=1}^{N} \delta_{X^{(N)}_i}(B),\quad B\in \mathcal{X},
		$$
		where: {\rm (i)} by convention, $\sum_{i=1}^0 \equiv 0$, {\rm (ii)} the random variable $N = N_F$ is defined at \eqref{e:integer}, {\rm (iii)} the random vectors $X^{(k)} =(X_1^{(k)},\ldots,X_k^{(k)})$, $k\geq 1$, have distribution
		$$
		\mathbb{P} [ X^{(k)} \subseteq \Gamma] = \frac{1}{\|u_k\|^2} \int_\Gamma u_k(x_1,\ldots,x_k)^2 \lambda(dx_1)\cdots \lambda(dx_k), \quad \Gamma\in \mathcal{X}^k,
		$$
		and {\rm (iv)} $N_F$ and $\{X^{(k)} : k\geq 1\}$ are stochastically independent.% (the joint distribution of the $X^{(k)}$'s is immaterial). 
  }
	\end{definition}
	\begin{remark}\label{r:quantumtsirelson}
 {\rm 
 \begin{enumerate}[(i)]
 \item Since $\lambda$ is non-atomic, one has that, $\mathbb{P}$--a.s., $\gamma_F$ is an element of ${\bf N}_s$ verifying ${\rm supp}\, \gamma_F := \{x\in \mathbb{X} : \gamma_F(\{x\}) =1 \} = \{X_1^{(N)}, \ldots, X_N^{(N)}\}$. 
 
 \item The probability measure on ${\bf N}_s$ induced by $\gamma_F$ implicitly appears in the quantum field particle interpretation of {\it Bosonic Fock spaces} -- see e.g.\ the classical references \cite[p.\ 105-108]{glimm} and \cite[Chs.\ IV and V]{meyer}.
 \item Let $\mathbb{X} = \R$, and $\lambda$ be the Lebesgue measure, and denote by $\mathcal{B}$ the Borel $\sigma$-field associated with the class $\mathbf{C}$ of all compact subsets of $\R$ endowed with the Hausdorff distance (see e.g.\ \cite[Section 3c]{Tsir2}). Since for every $k\in \N$, the mapping $(x_1,\ldots,x_k) \mapsto \{x_1,\ldots,x_k\}$ from $\R^k$ to $\mathbf{C}$ is continuous, one has that $ {\rm supp}\, \gamma_F = \{X_1^{(N)},\hdots, X_N^{(N)}\}$ is a well-defined ${\bf C}$-valued random element, and the map $\mathcal{B}\ni K \mapsto \mu_F(K):= \mathbb{P}[{\rm supp}\, \gamma_F \in K]$ defines a probability measure on $(\mathbf{C}, \mathcal{B})$ supported on the class of finite subsets of $\R$. A direct inspection (left to the reader) shows that when $\mathbb{E}[F^2]=1$, in the terminology of \cite[Section 3d]{Tsir2}, $\mu_F$ coincides with the {\bf spectral measure} uniquely associated with $F$ and with the {\it continuous factorization} $((\Omega, \mathcal{A}, \mathbb{P}), \{\mathcal{A}_{s,t}\})$, where $\mathcal{A}$ is the $\mathbb{P}$-completion of the $\sigma$-field generated by $\eta$ and, for finite $s\leq t$, the symbol $\mathcal{A}_{s,t}$ stands for the $\mathbb{P}$-completion of the $\sigma$-field generated by the class $\{\eta(B) : B\subseteq [s,t]\}$. See \cite[p.\ 274]{Tsir1}, as well as the recent reference \cite{las23}, for further details.
 \end{enumerate}
 
 }
	    
\end{remark}
    As discussed in the introduction, the spectral point processes $\gamma_F$ is a natural counterpart to the spectral sample $\mathscr{S}_f$ associated with a Boolean function $f$ on the discrete cube. Under some integrability assumption, the following result provides a characterization of the factorial moment measures $\E[\gamma_F^{(\ell)}]$ of $\gamma_F$, as defined in Section \ref{ss:prem}, for all $\ell\geq 1$.

	\begin{prop}\label{p:ell} Let $F : \bN\to \R$ be a measurable mapping such that $0<\mathbb{E} [F(\eta)^2]<\infty$, and assume that, for some $m\geq 1$ and a measurable set $C \subseteq \mathbb{X}$,
 \begin{equation}\label{e:integram}
\int_{C^m} \mathbb{E}[( D^m_{x_1,\ldots,x_m} F(\eta) )^2] \, {\rm d}\lambda^m(x_1,\ldots,x_m) <\infty.
\end{equation}
 Then, for all $\ell \leq m $, and all measurable and bounded $h : C^\ell \to \R_+$,
		\begin{eqnarray}\notag
			\E[F^2]\times \E\left[ \int_{C^\ell} h\, {\rm d}\gamma_F^{(\ell)} \right] \!\!&=&\!\! \E\left[ \int_{C^\ell} h(z_1,...,z_\ell) (D^{-} _{z_1}\cdots D^{ -}_{z_\ell}F)^2 {\rm d}\eta^{(\ell)}(z_1,...,z_\ell)\right]\\
			\!\!&=&\!\! \E\left[ \int_{C^\ell} h(z_1,...,z_\ell) (D^\ell_{z_1,...,z_\ell}F)^2 {\rm d}\lambda^{\ell}(z_1,...,z_\ell)\right] \label{e:rn}
			\\
			\notag \!\!&=&\!\! \E\left[ \int_{C^\ell} h(z_1,...,z_\ell) (D^\ell_{z_1,...,z_\ell}F)^2 {\rm d}\eta_0^{(\ell)}(z_1,...,z_\ell)\right],
		\end{eqnarray}
		where $F\equiv F(\eta)$ and $\eta_0$ is an independent copy of $\eta$. 	
	\end{prop}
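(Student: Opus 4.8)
The plan is to establish the three identities by relating the factorial moment measure of $\gamma_F$ to the chaos kernels $u_k$, and then identifying the resulting quantity with the three expectations on the right-hand side via the Mecke formula and Lemma \ref{l:domd}. First I would use Definition \ref{def:spproc} to compute $\E[\int_{C^\ell} h\, {\rm d}\gamma_F^{(\ell)}]$ directly: conditioning on $N_F = k$ (which requires $k \geq \ell$, else the $\ell$-th factorial measure of a $k$-point configuration vanishes) and using the distribution of $X^{(k)}$, the $\ell$-th factorial measure picks out ordered $\ell$-tuples of distinct coordinates, so integrating out the remaining $k-\ell$ coordinates of $u_k^2$ yields
$$
\E[F^2]\times \E\left[ \int_{C^\ell} h\, {\rm d}\gamma_F^{(\ell)} \right] = \sum_{k=\ell}^\infty k_{(\ell)}\, k! \int_{C^\ell} h(z_1,\ldots,z_\ell)\, \|u_k(z_1,\ldots,z_\ell,\cdot)\|^2_{L^2(\lambda^{k-\ell})}\, {\rm d}\lambda^\ell(z_1,\ldots,z_\ell),
$$
where the combinatorial factor $k_{(\ell)}$ counts the ordered $\ell$-subsets and $k!$ is the normalization from \eqref{e:integer}. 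By Lemma \ref{l:domd} (specifically the right-hand side of \eqref{e:derchaos1}, applied with a bounded weight $h$ inserted into the integrand, which is legitimate since the unweighted version is finite by assumption \eqref{e:integram} and $h$ is bounded), this sum equals $\E[\int_{C^\ell} h(z_1,\ldots,z_\ell)(D^\ell_{z_1,\ldots,z_\ell}F)^2\, {\rm d}\lambda^\ell(z_1,\ldots,z_\ell)]$, which is exactly the second line of \eqref{e:rn}. The integrability hypothesis \eqref{e:integram} is precisely what guarantees that all the manipulations (Fubini, the equality in Lemma \ref{l:domd}) are justified and that the displayed quantities are finite.

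Next I would treat the first and third lines of \eqref{e:rn}. The equality of the second and third lines is the Mecke formula \eqref{e:mecke} applied with $n = \ell$: since $\eta_0$ is an independent copy of $\eta$, we have $\E[\int_{C^\ell} h\,(D^\ell_{z_1,\ldots,z_\ell}F(\eta))^2\, {\rm d}\eta_0^{(\ell)}] = \E_{\eta_0}\E_\eta[\cdots]$, and applying \eqref{e:mecke} to the $\eta_0$-integral (noting that $(D^\ell_{z_1,\ldots,z_\ell}F(\eta))^2$ does not depend on $\eta_0$) replaces ${\rm d}\eta_0^{(\ell)}$ by ${\rm d}\lambda^\ell$, giving the second line. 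For the first line, I would again use the multivariate Mecke formula, but now in the form that converts an integral against $\eta^{(\ell)}$ into one against $\lambda^\ell$: writing $f(z_1,\ldots,z_\ell,\mu) = h(z_1,\ldots,z_\ell)\,(D^-_{z_1}\cdots D^-_{z_\ell} F(\mu))^2$ and observing that on the support of $\eta^{(\ell)}$ the indicators $\mathds{1}_{\{z_i \in \eta\}}$ in the definition \eqref{e:remove1} are all equal to one, one checks that for $z_1,\ldots,z_\ell$ distinct, $D^-_{z_1}\cdots D^-_{z_\ell} F(\eta + \delta_{z_1} + \cdots + \delta_{z_\ell}) = D^\ell_{z_1,\ldots,z_\ell} F(\eta)$ (the remove-one costs applied after adding the points reconstruct the iterated difference operator \eqref{e:addoneincexc}, up to the sign convention which is squared away). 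Thus \eqref{e:mecke} turns the first line into the second line.

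The main obstacle, and the point requiring the most care, will be the interchange-of-limits and Fubini issues in the first step: justifying that one may move the bounded weight $h$ through the series expansion coming from Lemma \ref{l:domd}, and that conditioning on $N_F$ and summing over $k \geq \ell$ produces a convergent series whose sum is finite. This is exactly where hypothesis \eqref{e:integram} is used — it gives finiteness of $\sum_{k=\ell}^\infty k_{(\ell)} k! \int_{C^\ell}\|u_k(z_1,\ldots,z_\ell,\cdot)\|^2\, {\rm d}\lambda^\ell$ via the $\ell = m$ case of \eqref{e:derchaos1} together with monotonicity in $\ell$ (for $\ell \leq m$, one has $k_{(\ell)} \leq k_{(m)} \cdot k^{m-\ell}$-type bounds, or more simply one truncates $C$ and uses local finiteness), so that dominated convergence applies and all three expressions are simultaneously finite. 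A secondary subtlety is the a.e.\ identification $D^\ell_{z_1,\ldots,z_\ell}F(\eta) = \sum_{k\geq \ell} k_{(\ell)} I_{k-\ell}(u_k(z_1,\ldots,z_\ell,\cdot))$ from \eqref{e:derchaos2}, which is needed implicitly if one wants the pointwise (rather than merely integrated) interpretation, but for the statement as written only the integrated identity \eqref{e:derchaos1} is required.
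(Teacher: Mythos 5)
Your proof follows the same route as the paper's: compute the $\ell$-th factorial moment of $\gamma_F$ directly from Definition~\ref{def:spproc} (conditioning on $N_F\geq\ell$ and integrating out the remaining $k-\ell$ coordinates), match the resulting series $\sum_{k\ge\ell}k_{(\ell)}k!\int_{C^\ell}h\,\|u_k(z,\cdot)\|^2\,d\lambda^\ell$ against Lemma~\ref{l:domd} to obtain the middle line of \eqref{e:rn}, and deduce the first and third lines from the Mecke formula. Your identification $D^-_{z_1}\cdots D^-_{z_\ell}F(\eta+\delta_{z_1}+\cdots+\delta_{z_\ell})=D^\ell_{z_1,\ldots,z_\ell}F(\eta)$ for distinct $z_1,\ldots,z_\ell$ is in fact an exact identity, with no sign discrepancy, so the parenthetical ``up to sign'' caveat is unnecessary.

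One point worth tightening. To pass from the weighted series to $\E[\int_{C^\ell} h\,(D^\ell F)^2\,d\lambda^\ell]$ you need the $\lambda^\ell$-a.e.\ identity of \eqref{e:derchaos2}, which requires finiteness of either side of \eqref{e:derchaos1} for the given $\ell$, not merely for $m$. This \emph{does} follow from \eqref{e:integram}, but not for the reasons you sketch: for $\ell\le m\le k$ one has $k_{(\ell)}\le k_{(m)}$ (not the reverse), so ``$k_{(\ell)}\le k_{(m)}k^{m-\ell}$-type'' bounds do not reduce the $\ell$-case to the $m$-case, and truncating $C$ does not help since the statement is for a fixed $C$. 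A correct argument: setting $b_k(j):=\int_{C^j\times\X^{k-j}}u_k^2\,d\lambda^k$, the symmetry of $u_k$ and exchangeability of the probability law $\mu_k\propto u_k^2$ on $\X^k$ give $k_{(j)}k!\,b_k(j)=k!\,\|u_k\|^2_{L^2(\lambda^k)}\,\E_{\mu_k}[N_{(j)}]$, where $N$ counts the coordinates falling in $C$; the elementary inequality $n_{(\ell)}\le(m-1)_{(\ell)}+n_{(m)}$ (valid for all $n\in\N_0$ and $\ell\le m$), combined with $\sum_k k!\|u_k\|^2=\E[F^2]<\infty$, then yields $\int_{C^\ell}\E[(D^\ell F)^2]\,d\lambda^\ell\le(m-1)_{(\ell)}\E[F^2]+\int_{C^m}\E[(D^m F)^2]\,d\lambda^m<\infty$. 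The paper glosses over this step as well; since your proposal explicitly offers a justification, I flag that the one you give would need replacing.
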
 
	\begin{proof} Fix $1\leq \ell \leq m$ and $h$ as in the statement. Let \eqref{e:chaos} be the chaotic expansion of $F(\eta)$. Since $\gamma_F$ is an element of ${\bf N}_s$ almost surely, one has that
		$$
		 \E[F^2]\times \E\left[ \int_{C^\ell} h\, {\rm d}\gamma_F^{(\ell)} \right] = \E[F^2] \times \E\left[ \int_{C^\ell} h\, d\gamma_F^{(\ell)} \mathds{1}\{ N_F\geq \ell\}\right],
		$$
		and the right-hand side of the previous relation equals 
		\begin{eqnarray*} 
			&& \sum_{k\geq \ell} k_{(\ell)} k! \int_{C^\ell} h(z_1,\ldots,z_\ell) \|u_k (z_1,\ldots,z_\ell, \cdot )\|_{L^2(\lambda^{k-\ell})}^2{\rm d}\lambda^{\ell}(z_1,\ldots,z_\ell)\\
			%&& = \sum_{k\geq \ell} k_{(\ell)}^2 (k-\ell)! \int_{C^\ell} h(z_1,\ldots,z_\ell) \|f_k (z_1,\ldots,z_\ell, \cdot )\|_{L^2(\lambda^{k-\ell})}^2 {\rm d}\lambda^{\ell}(z_1,\ldots,z_\ell)  \\
			&& = \E\left[ \int_{C^\ell} h(z_1,\ldots,z_\ell) (D^{\ell}_{z_1,\ldots,z_{\ell}} F)^2 {\rm d}\lambda^{\ell}(z_1,\ldots,z_\ell)\right],\notag
		\end{eqnarray*}
		where we have used relations \eqref{e:derchaos1} and \eqref{e:derchaos2} from Lemma \ref{l:domd}. The remaining identities in the statement follow from Mecke formula \eqref{e:mecke}.    
	\end{proof}

\begin{remark}\label{e:bagatelle}{\rm \begin{enumerate}
    \item[(i)] According to the terminology introduced in Section \ref{ss:prem}, relation \eqref{e:rn} implies that the $\ell$-th factorial moment measure associated with $\gamma_F$ is absolutely continuous with respect to $\lambda^\ell$ on the set $C^\ell$, with a Radon-Nikodym derivative given by the mapping $(z_1,...,z_\ell)\mapsto \mathbb{E}[(D_{z_1,...,z_\ell}^\ell F)^2]/\E[F^2]$. 
    \item[(ii)] If $| F(\mu) |\leq c$ for some finite constant $c>0$, then condition \eqref{e:integram} is trivially verified for all $m\geq 1$ and all measurable bounded $C\subset \mathbb{X}$ (recall that $\lambda$ is locally finite by assumption). Observe that, in this case, for all $\ell\geq 1$ and all measurable bounded $C$ one has that,
$$
\mathbb{E}[\gamma_F^{(\ell)}(C^\ell)] \leq (2\ell c)^2 \lambda(C)^\ell /\E [F^2]
$$
and the classical criterion stated in \cite[Proposition 4.12]{LastPenrose17} implies that the validity of relation \eqref{e:rn} for all $\ell\geq 1$ uniquely characterizes the distribution of $\gamma_F$ as a random element taking values in ${\bf N}$. 
\end{enumerate}

}
\end{remark}

In resonance with the discrete case (see e.g.\ \cite[Chs.\ 4 and 9]{GS} and \cite[Section 5]{Garban2011}), it will often be useful to compare $\gamma_F$ with the point process supported on {\it pivotal points} (these objects appear e.g.\ in {\it Margulis-Russo-type formulae} on the Poisson space -- see \cite[p.\ 209]{LastPenrose17}, as well as \cite{last14} and the references therein). 
	\begin{definition}\label{def:pivproc}{\rm
		Let $F: \bN \to \{-1,1\}$ be a measurable mapping. The set of {\bf pivotal points} associated with $F$ and $\eta$ is the random subset of $\mathbb{X}$ composed of all those $x \in \eta$ such that $D^-_xF(\eta)\neq 0$, where we adopted the notation \eqref{e:remove1}. In particular, the {\bf pivotal point process} associated with $F$ and $\eta$ is a random element in ${\bf N}_s$ given by
  $$
  \mathcal{P}_F(B) := \sum_{x\in \eta} \mathds{1}_{\{ D^-_x F(\eta) \neq 0\}} \, \delta_x(B), \quad B\in \mathcal{X}.
  $$}
	\end{definition}

The following statement is the counterpart of Proposition \ref{p:ell} for pivotal point processes associated with Boolean functions. The proof is an elementary consequence of \eqref{e:mecke} and is thus left to the reader. Note that the last sentence in the statement is again a consequence of \cite[Proposition 4.12]{LastPenrose17}.

\begin{prop}\label{p:ell2} Let $F: \bN \to \{-1,1\}$ be a measurable Boolean mapping. Then, for all $\ell\geq 1$, and all bounded and measurable $h: \mathbb{X}^\ell \to [0,\infty)$,  
\begin{eqnarray}\notag
			 \E\left[ \int_{\mathbb{X}^\ell} h \, {\rm d} {\mathcal P}_F^{(\ell)} \right] \!\!\!\!&= &\!\! \!\!\frac{1}{4^\ell} \E\left[ \int_{\mathbb{X}^\ell} h(z_1,...,z_\ell) (D^{-} _{z_1}F)^2 \cdots (D^{ -}_{z_\ell}F)^2 {\rm d}\eta^{(\ell)}(z_1,...,z_\ell)\right]\\
			\!\!\!\!&= & \!\! \!\! \frac{1}{4^\ell} \E\left[ \int_{\mathbb{X}^\ell} h(z_1,...,z_\ell) (D_{z_1}F)^2\cdots (D_{z_\ell}F)^2 {\rm d}\lambda^{(\ell)}(z_1,...,z_\ell)\right]\label{e:rn2}
			\\
			\!\!\!\!&=& \!\! \!\! \frac{1}{4^\ell} \E\left[ \int_{\mathbb{X}^\ell} h(z_1,...,z_\ell) (D_{z_1}F)^2 \cdots (D_{z_\ell}F)^2 {\rm d}\eta_0^{(\ell)}(z_1,...,z_\ell)\right],\notag
		\end{eqnarray}
where $F\equiv F(\eta)$ and $\eta_0$ is an independent copy of $\eta$. Moreover, \eqref{e:rn2} uniquely characterize the law of $\mathcal{P}_F$ as a random element taking values in ${\bf N}$.
\end{prop}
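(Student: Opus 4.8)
\textbf{Proof proposal for Proposition \ref{p:ell2}.}

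The plan is to derive the three displayed equalities as a direct application of the multivariate Mecke formula \eqref{e:mecke}, together with two elementary observations about Boolean mappings. First I would record that, since $F$ takes values in $\{-1,1\}$, for any $x$ one has that $D_x^- F(\eta) \in \{-2, 0, 2\}$, so that $(D_x^- F(\eta))^2 = 4\cdot \mathds{1}_{\{D_x^- F(\eta)\neq 0\}}$; this is the identity that converts the indicator appearing in Definition \ref{def:pivproc} into a squared remove-one cost and produces the factor $4^\ell$ after taking products over the $\ell$ coordinates. Thus, starting from
$$
\E\left[\int_{\mathbb{X}^\ell} h\, {\rm d}\mathcal{P}_F^{(\ell)}\right] = \E\left[\int_{\mathbb{X}^\ell} h(z_1,\ldots,z_\ell)\prod_{i=1}^\ell \mathds{1}_{\{D^-_{z_i}F(\eta)\neq 0\}}\, {\rm d}\eta^{(\ell)}(z_1,\ldots,z_\ell)\right],
$$
(which is just the definition of the factorial measure of $\mathcal{P}_F$, using that $\mathcal{P}_F$ is a subconfiguration of $\eta$ and hence simple $\mathbb{P}$-a.s., since $\lambda$ is non-atomic), one immediately gets the first equality in \eqref{e:rn2} by replacing each indicator with $\frac14 (D^-_{z_i}F(\eta))^2$.

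Next I would pass from the integral against $\eta^{(\ell)}$ to the integral against $\lambda^\ell$ via the multivariate Mecke formula \eqref{e:mecke}, applied to the nonnegative integrand $(z_1,\ldots,z_\ell,\mu)\mapsto h(z_1,\ldots,z_\ell)\prod_{i=1}^\ell (D^-_{z_i}F(\mu))^2$. The key point here is that, when the configuration $\mu$ already contains the points $z_1,\ldots,z_\ell$ (as is the case after the Mecke substitution $\eta \rightsquigarrow \eta + \delta_{z_1}+\cdots+\delta_{z_\ell}$, since $\eta$ is $\mathbb{P}$-a.s. simple and disjoint from $\{z_1,\ldots,z_\ell\}$ for $\lambda^\ell$-a.e.\ choice), the remove-one cost simplifies: $D^-_{z_i}(F)(\eta+\delta_{z_1}+\cdots+\delta_{z_\ell}) = F(\eta+\sum_{j}\delta_{z_j}) - F(\eta+\sum_{j\neq i}\delta_{z_j})$, and the indicator $\mathds{1}_{\{z_i\in\,\cdot\,\}}$ is automatically $1$. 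A short computation — or rather the observation that for Boolean $F$ the squared difference is symmetric in ``adding'' versus ``removing'' a point — shows that this equals, up to sign, $D_{z_i}F(\eta+\sum_{j\neq i}\delta_{z_j})$ when the other points $z_j$ are inserted one at a time; iterating this identification over $i=1,\ldots,\ell$ turns the product $\prod_i (D^-_{z_i}F)^2$ evaluated at the enlarged configuration into $\prod_i (D_{z_i}F(\eta))^2$ evaluated at $\eta$ itself. This yields the second equality in \eqref{e:rn2}. (Note that the first-order difference operators $D_{z_i}$ acting on distinct points commute, which is what makes the bookkeeping consistent.) Finally, the third equality follows by running the Mecke formula \eqref{e:mecke} backwards with $\eta$ replaced by an independent copy $\eta_0$: since the integrand $\prod_i (D_{z_i}F(\cdot))^2$ no longer contains any remove-one cost, it is a genuine function of the configuration, and $\mathbb{E}[\int h \prod_i(D_{z_i}F(\eta_0))^2\,{\rm d}\eta_0^{(\ell)}] = \mathbb{E}[\int h \prod_i(D_{z_i}F(\eta_0+\sum_j\delta_{z_j}))^2\,{\rm d}\lambda^\ell]$, and one checks the inner expression coincides $\lambda^\ell\otimes\mathbb{P}$-a.e.\ with $\prod_i(D_{z_i}F(\eta))^2$ as above, closing the loop.

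For the last sentence of the statement, I would invoke the criterion of \cite[Proposition 4.12]{LastPenrose17}: since $F$ is bounded by $1$, each $(D_{z_i}F)^2 \leq 4$, so \eqref{e:rn2} gives $\mathbb{E}[\mathcal{P}_F^{(\ell)}(C^\ell)] \leq \lambda(C)^\ell < \infty$ for every bounded measurable $C\subseteq\mathbb{X}$ and every $\ell\geq 1$, and $\mathcal{P}_F$ is $\mathbb{P}$-a.s.\ a simple point configuration (being a sub-configuration of the simple process $\eta$); hence the full family of factorial moment measures determines the law of $\mathcal{P}_F$ on $(\bN,\mathcal{N})$. The only mildly delicate step is the middle one — the careful verification that, after the Mecke substitution, the product of squared remove-one costs at the enlarged configuration really does coincide $\lambda^\ell\otimes\mathbb{P}$-a.e.\ with the product of squared add-one costs at $\eta$ — but for Boolean $F$ this reduces to the symmetric two-valued identity $(F(\mu+\delta_z)-F(\mu))^2$ is unchanged under swapping $\mu+\delta_z \leftrightarrow \mu$, so I expect no real obstacle; it is exactly the elementary bookkeeping the authors defer to the reader.
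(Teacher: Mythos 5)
Your first equality (replacing the pivotal indicator by $\tfrac14(D^-_zF)^2$, valid since $F$ is $\{-1,1\}$-valued and $\mathcal{P}_F$ is a simple subconfiguration of $\eta$) and your last equality (passing from $\lambda^\ell$ to $\eta_0^{(\ell)}$ by independence of $\eta_0$ and $\mathbb{E}[\eta_0^{(\ell)}]=\lambda^\ell$) are both fine, as is the appeal to \cite[Proposition 4.12]{LastPenrose17} for the final sentence.

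The middle step, however, contains a genuine gap, and I do not believe the bookkeeping you defer can actually be carried out. After the Mecke substitution $\eta\rightsquigarrow\eta+\sum_j\delta_{z_j}$, each factor becomes $D^-_{z_i}F\bigl(\eta+\sum_{j}\delta_{z_j}\bigr) = D_{z_i}F\bigl(\eta+\sum_{j\neq i}\delta_{z_j}\bigr)$, i.e.\ the add-one cost of $z_i$ evaluated at a configuration that \emph{still contains the other inserted points} $z_j$, $j\neq i$. For $\ell=1$ this collapses to $D_{z_1}F(\eta)$ and you are done; for $\ell\geq 2$ the remaining inserted points cannot simply be dropped, and the ``swap $\mu+\delta_z\leftrightarrow\mu$'' symmetry you invoke controls only the $i$-th coordinate, not the insertion of the others. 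What Mecke genuinely produces is the integrand $\prod_i\bigl(D_{z_i}F(\eta+\sum_{j\neq i}\delta_{z_j})\bigr)^2$ against $\lambda^\ell$, and this is not equal, even in expectation, to $\prod_i\bigl(D_{z_i}F(\eta)\bigr)^2$. A concrete counterexample for $\ell=2$: take $\mathbb{X}=[0,1]$, $\lambda$ equal to $m$ times Lebesgue measure with $m>0$, $h\equiv 1$, and $F(\mu)=1$ iff $\mu(\mathbb{X})\geq 2$ (else $-1$). Then $\mathbb{E}\bigl[\mathcal{P}_F^{(2)}([0,1]^2)\bigr]=2\,\mathbb{P}(\eta(\mathbb{X})=2)=m^2e^{-m}$, while $\tfrac{1}{16}\,\mathbb{E}\int_{[0,1]^2}(D_{z_1}F(\eta))^2(D_{z_2}F(\eta))^2\,\lambda^2(\mathrm{d}z)=m^2\,\mathbb{P}(\eta(\mathbb{X})=1)=m^3e^{-m}$, and these coincide only when $m=1$. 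So the second equality of \eqref{e:rn2}, read literally with $D_{z_i}F := D_{z_i}F(\eta)$ as the statement specifies, does not follow from a Mecke substitution and in fact fails for $\ell\geq 2$; the correct Mecke output retains the cross-terms $\sum_{j\neq i}\delta_{z_j}$. (The case $\ell=1$, which is what feeds into \eqref{e:eqint} and the rest of the paper, is unaffected.)
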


\begin{remark}{\rm Relations \eqref{e:rn} and \eqref{e:rn2} in the case $\ell=1$ imply that, when $F$ is a Boolean mapping and measurable $B \subseteq \X$ is such that $\int_{B} \E[(D_zF)^2] \, \lambda({\rm d}z) <\infty$, then
\begin{equation}\label{e:eqint}
\E[|\gamma_F(B)|] = 4\E[|\mathcal{P}_F(B)|] = \int_{B} \E[(D_zF)^2] \, \lambda({\rm d}z) . 
\end{equation}
In particular, the above identity holds for all bounded measurable sets $B$, which yields that the intensity function of the spectral point process is four times that of the pivotal point process.

By a direct inspection of formulae \eqref{e:rn} and \eqref{e:rn2}, one sees that, in general, it is not possible to establish analogous relations for factorial moment measures of order $\ell\geq 2$. This fact has to be contrasted with the statement of \cite[Cor.\ 9.4]{GS} (see also \cite[Lem.\ 5.4]{Garban2011}), proving that the marginals of order two of the discrete spectral sample and of the discrete pivotal set discussed in the introduction coincide {(though marginals of order three and above, they also do not coincide, see e.g.\ \cite[p.\ 37]{GPS10})}.
}
\end{remark}
 \subsection{Annealed and projected spectral samples}\label{sec:comparison}

 Our aim in this section is to establish an explicit relationship between the general notion of spectral point process introduced in Definition~\ref{def:spproc}, and the {\bf annealed spectral sample} introduced in \cite{Vann2021} -- see, in particular, Section 2.1 therein.  As a by-product, we will also study the connection between the {pivotal point process} in Definition~\ref{def:pivproc} and the \textbf{quenched pivotal point process}, introduced below in Definition~\ref{def:qpiv}.  The latter emerges naturally in connection to the annealed spectral sample, while the pivotal point process is its natural counterpart when considering the spectral point process.
 
	\subsubsection{Factorial moment measures}
 
	   In order to accomplish the above mentioned tasks, we specialize the general setting outlined in Sections \ref{ss:prem} and \ref{sec:Spec} to the following framework.
	
	\begin{itemize}
		
		\item[--] For a fixed $d\geq 1$, let $\mathbb{X} = \R^d \times \{-1,1\}$. We endow $\mathbb{X}$ with the Borel $\sigma$-field $\mathcal{X}$ associated with the metric $d((x,\eps), (x', \eps')) = \|x-x'\|_{\R^d} + \mathds{1}_{\{\eps\eps' = -1\}}$. As before, we write ${\bf N}(\mathbb{X})$ (resp.\ ${\bf N}_s(\mathbb{X})$) to indicate the class of all locally finite integer-valued positive measures (resp.\ locally finite simple positive measures) on $(\mathbb{X}, \mathcal{X})$. Depending on notational convenience, we will write equivalently $(x, \eps)$ or $(x, \eps_x)$ to denote a generic element of $\mathbb{X}$. In order to simplify the discussion, we will often identify a measure $\mu \in {\bf N}_s(\mathbb{X})$ with its support. Accordingly, generic elements of ${\bf N}_s(\mathbb{X})$ will be written in set form $\mu = \{(x, \eps_x)\}$, meaning that $\mu$ is identified with the simple measure whose support is given by the pairs $(x, \eps_x)$ featured in the set. Finally, if $S\subset \R^d$ is a finite set, we write
  \begin{equation}\label{e:simpleS}
  \mu_S := \sum_{x\in S}\delta_x\in {\bf N}_s(\R^d).
  \end{equation}
		
		\item[--] The symbol $\eta$ denotes a Poisson point process on $(\mathbb{X}, \mathcal{X})$ with non-atomic intensity ${\rm d} x\,  p({\rm d}\eps)$, where $p({\rm d}\eps) = \frac12 ( \delta_1({\rm d}\eps) + \delta_{-1} ({\rm d}\eps))$.\footnote{Our results trivially extend to the case of an intensity of the type $c \,  {\rm d} x\,  p({\rm d}\eps)$ for arbitrary $c>0$.} Given the discussion contained in Section \ref{ss:prem}, one has that $\eta$ is an ${\bf N}_s(\mathbb{X})$-valued random element. 
		
		\item[--] Given $\mu = \{(x, \eps_x)\}\in {\bf N}_s(\mathbb{X})$, we write 
		\begin{equation}\label{e:stella}
			\mu^{pr} := \{x\in \R^d : (x, \eps_x)\in \mu \mbox{  for some $\eps_x\in \{-1,1\}$} \},
		\end{equation}
		and we identify $\mu^{pr}$ with the unique element of ${\bf N}_s(\R^d)$ whose support is given by those $x\in \R^d$ verifying the property expressed on the right-hand side of \eqref{e:stella}. Plainly, the mapping ${\bf N}_s(\mathbb{X})\to {\bf N}_s(\R^d): \mu\mapsto \mu^{pr}$ is measurable and $\eta^{pr}$ is a well-defined random element with values in ${\bf N}_s(\R^d)$, having the law of a Poisson point process with intensity $\, {\rm d} x$. In resonance with \eqref{e:stella}, the notation $\eta^{pr}\cap A$ and $\eta^{pr}_A$ will be used interchangeably in order to indicate the random measure $\eta^{pr}(A\cap \bullet)$. 

      \item[--]  Note that $\eta$ is an independently marked Poisson process i.e., to obtain a point process distributed as $\eta$, we can take a Poisson process distributed as $\eta^{pr}$ and attach independent symmetric Rademacher random variables to each of its points. 
			
	\end{itemize}

	Fix a countable set $\mathcal{J}\subset \R^d$ and consider a mapping $G : \{-1,1\}^\mathcal{J} \to \R : \{\eps_x : x\in \mathcal{J}\}\mapsto G(\eps_x : x\in \mathcal{J})$ that only depends on a finite subset of coordinates $\{\eps_x : x\in \tilde{\mathcal{J}}\} \subseteq \{\eps_x : x\in \mathcal{J}\}$, where $\tilde{\mathcal{J}}$ is a finite subset of $\mathcal{J}$. Fourier analysis on the discrete cube (or, more generally, the theory of {\bf Hoeffding-ANOVA decompositions} -- see e.g.\ \cite[Ch.\ 4]{GS}, \cite{efronstein} and \cite[Section 5.1.5]{serfling}) implies that $G$ admits the orthogonal decomposition
	
 \begin{equation}\label{e:anova}
		G(\eps_x : x\in \mathcal{J}) = \sum_{T\subseteq \mathcal{J}, \; T\,\,  \mbox{\small finite}} \hat{G}(T) \prod_{x\in T}  \eps_x = \sum_{T\subseteq \tilde{\mathcal{J}}} \hat{G}(T) \prod_{x\in T}  \eps_x,
	\end{equation}
	where the first sum is over all finite subsets $T$ of $\mathcal{J}$ and, for each such $T$, the coefficient $\hat{G}(T)$ is given by
	\begin{eqnarray}\label{e:walsh0}
		\hat{G}(T) &=& \int_{\{-1,1\}^{\mathcal{J}}} \left\{ G(a_x : x\in \mathcal{J}) \prod_{y\in T} a_y\right\} \,\,\prod_{t\in \mathcal{J}} p({\rm d} a_t),
	\end{eqnarray}
 with $\prod_\emptyset \equiv 1$; note here that since $G$ only depends on the coordinates corresponding to $x \in \tilde{\mathcal{J}}$, one has $\hat{G}(T)=0$, whenever $T$ is not a subset of $\tilde{\mathcal{J}}$. The orthogonality claimed above resides in the fact that, for {finite} $S\neq T\subseteq \mathcal{J}$, one has that
 $$
 \int_{\{-1,1\}^{\mathcal{J}}} \left\{\prod_{x\in S} \eps_x \prod_{y\in T} \eps_y\right\} \, \prod_{t\in \mathcal{J}} p({\rm d} \eps_t) = 0.
 $$
	One can also check that for finite $T \subseteq \mathcal{J}$,
	\begin{equation}\label{e:quietplace}
		\hat{G}(T) \prod_{x\in T}  \eps_x=\left\{ \prod_{t\in T} (I-Q_t) \prod_{y \in \mathcal{J}\backslash T} Q_{y}\right\}\, G(\eps_x : x\in \mathcal{J}),
\end{equation}
where $I$ is the identity operator and $Q_t$ is the operator defined by
	$$
	Q_tG(\eps_x : x\in \mathcal{J}) = \frac12 G(\eps_x : x\in \mathcal{J}) + \frac12 G({\eps}^{t}_x : x\in \mathcal{J}),
	$$
	where ${\eps}^{t}$ is the element of $\{-1,1\}^\mathcal{J}$ such that $\eps_t^t = - \eps_t$, and $\eps_x^t = \eps_x$ for $x\neq t$ (in other words, the action of $Q_t$ consists in taking expectation with respect to the coordinate $\eps_t$). We record two consequences of \eqref{e:quietplace}: if $\mathcal{J}$ is finite, then (a) one has that
 \begin{equation}\label{e:lupita}
     \hat{G}(\mathcal{J})\prod_{x\in \mathcal{J}}  \eps_x = \sum_{T\subseteq \mathcal{J}} (-1)^{|\mathcal{J}| - |T|} \, \mathbb{E}\big[ G(\eps_x : x\in \mathcal{J})\, |\, \epsilon_x : x\in T\big] ,
 \end{equation}
and (b) if $G({\eps}_x : x\in \mathcal{J}) = G({\eps}_x : x\in \mathcal{J}_0)$ for some strict subset $\mathcal{J}_0\subset \mathcal{J}$, then 
\begin{equation}\label{e:quinn}
\hat{G}(\mathcal{J}_1)\prod_{x\in \mathcal{J}_1}  \eps_x = 0,  
\end{equation}
for all $\mathcal{J}_1\subseteq \mathcal{J}$ such that $\mathcal{J}_0$ is a strict subset of $\mathcal{J}_1$.

\begin{definition}\label{d:finsupp}{\rm
We write $\SA$ to indicate the class of those finitely supported measurable functions $F : {\bf N}(\mathbb{X}) \to \R$ such that there exists a measurable mapping $\varphi : {\bf N}_s(\mathbb{X}) \to {\bf N}_s(\mathbb{R}^d) : \mu \mapsto \varphi(\mu)$ with the following properties: (i) for all $\mu \in {\bf N}_s(\mathbb{X})$, $\varphi(\mu)\subseteq \mu^{pr}$ (see \eqref{e:stella}) has finite support, (ii) if $\mu, \nu \in {\bf N}_s(\mathbb{X}) $ are such that $\mu^{pr} = \nu^{pr}$, then $\varphi(\mu) = \varphi(\nu)$, and (iii) for all $\mu\in {\bf N}_s(\mathbb{X})$, $F(\mu) = F\big((x, \varepsilon_x)\in \mu : x\in \varphi(\mu)\big) $. To make the notation less implicit, we shall sometimes write $\varphi(\mu) =: \tilde{\mu}^{pr} $, that is, $\tilde{\mu}^{pr}$ indicates the finite subset of ${\mu}^{pr}$ determining the value of a finitely supported mapping $F(\mu)$. }
\end{definition}

\medskip 
 
 Relation \eqref{e:walsh0} implies in particular that the restriction of a mapping $F\in\SA$ to ${\bf N}_s(\mathbb{X})$ admits the following representation: for all $\mu= \{(x,\eps_x)\}\in {\bf N}_s(\bbX)$,
	\begin{equation*}\label{e:walsh1}
		F(\mu) = F( \{(x, \eps_x) : x\in \tilde{\mu}^{pr}\})  = \sum_{S\subseteq \tilde\mu^{pr}} \hat{F}(S; \mu^{pr}) \prod_{x\in S} \eps_x,
	\end{equation*}
	with $\mu^{pr}$ and $\tilde{\mu}^{pr}$ defined according to \eqref{e:stella} and Definition \ref{d:finsupp}, respectively, and
	\begin{equation*}\label{e:walsh2}
		\hat{F}(S; \mu^{pr}) := \int_{\{-1,1\}^{|\mu^{pr}|}} \left\{ F(\{x, \eps_x\}) \prod_{z\in S}\eps_z \right\}\prod_{x\in \mu^{pr}} p(\dd\eps_x).
	\end{equation*}
	
	\begin{remark}{\rm 
			When $\eta = \{ (x, \eps_x)\}$ is the Poisson measure introduced above, then
			$$
			\hat{F}(S; \eta^{pr}) = \E\left[F(\eta)\prod_{x\in S}\eps_x \, \Big|\, \eta^{pr}\right], \quad \text{finite } S\subseteq \eta^{pr}.
			$$
		}
	\end{remark}

	\begin{definition}{\rm (See \cite[Definition 2.2]{Vann2021}).
        \label{d:annealed}
 Let the above notation prevail, %fix a bounded set $A\subset \R^d$, 
 and consider a non-constant mapping $F\in \SA$. The {\bf annealed spectral sample} associated with $F$, written $\gamma_F^{an}$, is the point process on $\R^d$ characterized by the following relation: for all $\mathcal{C}\in \mathcal{N}(\R^d)$,
			\begin{equation}\label{e:gaman}
				\mathbb{P}(\gamma_F^{an} \in \mathcal{C}) = \frac{1}{\E[F^2]} \E\left\{ \sum_{S\subseteq \eta^{pr}, \; S\; \text{finite}} \mathds{1}_{\{\mu_S\in \mathcal{C}\}} \hat{F}(S; \eta^{pr})^2\right\},
			\end{equation}
   where we have adopted the notation \eqref{e:simpleS}.
		}
	\end{definition}
	
	For the rest of the section, we fix 
 a non-constant mapping $F \in \SA$, and we denote by $\gamma = \gamma_F$ and $\gamma^{an}= \gamma^{an}_F$, respectively, the spectral sample and the annealed spectral sample associated with $F$ (to simplify the notation, the dependence on $F$ is removed). Note that $\gamma$ is a random element with values in ${\bf N}_s(\bbX)$, whereas $\gamma^{an}$ has values in ${\bf N}_s(\R^d)$. 
	
	\begin{definition}\label{d:gammastella} {\rm For $\eta$ a Poisson process on $\mathbb{X} = \R^d \times\{-1,1\}$ as above, and $F = F(\eta)$ non-zero and square-integrable  (not necessarily belonging to {\bf S}), we define the {\bf projected spectral sample} associated with $F$ to be the ${\bf N}_s(\R^d)$-valued random element $ \gampr = \gampr_F $, where we have adopted the notation \eqref{e:stella}.}
	\end{definition}
		The next definition is a direct emanation of \cite[Definition 2.2]{Vann2021}.
	
 \begin{definition}\label{def:qpiv}{\rm For $\eta$ a Poisson process on $\mathbb{X}$ as above, consider a random variable $F = F(\eta)$ with values in $\{-1,1\}$ (not necessarily belonging to {\bf S}). The {\bf quenched Pivotal point process} associated with $F$ is defined as the random measure
		$$
		{\mathcal P}_F^{q} (B)={\mathcal P}^{q} (B):= \sum_{x \in \eta^{pr} } \mathbf{1}_{\{F(\eta) \neq F(\eta - \delta_{(x,\eps_x)} +\delta_{(x,-\eps_x)})\}} \, \mathds{1}_{\{x\in B \}}, \quad B\in \mathcal{B}(\R^d).
		$$
		In other words, ${\mathcal P}^{q}$ is the random element of ${\bf N}_s(\R^d)$ whose support is given by those $x\in \eta^{pr}$ such that changing $\varepsilon_x$ into $-\varepsilon_x$ yields a change in the sign of $F$.
  }
	\end{definition}
	
 \begin{remark}\label{r:kgammastella}{\rm  Consider a non-zero and bounded random variable $F = F(\eta)$. An application of Proposition \ref{p:ell} and of Remark \ref{e:bagatelle}-(ii) shows that the projected spectral sample $\gamma^{pr}$ is the point process on $\R^d$ whose $k$-th factorial moment measure (for all $k\geq 1$) is absolutely continuous with respect to the Lebesgue measure, with density 
			\begin{eqnarray}\label{e:kgammastella}
				\Phi_k(x_1,...,x_k) =  \frac{1}{\E[F^2]} \int_{\{-1,1\}^k} \!\!\mathbb{E}[ ( D_{x_1, \eps_1}\cdots D_{x_k, \eps_k} F)^2] \, p(\dd\eps_1)\cdots p(\dd\eps_k),
			\end{eqnarray}
It is easily seen that, if $F(\eta) = F(\eta^{pr})$ (that is, $F$ does not depend on the family of marks $\{\epsilon_x\}$), then \eqref{e:kgammastella} coincides with \eqref{e:rn}, as applied to the Poisson measure $\eta^{pr}$. 		  } 
	\end{remark}
	
	Consider $F(\eta)$ with $F \in \SA$. Given $k\geq 1$, $\{x_1,...,x_k\} = \mathcal{J}\subset \R^d$, and $T\subseteq \mathcal{J}$, we write
	$$
	\widehat{D_{x_1}\cdots D_{x_k} F} (T)
	$$
	to denote the random variable obtained by applying \eqref{e:walsh0} to the mapping $G$ on $\{-1,1\}^{\mathcal{J}}$ given by
	$$
	\{\eps_x: x\in \mathcal{J}\}\mapsto G(\eps_x: x\in \mathcal{J}) := D_{x_1, \eps_1 }\cdots D_{x_k, \eps_k} F(\eta),
	$$
	where $\eps_i:=\eps_{x_i}$, $i=1,...,k$. The following statement clarifies the relation between the factorial moment measures of $\gampr$ and $\gaman$ for bounded elements in $\SA$ -- see Definition \ref{d:finsupp}.
 
	\begin{theorem}\label{t:kgampran} Let $F\in \SA$ be bounded. Then, for $k\geq 1$, the $k$-th factorial moment measure of the annealed spectral sample $\gamma^{an}$ is absolutely continuous with respect to the Lebesgue measure, with a density $\Psi_k$ given by the following relation: for $x_1,...,x_k$ pairwise distinct,
		\begin{equation}\label{e:psik}
			\Psi_k(x_1,...,x_k) =\frac{1}{\E[F^2]} \E\left[ (\widehat{D_{x_1}\cdots D_{x_k} F} (\mathcal{J}))^2\right],
		\end{equation}
where $\mathcal{J} = \{x_1,...,x_k\}$. Moreover, for $k\geq 1$, the $k$-point correlation function $\Phi_k$ for projected spectral sample $\gamma^{pr}$ defined in \eqref{e:kgammastella} admits the following representation:
		\begin{equation}\label{e:phik}
			\Phi_k(x_1,...,x_k) =\frac{1}{\E[F^2]}\sum_{T\subseteq \mathcal{J}} \E\left[ (\widehat{D_{x_1}\cdots D_{x_k} F} (T))^2\right].
		\end{equation}
		In particular, $\Psi_k\leq \Phi_k$ a.e.\ $\dd x_1\cdots \dd x_k$, for all $k\geq 1$. Finally, the fact that the right-hand side of \eqref{e:psik} (resp.\ \eqref{e:phik}) is the density of the $k$-th factorial moment measure uniquely characterizes the distribution of $\gaman$ (resp.\ $\gamma^{pr}$) as a random element with values in ${\bf N}(\R^d)$.
	\end{theorem}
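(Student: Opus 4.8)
### Proof Plan

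The plan is to establish \eqref{e:psik} and \eqref{e:phik} by a direct computation of the factorial moment measures using the definitions of $\gamma^{an}$ and $\gamma^{pr}$, together with the Hoeffding-ANOVA/Walsh machinery recalled in \eqref{e:anova}--\eqref{e:quinn} and the (multivariate) Mecke formula \eqref{e:mecke}. Throughout, I would use the fact that $F\in\SA$ is bounded, so that all integrability conditions (in particular \eqref{e:integram} for every $m$ and bounded $C$) hold, Fubini applies freely, and the characterization criterion \cite[Proposition 4.12]{LastPenrose17} is available for the final uniqueness statement.

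First I would treat $\gamma^{an}$. For a bounded measurable $h:(\R^d)^k\to[0,\infty)$, I would compute $\E[\int h\,\dd(\gamma^{an})^{(k)}]$ starting from Definition \ref{d:annealed}. Writing the test function against the $k$-th factorial measure of $\mu_S=\sum_{x\in S}\delta_x$ picks out, for each finite $S\subseteq\eta^{pr}$, exactly the ordered $k$-tuples of distinct points of $S$; thus
\[
\E[F^2]\,\E\Big[\int h\,\dd(\gamma^{an})^{(k)}\Big] = \E\Big[\sum_{\substack{S\subseteq\eta^{pr}\\ |S|\geq k}} \hat F(S;\eta^{pr})^2 \sum_{\substack{(x_1,\dots,x_k)\in S^k\\ \text{distinct}}} h(x_1,\dots,x_k)\Big].
\]
I would then reorganize the double sum by first fixing the distinct tuple $(x_1,\dots,x_k)=:\mathcal J\subseteq\eta^{pr}$ and summing over all finite $S\supseteq\mathcal J$ with $S\subseteq\eta^{pr}$. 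For fixed $\mathcal J$, the inner sum $\sum_{S\supseteq\mathcal J}\hat F(S;\eta^{pr})^2$ is precisely the squared $L^2$-norm (conditionally on $\eta^{pr}$) of the part of the Walsh expansion of $F$ involving all the coordinates $\eps_{x_1},\dots,\eps_{x_k}$; by Parseval on the discrete cube and the identity \eqref{e:quietplace} (applied with $G=F$ and the operators $I-Q_{x_i}$, $Q_y$), this equals $\E[\,\widehat{F}^{\,\mathcal J}(\eta^{pr})^2\mid\eta^{pr}]$ where $\widehat F^{\,\mathcal J}$ is the ANOVA-component of $F$ supported exactly on $\mathcal J$ (with the remaining coordinates averaged out). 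The key step is then to identify this component, after integrating over the marks, with $\widehat{D_{x_1}\cdots D_{x_k}F}(\mathcal J)$: applying $D_{x_1,\eps_1}\cdots D_{x_k,\eps_k}$ to $F$ and then taking the top ANOVA coefficient in the $\eps_i$'s kills every Walsh monomial not containing all of $\eps_{x_1},\dots,\eps_{x_k}$ and, on the monomials that do, $D_{x_i,\eps_i}$ acts as multiplication by $-2\eps_{x_i}\cdot(\text{the Walsh monomial})$; tracking constants shows the top coefficient of $D_{x_1,\eps_1}\cdots D_{x_k,\eps_k}F$ equals (up to the normalization already absorbed) the $\mathcal J$-component of $F$. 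Finally I would move from the sum over $k$-tuples $\mathcal J\subseteq\eta^{pr}$ to an integral over $(\R^d)^k$ via the multivariate Mecke formula \eqref{e:mecke} applied to $\eta^{pr}$ (a Poisson process with intensity $\dd x$), which produces the Lebesgue density $\Psi_k$ in \eqref{e:psik}. One has to check that adding the extra points $\delta_{x_i}$ inside Mecke does not change $\widehat{D_{x_1}\cdots D_{x_k}F}(\mathcal J)$, which holds because the difference operators $D_{x_i}$ already insert those points.

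Next I would treat $\gamma^{pr}$. Here \eqref{e:kgammastella} from Remark \ref{r:kgammastella} is the starting point: the $k$-point correlation function is, up to $\E[F^2]$, the average over the marks $\eps_1,\dots,\eps_k$ of $\E[(D_{x_1,\eps_1}\cdots D_{x_k,\eps_k}F)^2]$. I would then expand $D_{x_1,\eps_1}\cdots D_{x_k,\eps_k}F$ in its Walsh series in the coordinates $\eps_{x_1},\dots,\eps_{x_k}$, i.e.\ $G=\sum_{T\subseteq\mathcal J}\widehat{D_{x_1}\cdots D_{x_k}F}(T)\prod_{x\in T}\eps_x$. Averaging $G^2$ over $\{-1,1\}^k$ with respect to $p^{\otimes k}$ and using the orthogonality of the Walsh monomials (the displayed orthogonality relation just above \eqref{e:quietplace}) makes the cross terms vanish, leaving exactly $\sum_{T\subseteq\mathcal J}\E[(\widehat{D_{x_1}\cdots D_{x_k}F}(T))^2]$, which is \eqref{e:phik}. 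The comparison $\Psi_k\leq\Phi_k$ is then immediate since \eqref{e:phik} is a sum over all $T\subseteq\mathcal J$ of nonnegative terms, one of which (the term $T=\mathcal J$) is exactly $\E[F^2]\Psi_k$.

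For the final uniqueness assertion, I would invoke \cite[Proposition 4.12]{LastPenrose17}: since $F$ is bounded, say $|F|\leq c$, one has $|D_{x_1}\cdots D_{x_k}F|\leq (2c)^k$ pointwise, hence both $\Psi_k$ and $\Phi_k$ are bounded by $(2c)^{2k}/\E[F^2]$ on every bounded set, so the factorial moment measures of $\gamma^{an}$ (resp.\ $\gamma^{pr}$) restricted to bounded Borel sets grow at most like $(\text{const})^k\,|\text{set}|^k$, which satisfies the Carleman-type summability condition of that proposition; therefore the full sequence of factorial moment measures determines the law of each process as an ${\bf N}(\R^d)$-valued random element. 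The main obstacle I anticipate is the bookkeeping in the $\gamma^{an}$ computation: carefully justifying the interchange of the (a.s.\ finite, but unboundedly many) sums over $S$ and over $k$-tuples, and — more delicately — pinning down the exact identification of the conditional Walsh-energy $\sum_{S\supseteq\mathcal J}\hat F(S;\eta^{pr})^2$ with $\E[(\widehat{D_{x_1}\cdots D_{x_k}F}(\mathcal J))^2\mid\eta^{pr}]$ including all combinatorial constants, for which the operator identity \eqref{e:quietplace} and the inclusion-exclusion form \eqref{e:lupita} of the top ANOVA coefficient are the right tools.
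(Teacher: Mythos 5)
Your plan has the same skeleton as the paper's proof: reorganize the double sum coming from Definition~\ref{d:annealed}, apply Mecke \eqref{e:mecke} to trade $(\eta^{pr})^{(k)}$ for Lebesgue measure, and reduce \eqref{e:psik} to the combinatorial identity
\[
\E\Big[\textstyle\sum_{S\subseteq_f\,\eta^{pr}}\hat F\big(S\cup\mathcal J;\eta^{pr}\cup\mathcal J\big)^2\Big]=\E\Big[\big(\widehat{D_{x_1}\cdots D_{x_k}F}(\mathcal J)\big)^2\Big],
\]
while the derivation of \eqref{e:phik} via Walsh orthogonality, the comparison $\Psi_k\le\Phi_k$, and the uniqueness via \cite[Prop.~4.12]{LastPenrose17} are exactly the paper's.

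However, the mechanism you invoke for the key identification is wrong. The claim that ``$D_{x_i,\eps_i}$ acts as multiplication by $-2\eps_{x_i}$ (the Walsh monomial)'' conflates the Poisson add-one cost \eqref{e:add1} with the discrete-cube derivative: on the cube, differentiating at a coordinate $i$ does send $\chi_S\mapsto 2\eps_i\mathds 1_{\{i\in S\}}\chi_{S\setminus\{i\}}$, but on the Poisson space $D_{x_i,\eps_i}F(\eta)=F(\eta+\delta_{(x_i,\eps_i)})-F(\eta)$ genuinely appends a new point with a new mark; $\eps_{x_i}$ is not a pre-existing Walsh variable of $F(\eta)$ (indeed $x_i\notin\eta^{pr}$ a.s.), so ``tracking constants'' along these lines does not close. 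Likewise, identifying the relevant object with ``the ANOVA-component of $F$ supported exactly on $\mathcal J$'' (the single scalar $\hat F(\mathcal J;\eta^{pr})$) is off: what you need is the top ANOVA coefficient of $F$ \emph{with respect to the $\mathcal J$-marks only}, namely $\sum_{S\supseteq\mathcal J}\hat F(S)\prod_{x\in S\setminus\mathcal J}\eps_x$, which still depends on the remaining marks, and only then does conditional Parseval give $\sum_{S\supseteq\mathcal J}\hat F(S)^2$. The correct route to the identity is the one the paper packages as \eqref{e:rr}: expand $D_{x_1,\eps_1}\cdots D_{x_k,\eps_k}F$ via the inclusion-exclusion \eqref{e:addoneincexc}, observe from \eqref{e:quinn} that only the $A=[k]$ term contributes to the top ANOVA coefficient at $\mathcal J$ (because a term with $A\subsetneq[k]$ depends on a strict subset of $(\eps_\ell)$), deduce that, conditionally on $\eta$,
\[
\widehat{D_{x_1}\cdots D_{x_k}F}(\mathcal J)=\sum_{S\supseteq\mathcal J}\hat F\big(S;\eta^{pr}\cup\mathcal J\big)\prod_{x\in S\setminus\mathcal J}\eps_x,
\]
and then conclude by orthogonality in the remaining marks; the paper realizes exactly this via the conditional expectations $H(T),H'(T)$ and \eqref{e:lupita}. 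With that repair your plan matches the paper's proof; without it, the central step does not go through.
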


	\begin{proof} By homogeneity and without loss of generality, we can assume that $\E[F^2]=1$. Fix $k\geq 1$ and consider a symmetric and bounded mapping $h : (\R^d)^k\to \R_+$ which is supported on some compact hyperrectangle $C^k$. Given a set $Y$, we will follow \cite{Vann2021} and use the notation $S\subseteq_f Y$ to indicate that $S$ is a {\it finite} subset of $Y$. Then, standard combinatorial considerations based on the definition \eqref{e:gaman} yield
\begin{eqnarray*}
			&&\mathbb{E} \left[ \int_{C^k} h\, \dd (\gaman)^{(k)}\right] = k! \E\left[ \sum_{S\subseteq_f \, \eta^{pr} } \hat{F}(S; \eta^{pr})^2 \sum_{T\subseteq S : |T|=k} h(T)\right] \\
			&&\quad = k! \E\left[ \sum_{T\subseteq \eta^{pr} : |T|=k} h(T) \sum_{S\, \mbox{ \small finite } : \,  T\subseteq S} \hat{F}(S; \eta^{pr})^2\right]\\
			&&\quad = \mathbb{E}\left[ \int_{C^k} h(x_1,...,x_k) \left( \sum_{S\subseteq_f\, \eta^{pr} : \{x_1,...,x_k\}\subseteq S} \hat{F}(S; \eta^{pr})^2\right) \, (\eta^{pr})^{(k)}(\dd x_1,...,\dd x_k)\right].
		\end{eqnarray*}

		%\begin{eqnarray*}
			%\mathbb{E} \left[ \sum^{\neq}_{x_1,..,x_k\in \gaman} h(x_1,...,x_k)\right] &=& k! \E\left[ \sum_{S\subseteq \eta^{pr} } \hat{F}(S; \eta^{pr})^2 \sum_{T\subseteq S : |T|=k} h(T)\right] \\
			%&=& k! \E\left[ \sum_{T\subseteq \eta^{pr} : |T|=k} h(T) \sum_{S: T\subseteq S} \hat{F}(S; \eta^{pr})^2\right]\\
			%&=& \mathbb{E}\left[ \sum^{\neq}_{x_1,...,x_k\in \eta^{pr}} h(x_1,...,x_k) \sum_{S\subset \eta^{pr} : \{x_1,...,x_k\}\subset S} \hat{F}(S; \eta^{pr})^2\right].
		%\end{eqnarray*}
		\noindent We now apply the multivariate Mecke formula \eqref{e:mecke} to the last term in the previous chain of equations to infer that
		\begin{eqnarray*}
			 &&\mathbb{E} \left[ \int_{C^k} h\, \dd (\gaman)^{(k)}\right] \\ &&= \E\left[ \int_{C^k} h(x_1,...,x_k)\sum_{S\subseteq_f\, \eta^{pr}} \hat{F}\left(S\cup\{x_1,...,x_k\} ; \eta^{pr}\cup\{x_1,...,x_k\}\right)^2 \dd x_1\cdots \dd x_k \right],
		\end{eqnarray*}
		and relation \eqref{e:psik} is obtained once we show that 
		\begin{eqnarray}\label{e:combinatorial}
&& \mathbb{E}\left[\sum_{S\subseteq_f\, \eta^{pr}} \hat{F}\left(S\cup\{x_1,...,x_k\} ; \eta^{pr}\cup\{x_1,...,x_k\}\right)^2\right] \\ \notag
&&\quad\quad\quad\quad\quad\quad = \E\left[ \widehat{D_{x_1}\cdots D_{x_k} F} (\mathcal{J})^2\right], \,\mbox{when}\,\, \mathcal{J} = \{x_1,...,x_k\}. \end{eqnarray}
To prove such an identity, we start by observing that, by a conditioning argument, the left-hand side of \eqref{e:combinatorial} equals
$$
\mathbb{E}\left[\left( \sum_{S\subseteq_f\, \eta^{pr}} \hat{F}\left(S\cup\{x_1,...,x_k\} ; \eta^{pr}\cup\{x_1,...,x_k\}\right) \prod_{x\in S\cup\{x_1,...x_k\}}\varepsilon_x \right)^2\right],
$$
where $\{ \varepsilon_x : x\in \eta^{pr}\cup \{x_1,..., x_k\} \}$ is a collection of i.i.d.\ symmetric random signs such that, for $x\in \eta^{pr}$, $\varepsilon_x$ is the random mark associated with $x$ through the Poisson measure $\eta$. Now, as a consequence of \eqref{e:anova} and because $F\in {\bf S}$,
\begin{equation*}\label{e:consanova}
F \left(\eta + \sum_{\ell=1}^k \delta_{(x_\ell, \varepsilon_{x_\ell})} \right) =\!\!\!\! \sum_{S\subseteq_f \, \eta^{pr}\cup\{x_1,...,x_k\}} \hat{F}(S ; \eta^{pr}\cup\{x_1,...,x_k\}) \prod_{x\in S} \varepsilon_x =: \!\!\!\!\sum_{S\subseteq_f \, \eta^{pr}\cup\{x_1,...,x_k\}} U(S) ,
\end{equation*}
where $U(S) :=\hat{F}(S ; \eta^{pr}\cup\{x_1,...,x_k\}) \prod_{x\in S} \varepsilon_x $. The last identity yields that, for all $T\subseteq \{x_1,...,x_k\}$,
\begin{equation*}\label{e:ie}
H(T) := \mathbb{E}\left[ F \left(\eta + \sum_{\ell=1}^k \delta_{(x_\ell, \varepsilon_{x_\ell})} \right)\, \Big| \, \eta+\sum_{x\in T}\delta_{(x, \varepsilon_x)}\right]=\sum_{S\subseteq_f \, \eta^{pr}\cup T} U(S),
\end{equation*}
(we stress that $H(T)$ is a function of $\eta + \sum_{x\in T} \delta_{(x, \varepsilon_{x})}$), which in turn implies that
\begin{eqnarray*}
\sum_{T\subseteq \{x_1,...x_k\}} (-1)^{k - |T|} H(T) &= &\sum_{T\subseteq \{x_1,...x_k\}} (-1)^{k - |T|} \sum_{S\subseteq_f \, \eta^{pr}\cup T} U(S) \\ &=&
\sum_{S\subseteq_f \eta^{pr}\cup \{x_1,...,x_k\}} U(S) \sum_{S\backslash \eta^{pr}\subseteq T\subseteq \{x_1,...,x_k\}} (-1)^{k - |T|} \\
&=& \sum_{\{x_1,...,x_k\}\subseteq S\subseteq_f \eta^{pr}\cup \{x_1,...,x_k\}} U(S).
\end{eqnarray*}
We have therefore proved that the quantity $\sum_{T\subseteq \{x_1,...x_k\}} (-1)^{k - |T|} H(T)$ equals
$$
\sum_{S\subseteq_f\, \eta^{pr}} \hat{F}\left(S\cup\{x_1,...,x_k\} ; \eta^{pr}\cup\{x_1,...,x_k\}\right) \prod_{x\in S\cup\{x_1,...x_k\}}\varepsilon_x,
$$
and we eventually deduce the desired formula \eqref{e:combinatorial} from the chain of identities 
\begin{equation}\label{e:rr}
\sum_{T\subseteq \{x_1,...x_k\}} (-1)^{k - |T|} H(T) = \sum_{T\subseteq \{x_1,...x_k\}} (-1)^{k - |T|} H'(T) = \widehat{D_{x_1}\cdots D_{x_k} F} (\mathcal{J}) \prod_{j \in \mathcal{J}}\varepsilon_{x_j},  
\end{equation}
with $\mathcal{J} = \{x_1,...,x_k\}$ and
$$
H'(T) := \mathbb{E}\left[ D_{x_1,\varepsilon_{x_1}} \cdots D_{x_k,\varepsilon_{x_k}} F \left(\eta \right)\, \Big| \, \eta+\sum_{x\in T}\delta_{(x, \varepsilon_x)}\right].
$$
For clarity, we observe that the first equality in \eqref{e:rr} follows from \eqref{e:addoneincexc} and \eqref{e:quinn}, while the second one can be inferred from \eqref{e:lupita}. Formula \eqref{e:phik} is a direct consequence of \eqref{e:anova} and \eqref{e:kgammastella}, as well as of the orthogonality of distinct components of the Hoeffding decomposition. The final assertion is obtained by applying arguments analogous to those developed in Remark \ref{e:bagatelle}-(ii).
	\end{proof}

	\subsubsection{Comparison of correlation functions}
	\label{ss:comparison_samples}
	
    In this section, we study in more detail the comparison of the first two correlation functions $\Phi_i,\Psi_i, i=1,2$ of the projected and annealed samples, respectively, associated with a function $F\in \SA$ as in the statement of Theorem \ref{t:kgampran} taking values in $\{-1,1\}$ (that is, $F$ is Boolean); observe, in particular, that the content of Remark \ref{r:kgammastella} applies to $F$.
 \begin{enumerate}
 \item[(i)] {\it Factorial moment measures of order 1.}  Re-writing the formulas for $\Psi_1$ and $\Phi_1$ from \eqref{e:psik} and \eqref{e:phik} respectively, we have that
	\bea
	\Psi_1(x) & := &  \E\Big[ \E[ \eps D_{x,\eps}F(\eta) \mid \eta ]^2 \Big],  \nonumber \\
	\label{e:onepointcorrelation} \Phi_1(x) & := &  \E\Big[ \E[ \eps D_{x,\eps}F(\eta) \mid \eta ]^2 \Big] +  \E\Big[ \E[D_{x,\eps}F(\eta) \mid \eta ]^2 \Big],
	\eea
	where $\eps$ is a Rademacher random variable (i.e.,  $\{-1,+1\}$-valued) independent of $\eta$.  This yields that $\Psi_1(x) \leq \Phi_1(x)$ as also indicated by Theorem \ref{t:kgampran}. More can be said if $F$ is {\bf increasing}, that is, if for all $x \in \R^d$,  
	\begin{equation}\label{e:fmon}
		F(\eta + \delta_{(x,0)}) \le F(\eta)\le F(\eta + \delta_{(x,1)}).
	\end{equation}
	In this case, one indeed has that
	\begin{equation}
		\label{e:onepointcomparison}
		\Phi_1(x)= 2 \Psi_1(x).
	\end{equation}
	To see this, first note that by \eqref{e:fmon}, we have that given $\eta$, one of the two quantities $D_{x,1}F(\eta)$ and $D_{x,-1}F(\eta)$ has to be zero. Hence,
	\begin{eqnarray*}
		\E[ \eps D_{x,\eps}F(\eta) \mid \eta ]^2&=& \frac{1}{4} \left[D_{x,1}F(\eta) - D_{x,-1}F(\eta)\right]^2 \\
		&=& \frac{1}{4} \left[D_{x,1}F(\eta)^2 + D_{x,-1}F(\eta)^2\right]
		= \E[D_{x,\eps}F(\eta) \mid \eta ]^2,
	\end{eqnarray*}
	and using \eqref{e:onepointcorrelation} we infer \eqref{e:onepointcomparison}.

	\item[(ii)] {\it Factorial moment measures of order 2.} There does not seem to be any obvious equality between second-order correlation functions, even in the case of an increasing $F$. Some partial result can be obtained as follows: write $x^{1}$ and $x^{-1}$ for $\delta_{(x,1)}$ and $\delta_{(x,-1)}$, respectively and, for $x \neq y \in \R^d$, define the event
	\begin{eqnarray*} 
			E{(x,y)} &=&\{ F(\eta+x^1+y^1)=1=-F(\eta+x^{-1}+y^{-1}),\\
			&& F(\eta+x^1+y^{-1})=1=-F(\eta+x^{-1}+y^{1}),\\
			&& F(\eta+y^1)=1=-F(\eta+y^{-1}) \, \}.
	\end{eqnarray*}
Then, one has the following statement, whose proof can be found in Appendix \ref{s:pfz1max}.
	\begin{lemma}\label{lem:z1max}
		Let $F$ be as above and increasing. For $x \neq y \in \R^d$, we have
		\begin{equation*}
			\label{e:twopointcomparison}
			\Phi_2(x,y)\le 9 \Psi_2(x,y) + 48 [\P(E(x,y) \cup E(y,x))].
		\end{equation*}
	\end{lemma}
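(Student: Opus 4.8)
The plan is to expand both $\Phi_2(x,y)$ and $\Psi_2(x,y)$ using the formulas \eqref{e:psik} and \eqref{e:phik} from Theorem \ref{t:kgampran} in the case $k=2$, $\mathcal{J} = \{x,y\}$, and then compare the resulting sums of squares term by term. Recall that $\Psi_2(x,y) = \E[\,(\widehat{D_x D_y F}(\{x,y\}))^2\,]$ while $\Phi_2(x,y) = \sum_{T\subseteq\{x,y\}}\E[\,(\widehat{D_x D_y F}(T))^2\,]$, so the difference $\Phi_2 - \Psi_2$ consists of the three contributions from $T = \emptyset$, $T = \{x\}$, $T = \{y\}$. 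The first step is therefore to write each of these Hoeffding-Fourier coefficients explicitly in terms of the conditional expectations of the random variables $D_{x,\eps_x} D_{y,\eps_y} F(\eta)$ with respect to $\eta$ and, where relevant, one of the marks $\eps_x, \eps_y$; using \eqref{e:walsh0} and \eqref{e:quietplace} this gives, schematically, $\widehat{D_x D_y F}(\emptyset) = \E[D_{x,\eps_x}D_{y,\eps_y}F \mid \eta]$ averaged over both signs, $\widehat{D_x D_y F}(\{x\})\eps_x = \frac12(\text{difference over }\eps_x\text{ of the }\eps_y\text{-average})$, and similarly for $\{y\}$.

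The second step exploits the monotonicity hypothesis \eqref{e:fmon}, exactly as in part (i) of the preceding discussion: because $F$ is increasing, for fixed $\eta$ and fixed $\eps_y$ at most one of $D_{x,1}(\cdot)$, $D_{x,-1}(\cdot)$ is nonzero, and likewise in the $y$-variable. This lets me replace squared \emph{differences} over a sign by \emph{sums of squares} over that sign, at the cost of controlled constants, and reduces each of the four coefficients $\widehat{D_xD_yF}(T)$, $T\subseteq\{x,y\}$, to an expression built from the four deterministic quantities $D_{x,\epsilon}D_{y,\delta}F(\eta)$, $\epsilon,\delta\in\{\pm1\}$ (only one of which is generically nonzero for each choice of the outer signs). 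The key combinatorial point is that each $(D_{x,\epsilon}D_{y,\delta}F)^2$ appears with bounded multiplicity (at most a small constant) across the expansions of $\Phi_2$ and $\Psi_2$, so after collecting terms one gets an inequality of the form $\Phi_2 \le c_1 \Psi_2 + (\text{error})$, with $c_1$ a small integer; tracking the constants carefully should yield $c_1 = 9$.

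The third step is to identify and bound the error term. The error arises precisely from those configurations $(\eta,\eps_x,\eps_y)$ for which the naive term-by-term comparison fails — namely when the "wrong" sign combination contributes, i.e.\ when flipping a mark changes the value of $F$ in a way not captured by the one-variable monotonicity bookkeeping. Unwinding which events these are, one finds they are exactly (sub-events of) $E(x,y)$ and $E(y,x)$ as defined above: these encode the situation where $F$ toggles between $+1$ and $-1$ under the relevant double insertions with opposite signs, i.e.\ where both $x$ and $y$ are "doubly pivotal" in the annealed sense. On the complement of $E(x,y)\cup E(y,x)$ the clean inequality $\Phi_2 \le 9\Psi_2$ holds pointwise (in $(\eta,\eps_x,\eps_y)$), and on the bad event one simply bounds each squared difference-operator by the crude a priori bound $(D_{x,\epsilon}D_{y,\delta}F)^2 \le (2\cdot 2)^2 = 16$ (since $F$ is $\{-1,1\}$-valued so $|D_{x,\epsilon}F|\le 2$ and $|D_{x,\epsilon}D_{y,\delta}F|\le 4$); counting how many such terms appear — at most three, one for each $T\subsetneq\{x,y\}$, times a factor coming from the $\eps$-averaging — and taking expectations gives the $48\,[\P(E(x,y)\cup E(y,x))]$ contribution. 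Assembling the three steps yields the claimed bound $\Phi_2(x,y)\le 9\Psi_2(x,y) + 48\,[\P(E(x,y)\cup E(y,x))]$.

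I expect the main obstacle to be the careful bookkeeping in steps two and three: correctly identifying, for each subset $T\subseteq\{x,y\}$, which sign-combinations of $(D_{x,\pm1}D_{y,\pm1}F)^2$ survive under monotonicity, matching each such term against a corresponding term inside $\Psi_2$, and isolating the precise residual event on which this matching breaks down and verifying it is contained in $E(x,y)\cup E(y,x)$. Getting the constants $9$ and $48$ exactly right (as opposed to merely \emph{some} constants) requires a disciplined case analysis over the $2^2$ mark configurations for $(\eps_x,\eps_y)$, and it is here that the increasing property of $F$ must be invoked repeatedly to kill cross-terms; everything else is routine once that accounting is set up.
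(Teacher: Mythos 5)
Your high-level strategy — expand $\Phi_2$ and $\Psi_2$ via \eqref{e:psik}--\eqref{e:phik} into the four Hoeffding coefficients $\widehat{D_xD_yF}(T)$, $T\subseteq\{x,y\}$, invoke monotonicity, isolate a ``bad'' event where the pointwise comparison fails, and bound the contribution there by the crude bound $|D_{x,\eps}D_{y,\delta}F|\le 4$ — is exactly the route the paper takes (it introduces $Z_1 = \widehat{D_xD_yF}(\{x,y\})$, $Z_2 = \widehat{D_xD_yF}(\{x\})$, $Z_3 = \widehat{D_xD_yF}(\{y\})$, $Z_4 = \widehat{D_xD_yF}(\emptyset)$, writes $\Psi_2 = \E[Z_1^2]$ and $\Phi_2 = \sum_{i=1}^4\E[Z_i^2]$, and does a case analysis). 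So the approach is the same; what you have written, however, is a plan for a proof rather than a proof, since the entire content is the ``disciplined case analysis'' you defer.

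One specific claim in your Step 2 is false and would derail the execution. You assert that for fixed $\eta$, ``only one'' of the four second differences $D_{x,\eps}D_{y,\delta}F(\eta)$, $\eps,\delta\in\{\pm 1\}$, is generically nonzero, by analogy with the one-variable fact that at most one of $D_{x,\pm 1}F(\eta)$ is nonzero. The analogy fails: monotonicity of $F$ does \emph{not} imply that the iterated difference $D_{x,\eps}D_{y,\delta}F$ vanishes for all but one sign pattern. For instance, take $\eta$ with $F(\eta) = -1$, $F(\eta+x^1)=F(\eta+y^1)=1$, $F(\eta+x^{-1})=F(\eta+y^{-1})=-1$, $F(\eta+x^1+y^1)=1$, $F(\eta+x^1+y^{-1})=F(\eta+x^{-1}+y^1)=F(\eta+x^{-1}+y^{-1})=-1$; this is compatible with \eqref{e:fmon} and one checks $D_{x,1}D_{y,1}F = D_{x,1}D_{y,-1}F = D_{x,-1}D_{y,1}F = -2\neq 0$ while only $D_{x,-1}D_{y,-1}F=0$. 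The paper avoids this trap: it does not try to kill sign combinations of the second difference, but instead writes each $Z_i$ explicitly as a signed combination of the nine values $F(\eta+x^{\eps}+y^{\delta})$, $F(\eta+x^{\eps})$, $F(\eta+y^{\delta})$, $F(\eta)$, and uses monotonicity to constrain those nine $\pm1$-values, case by case. If you build your Step 2 on the ``only one nonzero'' heuristic, the ``controlled constants'' you invoke will not materialize.

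A secondary caution: do not assume the constant $9$ will drop out. Carrying out the paper's own case analysis on the configuration above (which lies outside $E(x,y)\cup E(y,x)$) gives $Z_1^2=Z_2^2=Z_3^2=\tfrac14$ and $Z_4^2=\tfrac94$, so $\sum_i Z_i^2 = 3 = 12\,Z_1^2$, and the paper's stated bound ``$|Z_2|,|Z_3|,|Z_4|\le 3|Z_1|$'' only yields $\sum_i Z_i^2 \le 28\,Z_1^2$. Thus the clean pointwise inequality on the good event appears to be $\Phi_2\le 12\,\Psi_2$ rather than $\Phi_2\le 9\,\Psi_2$ (the lemma's constant is not used quantitatively elsewhere in the paper, so this discrepancy is inconsequential for the applications, but you should not take the target constant on faith when you ``track carefully'').
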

\end{enumerate}
 
%Typically, (e.g., in the case of critical Voronoi percolation), the integral of $\P(B(x,y) \cup B(y,x))]$ on $W_{L/2}^2$ is small. 

%
	
	We conclude the section by observing that, unlike the case of spectral point processes,  the factorial measures of pivotal processes are related by a simple identity.
	\begin{prop}
		\label{p:comp_Pivotal}
		Let $F\in \SA$ be a Boolean function of a Poisson process $\eta$ on $\X$ and let ${\mathcal P}_F$ and ${\mathcal P}^q_F$ be the pivotal and quenched pivotal processes as in Definitions~\ref{def:pivproc} and \ref{def:qpiv} respectively. Then, for all $k\geq 1$ and measurable $C_1,...,C_k \subset \R^d$, one has that
  $$
   \E \left[(\mathcal{P})^{k}((C_1\times \{-1,1\} )\times\cdots\times (C_k  \times \{-1,1\})) \right] = \frac{1}{2^k} \E \left[(\mathcal{P}^q)^{k}(C_1 \times\cdots C_k)\right].
  $$

  %For $B \subset D$ and $k \geq 1$, 
		
%		\begin{multline*}
%			\E\left[ \sum_{x_{i_1}, ..., x_{i_k}\in {\mathcal P}_F}^{\neq} \mathds{1}(x_{i_1},..., x_{i_k} \in   B \times \{-1,+1\})\right] = \E\left[{\mathcal P}_F \left((B  \times \{-1,+1\})^{(k)}\right)\right]\\  
%			= \frac{1}{2^k} \E[{\mathcal P}^q_F(B^{(k)})]=\frac{1}{2^k}\E\left[ \sum_{y_{i_1}, ..., y_{i_k}\in {\mathcal P}_F^q}^{\neq} \mathds{1}(y_{i_1},..., y_{i_k} \in   B)\right].
%		\end{multline*}
		%
	\end{prop}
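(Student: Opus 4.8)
The plan is to prove the identity by computing both sides via the Mecke formula and checking that the integrands match up to the factor $2^k$. Recall from Proposition \ref{p:ell2} that the $k$-th factorial moment measure of $\mathcal{P}_F$ has a density (with respect to $\lambda^k$ on $\mathbb{X}^k$) given by $(z_1,...,z_k)\mapsto \frac{1}{4^k}\E[(D_{z_1}F)^2\cdots (D_{z_k}F)^2]$, where each $z_i = (x_i, \eps_i)\in \R^d\times\{-1,1\}$ and $\lambda(\dd z) = \dd x\, p(\dd\eps)$. Integrating the indicator of $(C_1\times\{-1,1\})\times\cdots\times(C_k\times\{-1,1\})$, the left-hand side becomes
\begin{equation*}
\frac{1}{4^k}\int_{C_1\times\cdots\times C_k}\int_{\{-1,1\}^k} \E\big[(D_{(x_1,\eps_1)}F)^2\cdots (D_{(x_k,\eps_k)}F)^2\big]\, p(\dd\eps_1)\cdots p(\dd\eps_k)\, \dd x_1\cdots\dd x_k.
\end{equation*}
Since $p$ is the uniform measure on $\{-1,1\}$, each integral over $\eps_i$ contributes a factor $\tfrac12\big((D_{(x_i,1)}F)^2 + (D_{(x_i,-1)}F)^2\big)$, so the $\{-1,1\}^k$-integration produces $2^{-k}$ times a sum over the $2^k$ choices of sign vector.

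For the right-hand side, I would first derive the factorial moment measure of $\mathcal{P}^q_F$ directly from its definition and the Mecke formula for the Poisson process $\eta^{pr}$ on $\R^d$ (which has intensity $\dd x$), noting that $\eta = \eta^{pr}$ equipped with i.i.d.\ symmetric marks. The key observation is that, for $x\in\eta^{pr}$ with mark $\eps_x$, the indicator $\mathbf{1}_{\{F(\eta)\neq F(\eta - \delta_{(x,\eps_x)} + \delta_{(x,-\eps_x)})\}}$ equals $\tfrac14 (D_{(x,\eps_x)}F - D_{(x,-\eps_x)}F)^2$ after noting $F(\eta - \delta_{(x,\eps_x)} + \delta_{(x,-\eps_x)}) = F(\eta) + D_{(x,-\eps_x)}F(\eta - \delta_{(x,\eps_x)})$ and that, since $F\in\SA$ depends on a point only through whether it is present and its mark, one can rewrite everything in terms of the add-one cost operators at $(x,1)$ and $(x,-1)$ acting on the configuration with $x$ removed. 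Applying the multivariate Mecke formula on $\R^d$ to the $k$-fold sum over distinct $x_1,...,x_k\in\eta^{pr}$, and then averaging over the freshly attached independent marks $\eps_{x_1},...,\eps_{x_k}$ (which is again integration against $p^{\otimes k}$), the $k$-th factorial moment measure of $\mathcal{P}^q_F$ evaluated on $C_1\times\cdots\times C_k$ becomes $\frac{1}{4^k}$ times an integral over $\R^d$-variables of the average over sign vectors of $\E[(D_{(x_1,\eps_1)}F)^2\cdots(D_{(x_k,\eps_k)}F)^2]$ — that is, it picks up an \emph{extra} factor $2^{-k}$ from the mark-averaging relative to the expression for the left-hand side above, because in the left-hand computation the marks $\eps_i$ were already part of the integration variable $z_i$ whereas here they are averaged separately in addition to the spatial integration. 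Comparing, the right-hand side equals $2^k$ times the left-hand side, which is the claim.

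The main technical obstacle will be the careful bookkeeping in the quenched case: one must verify that the ``flip a mark'' operation translates correctly into add-one cost operators $D_{(x,1)}$ and $D_{(x,-1)}$, which requires using property (iii) of Definition \ref{d:finsupp} (that $F$ depends on a point only via its presence and its mark) to make sense of removing and re-adding a point with the opposite mark, and to confirm the algebraic identity $\mathbf{1}_{\{a\neq b\}} = \tfrac14(a-b)^2$ for $\pm1$-valued $a,b$ applies here with $a = F(\eta)$, $b = $ the flipped value. A secondary point to be careful about is that when we apply Mecke on $\R^d$, the marks of the $k$ inserted points are new independent symmetric signs, so the resulting expectation genuinely contains an extra $p^{\otimes k}$-average over those marks — this is precisely the source of the discrepancy factor, and I expect it to be the subtle point a reader might initially miss. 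Finally, the measure-theoretic justification (finiteness of the relevant integrals, so that Mecke applies and the factorial moment measures are well-defined and uniquely determined) follows exactly as in Remark \ref{e:bagatelle}-(ii) since $F$ is bounded and $\lambda$ is locally finite, so I would simply invoke that.
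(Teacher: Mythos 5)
The paper states this proposition without an accompanying proof, so there is no argument to compare against; evaluated on its own terms, your proposal has two substantive gaps. The first is exactly where you flag the technical danger. You correctly identify that $\mathbf{1}_{\{F(\eta)\neq F(\eta - \delta_{(x,\eps_x)}+\delta_{(x,-\eps_x)})\}}=\tfrac14\bigl(D_{(x,\eps_x)}F - D_{(x,-\eps_x)}F\bigr)^2$, the add-one costs acting on the configuration with $x$ removed. But you then assert that, after Mecke and mark-averaging, the quenched density becomes $\tfrac{1}{4^k}$ times a $p^{\otimes k}$-average of $\E[(D_{(x_1,\eps_1)}F)^2\cdots(D_{(x_k,\eps_k)}F)^2]$, which silently discards the cross terms $-2\,D_{(x_i,\eps_i)}F\,D_{(x_i,-\eps_i)}F$ from $(a-b)^2=a^2+b^2-2ab$. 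For $\pm1$-valued $F$ these cross terms do not vanish in general: if both $D_{(x,1)}F$ and $D_{(x,-1)}F$ are nonzero they both equal $-2F(\eta)$, so $D_{(x,1)}F\, D_{(x,-1)}F = 4$. They vanish iff at most one of the two is nonzero, which holds for increasing $F$ (as in the Voronoi application) but is not implied by $F\in\SA$ alone, and your argument never invokes it. Second, your $2^{\pm k}$ bookkeeping is backwards. You claim the quenched side ``picks up an extra factor $2^{-k}$ from the mark-averaging,'' but the pivotal side already carries the identical $p^{\otimes k}$-average from expanding $\lambda^k=({\rm d}x\,p({\rm d}\eps))^k$; taken at face value, your reasoning would yield \emph{equal} Lebesgue densities rather than the asserted factor $2^k$. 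The actual source of the factor of $2$ (for $k=1$, increasing $F$) is that the quenched indicator is the \emph{sum} $\mathbf{1}\{D_{(x,1)}F\neq0\}+\mathbf{1}\{D_{(x,-1)}F\neq0\}$ of two disjoint events, while the $\lambda$-integrated pivotal intensity involves their $p$-\emph{average} $\tfrac12\sum_\eps\mathbf{1}\{D_{(x,\eps)}F\neq0\}$.

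There is a deeper obstruction that any attempted proof along these lines must confront: for $k\geq 2$ the identity is not a formal consequence of Mecke and mark-averaging even when the cross terms vanish, because after Mecke the $i$-th factor is evaluated on $\eta+\sum_{j\neq i}\delta_{z_j}$, so the auxiliary marks $\eps_j$ enter the $i$-th factor, and the two sides tie these marks to the ``active'' one differently. Concretely, take $F(\mu)=2\mathbf{1}\{\mu([0,1]\times\{1\})\geq1\}-1$ (a bounded, increasing element of $\SA$), $k=2$, and disjoint $C_1,C_2\subset[0,1]$ of positive Lebesgue measure. Then $|\mathcal{P}_F|\leq1$ almost surely, so $\E\bigl[\mathcal{P}^{(2)}(C_1\times\{-1,1\}\times C_2\times\{-1,1\})\bigr]=0$; yet when $\eta([0,1]\times\{1\})=0$ every $x\in\eta^{pr}\cap[0,1]$ is quenched-pivotal, and a short computation gives $\E\bigl[(\mathcal{P}^q)^{(2)}(C_1\times C_2)\bigr]=\tfrac14|C_1||C_2|e^{-1/2}>0$. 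So a termwise match of integrands as you propose cannot work for $k\geq2$; the case the paper actually uses is $k=1$ with an increasing $F$, where the cross terms vanish and the argument closes.
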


 \section{Applications to continuum percolation}
\label{s:sharp_Nst_cont_perc}
In this section, we prove sharp noise instability for crossings in critical Poisson Boolean model (Theorem~\ref{t:sharpNS_Boolean}) and sharp noise sensitivity (as well as sharp noise instability) for crossings in the critical Poisson Voronoi percolation model (Theorem \ref{t:sharpNS_voronoi}). 
 
First, in Section~\ref{ss.gen_sns_cont}, we will provide four geometric conditions \ref{A1} -- \ref{A3} on a general static continuum percolation model, which imply sharp noise instability as defined at \eqref{eq:sNStab} for the model, when considered under the OU dynamics. We will then apply this result to the Boolean percolation model in Section~\ref{ss:proofBoolean}. We will check condition \ref{A2} (quasi-multiplicativity as in Theorem \ref{t:quasimPBM}) in Section \ref{sec:quasim_boolean} and conditions \ref{A1}, \ref{A3} and \ref{A4} in Section \ref{ss:proofBoolean} to conclude the proof of Theorem~\ref{t:sharpNS_Boolean}. Verifying these conditions uses the explicit representation of first and second order correlations of the spectral and pivotal processes from Section \ref{s:specpp}.
 
In Section \ref{s:sharpNSvor} we prove sharp noise sensitivity and sharp noise instability for the critical Poisson-Voronoi percolation model. We achieve this by relating our estimates for spectral point processes with those for annealed spectral samples obtained in \cite{Vann2021}, and also by using the sharp noise sensitivity under the frozen dynamics established therein. This will utilize the results contained in Section \ref{ss:comparison_samples}, as well as the covariance inequality in Proposition \ref{p:covcomp}, which was suggested to us by H.\ Vanneuville.

We will adopt the usual asymptotic notation for comparing the behaviour of functions at infinity. For two functions $f,g:[0,\infty) \to \R$, we write $f(x) \asymp g(x)$ if there exists constants $C_1,C_2>0$ such that $C_1 \le \liminf_{x \to \infty} f(x)/g(x) \le \limsup_{x \to \infty} f(x)/g(x) \le C_2$. Also, we write $f(x) \gtrsim g(x)$ or $f(x)\lesssim g(x)$ when the corresponding one-sided inequalities hold respectively.

\subsection{Criteria for sharp noise instability in continuum percolation}
\label{ss.gen_sns_cont}
Let $\eta$ be a Poisson process on the space $\X = \R^2 \times \M$ with intensity measure $\lambda (\md \tx) := c \, \md x \times \nu(\md a)$ where $\nu$ is a probability on a measurable mark space $(\M, \mathcal{M})$, and $\tx=(x,a)$ denotes a generic element in $\X$. Without loss of generality, from now on we may assume that $(\Omega, \mathcal{F}) = ({\bf N}_s(\mathbb{X}), \mathcal{N})$ and that the probability $\mathbb{P}$ coincides with the law of $\eta$; see Section \ref{ss:prem}.

For $\eta$ as above and $t>0$, the perturbed version $\eta^t$ of $\eta$ after time $t$ under the OU dynamics can be defined as follows: delete each point of $\eta$ independently with probability $(1 - e^{-t})$ to obtain a thinned process $\eta_1$, and then add an independent Poisson process $\eta_2$ with intensity $(1-e^{-t}) \, \lambda $, i.e.,
$\eta^t =_d \eta_1 + \eta_2$ with $\eta_1$ and $\eta_2$ independent. See e.g.\ \cite[Section 6.1]{LPY2021} for a formal definition of this Markov process.

Consider a continuum percolation model defined on the Poisson process $\eta$, i.e., a partition of the space $\R^2$ into occupied and vacant regions, determined by $\eta$. %by some mechanism, given the configuration $\eta$, we
More formally, we assume here that there is a measurable mapping that associates with $\eta$ a random closed set $\Occ(\eta) \subseteq \R^2$, called the {\bf occupied region}. We call its complement $\mathcal{V}(\eta) = \R^2 \setminus \Occ(\eta)$ the {\bf vacant region}. For more details on random sets including measurability issues, we refer the reader to \cite{LastPenrose17, Molchanov2017}.  We use standard notations for the arm events as in \cite{Vann2019,Vann2021}. For example, $A^x(r,R)$, $0<r<R<\infty$, denotes the annulus $x + [-R,R]^2 \setminus (-r,r)^2$ and $W_L: =  [-L,L]^2$ is the square window of side $2L$ centered at the origin. For $j \in \N$, we write $A_j^x(r,R)$ for the $j$-{\bf arm event} (corresponding to the presence of $j$ paths of alternating types -- 
occupied or vacant -- from $\partial W_r$ to $\partial W_R$) in the annulus $A^x(r,R)$; also, $\alpha_j(r,R) := \P(A_j^0(r,R))$, where $0$ denotes the origin in $\R^2$.  Furthermore, for $R\ge 1$, set $A_j^x(R) = A_j^x(1,R)$ and $\alpha_j(R) = \alpha_j(1,R)$. 
	
We are interested in the event of a left-right (L-R) crossing of the box $W_L =  [-L,L]^2$, $L \geq 1$ through the occupied region. We write $\widehat W_L:=W_L \times \M$. We set $f_L$ to be the $\pm 1$-indicator that there is such an L-R occupied crossing of $W_L$ in $\Occ(\eta) \cap W_L$.  Let $\gamma_L := \gamma_{f_L}$ and $\cP_L := \cP_{f_L}$ be respectively the spectral and pivotal point processes of $f_L$ as in Definitions~\ref{def:spproc} and \ref{def:pivproc}.  We now state four geometric conditions on the percolation model that ensure sharp noise instability of the crossing functional $f_L$, as defined at \eqref{eq:sNStab}.

\begin{itemize}
\item[\namedlabel{A1}{(\textbf{H1})}] \textbf{Lower-bound for $4$-arm probabilities}: There exists an absolute constant $\eps \in (0,2)$ such that for every $1 \le r \le R<\infty$,
		\begin{equation*}
			\alpha_4(r,R) \ge  \eps \left(\frac{r}{R}\right)^{2-\eps}.
		\end{equation*} 
  
\item[\namedlabel{A2}{(\textbf{H2})}] \textbf{Quasi-multiplicativity for $4$-arm probabilities}: there exists a constant $C \in [1,\infty)$ such that for all $1 \le r_1 \le r_2 \le r_3<\infty$,
		\begin{equation*}
			\frac{1}{C} \alpha_{4}\left(r_{1}, r_{3}\right) \leq \alpha_{4}\left(r_{1}, r_{2}\right) \alpha_{4} \left(r_{2}, r_{3}\right) \leq C \alpha_{4}\left(r_{1}, r_{3}\right).
		\end{equation*}

\item[\namedlabel{A4}{(\textbf{H3})}] \textbf{Upper-bound for second-difference operator}: Let $\eps \in (0,2)$ be as in \ref{A1}. There exists $\rho \in (0,\infty)$ such that for $x,y \in W_{\rho L}$ and $|x-y| \geq L^{\eps/3}$,
		\begin{equation*}
			\E[|D_{\tx}D_{\ty}f_L( \eta)|^2] \lesssim \alpha_4^2\left(\frac{|x-y|}{4}\right)
   + \text{O}(L^2 \alpha_4(L)^2).
		\end{equation*}

\item[\namedlabel{A3}{(\textbf{H4})}]\textbf{Estimates for pivotal sample}: Assume that $\E |\cP_L| \lesssim L^2\alpha_4(L)$ and $\E |\cP_L \cap \widehat W_{\rho L}| \gtrsim L^2 \alpha_4(L)$ for $\rho \in (0,\infty)$ as in \ref{A4}. 

Note that by \eqref{e:eqint} and Proposition~\ref{p:ell}, we can interchangeably use $\gamma_L$ (instead of $\cP_L$) in assumption \ref{A3} and rewrite it in terms of add-one cost operators as follows:
		\begin{equation*}
	    L^2\alpha_4(L) \lesssim  \int_{\widehat W_{\rho L}}\E |D_{\tx} f_L(\eta)|^2 \lambda(\md \tx)  \leq   \int_{\X} \E |D_{\tx} f_L(\eta)|^2 \lambda(\md \tx) \lesssim L^2\alpha_4(L).
		\end{equation*}
	\end{itemize}
The four conditions above, especially \ref{A1}, \ref{A2} and \ref{A3}, are important properties in their own rights of the percolation model. The quantity $\rho$ in \ref{A4} can be taken to be in the interval $(0,1)$ when the percolation model in consideration satisfies certain asymptotic independence properties. Typically, the above properties are expected to hold for many critical percolation models (where $f_L$ is non-degenerate asymptotically) and our upcoming proposition does imply asymptotic non-degeneracy of $f_L$; see the discussion below. Indeed, the following proposition shows that establishing \ref{A1} -- \ref{A3} yields sharp noise instability. Whether similar explicit criteria for sharp noise sensitivity exists is unclear.

\begin{prop}\label{p:sharpNStab}
Let $\Occ(\eta)$ be the occupied region in a percolation model defined on a Poisson process $\eta$ as above and assume that assumptions \ref{A1} -- \ref{A3} are in force. Then, $f_L$ exhibits sharp noise instability at time-scale $t_L = \frac{1}{L^2\alpha_4(L)}$, as $L \to \infty$.
 \end{prop}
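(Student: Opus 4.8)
The plan is to analyze the probability $\mathbb{P}\{f_L(\eta) \neq f_L(\eta^{t_L})\}$ by encoding the event $\{f_L(\eta)\neq f_L(\eta^t)\}$ in terms of the points that have been added or removed by the OU dynamics and the pivotal/spectral structure of $f_L$. First I would record the standard coupling: $\eta^t =_d \eta_1 + \eta_2$, where $\eta_1$ is obtained from $\eta$ by deleting each point independently with probability $1-e^{-t}$, and $\eta_2$ is an independent Poisson process with intensity $(1-e^{-t})\lambda$. For the \emph{easy direction} (showing the probability tends to $0$ when $t_L L^2\alpha_4(L)\to 0$), I would bound $\mathbb{P}\{f_L(\eta)\neq f_L(\eta^t)\}$ by the probability that some removed or added point is pivotal for the corresponding intermediate configuration; by a union bound, the multivariate Mecke formula \eqref{e:mecke}, and the identity \eqref{e:eqint} relating the intensity of $\gamma_L$ (equivalently $\mathcal{P}_L$) to $\int \mathbb{E}[(D_{\tilde x}f_L)^2]\lambda(\mathrm{d}\tilde x)$, this is $\lesssim (1-e^{-t})\,\mathbb{E}|\mathcal{P}_L| \lesssim t\, L^2\alpha_4(L)$ using the upper bound in \ref{A3}. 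This gives the first assertion.

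The \emph{hard direction} is to show $\liminf_L \mathbb{P}\{f_L(\eta)\neq f_L(\eta^{t_L})\} > 0$ whenever $t_L L^2\alpha_4(L)\to\infty$. Here the natural route, following the strategy of \cite[Section 5.2]{Garban2011} and the discrete analysis in \cite{GPS10, GS}, is a second-moment / spectral argument. I would express the covariance $\mathrm{Cov}(f_L(\eta), f_L(\eta^t))$ via the chaos expansion: since the OU semigroup acts diagonally on Wiener chaos, multiplying the $k$-th chaos kernel by $e^{-kt}$, one gets
\[
\mathrm{Cov}(f_L(\eta), f_L(\eta^t)) = \sum_{k\geq 1} e^{-kt}\, k!\,\|u_k^{(L)}\|^2 = \mathbb{E}[F^2]\,\mathbb{E}\big[e^{-t N_{f_L}}\big] - \mathbb{E}[F]^2,
\]
where $N_{f_L}$ is the integer random variable of \eqref{e:integer}, i.e.\ $N_{f_L} =_d |\gamma_L|$. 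Equivalently $1 - \mathbb{P}\{f_L(\eta)\neq f_L(\eta^t)\} = $ (up to normalization) $\mathbb{E}[e^{-t|\gamma_L|}\mathbf 1\{|\gamma_L|\ge 1\}] + \mathbb{P}(f_L=\pm1\text{ trivially})$-type terms; the point is that $\mathbb{P}\{f_L(\eta)= f_L(\eta^{t})\}$ stays bounded away from $1$ iff $\mathbb{P}(|\gamma_L| \gtrsim 1/t)$ stays bounded below. So it suffices to prove a \textbf{lower-tail estimate for the spectral sample}: for $t_L L^2\alpha_4(L)\to\infty$, there is $c>0$ with $\mathbb{P}\big(0 < |\gamma_L| \le 1/t_L\big) \geq c$, or more precisely that $|\gamma_L|$ restricted to $\widehat W_{\rho L}$ charges scales comparable to $1/t_L \ll L^2\alpha_4(L) = \mathbb{E}|\gamma_L|$ with positive probability. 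This is exactly where \ref{A1}, \ref{A2}, \ref{A4} enter: using the first- and second-moment formulas \eqref{e:rn} for $\gamma_L^{(1)}$ and $\gamma_L^{(2)}$, together with the upper bound \ref{A4} on $\mathbb{E}[|D_{\tilde x}D_{\tilde y}f_L|^2]$ in terms of $\alpha_4^2(|x-y|/4)$, quasi-multiplicativity \ref{A2}, and the polynomial lower bound \ref{A1}, one runs a \emph{localized second-moment argument} à la \cite[Section 5]{Garban2011}/\cite{GPS10}: restrict $\gamma_L$ to a subwindow, show $\mathbb{E}|\gamma_L \cap \widehat W_{\rho L}| \asymp L^2\alpha_4(L)$ (lower bound from \ref{A3}), bound $\mathbb{E}[|\gamma_L\cap\widehat W_{\rho L}|^2]$ via the two-point function and \ref{A1}--\ref{A2} to get the Paley--Zygmund-type conclusion that $|\gamma_L\cap\widehat W_{\rho L}|$ is of order $L^2\alpha_4(L)$ with positive probability, and then a one-scale-at-a-time / clustering argument (using quasi-multiplicativity across dyadic annuli) to show that conditioned on being nonempty, $|\gamma_L|$ puts mass at every intermediate scale, in particular at scale $1/t_L$, with probability bounded below.

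The main obstacle I anticipate is precisely this lower-tail control on $|\gamma_L|$: the authors themselves flag (in the paragraph around Proposition \ref{p:noNStabgen}) that their spectral bounds are \emph{not} strong enough to obtain the sharp \emph{sensitivity} lower-tail estimates of \cite[Prop.\ 5.12]{Garban2011}, and that only the weaker concentration inequality is available. So the argument here must be the ``one-sided'' version sufficient for \emph{instability} only: rather than showing $\mathbb{P}(|\gamma_L|\le \varepsilon L^2\alpha_4(L)) \to 0$ (which would give sensitivity), one only needs that $|\gamma_L|$ is \emph{not too large}, i.e.\ that with probability bounded below it does not exceed $K/t_L$ for a large constant $K$ — equivalently an upper bound on $\mathbb{E}[|\gamma_L| \wedge (1/t_L)]$ type quantity, or a first-moment truncation combined with the matching lower bound from \ref{A3}. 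Concretely: from \ref{A4} and \ref{A2} the second factorial moment measure of $\gamma_L$ restricted to a box of radius $\sim 1/\sqrt{t_L}\cdot$ (appropriate power) is comparable to the square of its first moment, which forces $|\gamma_L|$ to be $O(1/t_L)$ on that scale with positive probability; combined with the global lower bound $\mathbb{E}|\gamma_L|\gtrsim L^2\alpha_4(L)$ this yields $\mathbb{P}(0<|\gamma_L|\lesssim 1/t_L) > c$, hence $\mathbb{E}[e^{-t_L|\gamma_L|}\mathbf 1\{|\gamma_L|\ge1\}]\ge c'$, which is exactly $\liminf_L \mathbb{P}\{f_L(\eta)=f_L(\eta^{t_L})\} - (\text{trivial part}) > 0$ in the required form. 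I would carry the details of this truncated second-moment step out using \eqref{e:rn} for $\ell=1,2$, \ref{A1}, \ref{A2}, \ref{A4} and the lower bound in \ref{A3}, in the manner of the computations in \cite[Section 5.2]{Garban2011}.
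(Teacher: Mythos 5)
Your overall skeleton is right and matches the paper's: Mehler's formula gives $\E[f_L(\eta)f_L(\eta^{t})]=\E[e^{-t|\gamma_L|}]$, the easy direction follows from $t_L\E|\gamma_L|\to 0$ (the paper uses Jensen; your union-bound/Mecke variant would also work), and the hard direction reduces, exactly as in Proposition~\ref{p:noNStabgen}, to establishing a second-moment/Paley--Zygmund estimate $\P(|\gamma_L|\ge c\,\E|\gamma_L|)\ge c_0$. You even gesture at the correct step — ``bound $\E[|\gamma_L\cap\widehat W_{\rho L}|^2]$ \ldots to get the Paley--Zygmund-type conclusion that $|\gamma_L\cap\widehat W_{\rho L}|$ is of order $L^2\alpha_4(L)$ with positive probability'' — which is precisely Lemma~\ref{l:2ndmomspectravoronoi} plus \eqref{e:PZ}. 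Had you stopped there, the proposal would be essentially the paper's proof.

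But you then abandon this step and replace it with a wrong one. In your last paragraph you conclude that what is needed is $\P(0<|\gamma_L|\lesssim 1/t_L)>c$ and hence $\E[e^{-t_L|\gamma_L|}\mathbf 1\{|\gamma_L|\ge1\}]\ge c'$. This is the condition for the \emph{covariance} to stay bounded away from $0$, i.e., for $f_L$ to be \emph{not noise sensitive}; it is not the condition for noise instability. Since $\P\{f_L(\eta)\neq f_L(\eta^{t})\}=\tfrac12\big(1-\E[e^{-t|\gamma_L|}]\big)$, instability requires $\E[e^{-t_L|\gamma_L|}]$ bounded \emph{away from} $1$, i.e., an \emph{upper} bound on $\E[e^{-t_L|\gamma_L|}\mathbf 1\{|\gamma_L|\ge1\}]$, which is obtained from $|\gamma_L|$ being \emph{large} (at least a fixed fraction of its mean) with positive probability, not small. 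The two directions are genuinely different: Theorem~\ref{t:sharpNS_voronoi} shows that for Voronoi percolation both sharp instability \emph{and} sharp sensitivity hold, so your condition $\P(0<|\gamma_L|\lesssim 1/t_L)>c$ cannot possibly be the right one (it would contradict sensitivity there). Relatedly, the ``one-scale-at-a-time/clustering'' argument you sketch is the lower-tail machinery needed for \emph{sensitivity}, which the paper explicitly says is out of reach under \ref{A1}--\ref{A3}; it is not needed, and not used, for the instability conclusion. The correct and sufficient ingredient is simply the Paley--Zygmund bound at the scale of the mean, obtained from the two-point estimate \ref{A4} (via Proposition~\ref{p:ell}, $\ell=2$), quasi-multiplicativity \ref{A2}, the polynomial lower bound \ref{A1}, and the matching first-moment bounds from \ref{A3}.
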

The crucial ingredients for the proof of Proposition \ref{p:sharpNStab} are Lemma \ref{l:2ndmomspectravoronoi}, which relates the first and second moments of the size of the spectral process, and the Paley-Zygmund inequality. While an estimate for the first moment can be derived easily from Proposition \ref{p:ell} and the bounds in \ref{A3}, the more involved estimate for the second moment necessitates the four assumptions \ref{A1} -- \ref{A3}. The proof can be found in Appendix \ref{s:pfSharpNStab}. Note that, in particular, our proof yields the following concentration estimates for the size of the spectral sample of $f_L$: for any $\epsilon \in (0,1)$, Markov's inequality and assumption \ref{A3} yields
$$ \P\big( |\gamma_{L}| \geq \frac{1}{\epsilon}L^2\alpha_4(L) \big) \leq C \epsilon,$$
 for some constant $C>0$, while from the proof of Proposition \ref{p:sharpNStab}, it follows that there exist constants $c,c_0>0$ such that
 $$  \P\big( |\gamma_{L}| \geq c L^2\alpha_4(L) \big) \ge c_0.$$
 By \eqref{e:quantnoNStab} in Proposition \ref{p:noNStabgen}, these estimates imply that the crossing functional $f_L$ is asymptotically non-degenerate (see remark thereafter).

	\begin{remark}{\rm
		One often has that the crossing functional $f_L$ is \textbf{increasing} (see also Eq.\ \eqref{e:fmon}) as a function of $\eta$, i.e., adding (deleting) points in $\eta$ results in an increase (decrease) of the functional. If we assume that $f_L$ is increasing, then we can use the following observation to verify \ref{A4}: It is straightforward to check, using \eqref{e:fmon}, that $D_{\tx}D_{\ty}f_L(\eta) \neq 0$ implies that either $D_{\tx} f_L(\eta)$ or $D_{\tx} f_L(\eta + \delta_{\ty})$ is non-zero, and either $D_{\ty} f_L( \eta)$ or $D_{\ty} f_L(\eta + \delta_{\tx})$ is non-zero. Thus, we have
		\begin{multline}\label{e:4sum}
			\E[|D_{\tx}D_{\ty}f_L(\eta)|] \lesssim \P(D_{\tx} f_L(\eta) \neq 0, D_{\ty} f_L(\eta) \neq 0) \\+ \P(D_{\tx} f_L(\eta) \neq 0, D_{\ty} f_L( \eta + \delta_{\tx}) \neq 0) 
			+ \P(D_{\tx} f_L(\eta + \delta_{\ty}) \neq 0, D_{\ty} f_L(\eta) \neq 0)\\ +\P(D_{\tx} f_L(\eta + \delta_{\ty}) \neq 0, D_{\ty} f_L(\eta + \delta_{\tx}) \neq 0).
		\end{multline}
	This relation parallels in some sense the notion of {\bf joint pivotality} in the discrete setting -- even though we do not have a similar equality as in the discrete setting; see \cite{GS}.}
 \end{remark}

\subsection{Quasi-multiplicativity in Boolean percolation --- proof of Theorem \ref{t:quasimPBM}}
\label{sec:quasim_boolean}
In this section, we prepare for the proof of Theorem \ref{t:sharpNS_Boolean} by proving the quasi-multiplicativity stated in Theorem \ref{t:quasimPBM} for 4-arm probabilities, as well as for 3-arm probabilities (in the half-plane) for the Boolean percolation model, which in particular confirms the validity of condition \ref{A2} in such a framework. We will deduce this from some lemmas and theorems, stated below, whose proofs are postponed to the subsequent sections. Using these, we complete the proof of Theorem \ref{t:quasimPBM} at the end of this section. Our strategy of proof is significantly inspired by \cite{Kes87}; however, we have tried as much as possible to present our arguments in a self-contained manner, with many illustrative figures. For the sake of readability, some technicalities have been skipped nonetheless, and we refer the reader to \cite{Kes87} for more details. The only places where a deeper knowledge of \cite{Kes87} is assumed are the last part of the proof of Lemma \ref{lem:del}, and the proof of \eqref{e:ineqlowcross} in the same Lemma, where precise references are provided.

We start with the case of 4-arm probabilities. As mentioned above, the proof follows the strategy of analogous results for Bernoulli percolation in \cite{Kes87} (see also \cite{Nol08}), but additionally accounting for local dependencies in Boolean percolation. The idea is to compare $4$-arm events with certain restricted $4$-arm events and show quasi-multiplicativity in the dyadic scale with respect to the latter. Showing quasi-multiplicativity with respect to the restricted events is slightly easier (see Theorem \ref{thm3}), but the challenge then lies in showing that the $4$-arm probabilities are comparable to the probabilities of the corresponding restricted $4$-arm events (see Theorems \ref{thm1} and \ref{thm2}). Finally, we extend quasi-multiplicativity from dyadic scales to all scales. %In order to make the main ideas of the proof more transparent and to keep the length of the present work within reasonable bounds, we do not give full proofs of all lemmas, but rather write detailed descriptions (or proof sketches) in some cases. We also provide more precise references as well as comparison to the analogous results in \cite{Kes87} to understand the differences better.

We recall the required notation. Let $\eta$ be a Poisson process on the unmarked space $\X = \R^2$ with intensity measure $\lambda \; \md x$ for some $\lambda>0$. Consider the Boolean percolation model, whose occupied region is defined as union of unit radius balls centered at points in the Poisson process.  Formally, $\Occ(\eta) := \cup_{x \in \eta}B_1(x)$ is the occupied set and its complement is the vacant set $\mathcal{V}(\eta) := \R^d \setminus \Occ(\eta)$. With the term ``percolation'', we indicate the existence of an unbounded connected component in $\Occ(\eta)$. It is known (see \cite{MR96}) that there exists a critical intensity $\lambda_c \in (0,\infty)$ such that
\[
\P(\mbox{$\Occ(\eta)$ percolates})= 
\begin{cases}
    0 & \text{if } \lambda \leq \lambda_c \\
   1 & \text{if } \lambda > \lambda_c.
\end{cases}
\]
Fix $\lambda = \lambda_c$, i.e., consider the model at criticality. We are interested in the event of a L-R crossing of the box $W_L$, $L>0$, through the occupied region $\Occ(\eta)$. Recall that $f_L$ denotes the $\pm1$-indicator of the above crossing event. The non-triviality of $f_L$ follows from a RSW type result (see \cite[Theorem~1.1(ii)]{Ahlbergsharp18} for example): for all $\kappa>0$, there exists a constant $c_0 = c_0(\kappa) \in (0,1/2)$ such that for every $L \ge 1$,
	\begin{equation}\label{eq:RSW}
		c_0<\P_{\lambda_c}(\text{there is a occupied LR crossing of the box $[0,\kappa L] \times [0,L]$}) <1-c_0.
	\end{equation}

With a slight abuse of notation (adopted uniquely in the present section), for $s \in \N_0$, we now write $W_s = [-2^s, 2^s]^2$ (instead of writing $W_{2^s}$) and let $A_4(s,r)\equiv A_4(2^s,2^r)$ for $r,s \in \N_0$, and so on. For a set $W$, we use the notation $\overline W:= W + B_1(0)$.

 For $\N \ni r \ge 4$, define $\cA(1,r) := [-2^{r-1},-2^r] \times [-2^{r-1},2^{r-1}]$, $\cA(3,r) := - \cA(1,r)$ to be the rectangular regions near the left and right boundaries of $W_r \setminus W_{r-1}$ respectively. Set $\cB(2,r)$ and $\cB(4,r)$ to be $\pi/2$-rotations in the clockwise direction of $\cA(1,r)$ and $\cA(3,r)$ respectively. Note that $\cB(2,r)$ and $\cB(4,r)$ are the rectangular regions near the top and bottom boundaries of $W_r \setminus W_{r-1}$ respectively. Further, define contractions of the above sets as $\cA(1,r)^- := [-2^{r-1}-2,-2^r+2] \times [-2^{r-1},2^{r-1}]$, $\cA(3,r)^- := - \cA(1,r)^-$, while $\cB(2,r)^-$ and $\cB(4,r)^-$ are again the $\pi/2$-rotations in  clockwise direction of $\cA(1,r)^-$ and $\cA(3,r)^-$ respectively. All of these sets are well defined since $2^{r-1}>4$; see Figure \ref{fig:A4T}.
 \medskip
 
 \noindent For $r \ge 4$ and an integer $0 \le s \le r-1$, define the event $\widetilde A_4(s,r)$ as (see Figure \ref{fig:A4T})
	\beaa
		\widetilde A_4(s,r) &:=&\text{there are four paths of alternating types from $\partial W_s$ to $\partial W_r$ such that all } \\
			&&\text{parts of the two occupied paths outside of $W_{r-1}$ lie in $\mathcal{A}(1,r)$ and $\mathcal{A}(3,r)$ }\\
			&&\text{while all parts of the vacant paths outside of $W_{r-1}$ lie in $\mathcal{B}(2,r)$ and }\\
			&&\text{$\mathcal{B}(4,r)$. In addition, there are occupied vertical crossings in $\mathcal{A}(1,r)^{-}$}\\
			&&\text{and $\mathcal{A}(3,r)^{-}$, and vacant horizontal crossings of $\mathcal{B}(2,r)^{-}$ and $\mathcal{B}(4,r)^{-}$.}
	\eeaa
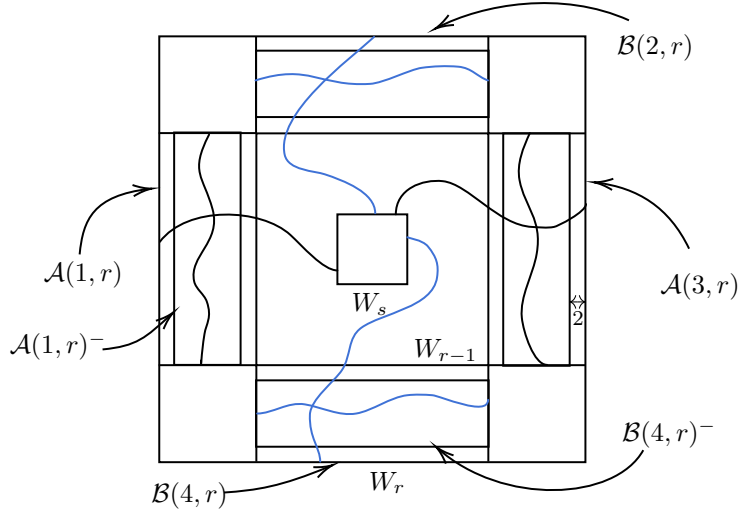
\begin{figure}
% \centering 
\tikzset{every picture/.style={line width=0.75pt}} %set default line width to 0.75pt        

\begin{tikzpicture}[x=0.65pt,y=0.65pt,yscale=-1,xscale=1]
\path (-20,300); %set diagram left start at 0, and has height of 300

%Shape: Square [id:dp9673608098145512] 
\draw   (191.67,25.33) -- (439.67,25.33) -- (439.67,273.33) -- (191.67,273.33) -- cycle ;
%Shape: Square [id:dp8225278251761081] 
\draw   (248.17,81.83) -- (383.17,81.83) -- (383.17,216.83) -- (248.17,216.83) -- cycle ;
%Shape: Right Angle [id:dp3371386677055819] 
\draw   (248,25.67) -- (248,81.67) -- (191.33,81.67) ;
%Shape: Right Angle [id:dp08290670214266038] 
\draw   (191.33,216.83) -- (248.17,216.83) -- (248.17,273) ;
%Shape: Right Angle [id:dp9687547548284934] 
\draw   (440,81.83) -- (383.17,81.83) -- (383.17,25) ;
%Shape: Right Angle [id:dp23813042121965378] 
\draw   (383.17,273) -- (383.17,216.83) -- (440,216.83) ;
%Shape: Rectangle [id:dp7415742955470384] 
\draw   (248,33.67) -- (383.33,33.67) -- (383.33,72.33) -- (248,72.33) -- cycle ;
%Shape: Rectangle [id:dp21543063151189212] 
\draw   (248,225.67) -- (383.33,225.67) -- (383.33,264.33) -- (248,264.33) -- cycle ;
%Shape: Rectangle [id:dp42723377874965074] 
\draw   (239,81.33) -- (239,216.67) -- (200.33,216.67) -- (200.33,81.33) -- cycle ;
%Shape: Rectangle [id:dp4162507915742486] 
\draw   (430.5,81.83) -- (430.5,217.17) -- (391.83,217.17) -- (391.83,81.83) -- cycle ;
%Shape: Square [id:dp7332813327113779] 
\draw   (295.33,129) -- (336,129) -- (336,169.67) -- (295.33,169.67) -- cycle ;
%Shape: Free Drawing [id:dp7299546187467527] 
\draw  [line width=0.75] [line join = round][line cap = round] (329.33,129) .. controls (329.33,114.12) and (344.79,108.39) .. (356.67,109.67) .. controls (371.77,111.28) and (388.56,135.33) .. (408.67,136.33) .. controls (412.88,136.54) and (417.18,137.09) .. (421.33,136.33) .. controls (423.79,135.89) and (440,127.56) .. (440,123) ;
%Shape: Free Drawing [id:dp4517561167186126] 
\draw  [line width=0.75] [line join = round][line cap = round] (192,145) .. controls (207.93,123.76) and (250.86,125.29) .. (268,141) .. controls (274.84,147.27) and (287.06,161.67) .. (295.33,161.67) ;
%Shape: Free Drawing [id:dp8869705452337742] 
\draw  [line width=0.75] [line join = round][line cap = round] (220.67,82.53) .. controls (209.17,104.88) and (217.33,106.5) .. (222.67,125.97) .. controls (227.98,145.36) and (208.61,153.05) .. (212,168.12) .. controls (213,172.57) and (217.85,175.46) .. (219.33,179.79) .. controls (222.54,189.14) and (219.01,192.45) .. (218,198.59) .. controls (215.97,210.91) and (216,220.41) .. (216,214.16) ;
%Shape: Free Drawing [id:dp13586245949200748] 
\draw  [line width=0.75] [line join = round][line cap = round] (407.33,81.68) .. controls (403.5,89.99) and (399.52,99.48) .. (401.33,109.07) .. controls (404.37,125.18) and (414.72,131.49) .. (411.33,151.84) .. controls (409.23,164.47) and (401.35,176.52) .. (402.67,189.26) .. controls (403.53,197.6) and (409.66,216.66) .. (417.33,216.66) ;
%Shape: Free Drawing [id:dp5505336479180345] 
\draw  [color={rgb, 255:red, 63; green, 114; blue, 214 }  ,draw opacity=1 ][line width=0.75] [line join = round][line cap = round] (247.67,51) .. controls (279.38,40.99) and (284.46,54.79) .. (314.33,53) .. controls (322.4,52.52) and (337.33,44.65) .. (344.33,43.67) .. controls (351.38,42.68) and (358.58,42.4) .. (365.67,43) .. controls (373.68,43.68) and (376.75,48.92) .. (383,51) ;
%Shape: Free Drawing [id:dp5561942116517731] 
\draw  [color={rgb, 255:red, 63; green, 114; blue, 214 }  ,draw opacity=1 ][line width=0.75] [line join = round][line cap = round] (316.67,25.67) .. controls (301.79,39.35) and (259.28,70.07) .. (264.67,94.33) .. controls (266.71,103.51) and (281.9,103.63) .. (288,105.67) .. controls (298.89,109.3) and (317.33,116.2) .. (317.33,128.33) ;
%Shape: Free Drawing [id:dp7199946786528157] 
\draw  [color={rgb, 255:red, 63; green, 114; blue, 214 }  ,draw opacity=1 ][line width=0.75] [line join = round][line cap = round] (285.33,273) .. controls (285.33,264.8) and (278.84,259.53) .. (279.33,245.67) .. controls (279.66,236.62) and (293.5,226.54) .. (298,219.67) .. controls (303.53,211.2) and (301.01,200.95) .. (309.33,195) .. controls (321.41,186.38) and (350.79,183.45) .. (353.33,165.67) .. controls (354.4,158.18) and (353.07,153.93) .. (348,147.67) .. controls (344.83,143.75) and (336,142.65) .. (336,142.33) ;
%Curve Lines [id:da9110033897895768] 
\draw    (145.33,152.33) .. controls (147.3,131.98) and (153.15,118.09) .. (185.82,114.17) ;
\draw [shift={(187.33,114)}, rotate = 173.84] [color={rgb, 255:red, 0; green, 0; blue, 0 }  ][line width=0.75]    (10.93,-3.29) .. controls (6.95,-1.4) and (3.31,-0.3) .. (0,0) .. controls (3.31,0.3) and (6.95,1.4) .. (10.93,3.29)   ;
%Curve Lines [id:da012078996963903021] 
\draw    (500,149) .. controls (488.3,124.62) and (469.95,113.56) .. (446.48,110.55) ;
\draw [shift={(444.67,110.33)}, rotate = 6.25] [color={rgb, 255:red, 0; green, 0; blue, 0 }  ][line width=0.75]    (10.93,-3.29) .. controls (6.95,-1.4) and (3.31,-0.3) .. (0,0) .. controls (3.31,0.3) and (6.95,1.4) .. (10.93,3.29)   ;
%Curve Lines [id:da6890006926740964] 
\draw    (465,16) .. controls (425.8,-1.64) and (379.88,6.65) .. (353.58,20.17) ;
\draw [shift={(352,21)}, rotate = 331.7] [color={rgb, 255:red, 0; green, 0; blue, 0 }  ][line width=0.75]    (10.93,-3.29) .. controls (6.95,-1.4) and (3.31,-0.3) .. (0,0) .. controls (3.31,0.3) and (6.95,1.4) .. (10.93,3.29)   ;
%Curve Lines [id:da19968103561336226] 
\draw    (229.33,291) .. controls (248.28,289.69) and (262.1,287.75) .. (290.89,275.75) ;
\draw [shift={(292.67,275)}, rotate = 157.11] [color={rgb, 255:red, 0; green, 0; blue, 0 }  ][line width=0.75]    (10.93,-3.29) .. controls (6.95,-1.4) and (3.31,-0.3) .. (0,0) .. controls (3.31,0.3) and (6.95,1.4) .. (10.93,3.29)   ;
%Curve Lines [id:da6523212551093673] 
\draw    (158.67,207.67) .. controls (178.71,205.73) and (183.71,200.96) .. (198.59,187.29) ;
\draw [shift={(200,186)}, rotate = 137.49] [color={rgb, 255:red, 0; green, 0; blue, 0 }  ][line width=0.75]    (10.93,-3.29) .. controls (6.95,-1.4) and (3.31,-0.3) .. (0,0) .. controls (3.31,0.3) and (6.95,1.4) .. (10.93,3.29)   ;
%Curve Lines [id:da327370859016332] 
\draw    (473.33,266.33) .. controls (456.83,296.03) and (391.33,298.94) .. (353.8,265.36) ;
\draw [shift={(352.67,264.33)}, rotate = 42.88] [color={rgb, 255:red, 0; green, 0; blue, 0 }  ][line width=0.75]    (10.93,-3.29) .. controls (6.95,-1.4) and (3.31,-0.3) .. (0,0) .. controls (3.31,0.3) and (6.95,1.4) .. (10.93,3.29)   ;
%Shape: Free Drawing [id:dp9686857248761971] 
\draw  [color={rgb, 255:red, 63; green, 114; blue, 214 }  ,draw opacity=1 ][line width=0.75] [line join = round][line cap = round] (248.63,245) .. controls (257.7,245) and (266.41,237.98) .. (276.44,239.67) .. controls (286.39,241.34) and (296.28,246.67) .. (306.23,245) .. controls (315.97,243.37) and (325.93,235.46) .. (336.03,234.33) .. controls (340.85,233.79) and (345.75,233.95) .. (350.59,234.33) .. controls (355.21,234.7) and (383.03,247.26) .. (383.03,237) ;

% Text Node
\draw (312,277) node [anchor=north west][inner sep=0.75pt]  [font=\footnotesize]  {$W_r$};
% Text Node
\draw (302,173) node [anchor=north west][inner sep=0.75pt]  [font=\footnotesize]  {$W_s$};
% Text Node
\draw (338,200) node [anchor=north west][inner sep=0.75pt]  [font=\footnotesize]  {$W_{r-1}$};
% Text Node
\draw (122.67,154.73) node [anchor=north west][inner sep=0.75pt]  [font=\footnotesize]  {$\mathcal{A}( 1,r)$};
% Text Node
\draw (481.67,161.4) node [anchor=north west][inner sep=0.75pt]  [font=\footnotesize]  {$\mathcal{A}( 3,r)$};
% Text Node
\draw (456.33,22.07) node [anchor=north west][inner sep=0.75pt]  [font=\footnotesize]  {$\mathcal{B}( 2,r)$};
% Text Node
\draw (185.33,283.4) node [anchor=north west][inner sep=0.75pt]  [font=\footnotesize]  {$\mathcal{B}( 4,r)$};
% Text Node
\draw (103,196) node [anchor=north west][inner sep=0.75pt]  [font=\footnotesize]  {$\mathcal{A}( 1,r)^{-}$};
% Text Node
\draw (459.33,246.73) node [anchor=north west][inner sep=0.75pt]  [font=\footnotesize]  {$\mathcal{B}( 4,r)^{-}$};

%added two width
\draw[ ->, thin] (435,180) -- (439.67,180);
\draw[<-, thin] (430.5,180) -- (435,180);

\draw (430.5,184) node [anchor=north west][inner sep=0.75pt]  [font=\footnotesize]  {\scriptsize{$2$}};

\end{tikzpicture}

 \caption{Illustration of the event $\widetilde A_4(s,r)$. The blue curves denote vacant paths while black ones denote occupied paths. Note, the crossings inside any contracted rectangle, e.g., $\cA^-(1,r)$, is independent of all other crossings of opposite type (blue in this case), in the figure.}
\label{fig:A4T}
\vspace{-.5cm}
\end{figure}
 
 \noindent Here, by a vertical crossing we mean a top to bottom crossing, while a horizontal crossing refers to a left to right crossing. Let $\widetilde \alpha_4(s,r):=\P_{\lambda_c}(\widetilde A_4(s,r))$ for $s \le r-1$ and $\widetilde \alpha_4(r,r)=1$.

 \begin{lemma}\label{lem:extend} There exists $C \in (1,\infty)$ such that, for all $r \ge 4$ and all $s \le r$, it holds that 
	$$
	\widetilde \alpha_4(s,r) \le C \widetilde \alpha_4(s,r+1).
	$$
\end{lemma}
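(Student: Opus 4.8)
\medskip
\noindent\emph{Proof proposal.} This is an \emph{extension} (enlargement) estimate, of the type that forms the backbone of Kesten's quasi-multiplicativity machinery: it says that the ``boxed'' four-arm event $\widetilde A_4(s,r)$ can be pushed one dyadic scale outward, to $\widetilde A_4(s,r+1)$, at a cost bounded uniformly in $r$ and $s$. The plan is to produce an event $G_r$ with
\[
\widetilde A_4(s,r)\cap G_r\ \subseteq\ \widetilde A_4(s,r+1)
\qquad\text{and}\qquad
\P_{\lambda_c}\big(\widetilde A_4(s,r)\cap G_r\big)\ \ge\ c\,\widetilde\alpha_4(s,r)
\]
for some $c\in(0,1)$ independent of $r,s$; the lemma then follows with $C=1/c$. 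The cases $s\in\{r,r-1\}$ are degenerate: there $\widetilde\alpha_4(s,r)\le 1$, while $\widetilde\alpha_4(s,r+1)$ is the probability of a single-annulus boxed four-arm event, which \eqref{eq:RSW} together with the FKG inequality bounds below uniformly in $r$. So from now on assume $s\le r-1$.

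\smallskip
First I would observe that $\widetilde A_4(s,r+1)$ imposes \emph{no} requirement on the configuration inside $W_r$ other than the presence of the four arms; hence on $\widetilde A_4(s,r)$ the ``inner part'' (everything inside $W_r$) already complies, and $G_r$ only has to do two things in the annular region $W_{r+1}\setminus W_{r-1}$: (a) continue each of the four arms from $\partial W_r$ to $\partial W_{r+1}$ inside its prescribed rectangle --- $\cA(1,r+1),\cA(3,r+1)$ for the occupied arms, $\cB(2,r+1),\cB(4,r+1)$ for the vacant ones --- welding it to the landing point of the corresponding inner arm; and (b) create the occupied top--bottom crossings of $\cA(1,r+1)^-,\cA(3,r+1)^-$ and the vacant left--right crossings of $\cB(2,r+1)^-,\cB(4,r+1)^-$. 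Next I would build $G_r$, following \cite{Kes87} (see also \cite{Nol08}), as the intersection of a bounded number (independent of $r$) of occupied/vacant crossing events of rectangles of bounded aspect ratio, all contained in $W_{r+1}\setminus W_{r-1}$: for the left occupied arm, an occupied left--right crossing of $\cA(1,r+1)$, an occupied top--bottom crossing of $\cA(1,r+1)^-$, and $\text{O}(1)$ auxiliary occupied crossings of small rectangles straddling the left side $\{-2^{r}\}\times[-2^{r-1},2^{r-1}]$ of $W_r$ --- the latter serving to capture the landing of the inner arm, which on $\widetilde A_4(s,r)$ is \emph{forced} to lie on that segment since the occupied arm is confined to $\cA(1,r)$ outside $W_{r-1}$ --- and three symmetric constructions (with ``occupied'' $\leftrightarrow$ ``vacant'') for the other arms. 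The contractions by $2$ in the $\cdot^-$ rectangles make every one of these crossing events measurable with respect to $\eta$ restricted to the corresponding un-contracted rectangle enlarged by $B_1(0)$, which neutralises the unit range of the Boolean model and lets the pieces attached to distinct arms sit in disjoint regions; by \eqref{eq:RSW} and the FKG inequality each of the $\text{O}(1)$ pieces has probability bounded below uniformly in $r$, so $\P_{\lambda_c}(G_r)\ge c_1>0$.

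\smallskip
The last step, and the one I expect to require genuine care, is combining the two estimates, i.e.\ passing from $\P_{\lambda_c}(G_r)\ge c_1$ to $\P_{\lambda_c}\big(\widetilde A_4(s,r)\cap G_r\big)\ge c\,\widetilde\alpha_4(s,r)$. The obstacle is that $G_r$ and $\widetilde A_4(s,r)$ are \emph{not} independent: both read $\eta$ in the overlap region $\overline{W_r}\setminus W_{r-1}$ (and the welding rectangles of $G_r$ straddle $\partial W_r$), so one cannot simply multiply probabilities. The plan is to handle this by an exploration/conditioning argument in the spirit of \cite{Kes87}: reveal the four arms of $\widetilde A_4(s,r)$ together with the contracted-rectangle crossings by exploring $\eta$ inside $\overline{W_r}$, and then, in the conditional law of $\eta$ on the still-unexplored portion of $\overline{W_{r+1}}\setminus W_{r-1}$, re-run the RSW--FKG construction of the previous paragraph to certify $G_r$ with probability still bounded below by some $c_1'>0$; here one uses that the constituent events of $\widetilde A_4(s,r)$ met in the overlap region are occupied (increasing) and vacant (decreasing) crossings of the \emph{same} type as the pieces of $G_r$ built there, so the conditioning does not spoil the RSW--FKG lower bounds. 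This yields $\P_{\lambda_c}\big(\widetilde A_4(s,r)\cap G_r\big)\ge c_1'\,\widetilde\alpha_4(s,r)$, hence the lemma with $C=1/c_1'$. (It is precisely to make this welding compatible with the local dependencies of Boolean percolation that the contracted rectangles $\cA(\cdot,r)^-,\cB(\cdot,r)^-$ are built into the definition of $\widetilde A_4$ in the first place.)
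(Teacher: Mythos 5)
Your proposal is in the right spirit and shares the paper's overall strategy — construct a welding event $G_r$ in $W_{r+1}\setminus W_{r-1}$, observe $\widetilde A_4(s,r)\cap G_r\subseteq\widetilde A_4(s,r+1)$, bound $\P(G_r)$ below by RSW and FKG, and handle the dependence between $\widetilde A_4(s,r)$ and $G_r$. But the dependence-handling step, which you rightly identify as the delicate one, is where your argument is genuinely incomplete, and the paper does something cleaner than what you sketch.

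You propose to reveal the four arms and the contracted-rectangle crossings by an exploration inside $\overline W_r$, and then re-run RSW--FKG under the conditional law. The difficulty is that the welding pieces of $G_r$ straddle $\partial W_r$ by construction (they must, in order to connect to the inner arm landing points), so they read $\eta$ precisely in the region you have already revealed. Stating that ``the conditioning does not spoil the RSW--FKG lower bounds'' is where the proof is missing: you would have to make the stopping set precise, verify that the regions probed by each piece of $G_r$ fall in the unrevealed part, and justify FKG and RSW for the conditional Poisson measure given that stopping set — none of which is automatic. The paper avoids this entirely by choosing a different conditioning. Its welding event $\widetilde B(r,r+1)$ is built from a single big occupied bridge $[2^{r-1}+2,2^{r+1}]\times[-2^{r-1},2^{r-1}]$ (and its rotations/reflection), which is anchored at radius $2^{r-1}+2$ — entirely outside $\overline W_{r-1}$ — and automatically welds, by planar topology, to both the $\cA(\cdot,r)^-$ and the $\cA(\cdot,r+1)^-$ crossings. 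One then conditions on the \emph{full} configuration $\eta_{\overline W_{r-1}}$ (one dyadic scale inward, not on $\overline W_r$), writes $\widetilde A_4(s,r)=A_+\cap A_-$ and $\widetilde B(r,r+1)=B_+\cap B_-$ with $A_+,B_+$ increasing and $A_-,B_-$ decreasing, uses the conditional independence of $A_+\cap B_+$ and $A_-\cap B_-$ given $\eta_{\overline W_{r-1}}$ (the occupied and vacant pieces outside $\overline W_{r-1}$ live in essentially disjoint regions), applies FKG within each half, and finally uses that $B_\pm$ are independent of $\eta_{\overline W_{r-1}}$ together with \eqref{eq:RSW}. No exploration and no stopping set is needed; I would recommend replacing your third paragraph with this conditioning-on-$\overline W_{r-1}$ plus increasing/decreasing split argument, and replacing the small ``auxiliary'' welding rectangles by the single bridging rectangle starting at $2^{r-1}+2$, since the latter is what makes the conditioning work.
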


%{not used} Note that Lemma~\ref{lem:extend} in particular implies that for $j \in \N_0$ with $j+4 \ge s$,
%\begin{equation}\label{eq:rem}
%	\widetilde \alpha_4(s,j+4) \ge \widetilde \alpha_4(s,4 \vee s)\;C^{-j \wedge (j+4-s)}.
%\end{equation}

\begin{theorem}\label{thm1} There exists $r_0 \in \N$ with $r_0 \geq 4$ such that for all $r \ge r_0$ and all $s \le r$,
		$$
		\alpha_4(s,r) \asymp \widetilde \alpha_4(s,r).
		$$
	\end{theorem}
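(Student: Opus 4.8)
The plan is to prove the two inequalities $\widetilde\alpha_4(s,r)\lesssim\alpha_4(s,r)$ and $\alpha_4(s,r)\lesssim\widetilde\alpha_4(s,r)$ separately, the first being trivial and the second being the heart of the matter. For the first, note that by construction the event $\widetilde A_4(s,r)$ demands four alternating arms from $\partial W_s$ to $\partial W_r$ \emph{plus} localization of these arms outside $W_{r-1}$ and the extra crossings of the contracted rectangles, so $\widetilde A_4(s,r)\subseteq A_4(s,r)$ and hence $\widetilde\alpha_4(s,r)\le\alpha_4(s,r)$ with constant $1$. The same remark also disposes of the degenerate cases $s\in\{r-1,r\}$: there all the events involved live in the fixed aspect-ratio annulus $W_r\setminus W_{r-1}$, so both $\alpha_4$ and $\widetilde\alpha_4$ are bounded above and below by absolute constants, using the RSW estimate \eqref{eq:RSW} and the Harris--FKG inequality (recall that occupied crossings are increasing and vacant crossings decreasing events of $\eta$, since $\Occ(\eta)$ grows with $\eta$). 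So from now on I would assume $s\le r-2$ and $r\ge r_0$, with $r_0$ large enough that all the rectangles $\mathcal{A}(i,r),\mathcal{B}(i,r)$ and their contractions are well defined and that the RSW/arm-separation estimates below apply uniformly.

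For the reverse inequality I would follow Kesten's arm-extension scheme, adapted to the short-range dependence of Boolean occupancy. First, reveal the restriction of $\eta$ to a $1$-neighbourhood of $W_{r-1}$; on $A_4(s,r)$ the four arms restricted to $W_{r-1}$ yield four alternating arms from $\partial W_s$ to $\partial W_{r-1}$, reaching $\partial W_{r-1}$ at four points in cyclic order occupied--vacant--occupied--vacant. Conditionally on this inner data, the restriction of $\eta$ to $W_r\setminus W_{r-1}$ is independent, and the goal is to show that, with probability bounded below uniformly in the inner data and in $r\ge r_0$, this outer configuration realizes the extra requirements defining $\widetilde A_4(s,r)$: the two occupied arms are continued through $\mathcal{A}(1,r)$ and $\mathcal{A}(3,r)$, the two vacant arms through $\mathcal{B}(2,r)$ and $\mathcal{B}(4,r)$, \emph{all} of their parts outside $W_{r-1}$ confined to those four rectangles, and in addition the prescribed occupied vertical crossings of $\mathcal{A}(1,r)^-,\mathcal{A}(3,r)^-$ and vacant horizontal crossings of $\mathcal{B}(2,r)^-,\mathcal{B}(4,r)^-$ occur. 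Integrating this conditional lower bound over the inner configuration yields $\alpha_4(s,r)\lesssim\widetilde\alpha_4(s,r)$.

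The construction in the outer annulus would proceed as follows. The prescribed crossings of the contracted rectangles follow from \eqref{eq:RSW} alone, and by FKG one may demand them simultaneously; the difficulty is to combine them with the correct continuation of the four \emph{arbitrary} inner arm-ends. Here I would first invoke arm separation: using RSW one argues that, at the cost of an absolute multiplicative constant, one may assume the four inner arms land at four well-separated points, one near the middle of each side of $\partial W_{r-1}$; then one explicitly builds a bounded collection of occupied and vacant crossings of sub-rectangles and corner quarter-annuli in $W_r\setminus W_{r-1}$ that glue the occupied arm-ends into $\mathcal{A}(1,r),\mathcal{A}(3,r)$ and the vacant arm-ends into $\mathcal{B}(2,r),\mathcal{B}(4,r)$, while forcing every occupied (resp.\ vacant) continuation outside $W_{r-1}$ to stay inside the union of the designated occupied (resp.\ vacant) rectangles. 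The $2$-unit buffers built into the contracted rectangles are what makes this go through in the Boolean setting: a crossing contained in a contracted rectangle depends on $\eta$ only there, hence is independent of the crossings of the opposite type living in disjoint (distance $>2$) regions, and one can then assemble all pieces using FKG within each type and independence across spatially separated regions. Since there are only finitely many such events, each of probability bounded below uniformly in $r\ge r_0$, their combination gives the required uniform lower bound. If the natural construction localizes the continued arms in a slightly different annulus than the one appearing in the definition of $\widetilde A_4(s,r)$, Lemma~\ref{lem:extend} reconciles the one-scale shift.

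The main obstacle is precisely this joint step: arm separation and rerouting of four alternating arms with arbitrary, possibly clustered, landing points on $\partial W_{r-1}$, carried out uniformly in $r\ge r_0$ and $s\le r-2$ and compatibly with the local dependence of Boolean occupancy. This forces one to re-establish the RSW and arm-separation estimates for Boolean percolation with explicit buffer zones, and to check carefully that the constraint ``occupied (resp.\ vacant) parts outside $W_{r-1}$ lie in $\mathcal{A}(1,r)\cup\mathcal{A}(3,r)$ (resp.\ $\mathcal{B}(2,r)\cup\mathcal{B}(4,r)$)'' is genuinely enforced by the crossings one constructs rather than merely made likely --- this is where Kesten's argument is most delicate and where the adaptation to the continuum model requires the most care. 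Everything else (the trivial inclusion, the small-scale cases, and the bookkeeping with $r_0$) is routine.
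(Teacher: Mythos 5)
Your trivial direction $\widetilde\alpha_4\le\alpha_4$ and your general plan (condition on an inner configuration, extend through the outer annulus using RSW and FKG, respecting the $2$-unit buffers) correctly identify the shape of the argument, and you correctly flag arm separation as ``where the adaptation requires the most care.'' But the way you propose to handle that step contains a genuine gap, and it is exactly the step the paper's proof is built around.

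Your key claim is: ``conditionally on the inner data, \ldots with probability bounded below uniformly in the inner data and in $r\ge r_0$, this outer configuration realizes the extra requirements defining $\widetilde A_4(s,r)$.'' This is false. If the four arm-ends land on $\partial W_{r-1}$ within an $\varepsilon$-neighbourhood of each other (which happens with positive probability for every $\varepsilon>0$ once $r-s$ is large), the conditional probability of rerouting them into the four disjoint rectangles $\mathcal{A}(1,r),\mathcal{B}(2,r),\mathcal{A}(3,r),\mathcal{B}(4,r)$ tends to $0$ as $\varepsilon\to 0$. There is no uniform lower bound over ``arbitrary, possibly clustered'' landing points. You then say ``using RSW one argues that \ldots one may assume the four inner arms land at four well-separated points,'' but this arm-separation statement, for Boolean percolation, is precisely what this theorem (together with Lemmas~\ref{lem:del}, \ref{lem:etaext}, \ref{lem:extend}) is establishing; it cannot be invoked as a known input. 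Your proposal therefore reduces the theorem to itself at the crucial point.

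Concretely, the paper does not condition once and extend. It introduces the well-separated event $A_4^\theta$ via the $(\theta,k)$-fence construction, proves (Lemma~\ref{lem:del}) that the probability of a crossing \emph{without} a fence is at most $\delta$ for a suitable $\theta(\delta)$, and then \emph{iterates} the resulting decomposition
$\alpha_4(s,j)\le\alpha_4^\theta(s,j)+8\delta\,\alpha_4(s,j-1)$
down from scale $r-2$ to a scale $l\approx \frac{r}{2}\vee s$. Each $\alpha_4^\theta(s,j)$ term is controlled by $\widetilde\alpha_4(s,j+2)$ via Lemma~\ref{lem:etaext} (the one-scale extension you gesture at, but carried out only \emph{after} separation is secured), and then pushed up to scale $r$ by repeated use of Lemma~\ref{lem:extend}, at the cost of a factor $C^{r-2-j}$. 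Choosing $\delta<\frac{1}{8}\wedge (32 C^2)^{-1}$ makes the resulting geometric series converge, and the boundary term at scale $l$ is killed by the exponential factor $(8\delta)^{(\frac{r}{2}\wedge(r-s))-3}$. This multi-scale telescoping is indispensable: a single-scale ``extension with uniform conditional lower bound'' fails exactly because separation can fail at any one scale; only the union over scales of the fence events carries the required probability mass. Your outline omits this iterative structure entirely, and with it the mechanism that makes the constants uniform in $r$ and $s$.
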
 

Next we define a similar event inward. For $3 \le s\le r-1$ (so that $2^s>4$), define the event $\dbtilde A_4(s,r)$ as (see Figure \ref{fig:dbtildeA4})
\beaa
\dbtilde{A}_4(s,r) & = & \text{there are four paths of alternating type from $\partial W_r$ to $\partial W_s$ such that all parts} \\
&&\text{of the two occupied paths outside of $W_r \setminus W_{s+1}$ lie in $\mathcal{A}(1,s+1)$ and }\\
&&\text{$\mathcal{A}(3,s+1)$, while all parts of the two vacant paths outside of $W_r \setminus W_{s+1}$ lie} \\
&&\text{in $\mathcal{B}(2,s+1)$ and $\mathcal{B}(4,s+1)$. In addition there are occupied vertical}\\
&&\text{crossings in $\mathcal{A}(1,s+1)^{-}$ and $\mathcal{A}(3,s+1)^{-}$, and vacant horizontal crossings of}\\
&&\text{$\mathcal{B}(2,s+1)^{-}$ and $\mathcal{B}(4,s+1)^{-}$.}
\eeaa

\begin{figure}
\tikzset{every picture/.style={line width=0.75pt}} %set default line width to 0.75pt        

\begin{tikzpicture}[x=0.65pt,y=0.65pt,yscale=-1,xscale=1]
\path (-40,300); %set diagram left start at 0, and has height of 300

%Shape: Square [id:dp9673608098145512] 
\draw   (191.67,25.33) -- (439.67,25.33) -- (439.67,273.33) -- (191.67,273.33) -- cycle ;
%Shape: Square [id:dp8225278251761081] 
\draw   (259.67,93.33) -- (371.67,93.33) -- (371.67,205.33) -- (259.67,205.33) -- cycle ;
%Shape: Right Angle [id:dp3371386677055819] 
\draw   (295.33,93.33) -- (295.33,129) -- (259.67,129) ;
%Shape: Right Angle [id:dp08290670214266038] 
\draw   (259.67,169.67) -- (295.33,169.67) -- (295.33,205.33) ;
%Shape: Right Angle [id:dp9687547548284934] 
\draw   (371.67,129) -- (336,129) -- (336,93.33) ;
%Shape: Right Angle [id:dp23813042121965378] 
\draw   (336,205.33) -- (336,169.67) -- (371.67,169.67) ;
%Shape: Rectangle [id:dp42723377874965074] 
\draw   (364.5,129.25) -- (364.5,169.75) -- (342.5,169.75) -- (342.5,129.25) -- cycle ;
%Shape: Square [id:dp7332813327113779] 
\draw   (295.33,129) -- (336,129) -- (336,169.67) -- (295.33,169.67) -- cycle ;
%Shape: Rectangle [id:dp6757364327578599] 
\draw   (288.5,129.25) -- (288.5,169.75) -- (266.5,169.75) -- (266.5,129.25) -- cycle ;
%Shape: Rectangle [id:dp9535933955098583] 
\draw   (295.32,99.63) -- (335.82,99.88) -- (335.68,122.62) -- (295.18,122.38) -- cycle ;
%Shape: Rectangle [id:dp21358423246966973] 
\draw   (295.33,175.63) -- (335.82,175.87) -- (335.68,199.12) -- (295.18,198.88) -- cycle ;
%Shape: Free Drawing [id:dp2690926321875111] 
\draw  [color={rgb, 255:red, 63; green, 114; blue, 214 }  ,draw opacity=1 ][line width=0.75] [line join = round][line cap = round] (285,25.25) .. controls (282.47,25.25) and (277.73,37.13) .. (279.5,41.25) .. controls (282.6,48.47) and (300.65,57.52) .. (302,59.25) .. controls (308.72,67.86) and (296.25,80.39) .. (304.5,87.75) .. controls (305.2,88.37) and (331.63,103.36) .. (324,112.75) .. controls (318.41,119.63) and (305.6,120.34) .. (303.5,128.75) ;
%Shape: Free Drawing [id:dp4025877111227538] 
\draw  [color={rgb, 255:red, 63; green, 114; blue, 214 }  ,draw opacity=1 ][line width=0.75] [line join = round][line cap = round] (439.52,222.91) .. controls (411.67,222.91) and (411.06,238.85) .. (385.19,243.29) .. controls (364.94,246.76) and (348.93,228.42) .. (329.37,233.84) .. controls (321.2,236.11) and (314.03,241.22) .. (305.95,243.78) .. controls (302.46,244.89) and (295.58,247.38) .. (294.98,243.78) .. controls (292.31,227.82) and (301.57,224.07) .. (309.93,211.97) .. controls (318,200.31) and (305.96,191.77) .. (306.45,179.16) .. controls (306.63,174.42) and (316.41,172.06) .. (316.41,169.72) ;
%Shape: Free Drawing [id:dp30569342935808863] 
\draw  [color={rgb, 255:red, 63; green, 114; blue, 214 }  ,draw opacity=1 ][line width=0.75] [line join = round][line cap = round] (295.5,187.25) .. controls (311.57,197.96) and (316.22,174.71) .. (336,186.25) ;
%Shape: Free Drawing [id:dp2000044146272968] 
\draw  [color={rgb, 255:red, 63; green, 114; blue, 214 }  ,draw opacity=1 ][line width=0.75] [line join = round][line cap = round] (295.5,109.75) .. controls (301.61,118.92) and (312,108.51) .. (317.5,106.25) .. controls (322.6,104.15) and (326.34,111.23) .. (330,112.75) .. controls (331.9,113.54) and (333.94,111.25) .. (336,111.25) ;
%Shape: Free Drawing [id:dp501726867339563] 
\draw  [color={rgb, 255:red, 0; green, 0; blue, 0 }  ,draw opacity=1 ][line width=0.75] [line join = round][line cap = round] (278,129.15) .. controls (275.01,131.72) and (269.31,136.68) .. (271.5,141.41) .. controls (272.87,144.35) and (277.46,145.2) .. (278.5,148.27) .. controls (282.64,160.45) and (280.8,160.53) .. (276,169.35) ;
%Shape: Free Drawing [id:dp5075262012803015] 
\draw  [color={rgb, 255:red, 0; green, 0; blue, 0 }  ,draw opacity=1 ][line width=0.75] [line join = round][line cap = round] (350.5,129.25) .. controls (350,132.42) and (348.32,135.62) .. (349,138.75) .. controls (349.32,140.21) and (352.25,139.46) .. (353,140.75) .. controls (354.28,142.96) and (354.73,145.71) .. (354.5,148.25) .. controls (354.21,151.4) and (352.35,154.2) .. (351.5,157.25) .. controls (351.02,158.99) and (356.5,161.49) .. (357,162.75) .. controls (357.87,164.92) and (357.5,167.41) .. (357.5,169.75) ;
%Shape: Free Drawing [id:dp3922024759271092] 
\draw  [color={rgb, 255:red, 0; green, 0; blue, 0 }  ,draw opacity=1 ][line width=0.75] [line join = round][line cap = round] (376.26,25.27) .. controls (363.84,25.27) and (367.13,45.98) .. (371.23,49.86) .. controls (377.79,56.05) and (401.45,67.08) .. (403.92,76.96) .. controls (406.73,88.17) and (397.74,95.68) .. (394.87,105.56) .. controls (393.04,111.88) and (392.5,118.56) .. (392.36,125.13) .. controls (392.13,135.83) and (415.48,131.06) .. (411.97,147.71) .. controls (410.76,153.45) and (399.02,153.17) .. (396.38,152.73) .. controls (384.07,150.68) and (363.3,136.14) .. (352.63,140.19) .. controls (346.58,142.48) and (336.03,143.77) .. (336.03,150.22) ;
%Shape: Free Drawing [id:dp85731692075048] 
\draw  [color={rgb, 255:red, 0; green, 0; blue, 0 }  ,draw opacity=1 ][line width=0.75] [line join = round][line cap = round] (192,130.25) .. controls (206.7,108.2) and (223.9,149.24) .. (237.5,150.75) .. controls (258.55,153.09) and (276.08,131.93) .. (295,142.25) ;

% Text Node
\draw (442,150) node [anchor=north west][inner sep=0.75pt]  [font=\small]  {$W_r$};
% Text Node
\draw (310.5,150) node [anchor=north west][inner sep=0.75pt]  [font=\small]  {$W_s$};
% Text Node
\draw (335.5,210) node [anchor=north west][inner sep=0.75pt]  [font=\small]  {$W_{s+1}$};

\end{tikzpicture}
\vspace{-.5cm}
\caption{Illustration of the event ${\overset{\; \approx}{A}}_4(s,r)$ with blue denoting vacant paths and black denoting occupied paths.}
\label{fig:dbtildeA4}
\end{figure}
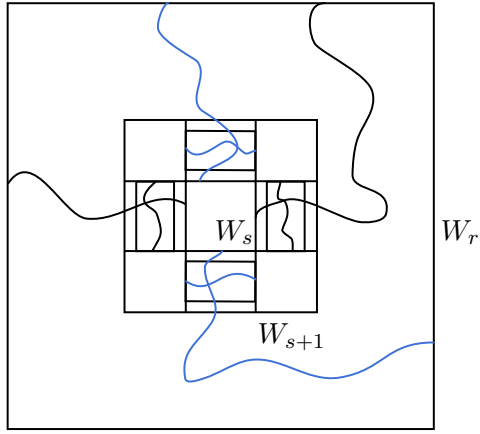

\noindent Let $\dbtilde \alpha_4(s,r)=\P_{\lambda_c}(\dbtilde A_4(s,r))$ for $s \le r - 1$ and $\dbtilde \alpha_4(r,r)=1$.

\begin{lemma}\label{lem:extendin} There exists $C \in (1,\infty)$ such that for all $s \ge 4$ and all $r \geq s$ it holds that
	$$
	\dbtilde \alpha_4(s,r) \le C \dbtilde \alpha_4(s-1,r).
	$$
\end{lemma}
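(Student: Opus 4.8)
\emph{Proof proposal.}
The plan is to prove the inequality by a one--scale gluing argument of Kesten type: I will exhibit an auxiliary event $H$ such that $\dbtilde A_4(s,r)\cap H\subseteq\dbtilde A_4(s-1,r)$ and $\P_{\lambda_c}(\dbtilde A_4(s,r)\cap H)\geq c\,\P_{\lambda_c}(\dbtilde A_4(s,r))$ for some absolute $c>0$; these two facts immediately give $\dbtilde\alpha_4(s-1,r)\geq c\,\dbtilde\alpha_4(s,r)$, i.e.\ the claim with $C=1/c$. The boundary case $r=s$ (for which $\dbtilde\alpha_4(s,s)=1$) is treated separately: $\dbtilde A_4(s-1,s)$ only asks for boundedly many crossings of boxes of bounded aspect ratio inside the fixed-modulus annulus $W_s\setminus W_{s-1}$, positively correlated within each side and independent across sides, so by the RSW estimate \eqref{eq:RSW} and the Harris--FKG inequality $\dbtilde\alpha_4(s-1,s)\geq c_1>0$, and the inequality holds after enlarging $C$. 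From now on I assume $r\geq s+1$; the standing hypothesis $s\geq 4$ is used only to ensure $2^{s-1}>4$, so that the scale-$s$ contracted rectangles below are well defined.

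The key geometric observation is that on $\dbtilde A_4(s,r)$ the part inside $\cA(1,s+1)$ of the occupied arm reaching the left side of $\partial W_s$ is a left--right occupied crossing of $\cA(1,s+1)$; since $\cA(1,s+1)^{-}$ is obtained from $\cA(1,s+1)$ by contracting only horizontally (by $2$ on each side), that crossing contains a left--right occupied crossing of $\cA(1,s+1)^{-}$, which meets the occupied top--bottom crossing of $\cA(1,s+1)^{-}$ demanded by the event. Hence the occupied left arm together with that vertical crossing forms a single occupied cluster that is joined to $\partial W_r$ and that contains a top--bottom occupied crossing of the rectangle $Q:=[-2^{s+1}+2,-2^{s}-2]\times[-2^{s-1},2^{s-1}]$ (and symmetrically for the three other arms). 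I then set $H:=H_\ell\cap H_r\cap H_t\cap H_b$, where $H_\ell$ is the event that there exist an occupied left--right crossing of $[-2^{s+1}+2,-2^{s-1}]\times[-2^{s-1},2^{s-1}]$ and an occupied top--bottom crossing of $\cA(1,s)^{-}$, and $H_r$ (resp.\ $H_t,H_b$) is the analogous event obtained by reflecting (resp.\ rotating, in the way the $\cB(\cdot,\cdot)$'s are built from the $\cA(\cdot,\cdot)$'s) this configuration to the right (resp.\ top, bottom) side, with occupied/horizontal replaced by vacant/vertical for $H_t,H_b$. Every rectangle involved has bounded aspect ratio, the four packages live in pairwise well-separated regions, and $H$ is measurable with respect to $\eta$ restricted to a slight shrinking of $W_{s+1}$.

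The inclusion $\dbtilde A_4(s,r)\cap H\subseteq\dbtilde A_4(s-1,r)$ is a purely topological check. On this event, the occupied left--right crossing in $H_\ell$ restricted to $Q$ is a left--right crossing of $Q$, hence it meets the top--bottom crossing of $Q$ furnished above and is therefore joined to $\partial W_r$; its right endpoint lies on $\{-2^{s-1}\}\times[-2^{s-1},2^{s-1}]\subseteq\partial W_{s-1}$, and its trace in $W_s\setminus W_{s-1}$ lies in $\cA(1,s)$. Repeating this on all four sides produces four pairwise disjoint paths of alternating type from $\partial W_r$ to $\partial W_{s-1}$ (the two occupied ones in the left and right flaps, the two vacant ones in the top and bottom flaps, occupied and vacant regions being disjoint), confined in $W_s\setminus W_{s-1}$ to $\cA(1,s),\cB(2,s),\cA(3,s),\cB(4,s)$ respectively, and accompanied by the occupied crossings of $\cA(1,s)^{-},\cA(3,s)^{-}$ and the vacant crossings of $\cB(2,s)^{-},\cB(4,s)^{-}$ supplied by $H$; this is precisely the event $\dbtilde A_4(s-1,r)$.

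Finally, to prove $\P_{\lambda_c}(\dbtilde A_4(s,r)\cap H)\geq c\,\P_{\lambda_c}(\dbtilde A_4(s,r))$ I would combine: (a) RSW via \eqref{eq:RSW}, so that each crossing defining $H$ has probability bounded below uniformly in $s$; (b) independence of $\eta$ over disjoint regions, to decouple the ``interior'' crossings of $\cA(1,s)^{-},\cB(2,s)^{-},\cA(3,s)^{-},\cB(4,s)^{-}$ --- which sit at distance $\geq 2$ from $\partial W_s$ and in pairwise disjoint regions, the $2$-unit buffers of the contracted rectangles being precisely what absorbs the short-range dependence of the Boolean model --- from $\dbtilde A_4(s,r)$ and from one another; (c) the Harris--FKG inequality, to recombine the increasing (occupied) events $H_\ell,H_r$ with the occupied part of $\dbtilde A_4(s,r)$ and the decreasing (vacant) events $H_t,H_b$ with its vacant part; and (d) a conditioning and separation-of-arms argument in the spirit of \cite{Kes87} to handle the one remaining source of correlation: the ``long'' crossings in $H_\ell,H_r,H_t,H_b$ unavoidably run through the collar $W_{s+1}\setminus W_s$, where the configuration is correlated with $\dbtilde A_4(s,r)$ and where the arms of that event may land near the corners of $W_s$, so one conditions on a suitable part of the configuration near $\partial W_{s+1}$ and $\partial W_s$ and shows, using RSW in half-planes and near corners together with the structure of the contracted crossings, that the conditional probability of the required extensions stays bounded below uniformly. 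Step (d) is the one I expect to be the main obstacle; it is carried out following \cite{Kes87} (see also \cite{Nol08}), with the modifications needed to handle the local dependence of the Boolean model, and its routine details would be suppressed, as elsewhere in the paper.
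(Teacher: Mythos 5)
Your proposal reproduces essentially the strategy the paper uses for the twin statement, Lemma~\ref{lem:extend}, mirrored from the outer to the inner boundary, which is exactly what the paper intends when it writes that the two proofs are "similar." The auxiliary event $H$ you build is the inward analogue of the paper's $\widetilde B(r,r+1)$: fence crossings at the new (smaller) scale together with "long" crossings running from a $2$-unit offset of $\partial W_{s+1}$ all the way to $\partial W_{s-1}$, in the same way that $\widetilde B(r,r+1)$ pairs fence crossings at scale $r+1$ with long crossings starting $2$ units outside $W_{r-1}$. Your topological verification that $\dbtilde A_4(s,r)\cap H\subseteq\dbtilde A_4(s-1,r)$ is careful and correct (the point that the relevant left--right crossing of $\cA(1,s+1)$ automatically yields a left--right crossing of $\cA(1,s+1)^-$ since the contraction is only horizontal, so it meets the top--bottom crossing in the rectangle $Q$, is precisely the right observation), and your treatment of the $r=s$ case is appropriate.

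The one place where you diverge from what the paper actually does is your step~(d). You flag this as "the main obstacle" and sketch a conditioning "near $\partial W_{s+1}$ and $\partial W_s$" together with "RSW in half-planes and near corners." This is more elaborate than what is needed. The paper handles the analogous correlation in Lemma~\ref{lem:extend} by conditioning on $\eta$ restricted to $\overline W_{r-1}=W_{r-1}+B_1$; the reflected version here is to condition on $\eta$ restricted to the complement of the $1$-interior of $W_{s+1}$. Because every rectangle appearing in $H$ is at distance at least $2$ from $\partial W_{s+1}$, the event $H$ is a function of the (independent) Poisson configuration inside the conditioned-out region, so $H_+,H_-$ have unconditional probabilities bounded below by RSW. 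Conditionally on this outer configuration, the occupied parts of $\dbtilde A_4(s,r)\cap H$ and the vacant parts depend on the Poisson process in disjoint left/right versus top/bottom strips of $W_{s+1}$, giving the conditional independence that lets you write $\P(A_+\cap A_-\cap H_+\cap H_-\mid\text{outer})=\P(A_+\cap H_+\mid\text{outer})\,\P(A_-\cap H_-\mid\text{outer})$, after which conditional FKG together with $\P(H_+)\P(H_-)\geq c_0$ closes the estimate exactly as in the proof of Lemma~\ref{lem:extend}. Your long crossings running through the collar $W_{s+1}\setminus W_s$ are not an additional obstruction: they are increasing (occupied) events sitting inside the unconditioned region and are simply absorbed into $H_+$. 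No separation-of-arms or corner-RSW argument is required here; those are features of Lemmas~\ref{lem:del} and~\ref{lem:etaext}, not of the present lemma.

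So the proposal is correct and follows the intended route; you would only need to tighten step (d) to the conditional-FKG computation described above in place of the Kesten-style separation argument, which brings it in line with the proof the paper gives for the sibling Lemma~\ref{lem:extend}.
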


%Note that this implies that for $j \ge 0$ with $s_1(R)-j \le r$,
%\begin{equation}\label{eq:remin}
%	\widetilde \alpha_4(s_1(R)-j,r) \ge \widetilde \alpha_4(s_1(R),r)C_0'(R)^{-j}.
%\end{equation}

\begin{theorem}\label{thm2} There exists $s_0 \in \N$ with $s_0 \geq 3$ such that for all $s \ge s_0$ and all $r \ge s$, it holds that
	$$
	\alpha_4(s,r) \asymp \dbtilde \alpha_4(s,r).
	$$
\end{theorem}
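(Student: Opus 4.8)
The plan is to prove the two inequalities hidden in $\asymp$ separately, and one of them is immediate: every configuration realising $\dbtilde A_4(s,r)$ realises $A_4(s,r)$, so $\dbtilde\alpha_4(s,r)\le\alpha_4(s,r)$ for all $s\le r$ with no lower bound on $s$. The real content is the reverse estimate $\alpha_4(s,r)\lesssim\dbtilde\alpha_4(s,r)$, which I would prove by a gluing (surgery) argument localised in the thin dyadic shell $W_{s+3}\setminus W_s$ near the \emph{inner} boundary --- the exact mirror image of the argument used to establish Theorem~\ref{thm1}, with the roles of the inner scale $2^s$ and the outer scale $2^r$ interchanged. The structural reason this works is that $W_{s+3}\setminus W_s$ consists of a \emph{bounded} number of dyadic annuli, so every crossing or short-connection event constructed inside it has probability bounded below by a universal constant, thanks to the RSW estimate~\eqref{eq:RSW} and the Harris--FKG property of the Poisson process; the regime $r\le s+O(1)$ is anyway trivial since there both quantities are comparable to $1$ by RSW.

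Concretely, I would condition on $A_4(s,r)$ and invoke the arms-separation machinery of this section (the Boolean adaptation of Kesten's arguments, as developed in the earlier lemmas and in \cite{Kes87,Nol08}): conditionally on $A_4(s,r)$, with probability bounded below by a universal constant the four alternating arms can be realised so that they traverse the shell $W_{s+3}\setminus W_{s+1}$ in a well-separated way and land on $\partial W_{s+3}$ along four prescribed arcs --- the two occupied arms near the left and right faces, the two vacant arms near the top and bottom faces --- in the correct cyclic order. I would then decouple these ``outer'' arms from the surgery region by conditioning on the restriction of $\eta$ away from $W_{s+3}$; this costs only a universal constant, since one inserts an extra buffer dyadic shell $W_{s+4}\setminus W_{s+3}$ and demands an RSW crossing there, which absorbs the $O(1)$ overlap between the two regions coming from the radius-$1$ balls.

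Conditionally on such a favourable landing configuration, the final step is to build inside $W_{s+3}\setminus W_s$ precisely the features required by $\dbtilde A_4(s,r)$: occupied vertical crossings of the contracted rectangles $\cA(1,s+1)^-$ and $\cA(3,s+1)^-$ and vacant horizontal crossings of $\cB(2,s+1)^-$ and $\cB(4,s+1)^-$, together with short occupied connections routing the two occupied arm-ends into $\cA(1,s+1)$ and $\cA(3,s+1)$ and on to $\partial W_s$, and short vacant connections routing the two vacant arm-ends into $\cB(2,s+1)$ and $\cB(4,s+1)$ and on to $\partial W_s$. Each of these is a crossing or connection event in a region of bounded aspect ratio, hence has probability bounded below by~\eqref{eq:RSW}; the occupied ones are increasing and the vacant ones decreasing functionals of $\eta$, and the width-$2$ buffers built into the contracted rectangles guarantee that the occupied-crossing events depend on $\eta$ only through points lying in a region disjoint from the one controlling the vacant-crossing events, so that after splitting $\eta$ into its independent restrictions to these disjoint regions the FKG inequality gives a uniform lower bound on the probability that all the required features occur simultaneously. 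Intersecting with the conditioned arm configuration, taking expectations and recalling the decoupling step yields $\alpha_4(s,r)\le C\,\dbtilde\alpha_4(s,r)$; combined with the trivial inequality this gives $\alpha_4(s,r)\asymp\dbtilde\alpha_4(s,r)$. The threshold $s_0$ only needs $2^{s_0}>4$ so that the rectangles $\cA(\cdot,s+1)$, $\cB(\cdot,s+1)$ and their contractions are well defined (plus possibly a fixed number of extra scales for the separation step), and minor mismatches between the scale produced by the separation step and the scale appearing in $\dbtilde A_4$ are absorbed using Lemma~\ref{lem:extendin}.

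I expect the main obstacle to be the careful bookkeeping of the Boolean dependence structure: ensuring that the radius-$1$ ball interactions spoil neither the arms-separation step nor the FKG-based gluing, and in particular organising the successive conditionings so that the conditioned arm-endpoint event, the buffer-shell crossing, and the finitely many crossings built inside $W_{s+3}\setminus W_s$ are measurable with respect to suitably disjoint (hence independent) portions of $\eta$, so that their probabilities genuinely multiply up to a universal constant. This is precisely the point where the argument follows \cite{Kes87} (and its continuum refinement developed here) most closely.
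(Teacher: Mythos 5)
Your high-level target is right: Theorem~\ref{thm2} is proved by mirroring the proof of Theorem~\ref{thm1} inward, using Lemma~\ref{lem:del'} (inward fence), Lemma~\ref{lem:extendin} (extend one shell inward) and an inward version of Lemma~\ref{lem:etaext} (glue a well-separated arm configuration to the restricted event), and the trivial direction $\dbtilde\alpha_4\le\alpha_4$ and the FKG/RSW gluing at scale $\approx 2^s$ are described correctly. However, the central step of your argument is a single-shot conditional-separation claim that is not available as a black box, and this hides the real content of the proof.

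You write that ``conditionally on $A_4(s,r)$, with probability bounded below by a universal constant the four alternating arms can be realised so that they traverse the shell $W_{s+3}\setminus W_{s+1}$ in a well-separated way.'' That statement is equivalent to asserting $\alpha_4^{\theta,\mathrm{in}}(s,r)\gtrsim\alpha_4(s,r)$ (in the notation parallel to $\alpha_4^\theta$ used in the proof of Theorem~\ref{thm1}), and it cannot be ``invoked'' --- it is essentially the conclusion of Theorem~\ref{thm2}, not a lemma available in advance. What Lemma~\ref{lem:del'} actually gives is the \emph{unconditional} bound $\P(\text{a strip crossing lacks an inward $(\theta,s)$-fence})\le\delta$, and combining it with independence only yields an inequality of the form
\[
\alpha_4^{\theta,\mathrm{in}}(s,r)\;\ge\;\alpha_4(s,r)-8\delta\,\alpha_4(s+1,r),
\]
which is useless unless one already knows $\alpha_4(s+1,r)\lesssim\alpha_4(s,r)$ --- but that one-shell comparability is exactly the kind of statement quasi-multiplicativity is meant to deliver, and assuming it here is circular. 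The proof of Theorem~\ref{thm1} (and hence of Theorem~\ref{thm2}) resolves this by an iterative peeling recursion of the form
\[
\alpha_4(s+2,r)\;\le\;\alpha_4^{\theta,\mathrm{in}}(s+2,r)+8\delta\,\alpha_4(s+3,r),
\]
iterated inward, together with $\alpha_4^{\theta,\mathrm{in}}(s+2,r)\le C_\theta\,\dbtilde\alpha_4(s,r)$ (the inward analogue of Lemma~\ref{lem:etaext}) and $\dbtilde\alpha_4(s+j,r)\le C^j\,\dbtilde\alpha_4(s,r)$ (Lemma~\ref{lem:extendin}). The point of the iteration is that the geometric decay $(8\delta)^j$ beats the geometric growth $C^j$ once $\delta$ is fixed small relative to the constant $C$ in Lemma~\ref{lem:extendin}, so the telescoping sum is controlled without any a priori comparison between $\alpha_4$ at adjacent scales. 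Your proposal omits this recursion entirely, and with it the mechanism that makes the estimate non-circular; it also omits the resulting dependence of $s_0$ on the threshold $k(\theta)$ of Lemma~\ref{lem:del'}, which is more than just ``$2^{s_0}>4$.'' In short: the gluing step is fine, the separation step is the gap.
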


As a final ingredient to prove quasi-multiplicativity, we need the following result.

\begin{theorem}\label{thm3} Let $r_0,s_0$ be as in Theorems \ref{thm1} and \ref{thm2} and set  $r_1=(s_0+2) \vee r_0$. There exists $C \in (0,\infty)$ depending only on $r_1$ such that for all $t\ge r$ and all $r\ge s$ with $r \ge r_1$, it holds that 
	$$
	\alpha_4(s,t) \ge C \widetilde \alpha_4(s,r) \dbtilde \alpha_4(r,t).
	$$
\end{theorem}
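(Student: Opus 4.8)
\textbf{Proof plan for Theorem \ref{thm3}.}

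The plan is to stitch together an outer $4$-arm configuration in the annulus $W_s$ to $W_r$ with an inner $4$-arm configuration in the annulus $W_r$ to $W_t$, using the geometry that has been engineered into the events $\widetilde A_4(s,r)$ and $\dbtilde A_4(r,t)$. The whole point of the ``decorated'' events is that on $\widetilde A_4(s,r)$, just inside $\partial W_r$ the four arms are forced to pass through the four designated thin rectangles $\mathcal{A}(1,r)^-,\mathcal{B}(2,r)^-,\mathcal{A}(3,r)^-,\mathcal{B}(4,r)^-$ with crossings of the correct type; symmetrically, on $\dbtilde A_4(r,t)$, just outside $\partial W_r$ (i.e.\ inside $W_{r+1}\setminus W_r$) the four arms pass through the four rectangles $\mathcal{A}(1,r+1)^-$, $\mathcal{B}(2,r+1)^-$, etc. So first I would work in the overlap region, a bounded-aspect-ratio annular neighbourhood of $\partial W_r$ (say between $W_{r-1}$ and $W_{r+2}$), and use an RSW/gluing construction: conditionally on the two events, with probability bounded below (by a constant depending only on $r_1$, since everything here happens at a bounded number of fixed dyadic scales near $\partial W_r$), one can realize four ``connector'' paths — two occupied, two vacant — that link the ends of the outer occupied (resp.\ vacant) arms at $\partial W_r$ to the corresponding ends of the inner occupied (resp.\ vacant) arms, keeping occupied and vacant connectors in disjoint regions so they do not cross. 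Concretely the occupied connector in the left corridor lives in (a thickening of) $\mathcal{A}(1,r)^-\cup\mathcal{A}(1,r+1)^-$, which are parallel vertical strips, so a short occupied horizontal crossing glues them; similarly for the other three corridors, and the four corridors are pairwise disjoint, so the connectors are compatible.

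The second step is to handle the \emph{independence/dependence bookkeeping}, which is where the Boolean (as opposed to Bernoulli) setting bites. The events $\widetilde A_4(s,r)$ and $\dbtilde A_4(r,t)$ are, respectively, measurable with respect to the configuration in $\overline{W_r}$ and in $\R^2\setminus W_r$ roughly speaking — but because balls have unit radius, occupancy within distance $1$ of $\partial W_r$ depends on points on both sides. This is exactly why the $2$-pixel contractions ($\mathcal{A}(1,r)^-$ is $\mathcal{A}(1,r)$ shrunk by $2$, etc.) were introduced: a crossing of a contracted rectangle is determined by Poisson points in a region whose $1$-neighbourhood still lies inside the original rectangle, hence a vertical occupied crossing of $\mathcal{A}(1,r)^-$ is independent of everything of the opposite (vacant) type, as the caption of Figure \ref{fig:A4T} notes. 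I would exploit the same buffering to make the gluing step genuinely independent of the ``good'' parts of both events: carve the connector paths so that they are determined by Poisson points in a buffer shell around $\partial W_r$ that is disjoint from (the regions determining) the arm pieces we are keeping, then a standard FKG + finite-energy/RSW argument gives the connectors with a cost that is a constant. Combining: $\P(A_4(s,t)) \ge \P(\text{outer good})\cdot \P(\text{connectors}\mid \cdots)\cdot \P(\text{inner good}) \gtrsim \widetilde\alpha_4(s,r)\,\dbtilde\alpha_4(r,t)$, where I should be a bit careful that ``outer good'' and ``inner good'' as defined are not literally independent, but conditioning on the configuration in the thin overlap shell decouples them — or one simply uses the stronger decorated crossings in the contracted rectangles, which are of opposite types and hence genuinely independent, to run FKG across the three pieces.

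The last step is just to note that the resulting occupied and vacant paths, glued together, really do witness the (undecorated) event $A_4(s,t)$: the concatenation of an outer occupied arm, an occupied connector, and an inner occupied arm is a connected occupied path from $\partial W_s$ to $\partial W_t$ (similarly for vacant), and the four resulting paths are of alternating type in cyclic order because the four corridors sit in the four cyclic sectors around $\partial W_r$ and the arms enter them in the alternating pattern dictated by both $\widetilde A_4$ and $\dbtilde A_4$. I expect the main obstacle to be the dependence bookkeeping in step two — making precise which Poisson points determine which crossings near $\partial W_r$ and verifying that the buffer shells can be chosen disjoint from the pieces one wants to preserve, all while the connector construction only costs a constant. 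This is the step where one genuinely needs the contracted rectangles and where the argument deviates from the Bernoulli case of \cite{Kes87, Nol08}; the RSW inputs \eqref{eq:RSW} and FKG do the rest, and since only a bounded number of dyadic scales near $\partial W_r$ are involved (recall $r\ge r_1$), the constant $C$ depends only on $r_1$ as claimed.
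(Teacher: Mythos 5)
Your high-level strategy — stitch $\widetilde A_4$ and $\dbtilde A_4$ via connectors in a bounded-scale shell near $\partial W_r$, use FKG and RSW to pay a constant, and note that the contracted rectangles exist precisely to defeat the unit-ball dependence — is the right spirit, and you correctly identify the dependence bookkeeping as the crux. But you do not resolve it, and the fix you gesture at is not what the paper does; there is a genuine gap.

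The problem is that $\widetilde A_4(s,r)$ and $\dbtilde A_4(r,t)$ are \emph{not} independent, and there is no reason for them to be positively correlated: on one, arms of both colours go all the way out to $\partial W_r$; on the other, arms of both colours go all the way in to $\partial W_r$. Occupancy near $\partial W_r$ depends on Poisson points within distance $1$ on both sides, so the two events both depend on the configuration in a width-$2$ shell around $\partial W_r$. Since each event mixes increasing and decreasing parts, FKG does not apply to the pair as you need it; and your alternative — conditioning on the shell to decouple — only buys conditional independence, not a lower bound for the product of unconditional probabilities (you would still have to show that the conditional probabilities are uniformly comparable to the unconditional ones, which is a separate nontrivial argument). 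Your other suggestion, ``running FKG across the three pieces using the decorated crossings,'' does not address the issue either, because the overlap across $\partial W_r$ comes from the \emph{undecorated} arms, not from the crossings of the contracted rectangles.

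The paper's crucial extra idea, which you are missing, is to first \emph{shrink the outer event by two dyadic scales}: it proves $\alpha_4(s,t) \ge C\,\widetilde\alpha_4(s,r-2)\,\dbtilde\alpha_4(r,t)$ for $t>r>s+2$, handling the boundary cases $s\in\{r,r-1,r-2\}$ separately via Theorems~\ref{thm1}--\ref{thm2} and Lemma~\ref{lem:extendin}, and then upgrades $\widetilde\alpha_4(s,r-2)\gtrsim\widetilde\alpha_4(s,r)$ using Theorem~\ref{thm1}. The shrinking opens up the gap annulus $W_r\setminus W_{r-2}$, free of arm constraints, which accomplishes two things simultaneously: (a) $\widetilde A_4(s,r-2)$ depends only on Poisson points in $\overline{W_{r-2}}$ and $\dbtilde A_4(r,t)$ only on points outside a slight shrinkage of $W_r$ — these regions are separated by far more than $2$ units, so the two events are \emph{genuinely independent}; and (b) the gap hosts the connector event $B(r-2,r)$, built from four separated fixed-scale crossings, which can be shown to be positively associated with $\widetilde A_4(s,r-2)\cap\dbtilde A_4(r,t)$ by treating each of its four crossings individually (each is either independent of, or FKG-comparable with, the pieces of the other events, by matching types and checking the $\ge 2$ separation). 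Without this gap your connector strips $\mathcal{A}(1,r)^-\cup\mathcal{A}(1,r+1)^-$ sit directly on $\partial W_r$ where the two main events overlap, and the argument stalls exactly where you flag the difficulty.
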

The proofs of Lemmas \ref{lem:extend} and \ref{lem:extendin} are similar and we only prove the former in Section \ref{ss:proofs_lem1}. The proofs involve tackling the local dependence in the model, and rely upon the FKG inequality for Poisson process as well as the RSW bound \eqref{eq:RSW}. The proofs of Theorems \ref{thm1} and \ref{thm2} in Section \ref{ss:proof_thms12} are more involved and form the most technical part in the proof of quasi-multiplicativity. The proof of Theorem \ref{thm3}, which again relies upon the FKG inequality and the RSW bound along with standard gluing arguments, is presented in Section \ref{ss:proof_thm3}. 

Now we complete the proof of Theorem \ref{t:quasimPBM} using the above results.

\begin{proof}[Proof of Theorem \ref{t:quasimPBM}] Recall that in this section, $\alpha_4(2^s,2^r)$ is denoted by $\alpha_4(s,r)$.
\medskip

\noindent \underline{\textit{Proof in the dyadic case for $\alpha_4$}:} We first prove quasi-multiplicativity in the dyadic case, i.e., we show that for non-negative integers $t\ge r\ge s$, we have
\begin{equation}\label{eq:qmd}
\alpha_4(s,t) \asymp C_1 \alpha_4(s,r) \alpha_4(r,t).
\end{equation}
To this end, let $r_1$ be as in Theorem \ref{thm3}. For any non-negative integers $t\ge r\ge s$ when $t \le r_1$, it trivially holds that
\begin{equation}\label{eq:tler}
    \alpha_4(s,t) \asymp C_1 \alpha_4(s,r) \alpha_4(r,t)
\end{equation}
for some constant $C_1 \in (0,\infty)$. Now consider $t\ge r\ge s$ with $r \le r_1$ and $t >r_1$. By the above three theorems (Theorems \ref{thm1}, \ref{thm2} and \ref{thm3}), we obtain that there exists $C_1 \in (0,\infty)$ such that
	$$
	\alpha_4(s,t) \ge C_1 \alpha_4(s,r_1) \alpha_4(r_1,t) \ge C_1 C_2 \alpha_4(s,r) \alpha_4(r,t),
	$$
	where in the final inequality, we have used that there exists a constant $C_2 \in (0,\infty)$ such that $\alpha_4(s,r_1) \ge C_2 \alpha_4(s,r)$ for any $s \le r \le r_1$. Finally, when $r>r_1$, Theorems \ref{thm1}, \ref{thm2} and \ref{thm3} again imply
 $$
 \alpha_4(s,t) \ge C_1 \alpha_4(s,r) \alpha_4(r,t)
 $$
 for some $C_1 \in (0,\infty)$. This proves
 that for $t\ge r\ge s$, we have
\begin{equation}\label{eq:qmd1}
\alpha_4(s,t) \gtrsim \alpha_4(s,r) \alpha_4(r,t),
\end{equation}
showing one side of \eqref{eq:qmd}.
 
The other direction in \eqref{eq:qmd} is much easier to argue using conditional probability. Fix $r \ge r_0+1$. For any $t \ge r\ge s +1$, notice by independence (i.e., $A_4(s,t) \subseteq A_4(s,r-1) \cap A_4(r,t)$, and these are independent), Theorem~\ref{thm1} and Lemma~\ref{lem:extend} that 
	\beaa
	\alpha_4(s,t) &\le& \alpha_4(s,r-1) \alpha_4(r,t) \asymp \widetilde \alpha_4(s,r-1) \alpha_4(r,t) \\
	&\le& C \widetilde \alpha_4(s,r) \alpha_4(r,t) \asymp \alpha_4(s,r) \alpha_4(r,t),
	\eeaa
while the inequality is trivial if $s=r$. Finally, if $r \le r_0$, we argue similarly as above. The case where $t \le r_0$ is already considered in \eqref{eq:tler}, while if $s \le r \le r_0 < t$, we have
	$$
	\alpha_4(s,t) \le C_2^{-1}\alpha_4(s,r_0) \alpha_4(r_0,t) \le C_2^{-2} \alpha_4(s,r) \alpha_4(r,t),
	$$
	where we use that there exists $C_2 \in (0,\infty)$ with $\inf_{s' \le r' \le r_0} \alpha_4(s',r') \ge C_2$, and the final step uses \eqref{eq:qmd1}, so that
	$$
	\alpha_4(r,t) \gtrsim \alpha_4(r,r_0)\alpha_4(r_0,t) \ge C_2 \alpha_4(r_0,t).
	$$ 
This proves the other inequality in \eqref{eq:qmd}, completing the proof in the dyadic case.
\medskip 

\noindent \underline{\textit{Proof in the general case for $\alpha_4$}:}
First notice that for any positive integers $j \le k$, we have by \eqref{eq:qmd} that, 
\begin{equation}\label{eq:dyadic}
\alpha_4(j-1,k+1) \asymp \alpha_4(j-1,j) \alpha_4(j,k) \alpha_4(k,k+1) \asymp \alpha_4(j,k),
\end{equation}
where the final step is by noting from \ref{A1}, which is satisfied by our Boolean model (see \cite[(2.16)]{MS22}), that $1 \ge \alpha_4(i-1,i) \ge \epsilon (1/2)^{2-\epsilon}$ for any $i \in \N_0$. The result for arbitrary real numbers $1 \le r_1 \le r_2 \le r_3$ now follows by a dyadic approximation of $r_1,r_2$ and $r_3$ and then using \eqref{eq:dyadic} and \eqref{eq:qmd}. This completes the proof of Theorem \ref{t:quasimPBM} for $\alpha_4$.

\medskip 

\noindent \underline{\textit{Proof for $\alpha_3^+$}:}
Note that, in the half-plane, the arms in the 3-arm event are always of alternating type (which is not the case when looking for 3-arms in the whole annulus). Thus, the proof of quasi-multiplicativity for the 3-arm probabilities in the half-plane follows exactly that of $\alpha_4$, by proving suitably adapted versions of Lemmas \ref{lem:extend}, \ref{lem:extendin}, and Theorems \ref{thm1}, \ref{thm2} and \ref{thm3} for $\alpha_3^+$.
\end{proof}

\subsubsection{Proofs of Lemmas \ref{lem:extend} and \ref{lem:extendin}}
\label{ss:proofs_lem1}

The proofs of these lemmas are very similar, so we just provide a proof for Lemma \ref{lem:extend} here.

\begin{proof}[Proof of Lem.\ \ref{lem:extend}] Define the event (see Figure \ref{A4TExt})
	\beaa
	\widetilde B(r,r+1) &=&\text{there are two vertical occupied crossings of $\mathcal{A}(1,r+1)^-$, $\mathcal{A}(3,r+1)^-$ and} \\ 
     && \text{two horizontal vacant crossings of $\mathcal{B}(2,r+1)^-$, $\mathcal{B}(4,r+1)^-$, respectively.}\\
	&&\text{In addition there are two horizontal occupied crossings of the rectangles }\\
	&&\text{$[2^{r-1}+2, 2^{r+1}]\times[ -2^{r-1},2^{r-1}]$, and its reflection along the $y$-axis, and two }\\
	&&\text{vertical vacant crossings of rectangles obtained by $\pi/2$ clockwise rotations}\\
        &&\text{of them}.
	\eeaa

\begin{figure} 
\tikzset{every picture/.style={line width=0.75pt}} %set default line width to 0.75pt        

\begin{tikzpicture}[x=0.5pt,y=0.5pt,yscale=-1,xscale=1]
\path (-45,490); %set diagram left start at 0, and has height of 494

%Shape: Square [id:dp7509078966622402] 
\draw   (265,86) -- (588,86) -- (588,409) -- (265,409) -- cycle ;
%Shape: Square [id:dp8393076050459758] 
\draw   (320.25,141.25) -- (532.75,141.25) -- (532.75,353.75) -- (320.25,353.75) -- cycle ;
%Shape: Square [id:dp9959434788950945] 
\draw   (371.38,192.38) -- (481.63,192.38) -- (481.63,302.63) -- (371.38,302.63) -- cycle ;
%Shape: Right Angle [id:dp08882595356010947] 
\draw   (320.25,86) -- (320.25,141.25) -- (265,141.25) ;
%Shape: Right Angle [id:dp0006252369468797703] 
\draw   (532.75,409) -- (532.75,353.75) -- (588.75,353.75) ;
%Shape: Right Angle [id:dp534135583430547] 
\draw   (588,141.25) -- (532.75,141.25) -- (532.75,86) ;
%Shape: Right Angle [id:dp6216707142438411] 
\draw   (265,353.77) -- (320.25,353.75) -- (320.27,409) ;
%Shape: Rectangle [id:dp033708233070655735] 
\draw   (320,98) -- (533,98) -- (533,130) -- (320,130) -- cycle ;
%Shape: Rectangle [id:dp08263270088980867] 
\draw   (320,366) -- (533,366) -- (533,398) -- (320,398) -- cycle ;
%Shape: Rectangle [id:dp31437086252578994] 
\draw   (276.5,353.67) -- (276.5,141) -- (308.5,141) -- (308.5,353.67) -- cycle ;
%Shape: Rectangle [id:dp3117650181967928] 
\draw   (575.5,141) -- (575.5,353.5) -- (543.5,353.5) -- (543.5,141) -- cycle ;
%Shape: Right Angle [id:dp5441548478863614] 
\draw   (321,302.63) -- (371.38,302.63) -- (371.38,354) ;
%Shape: Right Angle [id:dp682693099389998] 
\draw   (481.63,353) -- (481.63,302.63) -- (533,302.63) ;
%Shape: Right Angle [id:dp8293607905013762] 
\draw   (532,192.38) -- (481,192.38) -- (481,141) ;
%Shape: Right Angle [id:dp23094215068835533] 
\draw   (371.38,142) -- (371.38,192.38) -- (320,192.38) ;
%Shape: Rectangle [id:dp6540934521046624] 
\draw  [line width=0.75]  (371.67,151) -- (481,151) -- (481,182.33) -- (371.67,182.33) -- cycle ;
%Shape: Rectangle [id:dp4674566138380629] 
\draw   (372,313) -- (481,313) -- (481,344) -- (372,344) -- cycle ;
%Shape: Rectangle [id:dp4144226759006515] 
\draw   (491,302) -- (491,192.33) -- (522,192.33) -- (522,302) -- cycle ;
%Shape: Rectangle [id:dp728599407941666] 
\draw   (330,302) -- (330,193) -- (361,193) -- (361,302) -- cycle ;
%Shape: Rectangle [id:dp0677286956124672] 
\draw  [color={rgb, 255:red, 0; green, 0; blue, 0 }  ,draw opacity=1 ][line width=0.75]  (491,192.33) -- (588,192.33) -- (588,302.33) -- (491,302.33) -- cycle ;
%Shape: Rectangle [id:dp5642990064279514] 
\draw  [color={rgb, 255:red, 0; green, 0; blue, 0 }  ,draw opacity=1 ][line width=0.75]  (265,193) -- (361,193) -- (361,302) -- (265,302) -- cycle ;
%Shape: Rectangle [id:dp7682496234567895] 
\draw  [color={rgb, 255:red, 0; green, 0; blue, 0 }  ,draw opacity=1 ][line width=0.75]  (372,182) -- (372,86) -- (481,86) -- (481,182) -- cycle ;
%Shape: Rectangle [id:dp24752776074320115] 
\draw  [color={rgb, 255:red, 0; green, 0; blue, 0 }  ,draw opacity=1 ][line width=0.75]  (372,409) -- (372,313) -- (481,313) -- (481,409) -- cycle ;
%Shape: Free Drawing [id:dp6443648274662999] 
\draw  [color={rgb, 255:red, 0; green, 0; blue, 0 }  ,draw opacity=1 ][line width=0.75] [line join = round][line cap = round] (561,141) .. controls (557.4,141) and (550.04,164.18) .. (551,169) .. controls (554.61,187.05) and (564.73,206.47) .. (563,229) .. controls (561.4,249.78) and (551.62,272.47) .. (557,294) .. controls (560.72,308.86) and (566.3,319.51) .. (563,336) .. controls (561.86,341.69) and (555,353.13) .. (555,353) ;
%Shape: Free Drawing [id:dp7178138335203701] 
\draw  [color={rgb, 255:red, 0; green, 0; blue, 0 }  ,draw opacity=1 ][line width=0.75] [line join = round][line cap = round] (291,142) .. controls (288,142) and (286.95,148.14) .. (286,151) .. controls (281.6,164.21) and (290.03,177.47) .. (291,191) .. controls (291.53,198.35) and (291.23,211.38) .. (290,220) .. controls (288.95,227.32) and (284.31,232.45) .. (284,240) .. controls (283.69,247.33) and (283.72,254.67) .. (284,262) .. controls (284.61,277.76) and (292.04,292.72) .. (290,309) .. controls (289.13,315.98) and (284.57,322.57) .. (284,330) .. controls (283.03,342.55) and (290.32,354) .. (301,354) ;
%Shape: Free Drawing [id:dp28562980317954834] 
\draw  [color={rgb, 255:red, 0; green, 0; blue, 0 }  ,draw opacity=1 ][line width=0.75] [line join = round][line cap = round] (490.86,249) .. controls (495.92,249) and (500.68,241.08) .. (511.63,243) .. controls (517.87,244.09) and (529.49,253.4) .. (537.59,256) .. controls (545.24,258.46) and (555.38,260.79) .. (563.55,260) .. controls (570.75,259.31) and (588.47,241.85) .. (588.47,237) ;
%Shape: Free Drawing [id:dp8377642220921213] 
\draw  [color={rgb, 255:red, 0; green, 0; blue, 0 }  ,draw opacity=1 ][line width=0.75] [line join = round][line cap = round] (265.05,266) .. controls (266.44,263.71) and (270.67,256.19) .. (272.11,255) .. controls (295.33,235.84) and (310.07,255.68) .. (333.69,251) .. controls (342.57,249.24) and (349.64,239.2) .. (353.88,235) .. controls (355.95,232.95) and (362.8,231) .. (360.94,231) ;
%Shape: Free Drawing [id:dp18438763703908845] 
\draw  [color={rgb, 255:red, 63; green, 114; blue, 214 }  ,draw opacity=1 ][line width=0.75] [line join = round][line cap = round] (320.99,113) .. controls (346.08,96.22) and (377.11,115.82) .. (401.74,117) .. controls (422.91,118.01) and (439.94,112.46) .. (459.57,110) .. controls (476.15,107.92) and (503.47,114.74) .. (514.41,112) .. controls (519.55,110.71) and (522.62,110.51) .. (526.37,109) .. controls (528.28,108.23) and (534.16,105) .. (532.35,105) ;
%Shape: Free Drawing [id:dp7720567475983307] 
\draw  [color={rgb, 255:red, 63; green, 114; blue, 214 }  ,draw opacity=1 ][line width=0.75] [line join = round][line cap = round] (321.02,383) .. controls (328,376.05) and (338.74,375.69) .. (349.15,375) .. controls (374.06,373.35) and (375.54,375.58) .. (401.38,381) .. controls (414.56,383.76) and (427.92,385.8) .. (441.57,385) .. controls (450.43,384.48) and (460.25,380.38) .. (468.69,380) .. controls (485.94,379.22) and (498.67,384.15) .. (514.9,383) .. controls (519.35,382.68) and (522.23,378.23) .. (525.95,377) .. controls (528.33,376.21) and (530.88,376.09) .. (532.98,374) ;
%Shape: Free Drawing [id:dp6226282857922163] 
\draw  [color={rgb, 255:red, 63; green, 114; blue, 214 }  ,draw opacity=1 ][line width=0.75] [line join = round][line cap = round] (399,85.95) .. controls (399,90.59) and (397.64,95.2) .. (398,99.82) .. controls (398.51,106.34) and (410.76,117.41) .. (414,120.63) .. controls (417.73,124.32) and (434.39,132.69) .. (436,137.47) .. controls (439.65,148.32) and (437.12,160.63) .. (435,171.15) .. controls (434.34,174.43) and (438,182.57) .. (438,181.06) ;
%Shape: Free Drawing [id:dp35585733800779384] 
\draw  [color={rgb, 255:red, 63; green, 114; blue, 214 }  ,draw opacity=1 ][line width=0.75] [line join = round][line cap = round] (449,313.53) .. controls (449,316.7) and (438.01,331.13) .. (437,340.55) .. controls (435.5,354.54) and (448.45,376.45) .. (437,388.35) .. controls (431.84,393.71) and (422.36,398.38) .. (418,402.9) .. controls (417.08,403.86) and (407.41,409.14) .. (410,409.14) ;

% Text Node
\draw (592,215
) node [anchor=north west][inner sep=0.75pt]    {\small{$W_{r+1}$}};
% Text Node
\draw (499,330) node [anchor=north west][inner sep=0.75pt]    {\small{$W_r$}};
% Text Node
\draw (405,278) node [anchor=north west][inner sep=0.75pt]    {\small{$W_{r-1}$}};
\end{tikzpicture}

\vspace{-1.5cm}
\caption{Illustration of the event $\widetilde B(r,r+1)$ with blue denoting vacant paths and black denoting occupied paths.}
\label{A4TExt}
\end{figure}
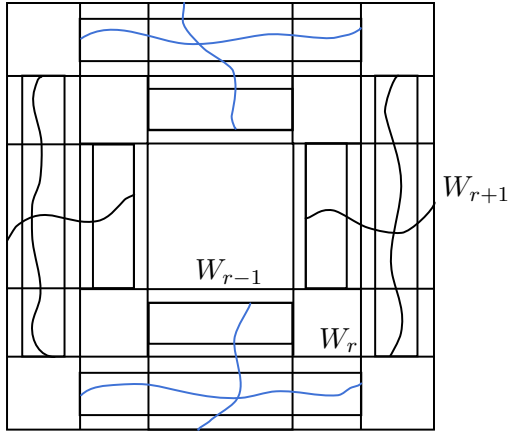

\noindent We then have $\widetilde A_4(s,r+1) \supseteq \widetilde A_4(s,r) \cap \widetilde B(r,r+1)$. In addition, one can see that $\widetilde B(r,r+1)$ is positively associated with $\widetilde A_4(s,r)$. Write $ \widetilde A_4(s,r)=A_+ \cap A_-$ and $\widetilde B(r,r+1) = B_+ \cap B_-$, where $A_+$ and $B_+$ are increasing events (ones corresponding to occupied events), and $A_-$ and $B_-$ are decreasing events (ones corresponding to vacant events). Notice by construction that the events $A_+ \cap B_+$ and $A_- \cap B_-$ are also conditionally independent, given the configuration in $\overline W_{r-1}=W_{r-1} + B_1(0)$. Thus,
	\beaa
	\P_{\lambda_c}(\widetilde A_4(s,r+1) | \eta_{\overline W_{r-1}}) &\ge& \P_{\lambda_c}(A_+ \cap A_- \cap B_+ \cap B_- | \eta_{\overline W_{r-1}}) \\
	&=& \P_{\lambda_c}(A_+\cap B_+ | \eta_{\overline W_{r-1}})\P_{\lambda_c}(A_- \cap B_- | \eta_{\overline W_{r-1}})\\
	& \ge& \P_{\lambda_c}(A_+| \eta_{\overline W_{r-1}})\P_{\lambda_c}(A_-| \eta_{\overline W_{r-1}}) \P_{\lambda_c}(B_+) \P_{\lambda_c}(B_-)\\
	& \ge & C_0 \P_{\lambda_c}(A_+ \cap A_-| \eta_{\overline W_{r-1}}),
	\eeaa
	where we have used the FKG inequality for the second inequality and in the final inequality, we have used that $B_+$ and $B_-$ are independent of $\eta_{\overline W_{r-1}}$ as well as RSW bounds from \eqref{eq:RSW}. The result now follows by taking expectation on both sides.
\end{proof}

\subsubsection{Proofs of Theorems \ref{thm1} and \ref{thm2}} 
\label{ss:proof_thms12}
We will first prove some lemmas to prepare for the proof of Theorem \ref{thm1}, and at the end of the subsection, prove the theorem. Since the proof of Theorem \ref{thm2} is similar, we will only quickly mention the differences. 

We shall now define the notions of a {\bf fence} and of a {\bf lowest crossing}, borrowing similar concepts from \cite[p.\ 121-123]{Kes87}. We shall avoid formal definitions for the sake of readability and also not justify measurability of the various events. These can be justified as in \cite{Kes87}.

The definition below is illustrated in Figure \ref{fig:path}. Fix $\theta \in (0,1/4)$, and an integer $k > 1- \log_2 \theta$ which ensures that $\sqrt{\theta} 2^{k-1}>\theta 2^k>2$. Let $\gamma$ be a (occupied or vacant) path from $\partial W_{k-1}$ to $\partial W_k$ in $W_k \setminus W_{k-1}$. Assuming that $\gamma$ hits the right boundary of $W_k$, denote by $\gamma'$ its {\em crosscut} in the right strip $\mathcal{S}_R = [2^{k-1} + 2,2^k] \times [-2^k,2^k]$ i.e., the piece of $\gamma$ from its last intersection of the line $x = 2^{k-1} + 2$ to the right edge of $\mathcal{S}_R$. Let $\mathcal{C} = \mathcal{C}_{\gamma',k}$ (or interchangeably $\mathcal{C}_{\gamma,k}$) be the component (occupied or vacant depending on the type of $\gamma$) of $\gamma'$ in $\mathcal{S}_R$ and let the lowest point of the component be $a=a(\mathcal{C}_{\gamma,k})=(a(1),a(2))$ on the right side of $\partial W_k$.  Observe that $a$ is the rightmost end-point of the lowest horizontal crossing of $\mathcal{C}_{\gamma,k}$. The path $\gamma$, or rather its component $\mathcal{C}$ ($\mathcal{C}_{\gamma,k}$ or $\mathcal{C}_{\gamma',k}$) is said to have {\em a $(\theta, k)$-fence}, if 
\begin{enumerate}[(i)]
    \item \noindent for another path (occupied or vacant) $\gamma_0$ from $\partial W_{k-1}$ to $\partial W_k$ with $\mathcal{C}_{\gamma,k} \cap \mathcal{C}_{\gamma_0,k} = \emptyset$ (this holds trivially if $\gamma$ and $\gamma_0$ are of different types), 
    \begin{equation}\label{e:separation}
    |a(\mathcal{C}_{\gamma,k}) - a(\mathcal{C}_{\gamma_0,k})|>2\sqrt{\theta} 2^k,
    \end{equation}
    \item \noindent \label{i:fence} there is a vertical crossing of the same type as $\gamma$ of the rectangle
    \begin{equation}\label{e:fence}
        R(a):=[a(1) +2,a(1) + \sqrt{\theta} 2^{k-1}] \times [a(2) - \theta 2^{k-1}, a(2) + \theta 2^{k-1}],
    \end{equation}
    which is connected to $\mathcal{C}_{\gamma,k}$ by a path of the same type in 
    $$S(a,\sqrt{\theta}2^{k-1}):=a+[-\sqrt{\theta}2^{k-1},\sqrt{\theta}2^{k-1}]^2.
    $$
\end{enumerate}
Finally, note that we can also define all the above for the similarly defined strips $\mathcal{S}_L$, $\mathcal{S}_T$, or $\mathcal{S}_B$ on the left, top and bottom, respectively.

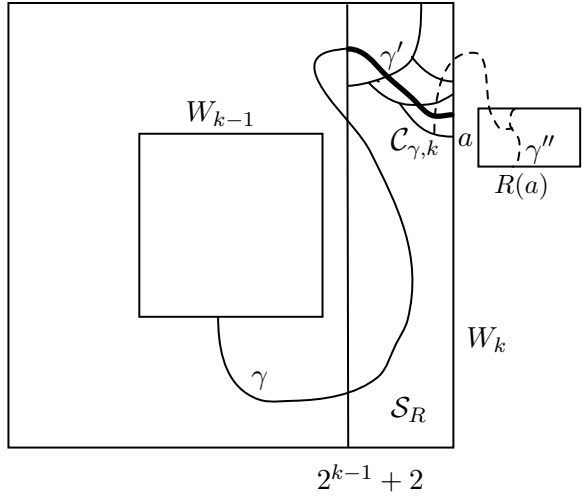
\begin{figure}
    \tikzset{every picture/.style={line width=0.75pt}} %set default line width to 0.75pt        

\tikzset{every picture/.style={line width=0.75pt}} %set default line width to 0.75pt        

\begin{tikzpicture}[x=0.55pt,y=0.55pt,yscale=-1,xscale=1]
\path (160,446); %set diagram left start at 0, and has height of 446

%Shape: Square [id:dp6284913256692546] 
\draw   (398,67) -- (704,67) -- (704,373) -- (398,373) -- cycle ;
%Shape: Square [id:dp1511615860028872] 
\draw   (488,157) -- (614,157) -- (614,283) -- (488,283) -- cycle ;
%Straight Lines [id:da9387989488681594] 
\draw    (631,68) -- (631.67,372.67) ;
%Shape: Free Drawing [id:dp7428723754933837] 
\draw  [line width=0.75] [line join = round][line cap = round] (631,98.4) .. controls (628.85,98.4) and (626.74,98.96) .. (624.6,99.2) .. controls (587.16,103.36) and (624.64,139.27) .. (636.6,154.4) .. controls (640.41,159.22) and (644.25,164.11) .. (647,169.6) .. controls (665.67,206.93) and (683.26,240.97) .. (672.6,284.8) .. controls (671.13,290.83) and (667.4,296.25) .. (664.6,301.6) .. controls (661.48,307.56) and (658.29,319.12) .. (652.6,324) .. controls (637.23,337.17) and (613.63,340.33) .. (594.2,340.8) .. controls (579.68,341.15) and (573.46,343.24) .. (562.2,336) .. controls (545.21,325.07) and (542.2,301.34) .. (542.2,283.2) ;
%Shape: Free Drawing [id:dp18966497031959295] 
\draw  [line width=0.75] [line join = round][line cap = round] (681.86,67) .. controls (681.86,80.51) and (681.66,93.39) .. (675.69,105) .. controls (669.82,116.43) and (641.74,124) .. (631.47,124) ;
%Shape: Free Drawing [id:dp4892197316032414] 
\draw  [line width=0.75] [line join = round][line cap = round] (675.42,104) .. controls (680.56,111.12) and (690.9,121) .. (703.58,121) ;
%Shape: Free Drawing [id:dp015621543900202983] 
\draw  [line width=0.75] [line join = round][line cap = round] (652,121) .. controls (652,121) and (652,121) .. (652,121) ;
%Shape: Free Drawing [id:dp9373470965041135] 
\draw  [line width=0.75] [line join = round][line cap = round] (645.86,122) .. controls (652.1,128.43) and (661.31,136.24) .. (673.04,137) .. controls (679.37,137.41) and (688.45,137.45) .. (694.41,135) .. controls (696.72,134.05) and (703.46,130) .. (703.5,130) ;
%Shape: Free Drawing [id:dp2756502925945876] 
\draw  [line width=0.75] [line join = round][line cap = round] (666.93,136) .. controls (677.53,150.34) and (683.84,159) .. (703.4,159) ;
%Shape: Rectangle [id:dp5087152534866322] 
\draw   (721.2,139.6) -- (791.2,139.6) -- (791.2,179.6) -- (721.2,179.6) -- cycle ;
%Shape: Free Drawing [id:dp9556808636570009] 
\draw  [dashed, line width=0.75] [line join = round][line cap = round] (746.2,140.14) .. controls (743.62,140.14) and (742.44,147.97) .. (743,150.85) .. controls (743.96,155.8) and (748.92,156.8) .. (749.4,163.19) .. controls (749.72,167.5) and (749.4,179.66) .. (744.6,179.66) ;
%Shape: Free Drawing [id:dp5918225434683924] 
\draw  [line width=2] [line join = round][line cap = round] (631.64,98.65) .. controls (642.6,98.65) and (649.23,108.1) .. (655.67,114.82) .. controls (661.72,121.13) and (671.31,127.9) .. (678.15,135.04) .. controls (681.46,138.49) and (685.46,143.67) .. (690.55,144.74) .. controls (694.64,145.59) and (700.2,143.93) .. (702.96,143.93) ;
%Shape: Free Drawing [id:dp9119576548048662] 
\draw  [dashed, line width=0.75] [line join = round][line cap = round] (690.91,156.18) .. controls (690.91,135.88) and (694.68,98.31) .. (719.28,101.42) .. controls (729.52,102.72) and (725.34,131.67) .. (729.52,142.24) .. controls (730.55,144.84) and (733.84,151.94) .. (735.82,153.19) .. controls (738.35,154.79) and (745.59,153.19) .. (744.49,153.19) ;

% Text Node
\draw (608,382.4) node [anchor=north west][inner sep=0.75pt]  [font=\small]  {$2^{k-1} +2$};
% Text Node
\draw (711,288.4) node [anchor=north west][inner sep=0.75pt]  [font=\small]  {$W_{k}$};
% Text Node
\draw (518,132.4) node [anchor=north west][inner sep=0.75pt]  [font=\small]  {$W_{k-1}$};
% Text Node
\draw (659,336.4) node [anchor=north west][inner sep=0.75pt]  [font=\small]  {$\mathcal{S}_{R}$};
% Text Node
\draw (563,318.4) node [anchor=north west][inner sep=0.75pt]  [font=\small]  {$\gamma$};
% Text Node
\draw (651.6,91.6) node [anchor=north west][inner sep=0.75pt]  [font=\small]  {$\gamma'$};
% Text Node
\draw (705,155) node [anchor=north west][inner sep=0.75pt]  [font=\small]  {$a$};
% Text Node
\draw (659,151.4) node [anchor=north west][inner sep=0.75pt]    {$\mathcal{C}_{\gamma,k}$};
% Text Node
\draw (751.6,156) node [anchor=north west][inner sep=0.75pt]  [font=\small]  {$\gamma''$};
\draw (730,182) node [anchor=north west][inner sep=0.75pt]  [font=\footnotesize]  {$R(a)$};

\end{tikzpicture}
\vspace{-.9cm}
\caption{$\gamma$ is a path from $\partial W_{k-1}$ to $\partial W_k$, with its crosscut in $\mathcal{S}_R$ denoted by $\gamma'$, the thickened black path. The component of $\gamma'$ in $\mathcal{S}_R$ is denoted by $\mathcal{C}_{\gamma,k}$, with its lowest point $a$. The rectangle on the right is $[a(1) +2,a(1) + \sqrt{\theta} 2^{k-1}] \times [a(2) - \theta 2^{k-1}, a(2) + \theta 2^{k-1}]$, with a top to bottom crossing $\gamma''$, which is connected to $\mathcal{C}_{\gamma,k}$ by a path inside $S(a,\sqrt{\theta}2^{k-1})$. If in addition \eqref{e:separation} is satisfied, we say $\mathcal{C}_{\gamma,k}$ has a $(\theta,k)$-fence.}
\label{fig:path}
\end{figure}

\begin{remark}[Comparison to Kesten's definitions]{\rm
Note that, compared to \cite{Kes87}, we replace $2^{k-1} + 1$ with $2^{k-1}+2$ in the definition of $\mathcal{S}_R$, to have independence of the crossing events in $W_{k-1}$, which are measurable with respect to the restriction of the Poisson measure to $\overline W_{k-1}$.
Also, note that condition \eqref{i:fence} is a slight modification of equation (2.28) in \cite{Kes87}. In particular, we replace $S(a, \sqrt{\theta} 2^k)$ by $S(a, \sqrt{\theta} 2^{k-1})$. This ensures that for two different endpoints $a_1,a_2$ satisfying \eqref{e:separation}, the corresponding squares $S(a_1,\sqrt{\theta} 2^{k-1})$ and $S(a_2,\sqrt{\theta} 2^{k-1})$ are disjoint and are at least $\sqrt{\theta} 2^k$ apart. Since $\theta \in (0,1/4)$, and $k > 1- \log_2 \theta$, we have $\sqrt{\theta} 2^k> 2$, making crossing events in these squares stochastically independent.}
\end{remark}

To prove Theorem~\ref{thm1}, we first need a lemma (Lemma \ref{lem:del}), showing that the probability that there is a crossing of $\mathcal{S}_R$ without a $(\theta,k)$ fence can be made arbitrarily small for all $k$ large enough, by choosing $\theta$ appropriately. Then we show that the $4$-arm event $A_4(s,r)$, when additionally satisfying that all the 4 arms have a $(\theta,r)$-fence (the event $A_4^{\theta}(s,r)$ defined before Lemma \ref{lem:etaext}), can be extended to the event $\tilde{A}_4(s,r+2)$; see Lemma \ref{lem:etaext}. Finally, we use these two lemmas along with Lemma \ref{lem:extend} to complete the proof of Theorem~\ref{thm1} by deriving some recursive inequalities.
\begin{lemma}\label{lem:del}
	For each $\delta>0$, there exists $\theta=\theta(\delta) \in (0,1/4)$ such that for all integer $k %\ge k(\theta(\delta)) 
 >1- \log_2 \theta$ (equivalently $\theta 2^k > 2$), 
	\begin{multline*}
	\P_{\lambda_c}(\exists \text{ an occupied horizontal crossing $\gamma$ of $\mathcal{S}_R$ }\\
		\text{whose occupied component $\mathcal{C}$ does not have an $(\theta,k)$-fence}) \le \delta.
	\end{multline*}
 The above claim also holds for `occupied' replaced by `vacant' and $\mathcal{S}_R$ replaced by one of the strips $\mathcal{S}_L$, $\mathcal{S}_T$, or $\mathcal{S}_B$.
\end{lemma}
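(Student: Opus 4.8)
The plan is to follow the scheme of \cite[Lem.\ 4]{Kes87}, the only genuinely new ingredient being the bookkeeping forced by the local dependence of the Boolean model (occupancy of a point is a function of the Poisson points within distance $1$ of it). I will only treat occupied horizontal crossings of $\mathcal{S}_R$; the vacant case, and the strips $\mathcal{S}_L,\mathcal{S}_T,\mathcal{S}_B$, then follow by the colour-swapping duality and the rotation invariance of the model.

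\emph{Reduction to a per-component estimate.} On the event that $\mathcal{S}_R$ carries an occupied horizontal crossing, I would list the distinct occupied components of such crossings inside $\mathcal{S}_R$ from bottom to top as $\mathcal{C}^{(1)},\mathcal{C}^{(2)},\dots$ and let $N$ be their (random) number. Since $\mathcal{S}_R$ has bounded aspect ratio in the hard direction, a standard RSW/BK argument gives an exponential tail for $N$ uniform in $k$, hence $\mathbb{E}[N]\le M<\infty$ uniformly in $k$. Following \cite{Kes87}, the component $\mathcal{C}^{(j)}$ together with the configuration lying below it can be revealed by a lowest-crossing exploration; the only change is that one must reveal $\eta$ on the unit neighbourhood of the explored region, which still leaves $\eta$ untouched on a set of positive volume. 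Writing $a=a(\mathcal{C}^{(j)})=(2^k,a(2))$, the union bound
$$
\mathbb{P}_{\lambda_c}\!\left(\exists\text{ occ.\ crossing of }\mathcal{S}_R\text{ without an }(\theta,k)\text{-fence}\right)\ \le\ \sum_{j\ge 1}\mathbb{P}_{\lambda_c}\!\left(\mathcal{C}^{(j)}\text{ exists and has no }(\theta,k)\text{-fence}\right)
$$
then reduces the task to bounding each summand by $\mathbb{P}_{\lambda_c}(\mathcal{C}^{(j)}\text{ exists})\,\big(p_{\mathrm{sep}}(\theta)+p_{\mathrm{fence}}(\theta)\big)$ with $p_{\mathrm{sep}}(\theta),p_{\mathrm{fence}}(\theta)\to0$ as $\theta\downarrow0$, uniformly in $j$ and in $k>1-\log_2\theta$; the full sum is then $\le M\,(p_{\mathrm{sep}}(\theta)+p_{\mathrm{fence}}(\theta))$, which can be made $\le\delta$.

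\emph{The two per-component bounds.} For the fence-crossing estimate, the key observation is that, since $a(1)=2^k$, the rectangle $R(a)$ of \eqref{e:fence} lies entirely in $\{x\ge 2^k+2\}$ and is therefore disjoint from the unit neighbourhood of the explored region, so that conditionally on the exploration the occupancy inside $R(a)$ (and inside the part of $S(a,\sqrt{\theta}\,2^{k-1})$ to the right of $\{x=2^k+1\}$) is that of an independent Boolean model; moreover, for small $\theta$, $R(a)$ has width $\sqrt{\theta}\,2^{k-1}$ and height $2\theta\,2^{k-1}$, i.e.\ a wide-and-short shape whose vertical crossing is the easy direction with diverging aspect ratio, so the RSW bound \eqref{eq:RSW} and the FKG inequality for the Poisson process yield a conditional probability at least $c_1(\theta)$, with $c_1(\theta)\to1$ as $\theta\downarrow0$ uniformly in $j,k$, of an occupied vertical crossing of $R(a)$ connected to the occupied point $a\in\mathcal{C}^{(j)}$ by an occupied path inside $S(a,\sqrt{\theta}\,2^{k-1})$; hence $p_{\mathrm{fence}}(\theta)\le 1-c_1(\theta)$. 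For the separation estimate: if $\mathcal{C}^{(j)}$ violates \eqref{e:separation}, there is a second, disjoint component whose lowest right endpoint is within $2\sqrt{\theta}\,2^k$ of $a$, and since both components also reach distance of order $2^{k-1}$ back into $\mathcal{S}_R$, planarity forces, in the half-plane $\{x\le 2^k\}$ and localised around $a$, an alternating occupied--vacant--occupied three-arm configuration spanning $\asymp\log_2(1/\sqrt{\theta})$ dyadic annuli with radii between a constant times $\sqrt{\theta}\,2^k$ and a constant times $2^{k-1}$; this is the content of \eqref{e:ineqlowcross} (whose combinatorial part I would obtain by adapting the corresponding step of \cite{Kes87}), and since by RSW each such annulus contributes a factor bounded away from $1$, $p_{\mathrm{sep}}(\theta)$ tends to $0$ as $\theta\downarrow0$. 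Combining the two bounds and choosing $\theta=\theta(\delta)\in(0,1/4)$ small enough then gives the claim.

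I expect the main obstacle to be precisely the rigorous implementation of the lowest-crossing exploration in the presence of the spatial dependence of the Boolean model --- keeping careful track of which region remains unexplored (and hence genuinely independent) after one reveals the unit neighbourhood of a component, which is what makes both the conditional RSW lower bound for the fence and the combinatorial three-arm reduction behind \eqref{e:ineqlowcross} go through --- together with the adaptation of Kesten's counting arguments; these are exactly the places where the excerpt announces that a deeper familiarity with \cite{Kes87} is required.
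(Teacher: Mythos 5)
Your per-component reduction, the BK control on the number of crossings, and the geometric observation that $R(a)$ lies in the unexplored half-space $\{x\ge 2^k+2\}$ are all in order. The gap is in the fence-crossing estimate. Condition (\ref{i:fence}) requires not only a vertical crossing of $R(a)$ (which, as you note, has probability tending to $1$ as $\theta\downarrow 0$ by RSW in the easy direction), but also a \emph{connection} of that crossing to $\mathcal{C}_{\gamma,k}$ inside $S(a,\sqrt{\theta}2^{k-1})$. Conditionally on the lowest-crossing exploration, this connection is a single-scale gluing event whose probability is bounded away from $0$ by RSW and FKG, but it does \emph{not} tend to $1$ as $\theta\downarrow 0$: nothing in the geometry of $S(a,\sqrt{\theta}2^{k-1})$ produces such a gain. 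Your claim that $c_1(\theta)\to 1$, and hence $p_{\mathrm{fence}}(\theta)\to 0$, is therefore unsubstantiated.

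What actually pushes the conditional probability of condition (\ref{i:fence}) to $1$ is precisely the multi-scale annulus construction you assign, mistakenly, to the separation bound. As in Kesten's Lemma~2, one introduces events $E_j$ on dyadic annuli around $a$ of radius $\sim\theta 2^{k+j}$; each $E_j$ conditionally has probability at least $c_0(4)^4$ by RSW/FKG, and if any $E_{2j-1}$ with $2^{2j-1}\le\theta^{-1/2}/2$ occurs then condition (\ref{i:fence}) holds. Retaining only odd $j$ decouples the unit-range dependence of the Boolean model, leaving $\asymp -\tfrac12\log_2\theta$ conditionally independent attempts and a failure probability $\lesssim(1-c_0(4)^4)^{-C_0\log\theta}\to 0$ as $\theta\downarrow 0$. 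This is the content of \eqref{e:ineqlowcross}, which is thus a statement about condition (\ref{i:fence}), not about the separation requirement \eqref{e:separation}; separation is controlled afterwards by refining this argument, following Kesten's computations below his (2.33). Relocating the multi-scale machinery from your separation step to the fence-crossing step, and handling separation via that subsequent refinement, would repair the outline.
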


\begin{proof} The proof is very similar to that of \cite[Lem.\ 2]{Kes87}, so we will only give an outline of the proof and point out the relevant differences. We start by assuming that
\begin{multline}\label{e:ineqlowcross}
\P_{\lambda_c}\left(\exists \text{ a lowest occupied L-R crossing $\gamma_1$ of $\mathcal{S}_R$ not satisfying} \right.   \\
\left. \text{condition (\ref{i:fence}) in the definition of a $(\theta,k)$-fence}\right) \; \le \; (1-c_0^4)^{-C_0 \log \theta},
\end{multline}
for some constant $C_0 \in (0,\infty)$, where $c_0 = c_0(4)$ is defined at \eqref{eq:RSW}. Deferring the proof sketch of this inequality to the end, we will complete rest of the proof. 

Assume that $\gamma_i$, $1 \le i \le \sigma$, are given ordered occupied horizontal crossings of $\mathcal{S}_R$, $\gamma_1$ being the lowest, and $\gamma_\sigma$ begin the topmost. For a crossing $\gamma$, let $\mathcal{S}^+(\gamma)$ denote the region above $\gamma$ in $\mathcal{S}_R$. Repeating the argument in the derivation of \eqref{e:ineqlowcross} by successive conditioning, one can show that
\begin{multline*}
\P_{\lambda_c}\left( \exists \text{ a lowest occupied crossing $\gamma_{\sigma+1}$ of $\mathcal{S}_R$ in $\mathcal{S}^+(\gamma_\sigma)$ not satisfying} \right. \\ 
\left. \qquad \text{condition (\ref{i:fence}) in the definition of a $(\theta,k)$-fence} \, \mid \, \gamma_i, 1 \le i \le \sigma \right) \le (1-c_0(4)^4)^{-C_0 \log \theta}.
\end{multline*}
Also, by the Poisson BK inequality \cite[Theorem 4.1]{Gup1999}, there cannot be too many occupied left to right horizontal crossings of $\mathcal{S}_R$. Combining these, one can choose $\theta$ small depending on $\delta$ such that
\begin{multline*}
\P_{\lambda_c}\left( \exists \text{ an occupied L-R crossing of $\mathcal{S}_R$  not satisfying} \right. \\ 
\left. \qquad \text{condition (\ref{i:fence}) in the definition of a $(\theta,k)$-fence} \right) \le \delta/4.
\end{multline*}
Finally, we strengthen the above bound by refining the arguments involved in the derivation of \eqref{e:ineqlowcross}. This can be done as in the proof of \cite[Lem.\ 2]{Kes87} -- see the arguments below (2.33) on p.\ 125 of \cite{Kes87}.
  
\noindent To conclude, one needs to take $\theta = \theta(\delta) \in (0,1/4)$ small enough. Since, in our case, we additionally need certain spacial independence (see proof of \eqref{e:ineqlowcross} below) which occurs for $2^{-k} < \theta$, the conclusion follows for $k > - \log_2 \theta$.
\medskip

\noindent \underline{\textit{Proof of \eqref{e:ineqlowcross}}:}
This proof requires a more detailed knowledge of the proof of \cite[Lem.\ 2]{Kes87}. In particular, assuming that there exists an occupied horizontal crossing of $\mathcal{S}_R$, let $\gamma_1$ be the lowest such crossing with its endpoint on the right edge of $W_k$ being $a$, and define the events $E_j\equiv E_j(\eta, k, \gamma_1)$, $j \ge 1$, as in the proof of \cite[Lem.\ 2]{Kes87} (with a straightforward adaptation from the discrete to the continuum setting, see Figure \ref{fig:ej} for an illustration of the event). Unlike in \cite[Lem.\ 2]{Kes87}, since we have to take care of spatial dependence, we only consider the events $E_{2j-1}$ for $j \in \N$ as long as $2^{2j-1}\le \theta^{-1/2}/2$. 
This difference will only result in a variation of some constants. By taking alternative $E_j$'s, we make sure that they are independent. For this we need that the distance between the annuli $E_1$ and $E_3$ is larger than $2$ (this ensures independence of all other pairs of events $(E_{2j-1}, E_{2j+1})$ with $j \ge 2$), requiring $\theta 2^{k+1} > 2$, which is true by our assumption.

\begin{figure}
    \tikzset{every picture/.style={line width=0.75pt}} %set default line width to 0.75pt        

\begin{tikzpicture}[x=0.6pt,y=0.6pt,yscale=-1,xscale=1]
\path (-50,300); %set diagram left start at 0, and has height of 300

%Shape: Square [id:dp14488337429008835] 
\draw   (193,34) -- (468.5,34) -- (468.5,309.5) -- (193,309.5) -- cycle ;
%Shape: Square [id:dp16621669096238723] 
\draw   (264.5,105.5) -- (397,105.5) -- (397,238) -- (264.5,238) -- cycle ;
%Straight Lines [id:da0004969712168689799] 
\draw    (329.8,20) -- (331.15,326.87) ;
%Shape: Circle [id:dp08549829112963869] 
\draw  [fill={rgb, 255:red, 0; green, 0; blue, 0 }  ,fill opacity=1 ] (333.08,174.07) .. controls (333.08,172.79) and (332.03,171.75) .. (330.75,171.75) .. controls (329.47,171.75) and (328.43,172.79) .. (328.43,174.07) .. controls (328.43,175.36) and (329.47,176.4) .. (330.75,176.4) .. controls (332.03,176.4) and (333.08,175.36) .. (333.08,174.07) -- cycle ;
%Shape: Free Drawing [id:dp8730656578371963] 
\draw  [line width=0.75] [line join = round][line cap = round] (225.8,160) .. controls (225.8,156.07) and (220.94,150.42) .. (221.8,138.4) .. controls (223.47,115.07) and (231.28,101.01) .. (229,73.6) .. controls (228.25,64.6) and (224.64,51.72) .. (221.8,43.2) .. controls (220.69,39.88) and (216.2,36.76) .. (216.2,34.4) ;
%Shape: Free Drawing [id:dp27145052356845367] 
\draw  [line width=0.75] [line join = round][line cap = round] (194.6,57.6) .. controls (206.96,69.96) and (232.61,67.7) .. (249,67.2) .. controls (302.79,65.57) and (354.72,54.45) .. (409,56) .. controls (425.47,56.47) and (442.32,59.37) .. (456.2,64) .. controls (459.79,65.2) and (466.43,65.43) .. (469,68) ;
%Shape: Free Drawing [id:dp5602516831938456] 
\draw  [line width=0.75] [line join = round][line cap = round] (443.4,34.4) .. controls (443.4,34.44) and (434.05,57.19) .. (433,61.6) .. controls (430.08,73.87) and (430.11,88.53) .. (429.8,100.8) .. controls (429.65,106.82) and (429.65,139.34) .. (430.6,148.8) .. controls (433.6,178.76) and (441.61,207.13) .. (442.6,236.8) .. controls (443.22,255.36) and (441.71,280.89) .. (427.4,295.2) .. controls (424.1,298.5) and (419.69,304.25) .. (415.4,306.4) .. controls (413.85,307.17) and (409.6,308.8) .. (410.6,308.8) ;
%Shape: Free Drawing [id:dp30424080034730694] 
\draw  [line width=0.75] [line join = round][line cap = round] (469,261.6) .. controls (462.82,261.6) and (456.51,271.35) .. (449,273.6) .. controls (435.17,277.75) and (416.67,277.75) .. (401.8,275.2) .. controls (382.53,271.9) and (369.45,266.59) .. (350.6,265.6) .. controls (344.74,265.29) and (331.4,265.26) .. (331.4,268.8) ;
%Shape: Free Drawing [id:dp48639914415347096] 
\draw  [line width=0.75] [line join = round][line cap = round] (329.99,173.96) .. controls (324.03,172.43) and (312.39,192.43) .. (303.86,195.55) .. controls (288.15,201.3) and (270.3,196.72) .. (257.39,187.38) .. controls (240.67,175.29) and (231.07,160.3) .. (213.44,153.43) .. controls (207.96,151.29) and (195.1,147.92) .. (193.08,155.75) ;
%Shape: Boxed Bezier Curve [id:dp2720293381028658] 
%\draw [color={rgb, 255:red, 63; green, 114; blue, 214 }  ,draw opacity=1 ][line width=0.75]    (226.44,160.4) .. controls (226.44,165.08) and (228.01,169.74) .. (227.57,174.4) .. controls (226.74,183.21) and (219,187.7) .. (218.48,196.4) .. controls (218.12,202.39) and (218.12,208.41) .. (218.48,214.4) .. controls (218.97,222.54) and (231.42,229.51) .. (232.12,239.4) .. controls (232.47,244.39) and (232.5,249.41) .. (232.12,254.4) .. controls (231.84,258.13) and (225.73,262.86) .. (225.3,267.4) .. controls (224.95,271.05) and (224.92,274.75) .. (225.3,278.4) .. controls (225.43,279.67) and (228.02,309.4) .. (232.12,309.4) ;
%Shape: Free Drawing [id:dp07655744016001087] 
%\draw  [color={rgb, 255:red, 63; green, 114; blue, 214 }  ,draw opacity=1 ][line width=0.75] [line join = round][line cap = round] (193.8,270.07) .. controls (199.82,270.07) and (208.03,268.19) .. (214.8,270.07) .. controls (220.4,271.62) and (225.25,277.58) .. (230.8,277.84) .. controls (238.13,278.2) and (245.47,278.17) .. (252.8,277.84) .. controls (266,277.26) and (275.22,270.12) .. (287.8,268.96) .. controls (296.46,268.15) and (305.8,270.81) .. (313.8,272.29) .. controls (320.05,273.45) and (326.5,268.96) .. (330.8,268.96) ;
%Curve Lines [id:da6477301386171943] 
\draw    (495.8,44.4) .. controls (479.96,1.83) and (393.55,22.97) .. (337.49,26.3) ;
\draw [shift={(335.8,26.4)}, rotate = 356.93] [color={rgb, 255:red, 0; green, 0; blue, 0 }  ][line width=0.75]    (10.93,-3.29) .. controls (6.95,-1.4) and (3.31,-0.3) .. (0,0) .. controls (3.31,0.3) and (6.95,1.4) .. (10.93,3.29)   ;

% Text Node
\draw (236,182) node [anchor=north west][inner sep=0.75pt]    {$\gamma _{1}$};
% Text Node
\draw (337,169.4) node [anchor=north west][inner sep=0.75pt]    {$a$};
% Text Node
\draw (477,54) node [anchor=north west][inner sep=0.75pt]  [font=\small] [align=left] {right edge\\of $\mathcal{S}_R$};
% Text Node
\draw (330,240) node [anchor=north west][inner sep=0.75pt]  [font=\footnotesize]  {$S\left( a,\theta 2^{k+2j-2}\right)$};
% Text Node
\draw (200,314) node [anchor=north west][inner sep=0.75pt]  [font=\footnotesize]  {$S\left( a,\theta 2^{k+2j-1}\right)$};

\end{tikzpicture}
%\vspace{-.9cm}
\caption{Illustration of the event $E_{2j-1}$ for $j \in \N$. $\gamma_1$ is the lowest occupied horizontal crossing of $\mathcal{S}_R$. Given $\gamma_1$, the event $E_{2j-1}$ requires further occupied paths as shown here. If $E_{2j-1}$ occurs for some $j \in \N$ with $2^{2j-1}\le \theta^{-1/2}/2$, then condition \eqref{i:fence} in the definition of $(\theta,k)$-fence is satisfied.}
\label{fig:ej} 
\end{figure}
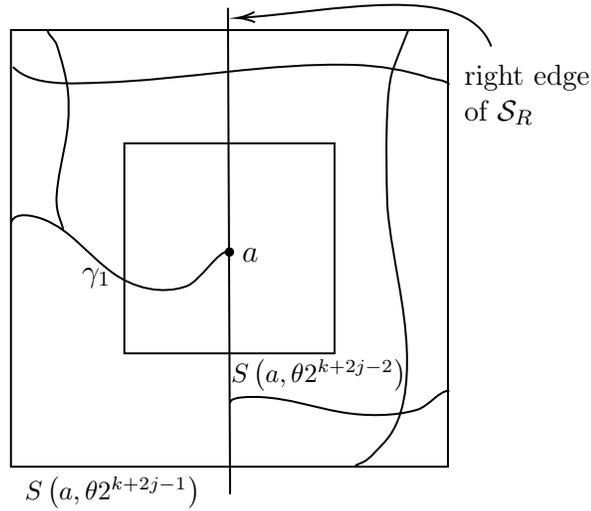

In particular we have that, conditionally on the event that the lowest occupied horizontal crossing in $\mathcal{S}_R$ is $\gamma_1$ (which provides no information on the configuration in the region in $\mathcal{S}_R$ which is unit distance above $\gamma_1$, and is positively associated with any occupied event in that unit strip), the probability that $E_j$ occurs is at least $c_0(4)^4$. We note here that the lowest crossing can be defined in a measurable way, as the lower boundary of the lowest connected component in the occupied region in $\mathcal{S}_R$ that connects its left and right boundaries. We need to be a bit careful here, since we are conditioning on an event with probability zero, but one can define it concretely by considering the Hausdorff metric on the space of piecewise smooth curves crossing $\mathcal{S}_R$ from left to right, which defines a Borel $\sigma$-algebra on this space. This makes the event that the lowest crossing is in the ball (in Hausdorff metric) of radius $\epsilon>0$ around a given fixed curve $\gamma_1$ measurable in our probability space. Then one can let $\epsilon$ go to zero to make sense of such a conditioning. Also, note that if $E_{2j-1}$ is true for some $j \in \N$ with $2^{2j-1}\le \theta^{-1/2}/2$, then \eqref{i:fence} in the definition of $(\theta,k)$-fence holds. The above upper bound now follows by arguing as in \cite[Lem.\ 2]{Kes87}.
\end{proof}

For the proof of Theorem~\ref{thm2}, we also need an inward version of Lemma~\ref{lem:del}. Again, for $\theta \in (0,1/4)$, and $k > 1- \log_2 \theta$, let $\gamma$ be a path from $\partial W_{k+1}$ to $\partial W_k$ (see also \cite[p.\ 134]{Kes87}). Assuming it hits the right boundary of $\partial W_k$, recall the definitions of its component $\mathcal{C}_{\gamma,k}$ in the $C$-shaped region $\mathcal{\widetilde S}_R := [0,2^{k+1}-2] \times [-2^{k+1}+2,2^{k+1}-2] \setminus W_k$ (again with $2^{k+1} - 1$ replaced by $2^{k+1} - 2$ in the definition of $ \mathcal{\widetilde S}_R$ in \cite{Kes87}) with outer and inner boundaries denoted by $B_1$ and $B_2$ respectively (See \cite[Figure 13]{Kes87}). Note that $B_2 = \{2^k\} \times [-2^k,2^k]$ is also the right boundary of $\partial W_k$. We denote the lowest point of  $\mathcal{C}_{\gamma,k}$ on $B_2$ by $a=a(\mathcal{C})$.

The component $\mathcal{C}$ of the path $\gamma$ is said to have an {\it inward $(\theta, k)$-fence}, if
\begin{enumerate}[(i)]
    \item for another path (occupied or vacant) $\gamma_0$ from $\partial W_{k+1}$ to $\partial W_k$ with $\mathcal{C}_{\gamma,k} \cap \mathcal{C}_{\gamma_0,k} = \emptyset$ (again holds trivially if $\gamma_0$ is of opposite type), 
    \begin{equation*}\label{e:inseparation}
    |a(\mathcal{C}_{\gamma,k}) - a(\mathcal{C}_{\gamma_0,k})|>2\sqrt{\theta} 2^k,
    \end{equation*}
    \item there is a vertical crossing of the same type as $\gamma$ of the rectangle
    \begin{equation*}\label{e:infence}
        [a(1) - \sqrt{\theta} 2^{k-1},a(1) -2] \times [a(2) - \theta 2^{k-1}, a(2) + \theta 2^{k-1}],
    \end{equation*}
    which is connected to $\mathcal{C}_{\gamma,k}$ by a path of the same type in $S(a,\sqrt{\theta}2^{k-1})$.
\end{enumerate}
As before, in contrast to the definition of inward $(\theta, k)$-fence in \cite{Kes87}, in condition (2.54) therein, we replace $S(a, \sqrt{\theta} 2^k)$ by $S(a, \sqrt{\theta} 2^{k-1})$. We can also similarly define the above events with respect to vacant component and left, top and bottom boundaries of $\partial W_k$.

Now we state an inward version of Lemma~\ref{lem:del}. Since its proof is very similar, we skip the proof here; see also the proof of \cite[Lem.\ 5]{Kes87}.
\begin{lemma}\label{lem:del'}
	For each $\delta>0$, there exists $\theta=\theta(\delta) \in (0,1/4)$ such that for all $k \ge k(\theta) >1-\log_2 \theta$,
	\beaa
	&&\P_{\lambda_c}(\exists \text{ an occupied horizontal crossing $\gamma$ from $B_1$ to $B_2$ in $\mathcal{\widetilde S_R}$ }\\
	&& \qquad \text{whose occupied component $\mathcal{C}$ does not have an inward $(\theta,k)$-fence}) \le \delta.
	\eeaa
 The above statement also holds for `occupied' replaced by `vacant', and $\mathcal{\widetilde S}_R$ replaced by one of the similarly defined strips $\mathcal{\widetilde S}_L$, $\mathcal{\widetilde S}_T$, or $\mathcal{\widetilde S}_B$.
\end{lemma}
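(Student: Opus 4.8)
\textbf{Proof proposal for Lemma \ref{lem:del'}.} The plan is to mirror, essentially verbatim, the argument used for Lemma \ref{lem:del}, replacing the outward strip $\mathcal{S}_R$ by the $C$-shaped region $\mathcal{\widetilde S}_R$ and the notion of $(\theta,k)$-fence by its inward counterpart. First I would establish the inward analogue of \eqref{e:ineqlowcross}: for some constant $C_0\in(0,\infty)$,
$$
\P_{\lambda_c}\!\left(\exists \text{ a lowest occupied crossing }\gamma_1\text{ from }B_1\text{ to }B_2\text{ in }\mathcal{\widetilde S}_R\text{ not satisfying the inward fence condition (2)}\right) \le (1-c_0^4)^{-C_0\log\theta},
$$
with $c_0=c_0(4)$ as in \eqref{eq:RSW}. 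This is done exactly as in the proof sketch of \eqref{e:ineqlowcross}: conditionally on the lowest occupied crossing being $\gamma_1$ with right endpoint $a$ on $B_2$, define nested events $E_{2j-1}\equiv E_{2j-1}(\eta,k,\gamma_1)$ (the inward adaptation of Kesten's events, built from crossings of annuli $S(a,\theta 2^{k+2j-2})$ and $S(a,\theta 2^{k+2j-1})$, as in \cite[p.\ 134]{Kes87}), for those $j\in\N$ with $2^{2j-1}\le \theta^{-1/2}/2$. Using only the odd-indexed events $E_{2j-1}$ guarantees that consecutive ones are at distance larger than $2$ — which follows from $\theta 2^{k+1}>2$, i.e.\ $k>1-\log_2\theta$ — hence they are stochastically independent; conditionally on $\gamma_1$, each occurs with probability at least $c_0(4)^4$ by the RSW bound \eqref{eq:RSW} (the conditioning on the lowest crossing gives no information on, and is positively associated with, occupied events in the unit-width strip just inside $\gamma_1$). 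If $E_{2j-1}$ occurs for some such $j$, then the inward fence condition (2) holds; failure of all of them has probability at most $(1-c_0^4)^{\#\{j\}}$, and $\#\{j\}\gtrsim -\log_2\theta$, giving the displayed bound. Measurability of the lowest crossing is handled exactly as in the proof of Lemma \ref{lem:del}, via the Hausdorff metric on piecewise-smooth left-right crosscuts.

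Next, by successive conditioning on an ordered family $\gamma_1,\ldots,\gamma_\sigma$ of disjoint occupied crossings of $\mathcal{\widetilde S}_R$ (with $\gamma_1$ the lowest), the same argument applied inside the region on the far side of $\gamma_\sigma$ yields
$$
\P_{\lambda_c}\!\left(\exists \text{ a next occupied crossing }\gamma_{\sigma+1}\text{ of }\mathcal{\widetilde S}_R\text{ beyond }\gamma_\sigma\text{ not satisfying the inward fence condition (2)}\,\middle|\,\gamma_i,\,1\le i\le\sigma\right)\le (1-c_0(4)^4)^{-C_0\log\theta}.
$$
Combining this with the Poisson BK inequality \cite[Theorem 4.1]{Gup1999} — which limits the number of disjoint occupied crossings of $\mathcal{\widetilde S}_R$ — and then strengthening, as in the arguments below (2.33) of \cite[p.\ 125]{Kes87} (adapted to the inward setting as in the proof of \cite[Lem.\ 5]{Kes87}), shows that for $\theta=\theta(\delta)\in(0,1/4)$ chosen small enough, the probability that some occupied horizontal crossing $\gamma$ from $B_1$ to $B_2$ in $\mathcal{\widetilde S}_R$ has its component $\mathcal{C}$ without an inward $(\theta,k)$-fence is at most $\delta$, for all $k\ge k(\theta)>1-\log_2\theta$. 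The statement for vacant crossings and for the strips $\mathcal{\widetilde S}_L,\mathcal{\widetilde S}_T,\mathcal{\widetilde S}_B$ follows by symmetry (the model being invariant under reflections and rotations by $\pi/2$, and the RSW bound \eqref{eq:RSW} and FKG inequality applying equally to vacant events).

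The main obstacle is purely bookkeeping: verifying that the inward geometry — with the annuli growing toward $\partial W_k$ rather than away from it, and the shrunk square $S(a,\sqrt{\theta}2^{k-1})$ in place of Kesten's $S(a,\sqrt{\theta}2^k)$ — still fits inside $\mathcal{\widetilde S}_R$ and that the separation \eqref{e:inseparation} keeps the squares attached to distinct endpoints disjoint and at distance $>\sqrt{\theta}2^k>2$ apart, so that the crossing events used in different $E_{2j-1}$'s (and for different crossings $\gamma$) are genuinely independent. These are the same checks as in Lemma \ref{lem:del}, and no new idea is needed; I would simply note the modifications and refer to \cite[Lem.\ 5]{Kes87} for the remaining details.
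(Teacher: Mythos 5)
Your proposal is correct and follows exactly the route the paper intends: the paper omits the proof of Lemma \ref{lem:del'}, remarking only that it is "very similar" to Lemma \ref{lem:del} and pointing to \cite[Lem.\ 5]{Kes87}, and your write-up is precisely that adaptation (inward events $E_{2j-1}$, the $k>1-\log_2\theta$ separation to secure independence, successive conditioning plus the Poisson BK bound on the number of disjoint crossings, and symmetry for the vacant case and the other strips). No discrepancy to report.
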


For the proof of Theorem~\ref{thm1}, we also need the following result from \cite[Lem.\ 3]{Kes87}, adapted to the Poisson setting. The proof uses the FKG inequality and complete independence property of the Poisson process.
\begin{lemma}\label{lem:FKG}
Let $A$ and $D$ be increasing events, and $B$ and $E$ decreasing events and $\P$ is a Poisson measure. Assume also that the events $A,B,D,E$ depend on the configuration in the sets $\mathcal{A}, \mathcal{B}, \mathcal{D}, \mathcal{E}$ respectively, with 
\begin{equation}\label{eq:FKGcond}
\mathcal{A} \cap \mathcal{B} = \mathcal{A} \cap \mathcal{E} = \mathcal{B} \cap \mathcal{D} = \emptyset.
\end{equation}
Then
	\begin{equation}\label{e:FKG}
	\P(A \cap B \mid D \cap E) \ge \P(A \cap B).
	\end{equation}
\end{lemma}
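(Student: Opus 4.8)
\textbf{Proof plan for Lemma \ref{lem:FKG}.}

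The plan is to reduce \eqref{e:FKG} to the ordinary FKG inequality for the Poisson measure $\P$ by exploiting the disjointness hypothesis \eqref{eq:FKGcond} together with the complete independence property of Poisson processes. First I would rewrite $\P(A\cap B\mid D\cap E)\ge \P(A\cap B)$ in the equivalent product form
\[
\P(A\cap B\cap D\cap E)\;\ge\;\P(A\cap B)\,\P(D\cap E).
\]
The idea is to split the underlying space $\X$ along the relevant sets. Condition \eqref{eq:FKGcond} says that $\mathcal{A}$ is disjoint from $\mathcal{B}\cup\mathcal{E}$ and $\mathcal{D}$ is disjoint from $\mathcal{B}$, but $\mathcal{A}$ may intersect $\mathcal{D}$, and $\mathcal{B}$ may intersect $\mathcal{E}$. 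So the natural decomposition is into the region $\mathcal{R}_1:=\mathcal{A}\cup\mathcal{D}$ (which carries the increasing events $A$, $D$) and the region $\mathcal{R}_2:=\mathcal{B}\cup\mathcal{E}$ (which carries the decreasing events $B$, $E$); by \eqref{eq:FKGcond} we have $\mathcal{A}\cap\mathcal{R}_2=\emptyset$ and $\mathcal{D}\cap\mathcal{R}_2$ need not be empty, so this naive split does not quite work. Instead I would use the finer partition generated by $\mathcal{A},\mathcal{B},\mathcal{D},\mathcal{E}$ and condition on the restriction $\eta|_{\mathcal{D}\cap\mathcal{E}}$ of the Poisson process to the (possibly nonempty) overlap $\mathcal{D}\cap\mathcal{E}$.

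The key steps, in order, are as follows. (i) By the complete independence of the Poisson process, the restrictions $\eta|_{\mathcal{D}\setminus\mathcal{E}}$, $\eta|_{\mathcal{E}\setminus\mathcal{D}}$, $\eta|_{\mathcal{D}\cap\mathcal{E}}$ and $\eta$ restricted to the complement are independent. Condition on $\zeta:=\eta|_{\mathcal{D}\cap\mathcal{E}}$. (ii) Given $\zeta$, the event $A$ depends only on $\eta|_{\mathcal{A}}$, which by \eqref{eq:FKGcond} lives in $\X\setminus(\mathcal{B}\cup\mathcal{E})$; similarly $B$ lives in $\X\setminus(\mathcal{A}\cup\mathcal{D})$, $D$ in $\X\setminus\mathcal{B}$, $E$ in $\X\setminus\mathcal{A}$. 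One checks that, conditionally on $\zeta$, the pair of increasing events $A\cap D'$ and the pair of decreasing events $B\cap E'$ (where $D',E'$ are the $\zeta$-sections of $D,E$, still increasing resp. decreasing in the remaining coordinates) are measurable with respect to disjoint coordinate blocks or, where blocks overlap, are monotone in the shared coordinates with compatible (same) monotonicity. (iii) Apply the Harris--FKG inequality for the Poisson measure (the Poisson FKG inequality, see e.g. \cite[Theorem 20.4]{LastPenrose17}) to the increasing function $\mathds{1}_{A}\mathds{1}_{D}$ and the decreasing function $\mathds{1}_{B}\mathds{1}_{E}$ in the conditional law given $\zeta$, obtaining $\P(A\cap B\cap D\cap E\mid\zeta)\ge \P(A\cap D\mid\zeta)\,\P(B\cap E\mid\zeta)$; then use a second application of FKG/independence to pull apart $A\cap D$ and $B\cap E$ further and integrate out $\zeta$, recovering the product bound. (iv) Translate back to the conditional-probability statement \eqref{e:FKG}.

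The main obstacle I expect is step (ii)--(iii): carefully tracking which event depends on which portion of the configuration and verifying that, after conditioning on the overlap $\mathcal{D}\cap\mathcal{E}$, the four events really do decouple into an "increasing block" and a "decreasing block" to which a single application of Poisson FKG applies. The disjointness in \eqref{eq:FKGcond} is exactly what is needed: it guarantees that the increasing event $A$ shares no coordinates with the decreasing events $B,E$, and the decreasing event $B$ shares none with the increasing events $A,D$, so the only potentially problematic overlaps ($\mathcal{A}\cap\mathcal{D}$ and $\mathcal{B}\cap\mathcal{E}$) involve events of the \emph{same} monotonicity and hence cause no trouble for FKG. Once this bookkeeping is set up, the inequality follows from the standard Poisson FKG inequality exactly as in \cite[Lem.\ 3]{Kes87} (whose discrete argument is the blueprint), with the complete independence of the Poisson process replacing the product structure of the discrete lattice measure.
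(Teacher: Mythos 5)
Your overall strategy --- conditioning on $\zeta:=\eta|_{\mathcal{D}\cap\mathcal{E}}$ and then splitting the remaining configuration into an ``increasing block'' carrying $A$ and $D$ and a ``decreasing block'' carrying $B$ and $E$ --- is the right one, and is exactly what the paper has in mind (it gives no detailed proof, only the remark that one should adapt Kesten's Lemma 3 using FKG and the complete independence of the Poisson process). Your key structural observation is correct: the disjointness hypothesis forces $(\mathcal{A}\cup\mathcal{D})\cap(\mathcal{B}\cup\mathcal{E})=\mathcal{D}\cap\mathcal{E}$, so once you condition on $\zeta$ the two blocks live on disjoint regions, and the only remaining overlaps are $\mathcal{A}\cap\mathcal{D}$ and $\mathcal{B}\cap\mathcal{E}$, which sit \emph{inside} a single monotone block and are harmless.

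However, step (iii) as written contains a direction error that you should fix. You say to ``apply the Harris--FKG inequality to the increasing function $\mathds{1}_A\mathds{1}_D$ and the decreasing function $\mathds{1}_B\mathds{1}_E$'' and thereby obtain $\P(A\cap B\cap D\cap E\mid\zeta)\ge\P(A\cap D\mid\zeta)\,\P(B\cap E\mid\zeta)$. But FKG applied to an increasing indicator and a decreasing indicator yields \emph{negative} correlation, i.e.\ the inequality would go the other way. What actually holds at that step, and what saves your argument, is \emph{equality}: after conditioning on $\zeta$, the events $A\cap D$ and $B\cap E$ are measurable with respect to the disjoint regions $(\mathcal{A}\cup\mathcal{D})\setminus(\mathcal{D}\cap\mathcal{E})$ and $(\mathcal{B}\cup\mathcal{E})\setminus(\mathcal{D}\cap\mathcal{E})$, so they are conditionally independent by the complete independence property of the Poisson process. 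You should invoke independence, not FKG, for this factorization. FKG is then correctly applied \emph{within} each block: $\P(A\cap D\mid\zeta)\ge\P(A\mid\zeta)\,\P(D\mid\zeta)$ (both increasing) and $\P(B\cap E\mid\zeta)\ge\P(B\mid\zeta)\,\P(E\mid\zeta)$ (both decreasing). Finally, to land on the stated bound you need two more pieces of elementary bookkeeping that you do not spell out: first, $\P(A\mid\zeta)\P(B\mid\zeta)=\P(A)\P(B)=\P(A\cap B)$ since $A,B$ do not depend on $\zeta$ and $\mathcal{A}\cap\mathcal{B}=\emptyset$; second, $\P(D\mid\zeta)\,\P(E\mid\zeta)=\P(D\cap E\mid\zeta)$ because, given $\zeta$, $D$ and $E$ are measurable with respect to the disjoint regions $\mathcal{D}\setminus\mathcal{E}$ and $\mathcal{E}\setminus\mathcal{D}$, so integrating over $\zeta$ recovers $\P(D\cap E)$. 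With these two corrections the argument is complete.
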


Now we are ready to prove Theorem~\ref{thm1} following the ideas in the proof of \cite[Lem.\ 4]{Kes87}. To make the proof easier to follow, we break it into two parts, first showing the following lemma which is equivalent to (2.38) in \cite{Kes87}, and then proving Theorem~\ref{thm1}. For $\theta \in (0,1)$, denote $\alpha_4^\theta(s,r)=\P_{\lambda_c}(A_4^\theta(s,r))$, where we define the event $A_4^\theta(s,r)$ as
$$
A_4^\theta(s,r) = A_4(s,r) \cap \{\text{all 4 components can be chosen to have an $(\theta,r)$-fence}\}.
$$
\begin{lemma}\label{lem:etaext} For $\theta \in (0,1/4)$ there exists a constant $C_{\theta} \in (0,\infty)$, such that for $r$ large enough so that $\theta 2^r > 2$, and $s\le r$,
	$$\alpha_4^\theta(s,r) \le C_\theta \widetilde \alpha_4 (s,r+2).$$
\end{lemma}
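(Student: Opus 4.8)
The plan is to prove the equivalent inequality $\widetilde\alpha_4(s,r+2)\ge c_\theta\,\alpha_4^\theta(s,r)$ for some constant $c_\theta\in(0,1)$ depending only on $\theta$, and then take $C_\theta=c_\theta^{-1}$. The mechanism, which follows \cite[Lem.\ 4]{Kes87} closely, is that on the event $A_4^\theta(s,r)$ the four $(\theta,r)$-fences supply four arms of alternating type reaching \emph{macroscopically well-separated} points $a_1,\dots,a_4$ on $\partial W_r$, each carrying a crossing $\gamma_i''$ of the fence rectangle $R(a_i)$ (of the same type as the arm) that protrudes a definite amount into the annulus $W_{r+1}\setminus W_r$; starting from these ``handles'' one builds, with conditional probability bounded below by a constant depending only on $\theta$, the extra crossings that turn the configuration into one realizing $\widetilde A_4(s,r+2)$.

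First I would fix, measurably, on $A_4^\theta(s,r)$, a choice of four alternating arms $\gamma_1,\dots,\gamma_4$ realizing $A_4(s,r)$ and admitting $(\theta,r)$-fences, together with the boundary points $a_i\in\partial W_r$ and the fence crossings $\gamma_i''\subseteq R(a_i)\subseteq S(a_i,\sqrt\theta 2^{r-1})$ --- this can be done exactly as the analogous selections in \cite{Kes87}, modulo the measurability discussion already referenced there. Let $\mathcal R$ denote the random region on whose restriction of $\eta$ these choices depend; since each arm lies in $\overline{W_r}$ and each box $S(a_i,\sqrt\theta 2^{r-1})$ sits inside $W_{r+1}$ (as $\sqrt\theta<1/2$), one may take $\mathcal R\subseteq\overline{W_{r+1}}$, and --- up to small safety margins built into the rectangles below, exactly as in \cite{Kes87} --- $\mathcal R$ is disjoint from all rectangles that will be used in $W_{r+2}\setminus W_r$, in particular from $\mathcal A(1,r+2)^{-},\mathcal A(3,r+2)^{-},\mathcal B(2,r+2)^{-},\mathcal B(4,r+2)^{-}$. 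The fence condition \eqref{e:separation}, applied to the pairwise disjoint components (occupied and vacant arms have disjoint components automatically), forces the $a_i$ to be pairwise at distance $>2\sqrt\theta 2^r$, hence the boxes $S(a_i,\sqrt\theta 2^{r-1})$ to be pairwise disjoint, leaving free corridors of width $\sim\sqrt\theta 2^{r}$ between consecutive fences.

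Next I would carry out the gluing construction. After relabelling so that, read cyclically around $\partial W_r$, the fences appear in the order occupied, vacant, occupied, vacant, I would exhibit a family of $m=m(\theta)$ axis-parallel rectangles of aspect ratio bounded in terms of $\theta$ only, all contained in $W_{r+2}\setminus W_r$, disjoint from $\mathcal R$, with the occupied and the vacant rectangles supported on disjoint regions, such that: crossings of the appropriate types connect $\gamma_1''$ to $\mathcal A(1,r+2)$, $\gamma_2''$ to $\mathcal B(2,r+2)$, $\gamma_3''$ to $\mathcal A(3,r+2)$ and $\gamma_4''$ to $\mathcal B(4,r+2)$, through channels that wind through the annulus $W_{r+1}\setminus W_r$ (at radius $\gtrsim\sqrt\theta 2^{r-1}$ beyond $\partial W_r$, hence clear of the $S(a_j,\cdot)$'s and of $\mathcal R$, and running inside the width-$\sim\sqrt\theta 2^r$ corridors) without meeting each other; and additional occupied/vacant crossings carry these arms across $\mathcal A(1,r+2),\mathcal A(3,r+2)$ resp.\ $\mathcal B(2,r+2),\mathcal B(4,r+2)$ out to $\partial W_{r+2}$ while also realizing the vertical occupied crossings of $\mathcal A(1,r+2)^{-},\mathcal A(3,r+2)^{-}$ and the horizontal vacant crossings of $\mathcal B(2,r+2)^{-},\mathcal B(4,r+2)^{-}$. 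On the event that all $m(\theta)$ of these crossings occur, the arms $\gamma_1,\dots,\gamma_4$ extended through the channels and rectangles become four alternating arms from $\partial W_s$ to $\partial W_{r+2}$ with all parts outside $W_{r+1}$ of the occupied (resp.\ vacant) ones lying in $\mathcal A(1,r+2)\cup\mathcal A(3,r+2)$ (resp.\ $\mathcal B(2,r+2)\cup\mathcal B(4,r+2)$), so that $A_4^\theta(s,r)\cap\{\text{all }m(\theta)\text{ crossings occur}\}\subseteq\widetilde A_4(s,r+2)$.

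Finally I would conclude with FKG, independence and RSW. Taking $\mathcal R$ to be a stopping (explored) set, conditionally on $(\mathcal R,\eta|_{\mathcal R})$ the restriction $\eta|_{\mathcal R^c}$ is again Poisson of the same intensity, and the $m(\theta)$ rectangles of the previous step are then deterministic rectangles of bounded aspect ratio lying in $\mathcal R^c$; by the RSW bound \eqref{eq:RSW} each is crossed in the required type with (conditional) probability at least $c_0=c_0(\kappa(\theta))>0$. Since the occupied rectangles and the vacant rectangles are supported on disjoint regions, the ``all occupied crossings'' and ``all vacant crossings'' events are conditionally independent; applying the FKG inequality to the (mutually increasing) occupied crossings and, separately, to the (mutually decreasing) vacant crossings --- the combination of these facts being the content of Lemma \ref{lem:FKG} --- the conditional probability that all $m(\theta)$ crossings occur is at least $c_0^{\,m(\theta)}=:c_\theta>0$, uniformly in the admissible configurations; taking expectations yields $\widetilde\alpha_4(s,r+2)\ge c_\theta\,\alpha_4^\theta(s,r)$. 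The one genuinely delicate point --- and the place I expect to spend the most effort --- is the geometric construction: one must check that for \emph{every} admissible placement of pairwise $2\sqrt\theta 2^r$-separated fence endpoints on $\partial W_r$ and \emph{every} alternating type assignment, the required bounded-aspect-ratio rectangles can be chosen disjoint from $\mathcal R$ with total count $m(\theta)$ depending on $\theta$ only; this is exactly what is done in the proof of \cite[Lem.\ 4]{Kes87}, the fences serving precisely to make all constants independent of $r$, $s$ and of the arm positions.
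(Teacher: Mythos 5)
Your high-level strategy is the same as the paper's --- starting from a realization of $A_4^\theta(s,r)$, use the $(\theta,r)$-fences as well-separated ``handles'' on $\partial W_r$ and, by gluing on $O_\theta(1)$ bounded-aspect-ratio rectangles in the annulus $W_{r+2}\setminus W_r$ that are crossed with RSW-probability, extend to $\widetilde A_4(s,r+2)$ at constant cost. However, the two main technical moves in the paper's proof do not survive in your outline.

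First, the paper's STEP 1 is a pigeonhole/landing-sequence argument: partition $\partial W_r$ into $O(1/\theta)$ intervals of length $\le \theta 2^r$, choose a landing sequence $I = (I_1,I_3,I_2^*,I_4^*)$ so that $\P(A_4^\theta(s,r,I)) \ge C_0\theta^4\,\P(A_4^\theta(s,r))$, and fix $I$ for the rest of the proof. This is the mechanism that lets the corridors $K_1,\dots,K_4$ be built as a fixed, deterministic template up to the positions of the $a_i$ inside their intervals, and it is the source of the $\theta^4$-type constant. Your sketch replaces this by exploring a stopping set $\mathcal{R}$ and then asking for a uniform construction of gluing rectangles ``disjoint from $\mathcal{R}$, for every admissible placement of the $a_i$''; this is exactly the content that the landing sequence is designed to supply, and deferring it to Kesten without naming the mechanism leaves a genuine gap, especially because the fence boxes $S(a_i,\sqrt\theta\,2^{r-1})$ already protrude into the same annulus $W_{r+1}\setminus W_r$ that your corridors must traverse, so ``disjoint from $\mathcal{R}$'' is not automatic.

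Second, your FKG step is misattributed. The paper's Lemma~\ref{lem:FKG} is a \emph{conditional} FKG inequality $\P(A\cap B\,|\,D\cap E)\ge \P(A\cap B)$ for increasing $A,D$ and decreasing $B,E$ with the \emph{cross-type} supports disjoint ($\mathcal{A}\cap\mathcal{B}=\mathcal{A}\cap\mathcal{E}=\mathcal{B}\cap\mathcal{D}=\emptyset$) while the same-type overlaps $\mathcal{A}\cap\mathcal{D}$, $\mathcal{B}\cap\mathcal{E}$ are allowed. This is precisely what is needed in the paper because it conditions only on $\eta|_{\overline W_r}$: the fence events $D\cap E$ (which are part of $A_4^\theta$) live on $\overline W_r^c$ and overlap the corridor events $A,B$ of the same type. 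Your description --- ``FKG on the occupied crossings, FKG on the vacant crossings, independence between them, the combination of these being Lemma~\ref{lem:FKG}'' --- describes ordinary FKG plus independence, a different and weaker statement. If your $\mathcal{R}$ truly swallowed $D\cap E$, you would not need Lemma~\ref{lem:FKG} at all; but since your corridors must abut the fence boxes to connect to $\gamma_i''$, you cannot in fact have $\mathcal{R}$ disjoint from the gluing supports, and it is exactly the conditional FKG that resolves this tension in the paper. You should either adopt the paper's conditioning on $\eta|_{\overline W_r}$ together with Lemma~\ref{lem:FKG} as written, or carry out the stopping-set argument carefully enough to explain how the same-type abutment between $\mathcal{R}$ and the gluing rectangles is handled without it.
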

\begin{proof} The proof again is similar as in \cite[Lem.\ 4]{Kes87}, and follows from the following two steps:
\medskip

\underline{\textsc{STEP 1}:} Assume 
$A_4(s,r)$ occurs so that there are occupied paths $\gamma_1,\gamma_3$ and vacant paths $\gamma_2^*,\gamma_4^*$ with corresponding components $\mathcal{C}_1, \mathcal{C}_3$ and $\mathcal{C}_2^*, \mathcal{C}_4^*$, and terminal points $a_1,a_3,a_2^*,a_4^*$, respectively. Partitioning $\partial W_r$ into disjoint intervals of lengths $\le \theta 2^r$, by the pigeonhole principle, we can pick intervals (landing sequence) $I_1,I_3,I_2^*, I_4^*$ of lengths $\le \theta 2^r$ such that there exists $C_0 \in (0,\infty)$ with
\begin{equation}\label{e:land}
\P_{\lambda_c}(A_4^\theta(s,r,I)) \ge C_0 \theta^4 \P_{\lambda_c}(A_4^\theta(s,r)),
\end{equation}
where 
$$
A_4^\theta(s,r,I):=A_4^\theta(s,r) \text{ with $a_i \in I_i$, and $a_{i+1} \in I_{i+1}^*$}, \, i=1,3.
$$
We fix this landing sequence $I$ in the rest of the proof. Also define
\beaa
\Theta(s,r,I) &:=& A_4(s,r) \text{ with $a_i \in I_i$, and $a_{i+1} \in I_{i+1}^*$, $i=1,3$,}\\
&& \text{ and the components satisfy \eqref{e:separation} with $k$ replaced by $r$}.
\eeaa
Then notice that under $\Theta$, the intervals $I_1,I_3,I_2^*, I_4^*$ are all different as $2\theta 2^r \le 2\sqrt{\theta} 2^r$ and distinct $a_i,a_i^*$'s are at least $2\sqrt{\theta} 2^r$ apart by definition. Moreover, $S(a_i,\sqrt{\theta} 2^{r-1})$ and $S(a_{i+1}^*,\sqrt{\theta} 2^{r-1})$ for $i=1,2$ are not only disjoint, but also at least at a distance $2$ away, since their distances are at least $2\sqrt{\theta}2^r - 2\sqrt{\theta} 2^{r-1} >2$ by our assumption. Thus, events in these squares are indeed independent.
\medskip

\underline{\textsc{STEP 2}:} Next, we use four disjoint corridors $K_1,\ldots,K_4$, that always stay at least at a distance $2$ apart, to connect the paths hitting the intervals $I_1,I_2,I_3^*$ and $I_4^*$ to $\partial W_{r+2}$, so that the parts of the corridors outside $W_{r+1}$ stay in the regions $\mathcal{A}(1,r+2)$ and $\mathcal{A}(3,r+2)$ and $\mathcal{B}(2,r+2)$ and $\mathcal{B}(4,r+2)$. Further, for $i=1,3$, recall from \eqref{e:fence} that $R(a_i)$ is the $(\sqrt{\theta} 2^{r-1} -2)  \times (2\theta 2^{r-1})$ rectangle outside $\overline W_r$, with $a_i$ shifted by $2$ and in the same direction as the side of $\partial W_r$ containing $a_i$, in the center of its short side; see also Figure \ref{fig:path}. Similarly, we have rectangles $R(a_{i+1}^*)$ with respect to $a_{i+1}^*$ for $i=1,3$.

Note that given that $\Theta(s,r,I)$ occurs, the event $A_4^\theta(s,r,I)$ occurs only if, additionally, condition (\ref{i:fence}) in the definition of $(\eta,k)$-fence is satisfied for the four paths, i.e., the following occurs for $i=1,3$ (we take $\theta<1/4$ so that $2\theta<\sqrt{\theta}$
):
\beaa
		D_i &=& \exists\text{ occupied crossing in the short direction of the rectangle $R(a_i)$ and} \\
  && \text{this crossing is connected to $\mathcal{C}_i$ by an occupied path in $S(a_i,\sqrt{\theta}2^{r-1})$,}
	\eeaa
 and
 \beaa
		E_{i+1} &=& \exists\text{ vacant crossing in the short direction of the rectangle $R(a_{i+1}^*)$ and} \\
			&&\text{this crossing is connected by a vacant path in $S(a_{i+1}^*,\sqrt{\theta}2^{r-1})$ to $\mathcal{C}_{i+1}^*$.}
	\eeaa

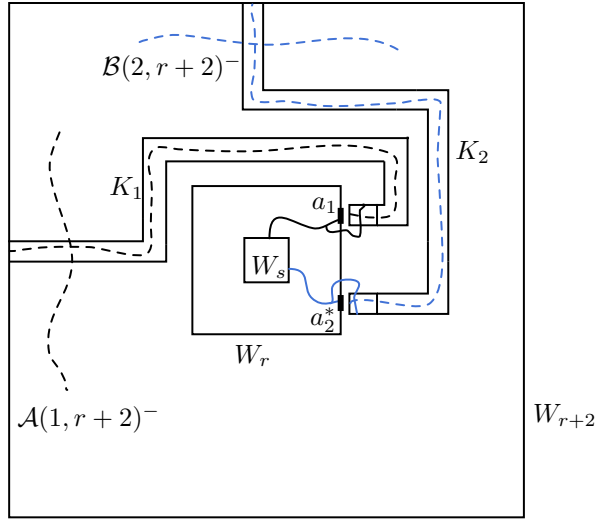
\begin{figure}
\tikzset{every picture/.style={line width=0.75pt}} %set default line width to 0.75pt        

\begin{tikzpicture}[x=0.55pt,y=0.55pt,yscale=-1,xscale=1]
\path (110,443); %set diagram left start at 0, and has height of 443

%Shape: Square [id:dp13986294871193272] 
\draw   (363,55) -- (717,55) -- (717,409) -- (363,409) -- cycle ;
%Shape: Square [id:dp4886365372940009] 
\draw   (489,181) -- (591,181) -- (591,283) -- (489,283) -- cycle ;
%Straight Lines [id:da9633170370832116] 
\draw [line width=2.25]    (591,196) -- (591,207) ;
%Shape: Square [id:dp5765794410766389] 
\draw   (524.75,216.75) -- (555.25,216.75) -- (555.25,247.25) -- (524.75,247.25) -- cycle ;
%Shape: Free Drawing [id:dp2171134890183879] 
\draw  [line width=0.75] [line join = round][line cap = round] (542,217) .. controls (542,191.56) and (557.51,207.1) .. (572,210) .. controls (579.33,211.47) and (585.63,204) .. (592,204) ;
%Shape: Free Drawing [id:dp11085267873560412] 
\draw  [color={rgb, 255:red, 63; green, 114; blue, 214 }  ,draw opacity=1 ][line width=0.75] [line join = round][line cap = round] (555.52,238) .. controls (571.8,238) and (563.94,251.12) .. (571.07,258) .. controls (578.76,265.41) and (586.16,260) .. (591.81,260) ;
%Shape: Rectangle [id:dp4232342834009766] 
\draw   (597,194) -- (616,194) -- (616,208) -- (597,208) -- cycle ;
%Straight Lines [id:da4420427308006618] 
\draw [line width=2.25]    (591,256) -- (591,267) ;
%Shape: Rectangle [id:dp8324618887695499] 
\draw   (597,255) -- (616,255) -- (616,269) -- (597,269) -- cycle ;
%Shape: Free Drawing [id:dp026557161907676763] 
\draw  [color={rgb, 255:red, 0; green, 0; blue, 0 }  ,draw opacity=1 ][line width=0.75] [line join = round][line cap = round] (581.16,208.73) .. controls (581.16,216.38) and (591.53,210.43) .. (595.43,211.55) .. controls (596.55,211.86) and (598.27,214.97) .. (599.82,215.3) .. controls (601.24,215.61) and (603.91,216.53) .. (604.22,215.3) .. controls (605.65,209.48) and (603.85,203.4) .. (604.22,197.45) .. controls (604.22,197.39) and (609.71,193.7) .. (606.41,193.7) ;
%Shape: Free Drawing [id:dp17503502030610174] 
\draw  [color={rgb, 255:red, 65; green, 114; blue, 214 }  ,draw opacity=1 ][line width=0.75] [line join = round][line cap = round] (583.67,261.67) .. controls (587.03,261.67) and (584.58,255) .. (585,251.67) .. controls (585.79,245.36) and (595.29,244.79) .. (599.67,245.67) .. controls (608.7,247.47) and (599.66,253.96) .. (600.33,258.33) .. controls (600.64,260.35) and (602.33,269.1) .. (602.33,269) ;
%Shape: Right Angle [id:dp10172574856613048] 
\draw   (616,208) -- (637,208) -- (637,147) ;
%Shape: Right Angle [id:dp8000307988499851] 
\draw   (615,194) -- (621,194) -- (621,164) ;
%Shape: Right Angle [id:dp4260958601017608] 
\draw   (621,164) -- (471,164) -- (471,221) ;
%Shape: Right Angle [id:dp11088921199824164] 
\draw   (638,148) -- (455,148) -- (455,203) ;
%Shape: Right Angle [id:dp3043378788813609] 
\draw   (363,233) -- (471,233) -- (471,221) ;
%Shape: Right Angle [id:dp9583253146145065] 
\draw   (362.33,219) -- (455,219) -- (455,203) ;
%Shape: Right Angle [id:dp2810575957032655] 
\draw   (616,269) -- (665,269) -- (665,233) ;
%Shape: Right Angle [id:dp9693969219277412] 
\draw   (616,255) -- (651,255) -- (651,219) ;
%Shape: Right Angle [id:dp06774056914119031] 
\draw   (643.33,115) -- (665,115) -- (665,233) ;
%Shape: Right Angle [id:dp1519991604629829] 
\draw   (629.33,128.33) -- (651,128.33) -- (651,219) ;
%Shape: Right Angle [id:dp19645746279669973] 
\draw   (643.33,115) -- (537.67,115) -- (537.67,54.33) ;
%Shape: Right Angle [id:dp7680138198861626] 
\draw   (629.33,128.33) -- (523.67,128.33) -- (523.67,54.33) ;
%Shape: Free Drawing [id:dp2193904678116858] 
\draw  [dashed, color={rgb, 255:red, 65; green, 114; blue, 214 }  ,draw opacity=1 ][line width=0.75] [line join = round][line cap = round] (597.67,261.73) .. controls (597.67,258.13) and (610.42,259.65) .. (611.67,259.75) .. controls (623.68,260.74) and (645.9,266.82) .. (652.33,261.73) .. controls (660.85,254.99) and (661.3,235.16) .. (661,225.45) .. controls (660.34,204.55) and (658.97,183.67) .. (657.67,162.79) .. controls (657.25,156.15) and (660.17,123.96) .. (659.67,123.22) .. controls (657.28,119.68) and (646.41,121.94) .. (645.67,121.9) .. controls (640.93,121.67) and (635.37,123.62) .. (630.33,123.88) .. controls (613.79,124.74) and (596.18,126.52) .. (579.67,125.86) .. controls (570.28,125.48) and (561.05,123.14) .. (551.67,122.56) .. controls (548.48,122.36) and (531.94,125.34) .. (530.33,120.58) .. controls (527.95,113.5) and (531.32,107.96) .. (531.67,100.8) .. controls (532.14,90.91) and (532.45,80.98) .. (531.67,71.12) .. controls (531.27,66.06) and (528.04,60.49) .. (530.33,55.95) ;
%Shape: Free Drawing [id:dp06209407360536545] 
\draw  [dashed, color={rgb, 255:red, 0; green, 0; blue, 0 }  ,draw opacity=1 ][line width=0.75] [line join = round][line cap = round] (597.53,200.33) .. controls (597.81,200.36) and (607.48,202.3) .. (608.24,202.33) .. controls (608.47,202.34) and (621.79,204.06) .. (625.63,201) .. controls (631.6,196.25) and (627.86,185.93) .. (628.31,178.33) .. controls (628.92,168.02) and (633.51,161.17) .. (622.96,157.67) .. controls (593.81,147.99) and (560.17,156.32) .. (531.29,157) .. controls (513.74,157.42) and (496.54,156.63) .. (479.11,155.67) .. controls (477.34,155.57) and (468.17,153.49) .. (465.05,154.33) .. controls (457.89,156.28) and (458.3,166.74) .. (459.7,172.33) .. controls (462.59,183.86) and (460.37,194.14) .. (460.37,208.33) .. controls (460.37,213.45) and (462.64,222.74) .. (458.36,227) .. controls (453.03,232.31) and (425.81,223.18) .. (418.22,223) .. controls (409.08,222.78) and (399.92,222.65) .. (390.79,223) .. controls (382.71,223.31) and (374.67,224.34) .. (366.7,225.67) .. controls (365.6,225.85) and (362.24,225.67) .. (363.35,225.67) ;
%Shape: Free Drawing [id:dp12215177428929835] 
\draw  [dashed, color={rgb, 255:red, 0; green, 0; blue, 0 }  ,draw opacity=1 ][line width=0.75] [line join = round][line cap = round] (397.67,144.01) .. controls (393.91,153.44) and (390.03,161.06) .. (391,170.67) .. controls (393.03,190.64) and (406.33,213.27) .. (406.33,232.09) .. controls (406.33,255.7) and (386.53,268.23) .. (390.33,294.67) .. controls (391.24,300.94) and (397.01,305.12) .. (399,309.73) .. controls (399.77,311.51) and (401.89,315.54) .. (402.33,317.26) .. controls (402.67,318.6) and (401.43,321.32) .. (403,321.32) ;
%Shape: Free Drawing [id:dp17649030744982608] 
\draw  [dashed, color={rgb, 255:red, 65; green, 114; blue, 214 }  ,draw opacity=1 ][line width=0.75] [line join = round][line cap = round] (630.14,87) .. controls (605.53,72.31) and (571.5,83.01) .. (545.25,83.67) .. controls (530.69,84.03) and (516.16,81.62) .. (501.69,79.67) .. controls (490.94,78.21) and (479.98,75.7) .. (469.3,77.67) .. controls (467.16,78.06) and (449.19,81.33) .. (449.19,84.33) ;

% Text Node
\draw (366.67,329.4) node [anchor=north west][inner sep=0.75pt]  [font=\footnotesize]  {$\mathcal{A}( 1,r+2)^{-}$};
% Text Node
\draw (425,88.07) node [anchor=north west][inner sep=0.75pt]  [font=\footnotesize]  {$\mathcal{B}(2,r+2)^{-}$};
% Text Node
\draw (569,187.07) node [anchor=north west][inner sep=0.75pt]  [font=\footnotesize]  {$a_{1}$};
% Text Node
\draw (568,262.4) node [anchor=north west][inner sep=0.75pt]  [font=\footnotesize]  {$a_2^*$};
% Text Node
%\draw (595,275) node [anchor=north west][inner sep=0.75pt]  [font=\footnotesize]  {$R(a_2^*)$};
% Text Node
\draw (527.33,227) node [anchor=north west][inner sep=0.75pt]  [font=\footnotesize]  {$W_{s}$};
% Text Node
\draw (516,288.07) node [anchor=north west][inner sep=0.75pt]  [font=\footnotesize]  {$W_{r}$};
% Text Node
\draw (721.33,328.73) node [anchor=north west][inner sep=0.75pt]  [font=\footnotesize]  {$W_{r+2}$};
% Text Node
\draw (430,172.73) node [anchor=north west][inner sep=0.75pt]  [font=\footnotesize]  {$K_{1}$};
% Text Node
\draw (668,148.07) node [anchor=north west][inner sep=0.75pt]  [font=\footnotesize]  {$K_{2}$};

\end{tikzpicture}
\vspace{-.7cm}
\caption{An illustration of the extension from $W_r$ to $W_{r+2}$ using an $(\theta,r)$-fence. The dashed paths constitute the events $A_1$ and $B_2$ here. The corridors $K_1$ and $K_2$ are of width $\theta 2^r$ and they always stay at a distance of at least 2 units apart.}
\label{fig:corridors}
\end{figure}

 Then, we apply \eqref{e:FKG} to the complement of $\overline W_r$, given a configuration inside $\overline W_r$ such that $\Theta(s,r,I)$ occurs. Without loss of generality, assume that $a_1,a_2^*,a_3,a_4^*$ appear on $\partial W_r$ in clockwise order. Define
  \beaa
		A_1 &=& \exists\text{ occupied path $\tilde \gamma_1$ outside $\overline W_r$ starting next to the side of $R(a_1)$ containing $a_1$,} \\
			&&\text{and ending on the left edge of $W_{r+2}$ inside a given corridor $K_1$ of width $\theta 2^{r}$;}\\
			&&\text{the part of $K_1$ outside of $W_{r+1}$ is in $\mathcal{A}(1,r+2)$; in addition, there exists an}\\
			&&\text{occupied vertical crossing of $\mathcal{A}(1,r+2)^-$.}
	\eeaa
We can similarly define $A_3$ for $a_3$ with corridor $K_3$, and let $A=A_1 \cap A_3$. Also, we can define $B_2$ and $B_4$ in very much the same way with $a_2^*,K_2$ and $a_4^*,K_4$ respectively, replacing occupied by vacant, see proof of \cite[Lem.\ 3]{Kes87} for more details. Let $B=B_2 \cap B_4$. Finally, we take $D=D_1 \cap D_3$ and $E=E_2\cap E_4$. 

Now given a configuration $\eta_r$ in $\overline W_{r}$ such that $\Theta(s,r,I)$ occurs (note that this specifies $a_i$ and $a_{i+1}^*$ for $i=1,3$), we will apply Lemma \ref{lem:FKG} to the Poisson measure $\P_{\lambda_c}^{\overline W_r^c}$ on the complement of $\overline W_r$. Note that, given $\eta_r$, our events $A,D$ are increasing, while $B,E$ are decreasing, as a function of the configuration in $\overline W_r^c$. In addition, conditionally, we also have that the assumption in \eqref{eq:FKGcond} is satisfied. Thus, by Lemma \ref{lem:FKG}, 
\beaa
\P_{\lambda_c}^{\overline W_r^c} (A \cap B \mid \{\eta(\overline W_r) = \eta_r\} \cap D \cap E) &\ge& \P_{\lambda_c}^{\overline W_r^c}(A \cap B \mid 
\{\eta(\overline W_r) = \eta_r\}) \\
&=& \P_{\lambda_c}^{\overline W_r^c}(A \mid a_1,a_3)\P_{\lambda_c}^{\overline W_r^c}(B \mid a_2^*, a_4^*) \ge C_1
\eeaa
for some constant $C_1 \in (0,\infty)$ depending only on $\theta$, where in the penultimate step, we have used that the event $A \cap B$, given $\eta(\overline W_r) = \eta_r$, depends only on $a_1,a_3,a_2^*$ and $a_4^*$ ($A,B$ being events measurable w.r.t.\ $\overline W_{r}^c$), and the two events are independent due to the disjointness of their corridors given $a_1,a_3,a_2^*$ and $a_4^*$. Indeed, the events $A,B$ depend on the Poisson process in the $1$-neighbourhood of corridors which are at least distance $2$ apart. The final step is due to the RSW bound \eqref{eq:RSW}.

Now notice that $\{\eta(\overline W_r) = \eta_r\} \cap A \cap B \cap D \cap E \subseteq \widetilde A_4 (s,r+2)$. The result now follows upon integrating the above inequality over $\Theta$ w.r.t.\ the Poisson measure in $\overline W_r$ and noting also that $A_4^\theta(s,r,I) \subseteq \Theta \cap D \cap E$, along with \eqref{e:land}.
\end{proof}

\begin{proof}[Proof of Theorem~\ref{thm1}] Note that one trivially has that $\tilde{\alpha}_4(s,r) \leq \alpha_4(s,r)$ and hence we only need to show that $\alpha_4(s,r) \lesssim \tilde{\alpha}_4(s,r)$ .

Fix $\delta \in (0,1/8 \wedge (32C^2)^{-1})$ for $C>1$ as in Lemma~\ref{lem:extend}, and let $\theta=\theta(\delta)$ and $k$ be as in Lemma~\ref{lem:del}. Also, fix $r \ge 2 (k \vee 4)$ and $s \le r$. We have
	\beaa
	\alpha_4(s,r)&\le& \alpha_4(s,r-2)\\
	&=& \alpha_4^\theta(s,r-2) + \P_{\lambda_c}(A_4(s,r-2) \setminus A_4^\theta (s,r-2)).
	\eeaa
	Observe (see also \cite[p.\ 131]{Kes87}) that the occurrence of $A_4(s,r-2) \setminus A_4^\theta (s,r-2)$ implies that $A_4(s,r-3)$ (determined by $\overline W_{r-3}$) occurs and, the following event - determined by $\overline W_{r-3}^c$ - occurs: in one of the right/left/top/bottom sides, there exists a occupied/vacant crossing of $\mathcal{S}_R$/$\mathcal{S}_L$/$\mathcal{S}_T$/$\mathcal{S}_B$ whose component does not have an $(\theta,k)$-fence. Each of these eight events (choices of occupied or vacant for each of the $4$ sides)  has probability at most $\delta$ due to Lemma~\ref{lem:del} and hence, the union has probability at most $8\delta$. Thus, using independence of the events, we obtain
	$$
	\alpha_4(s,r-2)\le \alpha_4^\theta(s,r-2) + 8\delta \,\alpha_4(s,r-3).
	$$
	We can iterate this argument for $\alpha_4(s,j)$ as long as 
	$j > s \vee k \vee 4=:s_1$.
	Thus, with $l = \frac{r}{2} \vee s_1$, we obtain that
	\beaa
	\alpha_4(s,r)&\le& \sum_{j=l+1}^{r-2} (8\delta)^{r-2-j} \alpha_4^\theta(s,j) + (8\delta)^{r-l-2} \alpha_4(s,l) \\
    &\le& \sum_{j=l+1}^{r-2} (8\delta)^{r-2-j} \alpha_4^\theta(s,j) + (8\delta)^{(\frac{r}{2} \wedge (r-s))-3} \alpha_4(s,s \vee 4) \\
	&\le&  \sum_{j=l+1}^{r-2} (8\delta)^{r-2-j} \alpha_4^\theta(s,j) + C_0 (8\delta)^{(\frac{r}{2} \wedge (r-s))-3}\widetilde \alpha_4(s,s \vee 4),
	\eeaa
 where in the final step, we have used the trivial fact that there exists a constant $C_0 \in (0,\infty)$ such that $\alpha_4(s,s \vee 4) \le C_0 \widetilde \alpha_4(s,s \vee 4)$.
	Now using Lemmas~\ref{lem:etaext} and \ref{lem:extend}, we obtain
	$$
	\alpha_4(s,r) \le \sum_{j=l+1}^{r-2} (8\delta)^{r-2-j} C_\theta C^{r-2-j} \widetilde \alpha_4(s,r) + (8\delta)^{(\frac{r}{2} \wedge (r-s))-3} C_0 C^{r-(s \vee 4)} \widetilde \alpha_4(s,r),
	$$
	with $C_{\theta}$ and $C$ as in  Lemmas~\ref{lem:etaext} and \ref{lem:extend}. The result now follows by our choice of $\delta$.
\end{proof}

\begin{proof}[Proof of Theorem~\ref{thm2}] Using Lemmas~\ref{lem:del'} and \ref{lem:extendin}, and an appropriate inward version of Lemma~\ref{lem:etaext}, arguing similarly as in the proof of Theorem \ref{thm1} above yields the result.
\end{proof}

\subsubsection{Proof of Theorem \ref{thm3}}
\label{ss:proof_thm3}
Finally we prove Theorem \ref{thm3} which is the final ingredient for the proof of Theorem \ref{t:quasimPBM} presented above.
\begin{proof}[Proof of Theorem~\ref{thm3}] First note that since $r_1 = (s_0+2) \vee r_0$, the result follows if $s=r$, or $t=r$ from Theorems~\ref{thm2} and \ref{thm1}, respectively. Similarly, for $i=1,2$ and $s=r-i \ge s_0 \ge 4$, we have by Lemma~\ref{lem:extendin} and Theorem~\ref{thm2} that
$$
\dbtilde \alpha_4(s+i,t) \le C^2 \dbtilde \alpha_4(s,t) \asymp \alpha_4(s,t).
$$
yielding the result. So we may assume that $t>r > s+2$, and we will show that for all $t > r > s+2$,
	\begin{equation*}\label{eq:qmtil}
		\alpha_4(s,t) \ge C_0 \widetilde \alpha_4(s,r-2) \dbtilde \alpha_4(r,t)
	\end{equation*}
	for some $C_0 \in (0,\infty)$, which will yield the desired result. 
 
	Notice that $\widetilde A_4(s,r-2)$ and $\dbtilde A_4(r,t)$ are independent, since they are events in annuli that are at least at a distance of two units apart. Define a new event $B(r-2,r)$ as (see Figure \ref{Brr})
	\beaa
	B(r-2,r) &=&\text{there are occupied horizontal crossings of }\\
	&&\text{$[2^{r-3}+2, 2^{r+1}-2] \times [-2^{r-3}+2, 2^{r-3}-2]$ } \\
	&&\text{and $[-2^{r+1}+2,-2^{r-3}-2] \times [-2^{r-3}+2, 2^{r-3}-2]$, and similarly }\\
	&&\text{there are vacant vertical crossings of }\\
	&&\text{$[-2^{r-3}+2, 2^{r-3}-2] \times [2^{r-3}+2, 2^{r+1}-2]$ and }\\
	&&\text{$[-2^{r-3}+2, 2^{r-3}-2] \times [-2^{r+1}+2,-2^{r-3}-2]$}.
	\eeaa

\begin{figure}
    \tikzset{every picture/.style={line width=0.75pt}} %set default line width to 0.75pt        
\begin{tikzpicture}[x=0.5pt,y=0.5pt,yscale=-1,xscale=1]
\path (-45,494); %set diagram left start at 0, and has height of 494

%Shape: Square [id:dp7509078966622402] 
\draw   (265,86) -- (588,86) -- (588,409) -- (265,409) -- cycle ;
%Shape: Square [id:dp8393076050459758] 
\draw   (327.73,148.73) -- (523,148.73) -- (523,344) -- (327.73,344) -- cycle ;
%Shape: Square [id:dp9981814740011492] 
\draw   (357.83,178.83) -- (495.17,178.83) -- (495.17,316.17) -- (357.83,316.17) -- cycle ;
%Shape: Square [id:dp012976075553980948] 
\draw   (288.34,109.34) -- (562.39,109.34) -- (562.39,383.39) -- (288.34,383.39) -- cycle ;
%Straight Lines [id:da3553362510969611] 
\draw [color={rgb, 255:red, 155; green, 155; blue, 155 }  ,draw opacity=1 ]   (495,86.33) -- (495.17,178.83) ;
%Straight Lines [id:da06497101919979831] 
\draw [color={rgb, 255:red, 155; green, 155; blue, 155 }  ,draw opacity=1 ]   (495.17,178.83) -- (587.67,179) ;
%Straight Lines [id:da3144181418465346] 
\draw [color={rgb, 255:red, 155; green, 155; blue, 155 }  ,draw opacity=1 ]   (357.67,86.33) -- (357.83,178.83) ;
%Straight Lines [id:da46349680908908697] 
\draw [color={rgb, 255:red, 155; green, 155; blue, 155 }  ,draw opacity=1 ]   (495.17,316.17) -- (587.67,316.33) ;
%Straight Lines [id:da1843812730175738] 
\draw [color={rgb, 255:red, 155; green, 155; blue, 155 }  ,draw opacity=1 ]   (265.33,316) -- (357.83,316.17) ;
%Straight Lines [id:da767176534370557] 
\draw [color={rgb, 255:red, 155; green, 155; blue, 155 }  ,draw opacity=1 ]   (265.33,178.67) -- (357.83,178.83) ;
%Straight Lines [id:da1392099576247945] 
\draw [color={rgb, 255:red, 155; green, 155; blue, 155 }  ,draw opacity=1 ]   (357.83,316.17) -- (358,408.67) ;
%Straight Lines [id:da2875556406558495] 
\draw [color={rgb, 255:red, 155; green, 155; blue, 155 }  ,draw opacity=1 ]   (495.17,316.17) -- (495.33,408.67) ;
%Shape: Rectangle [id:dp1932746682682691] 
\draw   (372.33,98.33) -- (481,98.33) -- (481,165.67) -- (372.33,165.67) -- cycle ;
%Shape: Rectangle [id:dp6704905853429088] 
\draw   (372.33,329) -- (481,329) -- (481,396.33) -- (372.33,396.33) -- cycle ;
%Shape: Rectangle [id:dp017829763852980296] 
\draw   (575,193) -- (575,301.67) -- (507.67,301.67) -- (507.67,193) -- cycle ;
%Shape: Rectangle [id:dp6301464853530716] 
\draw   (345,193) -- (345,301.67) -- (277.67,301.67) -- (277.67,193) -- cycle ;
%Straight Lines [id:da456626043465608] 
\draw    (579.67,253.67) -- (585.67,253.67) ;
\draw [shift={(587.67,253.67)}, rotate = 180] [color={rgb, 255:red, 0; green, 0; blue, 0 }  ][line width=0.75]    (4.37,-1.96) .. controls (2.78,-0.92) and (1.32,-0.27) .. (0,0) .. controls (1.32,0.27) and (2.78,0.92) .. (4.37,1.96)   ;
%Straight Lines [id:da45119640246845805] 
\draw  [dash pattern={on 4.5pt off 4.5pt}]  (582.67,253.67) -- (577,253.67) ;
\draw [shift={(575,253.67)}, rotate = 360] [color={rgb, 255:red, 0; green, 0; blue, 0 }  ][line width=0.75]    (4.37,-1.96) .. controls (2.78,-0.92) and (1.32,-0.27) .. (0,0) .. controls (1.32,0.27) and (2.78,0.92) .. (4.37,1.96)   ;
%Straight Lines [id:da16632967107105423] 
\draw  [dash pattern={on 4.5pt off 4.5pt}]  (538.67,309) -- (538.67,303.67) ;
\draw [shift={(538.67,301.67)}, rotate = 90] [color={rgb, 255:red, 0; green, 0; blue, 0 }  ][line width=0.75]    (4.37,-1.96) .. controls (2.78,-0.92) and (1.32,-0.27) .. (0,0) .. controls (1.32,0.27) and (2.78,0.92) .. (4.37,1.96)   ;
%Straight Lines [id:da4399452087486009] 
\draw    (538.67,309) -- (538.67,313) ;
\draw [shift={(538.67,315)}, rotate = 270] [color={rgb, 255:red, 0; green, 0; blue, 0 }  ][line width=0.75]    (4.37,-1.96) .. controls (2.78,-0.92) and (1.32,-0.27) .. (0,0) .. controls (1.32,0.27) and (2.78,0.92) .. (4.37,1.96)   ;
%Curve Lines [id:da9833517669211487] 
\draw    (614.33,295) .. controls (597.26,291.06) and (587.3,294.88) .. (581.27,261.23) ;
\draw [shift={(581,259.67)}, rotate = 80.36] [color={rgb, 255:red, 0; green, 0; blue, 0 }  ][line width=0.75]    (10.93,-3.29) .. controls (6.95,-1.4) and (3.31,-0.3) .. (0,0) .. controls (3.31,0.3) and (6.95,1.4) .. (10.93,3.29)   ;
%Curve Lines [id:da8869874631682546] 
\draw    (615,299.67) .. controls (601.14,302.31) and (587.28,317.36) .. (544.31,308.61) ;
\draw [shift={(543,308.33)}, rotate = 11.98] [color={rgb, 255:red, 0; green, 0; blue, 0 }  ][line width=0.75]    (10.93,-3.29) .. controls (6.95,-1.4) and (3.31,-0.3) .. (0,0) .. controls (3.31,0.3) and (6.95,1.4) .. (10.93,3.29)   ;
%Shape: Free Drawing [id:dp668806887817398] 
\draw  [color={rgb, 255:red, 63; green, 114; blue, 214 }  ,draw opacity=1 ][line width=0.75] [line join = round][line cap = round] (428.33,98.13) .. controls (404.93,115.29) and (424.57,126.14) .. (431.67,141.92) .. controls (434.62,148.49) and (432.26,156.96) .. (428.33,162.45) .. controls (426.95,164.38) and (424.33,165.99) .. (424.33,165.18) ;
%Shape: Free Drawing [id:dp15648769028731002] 
\draw  [color={rgb, 255:red, 63; green, 114; blue, 214 }  ,draw opacity=1 ][line width=0.75] [line join = round][line cap = round] (437.67,329.67) .. controls (437.67,341.87) and (416.13,340.74) .. (411.67,349.67) .. controls (403.6,365.81) and (411.95,383.86) .. (392.33,393.67) .. controls (391.91,393.88) and (390.58,395.67) .. (389.67,395.67) ;
%Shape: Free Drawing [id:dp5855556306198761] 
\draw  [color={rgb, 255:red, 0; green, 0; blue, 0 }  ,draw opacity=1 ][line width=0.75] [line join = round][line cap = round] (277.57,252.5) .. controls (277.57,246.01) and (292.55,242.68) .. (296.31,243) .. controls (305.92,243.81) and (308.21,248.95) .. (316.57,254) .. controls (327.37,260.53) and (339.77,252.42) .. (344.93,243.5) ;
%Shape: Free Drawing [id:dp9284196754487037] 
\draw  [color={rgb, 255:red, 0; green, 0; blue, 0 }  ,draw opacity=1 ][line width=0.75] [line join = round][line cap = round] (508.44,235) .. controls (528.64,209.42) and (556.71,248.05) .. (575.06,215.5) ;

% Text Node
\draw (620,288.07) node [anchor=north west][inner sep=0.75pt]  [font=\small]  {$2$};
% Text Node
\draw (549.5,387) node [anchor=north west][inner sep=0.75pt]  [font=\small]  {$W_r$};
% Text Node
\draw (210,212) node [anchor=north west][inner sep=0.75pt]  [font=\small]  {$W_{r+1}$};
% Text Node
\draw (497.5,346) node [anchor=north west][inner sep=0.75pt]  [font=\small]  {$W_{r-2}$};
% Text Node
\draw (414,295) node [anchor=north west][inner sep=0.75pt]  [font=\small]  {$W_{r-3}$};

\end{tikzpicture}
\vspace{-1.6cm}
\caption{Illustration of the event $B(r-2,r)$.}
\label{Brr}
\end{figure}
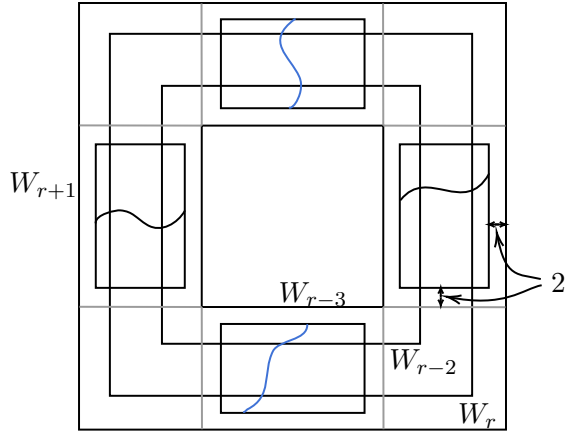

\noindent Now note that 
	$$
	A_4(s,t) \supseteq \widetilde A_4(s,r-2) \cap B(r-2,r) \cap \dbtilde A_4(r,t).
	$$
	On the other hand, the event $B(r-2,r)$ is positively associated with $\widetilde A_4(s,r-2) \cap \dbtilde A_4(r,t)$. To see this, first notice that the 4 different arm events in $B(r-2,r)$ are independent. Consider one of them, say the horizontal occupied crossing on the right. By construction, it is at least $2$ distance away from vacant events in $\widetilde A_4(s,r-2) \cap \dbtilde A_4(r,t)$ and hence is independent of them, while it is positively associated with the occupied events in $\widetilde A_4(s,r-2) \cap \dbtilde A_4(r,t)$. The same argument applies to the other 3 crossings in $B(r-2,r)$, implying the positive association. Thus we obtain,
	\beaa
	\alpha_4(s,t) &\ge& \P_{\lambda_c}(B(r-2,r)) \times \P_{\lambda_c}(\widetilde A_4(s,r-2) \cap \dbtilde A_4(r,t))\\
	& \ge & C_0 \widetilde \alpha_4(s,r-2) \dbtilde \alpha_4(r,t),
	\eeaa
where, for the final step, we have used independence as well as the fact that, by the RSW bounds \eqref{eq:RSW}, there exists a constant $C_0 \in (0,\infty)$ such that $\P_{\lambda_c}(B(r-2,r)) \ge C_0$, concluding the proof.
\end{proof}
 \subsection{Sharp noise instability for Boolean percolation ---  Theorem~\ref{t:sharpNS_Boolean}}\label{ss:proofBoolean}

 In this section, we prove Theorem~\ref{t:sharpNS_Boolean}, i.e., sharp noise instability for critical Boolean percolation, by checking the conditions \ref{A1} -- \ref{A3} from Section~\ref{ss.gen_sns_cont}. Recall from Section \ref{sec:quasim_boolean} that, we consider a critical Boolean percolation model on $\R^2$ with intensity measure $\lambda_c \;\md x$ and balls of unit radius. Note that this model fits in the framework in Section ~\ref{ss.gen_sns_cont} by simply taking the mark space $\M=\{1\}$ with a Dirac measure at $1$. 

\begin{proof}[Proof of Theorem~\ref{t:sharpNS_Boolean}]
By Proposition~\ref{p:sharpNStab}, it suffices to verify the conditions \ref{A1} -- \ref{A3} for the Boolean model. From \cite[Eq.\ (2.16)]{MS22}, we have that there exists a universal constant $\epsilon \in (0,2)$ such that
	$$
	\alpha_4(r,R)= \P_{\lambda_c}(A_4(r,R)) \ge \epsilon(r/R)^{2-\epsilon}
	$$
	for any $0<r<R<\infty$, proving \ref{A1}. 
 
Condition \ref{A2} follows from Theorem \ref{t:quasimPBM} which was proven above in Section \ref{sec:quasim_boolean}. The proof of \ref{A3} (with $\rho=1/2$) is provided in the next subsection \ref{sec:A3A4_boolean}. The following Lemma proves \ref{A4} and thereby completes the proof of the theorem. 
\end{proof}
\begin{lemma}
	\label{e:2corrpivotalvoronoi}
	Let $\eps \in (0,2)$ be as in \ref{A1}. For $x,y \in W_{L/2}$ and $|x-y| \geq L^{\eps/3}$,
	\begin{equation*}
		\E[|D_{\tx}D_{\ty}f_L( \eta)|^2] \lesssim \alpha_4^2\left(\frac{|x-y|}{4}\right).
	\end{equation*}
\end{lemma}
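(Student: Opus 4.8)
The goal is to bound $\E[|D_{\tx}D_{\ty}f_L(\eta)|^2]$ when $x,y\in W_{L/2}$ are well-separated (at distance $\geq L^{\eps/3}$), in terms of the square of a 4-arm probability at scale $|x-y|/4$. Since $f_L$ is $\pm 1$-valued, $|D_{\tx}D_{\ty}f_L(\eta)|^2 = |D_{\tx}D_{\ty}f_L(\eta)|$ whenever $D_{\tx}D_{\ty}f_L(\eta)\in\{-2,0,2\}$; more carefully the second difference of a $\pm 1$ function takes values in $\{-4,-2,0,2,4\}$, so $|D_{\tx}D_{\ty}f_L(\eta)|^2 \leq 16\cdot\mathds 1\{D_{\tx}D_{\ty}f_L(\eta)\neq 0\}$, and it suffices to show $\P(D_{\tx}D_{\ty}f_L(\eta)\neq 0)\lesssim \alpha_4^2(|x-y|/4)$.

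The plan is to use the standard geometric characterization of double pivotality: if $D_{\tx}D_{\ty}f_L(\eta)\neq 0$, then (as in the increasing-function remark in the excerpt, and more generally by a case analysis over the $2^2$ configurations of adding $\delta_{\tx}$ and $\delta_{\ty}$) both $x$ and $y$ must be pivotal-like for $f_L$ in one of the four configurations $\eta, \eta+\delta_{\tx}, \eta+\delta_{\ty}, \eta+\delta_{\tx}+\delta_{\ty}$. Concretely, for $x$ being pivotal for a crossing event, there is a 4-arm event (occupied-vacant-occupied-vacant) emanating from a unit-order neighborhood of $x$ out to distance $\sim |x-y|/4$ — taking radius $|x-y|/4$ keeps the two annuli around $x$ and around $y$ disjoint — and simultaneously the analogous 4-arm event around $y$. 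Adding one or two extra points $\delta_{\tx},\delta_{\ty}$ only perturbs the configuration in a bounded neighborhood of $x$ and $y$, so up to enlarging the inner radius of the annuli by a bounded amount (and using that unit balls have bounded radius), the 4-arm events around $x$ and around $y$ at scale $|x-y|/4$ still must hold in the (unperturbed) configuration $\eta$, with the understanding that the inner radius is an absolute constant. Here one should invoke the RSW bound \eqref{eq:RSW} and quasi-multiplicativity (Theorem \ref{t:quasimPBM}, already available) to absorb the bounded-scale discrepancies: $\alpha_4(\mathrm{const}, |x-y|/4)\asymp \alpha_4(1,|x-y|/4) = \alpha_4(|x-y|/4)$.

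The key independence step: the 4-arm event around $x$ inside the annulus $A^x(c, |x-y|/4)$ and the 4-arm event around $y$ inside $A^y(c,|x-y|/4)$ involve the Poisson process restricted to disjoint regions (their $1$-neighborhoods are disjoint because $|x-y|\geq L^{\eps/3}$ is much larger than $|x-y|/2 + O(1)$... actually one needs $|x-y|/4 + |x-y|/4 + O(1) < |x-y|$, which holds for $|x-y|$ large, guaranteed by $|x-y|\geq L^{\eps/3}\to\infty$). By the complete independence property of the Poisson process, these two 4-arm events are independent, so
\begin{equation*}
\P(D_{\tx}D_{\ty}f_L(\eta)\neq 0) \lesssim \alpha_4(|x-y|/4)^2,
\end{equation*}
which is the claim. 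The main obstacle I anticipate is the careful bookkeeping in the geometric double-pivotality argument in the Boolean (continuum, non-increasing in general — though here $f_L$ is increasing) setting: one must verify that in each of the four configurations obtained by adding $\delta_{\tx}$ and/or $\delta_{\ty}$, the disjoint 4-arm events around $x$ and $y$ genuinely appear in the base configuration $\eta$ after accounting for the bounded-range effect of the added points and the unit radius of the balls, and that the inner radii can be taken to be a fixed constant so that quasi-multiplicativity and RSW let us replace that constant by $1$. Once this localization is in place, the independence and the resulting bound follow routinely.
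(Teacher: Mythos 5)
Your argument follows the paper's own proof essentially line by line: reduce to bounding $\P(D_{\tx}D_{\ty}f_L(\eta)\neq 0)$ (the paper reduces to $\E|D_{\tx}D_{\ty}f_L|$, which is equivalent since $f_L$ is $\pm1$-valued), invoke the decomposition \eqref{e:4sum} via the increasing property, observe that each of the four events forces a $4$-arm event in each of the disjoint annuli $A^x(1,|x-y|/4)$ and $A^y(1,|x-y|/4)$ (with the perturbation by $\delta_{\tx},\delta_{\ty}$ being invisible to the opposite annulus for $|x-y|\geq L^{\eps/3}$ large), and conclude by the complete independence of the Poisson process on disjoint regions. The appeal to quasi-multiplicativity and RSW to normalize the inner radius is harmless but unnecessary here, since one can take inner radius $1$ directly as the paper does; otherwise the argument is the same.
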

\begin{proof}
	Notice that it is enough to consider $\E[|D_{\tx}D_{\ty}f_L( \eta)|]$ and since $f_L$ is increasing, \eqref{e:4sum} applies.
	Fix $x,y$ satisfying the assumptions. In this case, one can see that the event $D_{\tx} f_L(\eta) \neq 0, D_{\ty} f_L(\eta) \neq 0$ (or any of the other three events on the r.h.s.\ of \eqref{e:4sum}) implies that there are four alternating arms in the rectangular annuli $A^x(1,|x-y|/4)$ and $A^y(1,|x-y|/4)$. %and $A^{\frac{x+y}{2}}(|x-y|, (1-\rho)L)$.
 Also, since the radius of balls is $1$, for $L$ large enough, these three annuli are disjoint and hence the three events are independent. Using this, we derive
	$$
	\P(D_{\tx} f_L(\eta) \neq 0, D_{\ty} f_L(\eta) \neq 0)
	\le \P(A_4^x(R,|x-y|/4)\cap A_4^y(R,|x-y|/4))\lesssim \alpha_4^2(|x-y|/4). %\alpha_4(|x-y|, (1-\rho)L).
	$$
	Since this is true for all the four probabilities in the r.h.s.\ of \eqref{e:4sum}, the result follows. % by \ref{A2}.
\end{proof}

\subsubsection{Pivotal sample estimates for Boolean percolation --- Condition \ref{A3}} 
\label{sec:A3A4_boolean}

To prove \ref{A3}, we need bounds on certain $2$-arm and $3$-arm events; see \eqref{e:2++} and Lemma \ref{l:weak3+}. For $0<r <R$, recall that $\alpha_1(r,R)$ denotes the probability that there exists an occupied path connecting $\partial W_r$ to $\partial W_R$. Similarly, let $\alpha_2^{++}(r,R):= \P_{\lambda_c}(A_2^{++}(r,R))$, where $A_2^{++}(r,R) \equiv A_2^{0,++}(r,R)$ denotes the two-arm event in the annulus $W_R \setminus W_r$ in the first quadrant i.e., existence of an occupied and vacant path from $\partial W_r$ to $\partial W_R$ inside $(W_R \setminus W_r) \cap \R_+^2$. Finally, denote $\alpha_3^{+}(r,R):= \P_{\lambda_c}(A_3^{+}(r,R))$, where $A_3^{+}(r,R) \equiv A_3^{0,+}(r,R)$ denotes the three-arm event in the part of the annulus $W_R \setminus W_r$ in the upper half-plane i.e., existence of three paths (occupied-vacant-occupied or vacant-occupied-vacant) from $\partial W_r$ to $\partial W_R$ inside $(W_R \setminus W_r) \cap (\R \times \R_+)$.

First, we have that there exists $c>0$ such that for all $1 \le r < R$,
\begin{equation}\label{e:2++}
\alpha_2^{++}(r,R) \le \alpha_2(r,R) \le \alpha_1(r,R) \le \frac{1}{c}\left(\frac{r}{R}\right)^c,
\end{equation}
where the last inequality is from \cite[Cor.\ 4.4(iii)]{Ahlbergsharp18}. 
Next, we state a bound determining the upper-half plane $3$-arm probabilities and comparing it with $4$-arm probabilities.

\begin{lemma}\label{l:weak3+}
For all $2 \le r \le R$,
\begin{equation*}
\alpha_3^{+}(r,R) \asymp \left(\frac{r}{R}\right)^2 \lesssim \alpha_4(r, R).
\end{equation*}
\end{lemma}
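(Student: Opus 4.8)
The plan is to prove the two-sided estimate $\alpha_3^+(r,R) \asymp (r/R)^2$ first, and then deduce the comparison $\alpha_3^+(r,R) \lesssim \alpha_4(r,R)$ as an easy consequence. For the upper bound $\alpha_3^+(r,R) \lesssim (r/R)^2$, I would first reduce to dyadic scales using the quasi-multiplicativity of $\alpha_3^+$ established in Theorem \ref{t:quasimPBM} (the half-plane three-arm version): it suffices to show $\alpha_3^+(2^j, 2^k) \lesssim (2^j/2^k)^2$ and then chain. For a single dyadic annulus one uses the standard Markov/first-moment argument: if $A_3^+(r,R)$ occurs then in particular there are three alternating arms, and by conditioning on the Poisson configuration in small boxes near $\partial W_R$ and using the RSW bound \eqref{eq:RSW} together with a "square-root trick" (FKG) one shows that the three-arm event in the half-plane has probability $\asymp$ to the one-arm probability squared times a boundary-localization factor --- more directly, one can cover $\partial W_R$ by order $R/r$ boxes of side comparable to $r$, and by the BK-type inequality \cite[Theorem 4.1]{Gup1999} for the Poisson process the probability that three disjoint arms land in a fixed such box decays like the square of its relative size; summing over the $O(R/r)$ boxes along the boundary and being careful near the half-plane boundary gives the $(r/R)^2$ bound. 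I expect the main obstacle here to be handling the $1$-neighbourhood dependence of Boolean percolation near $\partial W_r$ and $\partial W_R$, but this is exactly the kind of local-dependence bookkeeping already carried out in Section \ref{sec:quasim_boolean} (fences, contracted rectangles), so I would reuse that machinery.

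For the matching lower bound $\alpha_3^+(r,R) \gtrsim (r/R)^2$, I would again work on dyadic scales and use quasi-multiplicativity, so it suffices to construct, with probability $\gtrsim c$ independent of the scale, a half-plane three-arm configuration across one dyadic annulus, and then glue $O(\log(R/r))$ such configurations using the FKG inequality (Lemma \ref{lem:FKG}) and RSW bounds \eqref{eq:RSW}. The construction across a unit dyadic annulus in the half-plane is a standard gluing of occupied and vacant crossings of a bounded number of rectangles of bounded aspect ratio, each of which has probability bounded below by \eqref{eq:RSW}; the alternating arm structure in the half-plane is automatic (the half-plane geometry forces the arms to be of alternating type, as noted in the text just before the statement of Theorem \ref{t:quasimPBM}). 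Chaining $k-j$ such annular pieces via FKG produces a constant to the power $k-j$, i.e. exactly $(2^{-(k-j)})^{c'}$ for some $c' > 0$; the point is that the half-plane forces this exponent to be exactly $2$ — this is the well-known "universal" half-plane three-arm exponent, and the matching with the upper bound $(r/R)^2$ pins $c' = 2$. Concretely, rather than chasing the exact exponent through the gluing, I would instead combine the upper bound just proved with a direct lower bound $\alpha_3^+(r,R) \gtrsim (r/R)^{2+\epsilon_0}$ from crude RSW chaining and then bootstrap using quasi-multiplicativity plus the standard fact that a half-plane three-arm event in an annulus of fixed modulus has probability bounded above \emph{and} below by absolute constants times the same power; the cleanest route is to cite the analogous computation for Bernoulli percolation (e.g. \cite{Nol08}) and observe that the proof transfers verbatim once RSW \eqref{eq:RSW}, FKG, the Poisson BK inequality \cite{Gup1999}, and quasi-multiplicativity (Theorem \ref{t:quasimPBM}) are in hand, which they are in our setting.

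Finally, for the comparison $(r/R)^2 \lesssim \alpha_4(r,R)$, I would simply invoke assumption-style bound \ref{A1}, which for the Boolean model is verified via \cite[Eq.\ (2.16)]{MS22}: there is $\epsilon \in (0,2)$ with $\alpha_4(r,R) \ge \epsilon (r/R)^{2-\epsilon} \ge \epsilon (r/R)^2$ for $1 \le r \le R$, since $(r/R)^{2-\epsilon} \ge (r/R)^2$. Combining this with the first part yields $\alpha_3^+(r,R) \asymp (r/R)^2 \lesssim \alpha_4(r,R)$, as claimed. The main obstacle is really the first part (pinning the exponent to exactly $2$ in the half-plane while controlling Boolean local dependence); the comparison step is immediate given \ref{A1}.
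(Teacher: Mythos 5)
There is a genuine gap at the heart of your proposal: you never actually pin the exponent to $2$. The paper's proof of $\alpha_3^+(2,R)\asymp R^{-2}$ rests on a counting argument that is absent from your plan. One introduces, for each $8\times 8$ square $S$ of a grid covering $W_{R/2}$, the event $F_S(2R)$ that $S$ contains the \emph{lowest point} of an occupied cluster reaching $\partial W_{2R}^{(y)}$; one shows (i) $\mathbb{P}(F_{S_0}(2R))\asymp\alpha_3^+(2,R)$ by RSW gluing, (ii) $\sum_{S\subset W_{R/2}}\mathbb{P}(F_S(2R))\le \E K_1\le C$ because distinct occurring $F_S$'s force disjoint occupied crossing components of the half-annulus, whose expected number is bounded via the square-root trick and the Poisson BK inequality, and (iii) the same sum is bounded below by a constant via an explicit RSW construction. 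Since the $\asymp R^2$ summands are all equal by translation invariance, each is $\asymp R^{-2}$. This ``one Good point per cluster, $\Theta(R^2)$ candidate locations'' mechanism is what makes the exponent exactly $2$; it is the universal-exponent argument you allude to when you say ``cite \cite{Nol08} and transfer verbatim,'' but you do not identify it, and identifying and adapting it (handling the $1$-neighbourhood dependence, the measurability of the lowest point, etc.) is the content of the lemma.

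The two substitutes you offer do not close this gap. For the upper bound, covering $\partial W_R$ by $O(R/r)$ boxes and asserting via BK that three disjoint arms landing in a fixed box has probability ``like the square of its relative size'' is circular: that quadratic decay is precisely what is to be proved, and BK applied to three arms gives a product of one-arm probabilities, not the exponent $2$. For the lower bound, RSW chaining across dyadic annuli yields $(r/R)^{c'}$ for some unspecified $c'>0$, and neither quasi-multiplicativity nor any ``bootstrap'' from the upper bound can improve $c'$ to $2$; quasi-multiplicativity relates scales but cannot determine the value of an exponent. The final step of your proposal, $(r/R)^2\lesssim\alpha_4(r,R)$ via \ref{A1}, and the reduction to $r=2$ via the $\alpha_3^+$ quasi-multiplicativity of Theorem \ref{t:quasimPBM}, are both correct and match the paper.
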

\begin{proof} 
First note that the second inequality is immediate from assumption \ref{A1}, while by quasi-multiplicativity of $\alpha_4$ and $\alpha_3^+$ in Theorem \ref{t:quasimPBM}, it is enough to show the first assertion for $r=2$, i.e.,
\begin{equation*}\label{eq:a3+}    \alpha_3^{+}(2,R) \asymp \left(\frac{1}{R}\right)^2. % \lesssim \alpha_4(R).
\end{equation*}
The proof follows three main steps as in the first exercise sheet in \cite{Wer07}. In STEP 1, we show that for a certain event $F_{S_0}(R)$ (see inside STEP 1 for definition), one has $\mathbb{P}(F_{S_0}(2R)) \asymp \alpha_3^{+}(2,R)$. In STEP 2, we show that $\mathbb{P}(F_{S_0}(2R))\asymp \mathbb{P}(F_{S_0}(R))\asymp R^{-2}$ which completes the proof. %since by quasi-multiplicativity of $\alpha_3^+$ and RSW bound \eqref{eq:RSW}, $\alpha_3^{+}(4,R) \asymp \alpha_3^{+}(2,R)$. 
One of the estimates needed in STEP 2 is proven in STEP 3.
\medskip

\underline{\textsc{STEP 1}:} For a $8\times 8$ square $S$ of the grid $8\mathbb{Z}^2 + (4,4)$, write $F_S(n)$ to denote the event that there exists
$y \in S$ that is an $n$-Good point, i.e., it is the lowest point in $W_n^{(y)}=y+W_n$ of an occupied component that intersects $\partial W_n^{(y)}$. Note that such a lowest point is almost surely unique in our model. Denote by $\partial W^+_R$ the upper-half of $\partial W_R$. Set  $S_0 = [-4,4]^2$. We first argue that $\alpha_3^+(2,R) \asymp \mathbb{P}(F_{S_0}(2R))$.
\medskip

By coupling the events $A_3^+(8,R)$ and $F_{S_0}(2R)$ appropriately, it is straightforward to see that the latter implies the first so that $\mathbb{P}(F_{S_0}(2R)) \lesssim \alpha_3^+(8,R) \asymp \alpha_3^+(2,R)$, where the final step is due to the quasi-multiplicativity of $\alpha_3^+$ and RSW bound \eqref{eq:RSW}. That $\alpha_3^+(2,R) \asymp \alpha_3^+(8,3R) \lesssim \mathbb{P}(F_{S_0}(2R))$ can be shown again by `extending the configuration by hand'. This involves first showing (using the RSW bound and spatial independence) that the probability $\alpha_3^+(8,3R)$ is of the same order as the 3-arm probability where the landing on $\partial S_0$ in the upper-half plane happens within certain specified parts (for example, we can take $\{-8\} \times [3,5]$, $ [-3,3] \times \{8\}$, and $\{8\}\times[3,5]$, respectively) of the boundary (see Figure \ref{extbyhand}). Given this event (say the arms are in the order vacant-occupied-vacant, from left to right), note that $F_{S_0}(2R)$ is implied if
\begin{enumerate}[(i)]
    \item the specified landing areas on the three sides are part of the vacant, occupied and vacant set, respectively,
    \item there is a top-down occupied crossing of $[-.5,.5] \times [0,4]$ (the dotted black path in Figure \ref{extbyhand}), %from the landing area on to top part to slightly inside $S_0$, staying above the $x$-axis
and 
\item there exists a `blocking' vacant path through a suitably chosen corridor (blue dotted path in Figure \ref{extbyhand} in the corridor $K$) of width 1 traversing left to right around this occupied path, staying 2 distance apart, ensuring independence and that the lowest point of the occupied component does not fall outside of $S_0$.
\end{enumerate}

By the RSW bound, all these events happen with probability bounded away from zero, yielding the claim.
\medskip

\begin{figure}
    \tikzset{every picture/.style={line width=0.75pt}} %set default line width to 0.75pt        

\begin{tikzpicture}[x=0.5pt,y=0.5pt,yscale=-1,xscale=1]
\path (-120,494); %set diagram left start at 0, and has height of 494

%Shape: Square [id:dp4607910111665712] 
\draw   (189.6,90.6) -- (513.2,90.6) -- (513.2,414.2) -- (189.6,414.2) -- cycle ;
%Shape: Square [id:dp004369427471050402] 
\draw   (312.4,376.6) -- (389.4,376.6) -- (389.4,453.6) -- (312.4,453.6) -- cycle ;
%Shape: Free Drawing [id:dp21313386254290823] 
\draw  [color={rgb, 255:red, 0; green, 0; blue, 0 }  ,draw opacity=1 ][line width=0.75] [line join = round][line cap = round] (190.14,107.77) .. controls (194.76,107.77) and (199.4,106.28) .. (203.97,106.88) .. controls (214.43,108.23) and (221.97,126.17) .. (225.71,134.63) .. controls (227.99,139.8) and (228.92,148.86) .. (234.6,152.53) .. controls (258.46,167.97) and (277.83,166.84) .. (301.78,176.7) .. controls (311.78,180.82) and (319.82,190.36) .. (327.47,197.29) .. controls (332.16,201.54) and (337.43,205.5) .. (340.31,210.71) .. controls (347.49,223.72) and (330.03,245.63) .. (327.47,257.26) .. controls (324.69,269.81) and (342.75,272.76) .. (344.26,282.33) .. controls (346.72,297.96) and (330.43,303.22) .. (330.43,319.92) ;
%Straight Lines [id:da983501594724858] 
\draw [line width=2.25]    (322,320.8) -- (381.2,321.4) ;
%Shape: Free Drawing [id:dp11173442055677629] 
\draw  [color={rgb, 255:red, 65; green, 114; blue, 214 }  ,draw opacity=1 ][line width=0.75] [line join = round][line cap = round] (514.34,320.13) .. controls (507.23,320.13) and (502.82,319.24) .. (498.09,321.07) .. controls (484.8,326.22) and (481.99,340.68) .. (474.52,352.26) .. controls (464.12,368.41) and (450.34,379.67) .. (431.46,379.67) ;
%Shape: Free Drawing [id:dp47800763211192443] 
\draw  [color={rgb, 255:red, 65; green, 114; blue, 214 }  ,draw opacity=1 ][line width=0.75] [line join = round][line cap = round] (190,398.18) .. controls (200.77,374.25) and (213.48,378.03) .. (232.32,381.83) .. controls (235.37,382.45) and (238.47,382.45) .. (241.52,381.83) .. controls (245.43,381.05) and (247.72,372.59) .. (251.64,371.62) .. controls (264.31,368.49) and (264.81,377.75) .. (272.8,377.75) ;
%Straight Lines [id:da7580205291423494] 
\draw [color={rgb, 255:red, 65; green, 114; blue, 214 }  ,draw opacity=1 ][line width=2.25]    (272.2,359) -- (272.2,394.2) ;
%Shape: Rectangle [id:dp35770114422592636] 
\draw   (273.4,320.8) -- (428.4,320.8) -- (428.4,414.4) -- (273.4,414.4) -- cycle ;
%Shape: Right Angle [id:dp6002971248168036] 
\draw   (272.2,376.6) -- (324.4,376.6) -- (324.4,440.6) ;
%Shape: Right Angle [id:dp4681751396776146] 
\draw   (429.2,376.6) -- (377.4,376.6) -- (377.4,440.6) ;
%Straight Lines [id:da662411143363365] 
\draw    (324.4,440.6) -- (377.4,440.6) ;
%Straight Lines [id:da04267102328201999] 
\draw    (272.4,388.6) -- (312.4,388.6) ;
%Straight Lines [id:da9068607751838402] 
\draw [color={rgb, 255:red, 65; green, 114; blue, 214 }  ,draw opacity=1 ][line width=2.25]    (429.2,359) -- (429.2,394.2) ;
%Straight Lines [id:da8343503852334622] 
\draw    (389.4,388.6) -- (429.4,388.6) ;
%Shape: Free Drawing [id:dp13565456406993337] 
\draw  [color={rgb, 255:red, 65; green, 114; blue, 214 }  ,draw opacity=1 ][dash pattern={on 4.5pt off 4.5pt}][line width=0.75] [line join = round][line cap = round] (273.4,382.6) .. controls (281.77,382.6) and (282.33,380.11) .. (287.4,379.6) .. controls (292.42,379.1) and (297.37,381.24) .. (302.4,381.6) .. controls (307.97,382) and (318.95,380.7) .. (319.4,386.6) .. controls (320.43,400.04) and (317.63,409.48) .. (318.4,422.6) .. controls (318.76,428.8) and (317.32,442.06) .. (322.4,444.6) .. controls (333.95,450.37) and (356.32,442.66) .. (370.4,443.6) .. controls (374.74,443.89) and (379.11,445.31) .. (383.4,444.6) .. controls (384.06,444.49) and (383.19,443.23) .. (383.4,442.6) .. controls (386.41,433.57) and (381.65,431.64) .. (381.4,423.6) .. controls (381.21,417.66) and (380.85,395.43) .. (382.4,384.6) .. controls (382.55,383.54) and (386.82,383.75) .. (387.4,383.6) .. controls (391.62,382.55) and (396.06,382.91) .. (400.4,382.6) .. controls (406.42,382.17) and (412.38,380.2) .. (418.4,380.6) .. controls (423.16,380.92) and (422.65,383.6) .. (429.4,383.6) ;
%Shape: Rectangle [id:dp7433064973909664] 
\draw   (340.65,321.3) -- (361.15,321.3) -- (361.15,413.9) -- (340.65,413.9) -- cycle ;
%Shape: Free Drawing [id:dp32617902387525] 
\draw  [color={rgb, 255:red, 0; green, 0; blue, 0 }  ,draw opacity=1 ][dash pattern={on 4.5pt off 4.5pt}][line width=0.75] [line join = round][line cap = round] (351.4,321.6) .. controls (351.4,324.84) and (347.73,328.92) .. (347.4,332.6) .. controls (346.35,344.2) and (353.74,348.35) .. (354.4,357.6) .. controls (354.71,361.92) and (354.94,366.3) .. (354.4,370.6) .. controls (353.74,375.86) and (348.16,378.55) .. (347.4,384.6) .. controls (346.16,394.5) and (352.33,395.11) .. (353.4,402.6) .. controls (354.56,410.74) and (350.4,409.69) .. (350.4,413.6) ;

% Text Node
\draw (520.2,203.4) node [anchor=north west][inner sep=0.75pt]  [font=\small]  {$W_{3R}$};
% Text Node
\draw (391.2,287) node [anchor=north west][inner sep=0.75pt]  [font=\small]  {$\partial W_{8}^+$};
% Text Node
\draw (393.2,428.4) node [anchor=north west][inner sep=0.75pt]  [font=\small]  {$S_0$};
\draw (280,390) node [anchor=north west][inner sep=0.75pt]  [font=\small]  {$K$};
\end{tikzpicture}
\vspace{-.8cm}
\caption{Illustration of the extension of the event $A_3^+(8,2R)$ to $F_{S_0}(R)$.}
\label{extbyhand}
\end{figure}
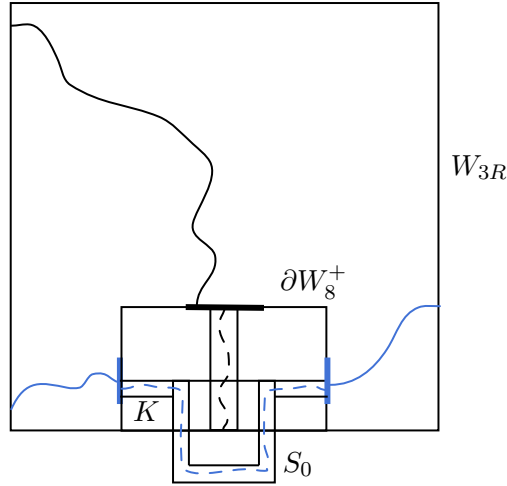

\underline{\textsc{STEP 2}:} Next, we show that for any $R \ge 2$,
\begin{equation}
\label{eq:FS0_bound}
\mathbb{P}(F_{S_0}(2R)) \asymp \frac{1}{R^2}.
\end{equation}
Let $K_1$ be the number of disjoint occupied components connecting  $\partial W^+_{R/2}$ to $\partial W^+_R$ in the upper half-plane. Assume that there exists $C \in [1,\infty)$ such that for all $R \ge 2$, one has $\E K_1 \leq C$. We show this in the STEP 3 below.

We can then derive that for all $R \geq 2$,
$$
\sum_{S: S \subset W_{R/2}} \mathbb{P}(F_S(2R)) = \E \sum_{S: S \subset W_{R/2}} \mathds{1}(F_S(2R)) \le \E K_1 \le C,
$$
since the occurrence of $F_S(2R)$ implies the existence of an occupied component connecting $\partial W^+_{R/2}$ and $\partial W^+_R$, while for two disjoint $S_1$ and $S_2$ if the events $F_{S_1}(2R)$ and $F_{S_2}(2R)$ both occur, then they necessarily correspond to two disjoint such occupied components. On the other hand, if we show there exists $C_0 \in (0,1]$ such that for all $R \geq 2$,
\begin{equation}\label{eq:lbco}
\sum_{S: S \subset W_{R/2}} \mathbb{P}(F_S(2R)) \ge \mathbb{P}(F_S(2R) \text{ holds for some }S \subset W_{R/2}) \ge C_0, 
\end{equation}
then \eqref{eq:FS0_bound} follows from the above two inequalities.
\smallskip

Now we argue the final lower bound in \eqref{eq:lbco}, similarly to STEP 1, as follows. Note that the event in question (i.e., $F_S(2R) \text{ holds for some }S \subset W_{R/2}$) is implied by the existence of a top-down occupied path in the rectangle $[-R/8,R/8] \times [3R/8, 5R/2]$
along with a vacant path through a suitable corridor (with both length and width being some constant multiples of $R$) below and around this rectangle, traversing left to right `blocking' the lowest point of the occupied component from falling outside of $W_{R/2}$. 
This ensures that $F_S(2R) \text{ holds for some }S \subset W_{R/2}$. 
That such a top-down occupied path and `blocking' vacant path exist with positive probability can be guaranteed by RSW bounds \eqref{eq:RSW} and their independence can be guaranteed by choosing $R$ large enough. Since for small $R$, one can simply consider some configuration with positive probability for which the event holds, this yields the lower bound.
\medskip

\underline{\textsc{STEP 3}:}  Now we show that $\E K_1 \leq C$ as claimed above in STEP 2 and thus completing the proof. Let $K_2$ be the number of disjoint occupied paths (i.e., they use disjoint sets of centers of balls) connecting $\partial W^+_{R/2}$ to $\partial W^+_R$ in the upper half-plane. Also let
$$
F_R:=\{\partial W^+_{R/2} \leftrightarrow \partial W^+_R \text{ in the upper half-plane}\}.
$$
Then, notice that there exists some $a \in [0,1)$ such that for all $R \ge 2$,
\begin{equation}\label{eq:crbd}
\mathbb{P}(F_R) \le a.
\end{equation}
Indeed, if this is not the case, then we must have that for any $\epsilon>0$, there exists $R \ge 2$ such that
$
\mathbb{P}(F_R) \ge 1-\epsilon.
$
But $F_R \subseteq A_1 \cup A_2 \cup A_3$, where $A_1$ and $A_3$ are the events that there is a L-R crossing of $[-R,-R/2] \times [0,R]$, and its reflection along the $y$-axis, respectively, while $A_3$ denotes a top to bottom crossing of $[-R,R] \times [R/2,R]$. The `square root trick' (see e.g., \cite[Prop.\ 4.1]{Tas16}) now implies that (noting $\max_{i \in \{1,2,3\}} \mathbb{P}(A_i) = \mathbb{P}(A_3)$)
$$
\mathbb{P}(A_3) \ge 1-\epsilon^{1/3}.
$$
This violates the RSW bound \eqref{eq:RSW} for $\epsilon$ small enough, proving \eqref{eq:crbd}. Now by the Poisson BK inequality (\cite[Theorem 4.1]{Gup1999}), we obtain
$$
\mathbb{P}(K_2 \ge k) \le a^k,
$$
which implies that there exists $C \in [1,\infty)$ such that $\mathbb{E} K_2 \le C$. Noting that $K_1 \le K_2$, this yields our claim.
\end{proof}

\noindent \textbf{\underline{Proof of \ref{A3} for critical Boolean percolation}:}
\medskip

\noindent {\underline{\textit{Upper bound}:} We need to show that $\E |\gamma_L| \lesssim L^2\alpha_4(L)$. We argue as in \cite[Lem.\ 4.9]{Vann2019} or \cite[Section 6.1.3]{GS}. First note that since adding a point outside of $W_{L+1}$ cannot affect a crossing in $W_L$, we have
\begin{equation}\label{e:A3ub}
\E |\gamma_L| = \int_{\R^2}\E |D_{(x,1)} f_L(\eta)|^2 \md x = \int_{W_{L/2}}\E |D_{(x,1)} f_L(\eta)|^2 \md x + \int_{W_{L+1} \setminus W_{L/2}}\E |D_{(x,1)} f_L(\eta)|^2 \md x.
\end{equation}
Also, for $x \in W_{L/2}$, by quasi-multiplicativity of 4-arm probabilities guaranteed by \ref{A2}, we have
\begin{equation}\label{e:ub1}
\P(D_{(x,1)} f_L(\eta) \neq 0) \lesssim \alpha_4(L),
\end{equation}
which yields that the first integral in \eqref{e:A3ub} is $\text{O}(L^2 \alpha_4(L))$.
The second integral, by rotational invariance, is bounded by (up to a constant factor)
\begin{equation}\label{e:ub2}
\int_{L/2}^{L+1} \int_{0}^u \P(D_{((u,v),1)} f_L(\eta) \neq 0) \md v \md u.
\end{equation}
First consider the case when $u \in (L/2,L-1)$. Let $d_0=d_0(u)$ be the distance from $(u,v)$ to 
its closest side of $W_L$, which means $d_0= L-u$. Also, let $d_1=d_1(v)$ be the distance between $x_1=(L,v)$ (projection of $x$ on the closest side) and the closest corner of $W_L$, say $x_2$, i.e., $d_1=(L-v)$. Then, notice that $D_{((u,v),1)} f_L(\eta) \neq 0$ implies that the event 
$$
A_4^{(u,v)}(1,d_0) \cap A_3^{x_1,+}(2d_0+2,d_1) \cap A_2^{x_2,++}(2d_1 +2,L)
$$
occurs. Since our radius of influence is $1$, by independence, Lemma \ref{l:weak3+} and quasi-multiplicativity of $\alpha_4$, we have
$$
\P(D_{((u,v),1)} f_L(\eta) \neq 0) \lesssim \alpha_2^{++}(2d_1+2,L) \alpha_3^{+}(2d_0+2,d_1)\alpha_4(d_0) \lesssim \alpha_2^{++}(2d_1+2,L) \alpha_4(d_1).
$$
Now again by quasi-multiplicativity of $\alpha_4$ and \ref{A1}, we have
$$
\alpha_4(d_1) \lesssim \alpha_4(L)\left(\frac{L}{d_1}\right)^{2-\epsilon}.
$$
Thus, using \eqref{e:2++} we obtain
\bea\label{e:ub4'}
&&\int_{L/2}^{L-1} \int_{0}^{u} \P(D_{((u,v),1)} f_L(\eta) \neq 0) \md v \md u \nonumber\\
&&\lesssim \alpha_4(L) \int_{L/2}^{L-1} \int_{0}^{u} \left(\frac{2d_1(v)+2}{L}\right)^{c}\left(\frac{L}{d_1(v)}\right)^{2-\epsilon} \md v \md u \nonumber\\
&&= \alpha_4(L) \int_{L/2}^{L-1} \int_{0}^{u} \left(\frac{2(L-v)+2}{L}\right)^{c}\left(\frac{L}{2(L-v)}\right)^{2-\epsilon} \md v \md u\nonumber\\
&& \lesssim  L^{2-\epsilon-c}\alpha_4(L) \int_{0}^{L-1} (L-v)^{c+\epsilon -1} \md v \asymp L^2\alpha_4(L).
\eea

Finally, consider the case when $u \in [L-1,L+1]$. Since $L^2 \alpha_4(L) \to \infty$ as $L \to \infty$ by \ref{A1}, it is enough to consider $v \in [0,u \wedge (L-1)]$. Let $d_1$ be as before. Notice that $D_{((u,v),1)} f_L(\eta) \neq 0$ implies that the event $A_3^{x_1,+}(2,d_1) \cap A_2^{x_2,++}(d_1 +2,L)$ occurs.
Then for $L$ large enough, we have from Lemma \ref{l:weak3+} that
$$
\P(D_{((u,v),1)} f_L(\eta) \neq 0) \lesssim \alpha_2^{++}(d_1+2,L) \alpha_3^{+}(2,d_1) \lesssim \alpha_2^{++}(d_1+2,L) \alpha_4(d_1),
$$
and thereby \eqref{e:2++} yields that
\bea\label{e:ub4}
&&\int_{L-1}^{L+1} \int_{0}^{u \wedge (L-1)} \P(D_{((u,v),1)} f_L(\eta) \neq 0) \md v \md u \nonumber\\
&& \lesssim \alpha_4(L) \int_{L-1}^{L+1} \int_{0}^{u \wedge (L-1)} \left(\frac{d_1(v)+2}{L}\right)^{c}\left(\frac{L}{d_1(v)}\right)^{2-\epsilon} \md v \md u\nonumber\\
&& \lesssim \alpha_4(L) \int_{L-1}^{L+1} \int_{0}^{u \wedge (L-1)} \left(\frac{L-v+2}{L}\right)^{c}\left(\frac{L}{L-v}\right)^{2-\epsilon} \md v \md u\nonumber\\
&& \lesssim L^{2-\epsilon-c}\alpha_4(L) \int_{0}^{L-1} (L-v)^{c+\epsilon -2} \md v \asymp L^2\alpha_4(L).
\eea
The upper bound in \ref{A3} now follows upon combining \eqref{e:A3ub}, \eqref{e:ub1}, \eqref{e:ub2}, \eqref{e:ub4'} and \eqref{e:ub4}. \\

\noindent {\underline{\textit{Lower bound}:} We need to show $\E |\gamma_L \cap \widehat W_{L/2}| \gtrsim L^2\alpha_4(L)$, which follows if we show that for $x \in W_{L/2}$, we have $\P(D_{(x,1)} f_L(\eta) \neq 0) \gtrsim \alpha_4(L)$. 

\noindent For simplicity and without loss of generality, assume that $L=2^l$ for some $l \ge 10$. Fix $x \in W_{L/2}=W_{2^{l-1}}$. For $s \ge 4$ and $r \ge s+2$, define the event $\overline A_4^{(x)}(2^s,2^r)$ as
\beaa
\overline A_4^{(x)}(2^s,2^r) &=&\text{there are 4 paths of alternating type from $\partial W_{2^r}^{(x)}$ to $\partial W_{2^s}^{(x)}$ such that all } \\
&&\text{the parts of the two occupied paths outside of $W_{2^{r-1}}^{(x)} \setminus W_{2^{s+1}}^{(x)}$ lie in }\\
&&\text{$x+\mathcal{A}(1,s+1) \cup \mathcal{A}(1,r)$ and $x+\mathcal{A}(3,s+1) \cup \mathcal{A}(3,r)$ respectively, and }\\
&&\text{similar event occurs for the vacant paths with $x+\mathcal{B}(2,s+1) \cup \mathcal{B}(2,r)$ }\\
&&\text{and $x+\mathcal{B}(4,s+1)\cup \mathcal{B}(4,r)$. In addition there are occupied top to }\\
&&\text{bottom crossings in $x+\mathcal{A}(1,s+1)^{-}$, $x+\mathcal{A}(3,s+1)^{-}$, $x+\mathcal{A}(1,r)^{-}$ }\\
&&\text{and $x+\mathcal{A}(3,r)^{-}$, and vacant horizontal crossings of $x+\mathcal{B}(2,s+1)^{-}$, }\\
&&\text{$x+\mathcal{B}(4,s+1)^{-}$, $x+\mathcal{B}(2,r)^{-}$  and $x+\mathcal{B}(4,r)^{-}$},
\eeaa
Note that the argument in the proof of Theorem~\ref{thm1} can be applied to $\dbtilde \alpha_4(2^s,2^r)$, instead of $\alpha_4(2^s,2^r)$, to ensure that the paths at the outer square $W_{2^r}$ hit the boundary inside the regions $\mathcal{A}(i,r)$ and $\mathcal{B}(i+1,r)$ for $i=1,3$ respectively. Arguing this way, one can obtain that for $s \ge 4$ and $r$ large enough,
\begin{equation}
\label{eq:al4barA4}
    \alpha_4(2^s,2^r) \asymp \dbtilde \alpha_4(2^s,2^r) \lesssim \P_{\lambda_c}(\overline A_4^{(x)}(2^s,2^r)).
\end{equation}
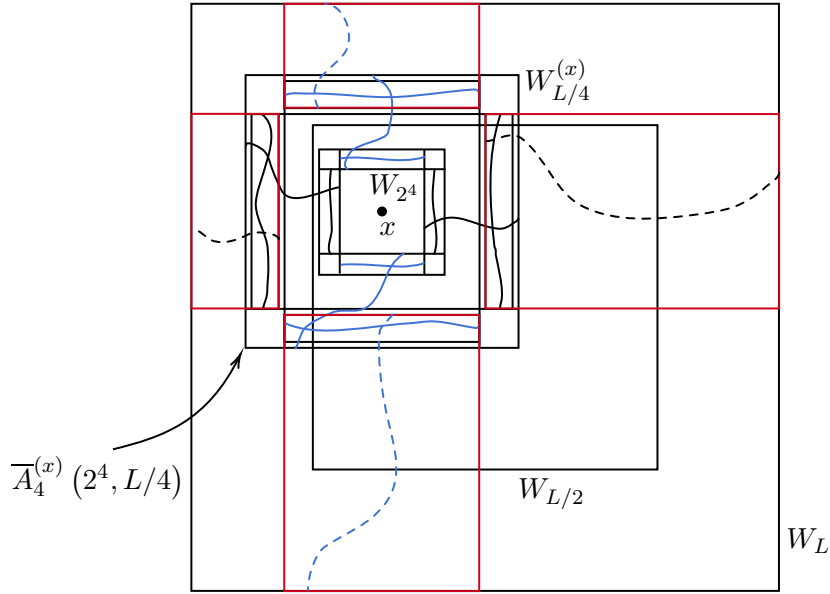
\begin{figure}
\vspace{-1cm}
    \tikzset{every picture/.style={line width=0.75pt}} %set default line width to 0.75pt        

\begin{tikzpicture}[x=0.5pt,y=0.5pt,yscale=-1,xscale=1]
\path (73,517); %set diagram left start at 0, and has height of 517

%Shape: Square [id:dp9526349941479804] 
\draw   (318,37) -- (762.5,37) -- (762.5,481.5) -- (318,481.5) -- cycle ;
%Shape: Square [id:dp8128338897644454] 
\draw   (409.89,128.89) -- (670.61,128.89) -- (670.61,389.61) -- (409.89,389.61) -- cycle ;
%Shape: Circle [id:dp804261400450869] 
\draw  [fill={rgb, 255:red, 0; green, 0; blue, 0 }  ,fill opacity=1 ] (459.25,194.25) .. controls (459.25,192.59) and (460.59,191.25) .. (462.25,191.25) .. controls (463.91,191.25) and (465.25,192.59) .. (465.25,194.25) .. controls (465.25,195.91) and (463.91,197.25) .. (462.25,197.25) .. controls (460.59,197.25) and (459.25,195.91) .. (459.25,194.25) -- cycle ;
%Shape: Square [id:dp9235726942696609] 
\draw   (359,91) -- (565.5,91) -- (565.5,297.5) -- (359,297.5) -- cycle ;
%Shape: Square [id:dp7410726498743578] 
\draw   (388.5,120.5) -- (536,120.5) -- (536,268) -- (388.5,268) -- cycle ;
%Shape: Right Angle [id:dp4788945946465528] 
\draw   (388.5,91.5) -- (388.5,120.5) -- (359.5,120.5) ;
%Shape: Right Angle [id:dp5469972001979728] 
\draw   (565.75,120.5) -- (536,120.5) -- (536,91) ;
%Shape: Right Angle [id:dp035607295675056605] 
\draw   (536,297.75) -- (536,268) -- (565.5,268) ;
%Shape: Right Angle [id:dp22686898908425812] 
\draw   (358.75,268) -- (388.5,268) -- (388.5,297.5) ;
%Shape: Rectangle [id:dp9282365992824892] 
\draw   (388.25,95.5) -- (535.75,95.5) -- (535.75,116) -- (388.25,116) -- cycle ;
%Shape: Rectangle [id:dp8672708917788858] 
\draw   (388.25,272.5) -- (535.75,272.5) -- (535.75,293) -- (388.25,293) -- cycle ;
%Shape: Rectangle [id:dp24349173955591819] 
\draw   (363.5,268) -- (363.5,120.5) -- (384,120.5) -- (384,268) -- cycle ;
%Shape: Rectangle [id:dp899335751692603] 
\draw   (540.5,268) -- (540.5,120.5) -- (561,120.5) -- (561,268) -- cycle ;
%Shape: Square [id:dp5941240281574685] 
\draw   (430.25,162.25) -- (494.25,162.25) -- (494.25,226.25) -- (430.25,226.25) -- cycle ;
%Shape: Square [id:dp39726445345036443] 
\draw   (414.62,147.28) -- (509.56,147.28) -- (509.56,242.21) -- (414.62,242.21) -- cycle ;
    %Shape: Rectangle [id:dp20881378758859737] 
    %\draw   (505.75,162) -- (505.75,226) -- (498.25,226) -- (498.25,162) -- cycle ;
%Shape: Right Angle [id:dp37374101856475095] 
\draw   (415.33,162.25) -- (430.25,162.25) -- (430.25,147) ;
%Shape: Right Angle [id:dp4087232306155759] 
\draw   (430.25,241.17) -- (430.25,226.25) -- (415,226.25) ;
%Shape: Right Angle [id:dp43096046666347965] 
\draw   (509.56,226.25) -- (494.25,226.25) -- (494.25,242.21) ;
%Shape: Right Angle [id:dp07106657781241799] 
\draw   (494.25,146.94) -- (494.25,162.25) -- (510.21,162.25) ;
    %Shape: Rectangle [id:dp5662854988474868] 
    %\draw   (430.5,150.25) -- (494.5,150.25) -- (494.5,157.75) -- (430.5,157.75) -- cycle ;
    %Shape: Rectangle [id:dp18275622741812425] 
    %\draw   (426.25,162) -- (426.25,226) -- (418.75,226) -- (418.75,162) -- cycle ;
    %Shape: Rectangle [id:dp3108929759098893] 
    %\draw [loosely dashed]  (430.5,230.25) -- (494.5,230.25) -- (494.5,237.75) -- (430.5,237.75) -- cycle ;
%Shape: Free Drawing [id:dp5063128765789731] 
\draw  [color={rgb, 255:red, 0; green, 0; blue, 0 }  ,draw opacity=1 ][line width=0.75] [line join = round][line cap = round] (551.75,121) .. controls (541.3,152.35) and (543.79,188.34) .. (547.25,220.5) .. controls (547.36,221.55) and (548.09,222.46) .. (548.25,223.5) .. controls (548.43,224.65) and (548.08,225.85) .. (548.25,227) .. controls (548.8,230.69) and (555.17,254.25) .. (552.25,263) .. controls (551.7,264.65) and (551.98,266.77) .. (550.75,268) ;
%Shape: Free Drawing [id:dp9629627693154086] 
\draw  [color={rgb, 255:red, 0; green, 0; blue, 0 }  ,draw opacity=1 ][line width=0.75] [line join = round][line cap = round] (371.25,120.53) .. controls (390.44,135.02) and (365.61,172.39) .. (366.75,189.97) .. controls (367.51,201.76) and (374.97,211.31) .. (375.75,222.68) .. controls (376.3,230.71) and (376.55,238.83) .. (375.75,246.84) .. controls (375.28,251.54) and (375.93,256.52) .. (374.25,260.93) .. controls (372.2,266.29) and (372.25,269.85) .. (372.25,266.96) ;
%Shape: Free Drawing [id:dp09299134327505953] 
\draw  [color={rgb, 255:red, 0; green, 0; blue, 0 }  ,draw opacity=1 ][line width=0.75] [line join = round][line cap = round] (501.92,162.5) .. controls (501.45,169.33) and (500.46,176.12) .. (500.53,183) .. controls (500.66,197.51) and (506.11,214.42) .. (501.64,226.5) ;
%Shape: Free Drawing [id:dp7014138212555419] 
\draw  [color={rgb, 255:red, 0; green, 0; blue, 0 }  ,draw opacity=1 ][line width=0.75] [line join = round][line cap = round] (423.75,162) .. controls (421.33,176.97) and (424.07,192.33) .. (424.25,207.5) .. controls (424.31,213.06) and (420.29,220.59) .. (422.75,225.5) .. controls (422.96,225.92) and (423.28,226.5) .. (423.75,226.5) ;
%Shape: Free Drawing [id:dp5645193606685528] 
\draw  [color={rgb, 255:red, 63; green, 114; blue, 214 }  ,draw opacity=1 ][line width=0.75] [line join = round][line cap = round] (431.18,154) .. controls (452.27,151.21) and (474.08,159.15) .. (494.32,153) ;
%Shape: Free Drawing [id:dp5715507713491368] 
\draw  [color={rgb, 255:red, 63; green, 114; blue, 214 }  ,draw opacity=1 ][line width=0.75] [line join = round][line cap = round] (431.68,234.5) .. controls (446.95,233.53) and (462.25,235.65) .. (477.54,236) .. controls (478.88,236.03) and (493.82,234.87) .. (493.82,233) ;
%Shape: Free Drawing [id:dp6532978411054691] 
\draw  [color={rgb, 255:red, 63; green, 114; blue, 214 }  ,draw opacity=1 ][line width=0.75] [line join = round][line cap = round] (389.2,106) .. controls (394.88,103.71) and (401.41,105.13) .. (407.53,105) .. controls (429.9,104.53) and (452.01,108.65) .. (474.38,107.5) .. controls (492.94,106.55) and (511.32,103.38) .. (529.85,102) .. controls (533.71,101.71) and (533.8,101.98) .. (535.3,103.5) ;
%Shape: Free Drawing [id:dp5147629979603068] 
\draw  [color={rgb, 255:red, 63; green, 114; blue, 214 }  ,draw opacity=1 ][line width=0.75] [line join = round][line cap = round] (389.25,279) .. controls (414.82,290.36) and (454.18,280.49) .. (481.75,279) .. controls (488.59,278.63) and (495.41,277.04) .. (502.25,277.5) .. controls (512.69,278.21) and (522.89,281) .. (533.25,282.5) .. controls (534.42,282.67) and (536.93,280) .. (535.75,280) ;
%Shape: Free Drawing [id:dp9870947152986009] 
\draw  [color={rgb, 255:red, 63; green, 114; blue, 214 }  ,draw opacity=1 ][line width=0.75] [line join = round][line cap = round] (436.25,162.09) .. controls (431.01,159.52) and (437.21,150.79) .. (438.75,148.84) .. controls (443.37,142.96) and (465.28,140.38) .. (466.75,134.6) .. controls (469.06,125.52) and (471.03,115.82) .. (469.25,106.63) .. controls (468.61,103.32) and (466.5,100.34) .. (464.25,97.79) .. controls (461.91,95.14) and (455.75,94.91) .. (455.75,91.41) ;
%Shape: Free Drawing [id:dp30040152825068467] 
\draw  [color={rgb, 255:red, 63; green, 114; blue, 214 }  ,draw opacity=1 ][line width=0.75] [line join = round][line cap = round] (478.75,226.06) .. controls (464.18,235.9) and (466.98,247.01) .. (456.25,261.5) .. controls (446.34,274.88) and (435.08,266) .. (423.75,271.11) .. controls (416.73,274.29) and (409.24,277.31) .. (403.75,282.76) .. controls (400.03,286.46) and (396.75,302.21) .. (396.75,296.93) ;
%Shape: Free Drawing [id:dp3862143215999707] 
\draw  [color={rgb, 255:red, 0; green, 0; blue, 0 }  ,draw opacity=1 ][line width=0.75] [line join = round][line cap = round] (494.25,207) .. controls (515.92,190.75) and (526.63,203.4) .. (547.75,207) .. controls (555.45,208.31) and (557.64,205.64) .. (564.75,201) .. controls (565.19,200.71) and (565.78,199.5) .. (565.25,199.5) ;
%Shape: Free Drawing [id:dp967527584778477] 
\draw  [color={rgb, 255:red, 0; green, 0; blue, 0 }  ,draw opacity=1 ][line width=0.75] [line join = round][line cap = round] (359.22,144.5) .. controls (359.22,139.28) and (371.29,146.34) .. (373.13,148.5) .. controls (378.31,154.58) and (375.38,156.52) .. (386.05,178) .. controls (393.23,192.45) and (419.32,176.96) .. (429.78,176) ;
%Shape: Rectangle [id:dp46749289494973545] 
\draw  [color={rgb, 255:red, 208; green, 2; blue, 27 }  ,draw opacity=1 ] (540.5,120.5) -- (762.67,120.5) -- (762.67,267.67) -- (540.5,267.67) -- cycle ;
%Shape: Free Drawing [id:dp2176653521309675] 
\draw  [dashed, color={rgb, 255:red, 0; green, 0; blue, 0 }  ,draw opacity=1 ][line width=0.75] [line join = round][line cap = round] (541.25,141) .. controls (548.36,141) and (559.48,132.5) .. (567.93,139) .. controls (586.23,153.07) and (592.93,179.83) .. (613.94,190.33) .. controls (638.87,202.79) and (669.88,200.34) .. (696.64,197) .. controls (723.47,193.65) and (734.97,191.31) .. (753.33,175) .. controls (755.45,173.12) and (762.66,168.75) .. (762.66,165) ;
%Shape: Rectangle [id:dp7683304023985125] 
\draw  [color={rgb, 255:red, 208; green, 2; blue, 27 }  ,draw opacity=1 ] (388.25,37) -- (535.75,37) -- (535.75,115.67) -- (388.25,115.67) -- cycle ;
%Shape: Rectangle [id:dp7125427845309678] 
\draw  [color={rgb, 255:red, 208; green, 2; blue, 27 }  ,draw opacity=1 ] (318,120.33) -- (384,120.33) -- (384,267.67) -- (318,267.67) -- cycle ;
%Shape: Free Drawing [id:dp44818720715831994] 
\draw  [dashed, color={rgb, 255:red, 0; green, 0; blue, 0 }  ,draw opacity=1 ][line width=0.75] [line join = round][line cap = round] (383.92,215) .. controls (385.58,215) and (382.26,212.85) .. (381.22,212.33) .. controls (377.68,210.59) and (366.96,208.7) .. (360.96,211.67) .. controls (353.03,215.58) and (343.44,219.46) .. (333.94,216.33) .. controls (328.96,214.69) and (324.72,207.67) .. (318.4,207.67) ;
%Shape: Free Drawing [id:dp24937423957807336] 
\draw  [dashed,color={rgb, 255:red, 63; green, 114; blue, 214 }  ,draw opacity=1 ][line width=0.75] [line join = round][line cap = round] (414,115.72) .. controls (401.99,91.97) and (430.53,85.56) .. (436,70.24) .. controls (440.54,57.53) and (432.54,37.28) .. (419.33,37.28) ;
%Shape: Rectangle [id:dp49696485681054536] 
\draw  [color={rgb, 255:red, 208; green, 2; blue, 27 }  ,draw opacity=1 ] (388.25,272.67) -- (535.75,272.67) -- (535.75,481.67) -- (388.25,481.67) -- cycle ;
%Shape: Free Drawing [id:dp5307326520743176] 
\draw  [dashed, color={rgb, 255:red, 63; green, 114; blue, 214 }  ,draw opacity=1 ][line width=0.75] [line join = round][line cap = round] (470,272.33) .. controls (470,273.48) and (461.22,281.49) .. (460.67,284.33) .. controls (459.43,290.66) and (459.72,297.22) .. (460,303.67) .. controls (460.68,319.43) and (462.98,336.23) .. (466.67,351.67) .. controls (471.75,372.94) and (479.2,397.04) .. (464,416.33) .. controls (451.33,432.41) and (435.92,436.71) .. (418.67,448.33) .. controls (412.64,452.39) and (408.34,465.61) .. (407.33,471) .. controls (406.28,476.64) and (406,481.62) .. (406,481) ;
%Curve Lines [id:da9634067200383265] 
\draw    (258.67,374.33) .. controls (296.29,365.09) and (329.99,353.89) .. (354.59,303.54) ;
\draw [shift={(355.33,302)}, rotate = 115.52] [color={rgb, 255:red, 0; green, 0; blue, 0 }  ][line width=0.75]    (10.93,-3.29) .. controls (6.95,-1.4) and (3.31,-0.3) .. (0,0) .. controls (3.31,0.3) and (6.95,1.4) .. (10.93,3.29)   ;

% Text Node
\draw (458,200) node [anchor=north west][inner sep=0.75pt]  [font=\small]  {$x$};
% Text Node
\draw (567.83,76.4) node [anchor=north west][inner sep=0.75pt]  [font=\small]  {$W_{L/4}^{( x)}$};
% Text Node
\draw (450,164) node [anchor=north west][inner sep=0.75pt]  [font=\small]  {$W_{2^{4}}$};
% Text Node
\draw (563,393.9) node [anchor=north west][inner sep=0.75pt]  [font=\small]  {$W_{L/2}$};
% Text Node
\draw (766,431.9) node [anchor=north west][inner sep=0.75pt]  [font=\small]  {$W_{L}$};
% Text Node
\draw (180,377) node [anchor=north west][inner sep=0.75pt]  [font=\small]  {$\overline A_{4}^{(x)}\left( 2^{4} ,L/4\right)$};

\end{tikzpicture}
\vspace{-.5cm}
\caption{Illustration of the event $\overline{A_4}^{(x)}(2^4,L/4)$ for $x \in W_{L/2}$ (all crossings inside the square $W_{L/4}^{(x)}$), and its extention to $W_L$ (with the 4 dashed crossings of alternating colors in the 4 red rectangles).}
\label{fig:barA4x}
\end{figure}

Now we extend by hand (the dashed crossings in Figure \ref{fig:barA4x}) the event $\overline{A_4}^{(x)}(2^4,L/4)$ to ensure that we have $D_{(x,1)} f_L(\eta) \neq 0$. To do this, we first enforce the event that there is a L-R occupied crossing of $[x_1+L/8+2,L] \times [x_2 -L/8,x_2+L/8]$. We also enforce similar occupied event to the left of $W_{L/4}^{(x)}$, and similar events, with occupied replaced by vacant, and L-R replaced by top to bottom, to the top and bottom of $W_{L/4}^{(x)}$. Note that all these events are positively associated with $\overline{A_4}^{(x)}(2^4,L/4)$ and using \eqref{eq:RSW}, they all have a probability bounded below by a positive constant. Also, together with $\overline{A_4}^{(x)}(2^4,L/4)$, these events imply that there are 4-arms of alternating types occupied, vacant, occupied, vacant, starting at the left, top, right and bottom sides of $W_{2^4}^{(x)}$ and hitting the left, top, right and bottom sides of $W_L$, respectively. Finally consider the event that there is a top to bottom vacant crossing of $[x_1-2^4,x_1+2^4] \times [x_2 - 2^5+2,x_2 + 2^5-2]$, while upon adding $B(x,1)$, there is a L-R occupied crossing of $[x_1-2^5+2,x_1+2^5-2] \times [x_2 - 2^4,x_2 + 2^4]$. This event has a probability strictly larger than zero, and it is positively associated with all the previous events. Putting all these events together, we obtain by quasi-multiplicativity, the RSW bound, and \eqref{eq:al4barA4} that
$$
\P(D_{(x,1)} f_L(\eta) \neq 0) \gtrsim \P_{\lambda_c}(\overline A_4^{(x)}(2^4,L/4)) \gtrsim \alpha_4(2^4,L/4) \asymp \alpha_4(L),
$$
which yields the lower bound.
 \qed

 \subsection{Sharp noise sensitivity for Voronoi percolation} 
\label{s:sharpNSvor}

In this section, we prove the sharp noise sensitivity result for crossing event in the case of planar Voronoi percolation stated in Theorem \ref{t:sharpNS_voronoi}. We adapt here the notations from Section~\ref{ss.gen_sns_cont}. While it is possible to confirm the assumptions \ref{A1} -- \ref{A3} in Section~\ref{ss.gen_sns_cont} to conclude sharp noise instability, here we manage to prove instead both sharp noise instability and sharp noise sensitivity for the critical Voronoi percolation under the OU dynamics, by making use of a comparisons with the so-called frozen dynamics in \cite{Vann2021}.
 
Let $\eta$ be a Poisson process on $\X = \R^2 \times \M$ with $\M=\{0,1\}$, and with intensity measure $\md x \times \nu(\md a)$ where $\nu$ is the Ber($1/2$) probability measure, and denote by $\eta^{pr}$ its projection on $\R^2$. Consider the Voronoi tessellation $\{C(x) := C(x,\eta^{pr})\}_{x \in \eta^{pr}}$ of $\R^2$ formed by the collection of points $\eta^{pr}$. Formally, $C(x) := \{y \in \R^2 : |y-x| \leq \min \{|y-z| : z \in \eta^{pr} \}$ is the set of points closer to $x$ than other points of $\eta^{pr}$. One can show that the these cells partition $\R^2$ and have disjoint interiors. Furthermore, 
the cells are convex polygons. 

The Voronoi percolation model is defined as follows. We color the Voronoi cell $C(x)$ of a point $x \in \eta^{pr}$ either white (corresponds to the mark $M(x) =0$) or black (corresponds to the mark $M(x) = 1$) according to the mark $M(x)$ of $x$ in $\eta$. We define the {\em black-colored/occupied region} $\Occ(\eta) = \cup_{x \in \eta^{pr}, M(x) = 1} C(x)$. As before, we are interested in the event of a left-right (LR) crossing of the box $W_L$ through the black-colored region $\Occ(\eta)$.  Let $f_L$ be the $\pm 1$-indicator of LR crossing of $\Occ(\eta) \cap W_L$, and $\alpha_4(L)$ denote the 4-arm probability of $4$ arms of alternating colors from $\partial W_1$ to $\partial W_L$ inside the annulus $W_L \setminus W_1$. We will often refer to $\eta$ as the colored point configuration while $\eta^{pr}$ will be called the uncolored configuration. We let $\Omega$ be the class of locally finite point configurations in $\X$, while $\eta^{pr}$ lives in $\Omega^{pr}$. It is known that $f_L$ is non-degenerate, indeed, a RSW-type result similar to \eqref{eq:RSW} holds for the Voronoi percolation model too; see \cite[Theorem 8.1]{Ahlbergsharp18}.

We prepare by proving two propositions. Below, we let $\gamma_L$ to be the spectral process as in Definition~\ref{def:spproc} associated with the crossing functional $f_L$.

\begin{prop} The spectral point process $\gamma_L$  satisfies
		\label{p:exppivotalsetvoronoi}
		\begin{equation*}
			\label{e:4armpivotalvoronoi}
			\E[|\gamma_L|] =  \int_{\X} \E[|D_{(x,a)}f_L(\eta)|^2]\md x \nu(\md a) \asymp L^2\alpha_4(L).  
		\end{equation*}
	\end{prop}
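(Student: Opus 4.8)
The plan is to apply the intensity formula \eqref{e:eqint} from the spectral/pivotal analysis, which gives $\E[|\gamma_L|] = \int_\X \E[(D_{(x,a)}f_L(\eta))^2]\,\lambda(\md\tx)$, and then estimate this integral by decomposing the space $\X = \R^2\times\M$ according to the position of $x$ relative to $W_L$. Since adding a point (with a colour) outside $W_{L+1}$ cannot change a crossing inside $W_L$, the integral is supported on $\widehat W_{L+1}$, and we split it as $\int_{\widehat W_{L/2}} + \int_{\widehat W_{L+1}\setminus\widehat W_{L/2}}$, exactly as in the Boolean case treated in Section~\ref{ss:proofBoolean}.

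First I would handle the upper bound. For $x\in W_{L/2}$, the event $\{D_{(x,a)}f_L(\eta)\neq 0\}$ forces a $4$-arm event of alternating colours in a macroscopic annulus around $x$ reaching $\partial W_L$, so by quasi-multiplicativity of the $4$-arm probabilities for Voronoi percolation one gets $\P(D_{(x,a)}f_L(\eta)\neq 0)\lesssim\alpha_4(L)$, contributing $\mathrm{O}(L^2\alpha_4(L))$. For $x$ in the outer annulus $W_{L+1}\setminus W_{L/2}$, one argues by rotational symmetry and the location of $x$ relative to the nearest side and nearest corner of $W_L$: writing $d_0$ for the distance to the nearest side and $d_1$ for the distance from the projection to the nearest corner, the pivotality event is contained in the intersection of a $4$-arm event at scale $d_0$, a half-plane $3$-arm event between scales $d_0$ and $d_1$, and a quarter-plane $2$-arm event between scales $d_1$ and $L$. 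Using the bounds $\alpha_3^+(r,R)\asymp(r/R)^2$, $\alpha_2^{++}(r,R)\lesssim(r/R)^c$, quasi-multiplicativity, and the polynomial lower bound \ref{A1} for $\alpha_4$, the resulting double integral is again $\mathrm{O}(L^2\alpha_4(L))$ — this is precisely the computation \eqref{e:ub4'}--\eqref{e:ub4} carried out for Boolean percolation, and all its ingredients (RSW from \cite[Theorem 8.1]{Ahlbergsharp18}, quasi-multiplicativity, half-plane and quarter-plane arm estimates) are available for Voronoi percolation.

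For the lower bound it suffices to show $\int_{\widehat W_{L/2}}\E[(D_{(x,a)}f_L(\eta))^2]\,\lambda(\md\tx)\gtrsim L^2\alpha_4(L)$, and for this it is enough to prove that $\P(D_{(x,a)}f_L(\eta)\neq 0)\gtrsim\alpha_4(L)$ uniformly over $x\in W_{L/2}$ and $a\in\{0,1\}$. This follows by the standard ``extending the configuration by hand'' argument: one constructs a $4$-arm event of alternating colours from a bounded neighbourhood of $x$ out to $\partial W_L$ whose realization makes $x$ (or its cell) pivotal, gluing together a local $4$-arm configuration around $x$ at scale $\mathrm{O}(1)$, long macroscopic crossings along the four corridors toward the four sides of $W_L$, and blocking crossings of the opposite colour, using RSW bounds and quasi-multiplicativity to control each piece and the positive association of the relevant events; this is the Voronoi analogue of the lower bound proof for \ref{A3} given at the end of Section~\ref{ss:proofBoolean}. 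The main obstacle is the same as in the Boolean case: one must ensure that the relevant half-plane $3$-arm and quarter-plane $2$-arm probabilities for Voronoi percolation satisfy quasi-multiplicativity and the polynomial estimates used above, and that the ``by hand'' gluing respects the geometry of Voronoi cells (in particular that a pivotal point for the crossing event corresponds to a genuine $4$-arm event of alternating colours); once these are in place, the argument is a near-verbatim transcription of Section~\ref{ss:proofBoolean}, so I would state the required arm estimates as consequences of the Voronoi RSW theory and quasi-multiplicativity and then run the two integral computations.
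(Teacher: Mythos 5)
Your approach takes a genuinely different route from the paper's. The paper does not directly estimate the integral $\int_\X \E[(D_{(x,a)}f_L(\eta))^2]\,\lambda(\md\tx)$ at all: it uses the identity $\E[|\gamma_L|] = 4\E[|\mathcal{P}_L|]$ from \eqref{e:eqint} together with Proposition~\ref{p:comp_Pivotal} (the factorial-moment comparison between the pivotal and quenched pivotal processes) to conclude $\E[|\gamma_L|] = 2\E[|\mathcal{P}^q_L|]$, and then invokes the estimate $\E[|\mathcal{P}^q_L|]\asymp L^2\alpha_4(L)$ from \cite{Vann2021, Vann2019}. In other words, the whole point of the Hoeffding--ANOVA comparison machinery in Section~\ref{sec:comparison} is precisely to avoid redoing the arm-probability computations for Voronoi and to piggyback on Vanneuville's existing estimate for the quenched pivotal set. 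Your plan --- re-running the two integral computations from Section~\ref{ss:proofBoolean} in the Voronoi setting --- is sound in outline and would also work, but it is considerably longer and requires assembling as external inputs the full suite of Voronoi arm estimates (quasi-multiplicativity of $\alpha_4$, half-plane $\alpha_3^+$, quarter-plane $\alpha_2^{++}$, and the relation ``pivotal $\Rightarrow$ 4-arm''), none of which are proved in this paper for Voronoi.

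There is also one concrete error in your transcription from the Boolean case. You assert that ``adding a point (with a colour) outside $W_{L+1}$ cannot change a crossing inside $W_L$, [so] the integral is supported on $\widehat W_{L+1}$''. This is true for the unit-radius Boolean model but \emph{false} for Voronoi: the Voronoi tessellation has no finite range of dependence, and a point inserted arbitrarily far from $W_L$ can, with positive (albeit tiny) probability, reshape cells intersecting $W_L$. The paper deals with this correctly in the integrability check preceding the identity, by bounding $\int_{\R^2}\P(C(x,\eta^{pr}\cup\{x\})\cap W_L\neq\emptyset)\,\md x<\infty$ via the super-exponential decay of that probability in $\|x\|$. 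For the same reason, the independence arguments you borrow from Section~\ref{ss:proofBoolean} (the $2d_0+2$, $2d_1+2$ shifts and disjointness of annuli, which rely on exact $1$-dependence of the Boolean model) do not transfer verbatim and would need to be replaced by the approximate spatial independence tools used in the Voronoi literature. So the proposal needs two repairs --- a soft, decaying cutoff in place of the hard cutoff at $W_{L+1}$, and a quantitative spatial-mixing argument in place of exact finite-range independence --- before the Boolean computation can be transcribed.
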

\begin{proof}
		Though it is also possible to establish the result directly,  we shall use comparison to annealed spectral sample as in Section \ref{ss:comparison_samples} and then use estimates for quenched pivotal sample from \cite{Vann2019,Vann2021}. By the local-finiteness of the Voronoi tessellation, we have that $f_L \in \SA$ and also one has that
  \begin{equation*}
  \int_{\X} \E[(D_{(x,a)} f_L)^2] \, {\rm d}x\nu({\rm d}a) \leq 4\int_{\R^2} \P 
  \big(C(x,\eta^{pr} \cup \{x\}) \cap W_L \neq \emptyset \big) {\rm d}x  < \infty,
  \end{equation*}
  where the last inequality uses that the probability in the integrand decays super-exponentially in $\|x\|$; see for example \cite[Section 5.1]{Penrose2007}. Thus, using \eqref{e:eqint} and Proposition \ref{p:comp_Pivotal}, we have that  
  \begin{equation*}
 \E[|\gamma_L|] = 4\E[|\mathcal{P}_L|]  = 2\E[|\mathcal{P}^q_L|],
  \end{equation*}
  where $\mathcal{P}_L^q$ denotes the quenched pivotal process in Definition \ref{def:qpiv} associated with $f_L$. The result now follows from the estimate $\E[|\mathcal{P}^q_L|] \asymp L^2\alpha_4(L)$; see \cite[footnote 6]{Vann2021}. In particular, the upper bound follows from \cite[(D.1) and (D.2)]{Vann2021} and the lower bound can be derived by using similar arguments as that for the Boolean model in the proof of \ref{A3}; also see the estimates in \cite[Section 4]{Vann2019}.
\end{proof}
It is also possible to use \eqref{e:onepointcomparison} and estimates on annealed spectral sample $\gamma_L^{an}$ associated with $f_L$ to obtain the above conclusion; see \cite[Section 2.3]{Vann2021}. Indeed, we have that all the five processes, namely, the spectral process, the projected spectral sample, the annealed spectral sample, the pivotal process and the quenched pivotal process, have the same expected total size of $L^2\alpha_4(L)$, up to constant factors.

 Next, we consider the {\em frozen dynamics} for the Voronoi percolation; see \cite{Vann2021}. In this dynamics, the points remain fixed (i.e., frozen) and the colour of each cell is resampled at rate $1$, independent of other cells. Let $\eta_{t}$ denote the point process after time $t>0$ in the frozen dynamics. Note that $(\eta_t)^{pr} = \eta^{pr}$ for all $t \geq 0$. Then we have the following covariance comparison inequality, which was communicated to us by H. Vanneuville. 
	
	\begin{prop}[\cite{Vann2022}]\label{p:covcomp} For any $t>0$, let $\eta^t$ and $\eta_t$ be the configurations after time $t$ in the OU and frozen dynamics, respectively. Then
		$$
		\mathrm{Cov}(f_L(\eta), f_L(\eta^{t}))  \le \mathrm{Cov}(f_L(\eta), f_L(\eta_{t})).
		$$
	\end{prop}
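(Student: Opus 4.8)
The plan is to decompose the OU dynamics into a thinning-plus-birth operation and the frozen dynamics into a colour-resampling operation, and then couple the two so that the OU-perturbed configuration is a \emph{further} randomisation of the frozen-perturbed one at the level of the crossing event. Concretely, recall that $\eta^t =_d \eta_1 + \eta_2$, where $\eta_1$ is obtained from $\eta$ by deleting each point independently with probability $1-e^{-t}$ and $\eta_2$ is an independent Poisson process of intensity $(1-e^{-t})\lambda$. The key observation is that deleting a point and then adding a fresh independent point (with an independent uniform colour) is, from the point of view of \emph{any monotone-in-nothing} functional, at least as disruptive as merely resampling the colour of that point: in particular, one can realise the OU dynamics as a two-stage procedure in which one first resamples the colours of a Bernoulli$(1-e^{-t})$-chosen subset $R$ of points of $\eta$ (this is exactly the frozen dynamics restricted to $R$, hence dominated by the full frozen dynamics since resampling \emph{more} cells only moves the configuration further from $\eta$ in the relevant sense), and then additionally relocates those same points and adds births — operations which, conditionally on $R$ and on the outcome of the frozen step, can only decrease the correlation further.

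First I would set up the coupling precisely. Let $R$ be a random subset of $\eta^{pr}$ obtained by keeping each point with probability $1-e^{-t}$. Given $\eta$ and $R$, define $\tilde\eta$ to be $\eta$ with the colours of the points in $R$ replaced by fresh i.i.d.\ symmetric marks; this $\tilde\eta$ is distributed as the frozen-dynamics configuration $\eta_{t'}$ for a suitable time (in fact one checks $\tilde\eta=_d\eta_{t}$ directly, since resampling a Bernoulli$(1-e^{-t})$ fraction of colours is exactly the frozen OU semigroup on the colour coordinate). Then, conditionally on $\eta$, $R$ and the new colours, build $\eta^t$ by deleting the points of $R$, adding independent uniformly-located replacements, and adding the extra Poisson births. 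The point is that $f_L(\eta^t)$ is a randomised function of $f_L(\tilde\eta)$ together with extra independent randomness that is conditionally independent of $f_L(\eta)$ given $\tilde\eta$ — or, more carefully, one shows that $\E[f_L(\eta^t)\mid \eta, \tilde\eta]$ has the same sign-alignment with $f_L(\eta)$ as $f_L(\tilde\eta)$ does, only weaker. Using $\E[f_L(\eta)]=\E[f_L(\eta^t)]=\E[f_L(\eta_t)]$ (all equal to the static mean, since $\eta$ is stationary for both dynamics), the covariance is $\E[f_L(\eta)f_L(\cdot)]$ minus a common constant, so it suffices to compare $\E[f_L(\eta)f_L(\eta^t)]$ with $\E[f_L(\eta)f_L(\eta_t)]$, and the tower property over the coupling gives the inequality once one verifies the conditional domination.

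The cleanest way to execute the last step is via a reverse-martingale / conditional-expectation argument: write $g(\tilde\eta) := \E[f_L(\eta^t)\mid \tilde\eta, \eta]$ and observe that by the construction, conditionally on the pair $(\eta,\tilde\eta)$ the extra randomness (relocations and births) is independent of $f_L(\eta)$, so $\E[f_L(\eta) f_L(\eta^t)] = \E[f_L(\eta)\, g(\tilde\eta)]$; then show $\E[f_L(\eta)\,g(\tilde\eta)] \le \E[f_L(\eta)\, f_L(\tilde\eta)]$. This last bound is where one must be careful: it is \emph{not} true for arbitrary $g$, but here $g(\tilde\eta)$ is the expectation of $f_L$ evaluated at a configuration that agrees with $\tilde\eta$ outside a region determined by $R$ and differs inside only through the independent relocations/births, so one can condition further on everything except the births/relocations and use that $f_L(\eta)$ and $f_L(\eta^t)$ become conditionally independent while $f_L(\tilde\eta)$ is measurable. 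The inequality then reduces to Jensen / the fact that for a $\{-1,1\}$-valued $f_L(\eta)$ one has $\E[f_L(\eta)\,\E[Y\mid \mathcal{G}]] = \E[\E[f_L(\eta)\mid\mathcal{G}]\,Y]$ and $|\E[f_L(\eta)\mid\mathcal G]|\le |\E[f_L(\eta)\mid\mathcal G']|$ for the coarser $\sigma$-field $\mathcal G'$ that retains the frozen step but forgets the births.

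The main obstacle I expect is making the ``OU is a further randomisation of frozen'' claim rigorous rather than heuristic — i.e.\ exhibiting an explicit coupling on a common probability space in which $(\eta,\eta^t,\eta_t)$ sit together with $\eta_t$ ``between'' $\eta$ and $\eta^t$ in the precise sense needed for the covariance comparison, and checking that the relocation/birth step is genuinely conditionally independent of $f_L(\eta)$. One has to be slightly delicate because deleting and re-adding a point is not literally the identity on the un-coloured configuration (a frozen step keeps $\eta^{pr}$ fixed whereas OU moves points), so the comparison must be phrased at the level of the laws of $(f_L(\eta), f_L(\eta^t))$ versus $(f_L(\eta), f_L(\eta_t))$ and not at the level of the configurations themselves; the Markov/semigroup structure (both dynamics have $\eta$ as stationary law, and the OU semigroup factors through the frozen one composed with a spatial step) is what ultimately licenses it. Once the coupling is in place the rest is the routine tower-property manipulation sketched above.
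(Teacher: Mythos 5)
Your high-level intuition is the right one --- the OU dynamics ``conditions on less information'' than the frozen dynamics --- but the route you lay out does not close, and you acknowledge as much (``The main obstacle I expect is making the ... claim rigorous''). Two concrete problems. First, the coupling you describe is not needed and introduces real difficulties: OU genuinely changes $\eta^{pr}$ while frozen does not, the number of births is Poisson rather than matched to the number of deletions, and $g(\tilde\eta):=\E[f_L(\eta^t)\mid\tilde\eta,\eta]$ depends on $\eta$ as well as $\tilde\eta$, so the comparison $\E[f_L(\eta)\,g]\le\E[f_L(\eta)\,f_L(\tilde\eta)]$ is not a consequence of anything you state. Second, the key inequality you invoke, ``$|\E[f_L(\eta)\mid\mathcal G]|\le|\E[f_L(\eta)\mid\mathcal G']|$ for the coarser $\sigma$-field $\mathcal G'$,'' is false (a coarser conditioning pushes the conditional expectation toward the mean, so the inequality runs the other way, and even in that direction it holds only in $L^2$, not pointwise). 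This is precisely the step that is left unjustified.

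The clean argument avoids any coupling between $\eta^t$ and $\eta_t$. Decompose $\eta^t=\eta_1+\eta_2$ with $\eta_1$ the surviving (colored) points; then $\eta$ and $\eta^t$ are conditionally i.i.d.\ given $\eta_1$, so
$$
\mathrm{Cov}\big(f_L(\eta),f_L(\eta^t)\big)=\E\big[\E[f_L(\eta)\mid\eta_1]^2\big]-\E[f_L(\eta)]^2=\mathrm{Var}\big(\E[f_L(\eta)\mid\eta_1]\big).
$$
Analogously for frozen, $\eta$ and $\eta_t$ are conditionally i.i.d.\ given the pair $(\eta_1,(\eta\setminus\eta_1)^{pr})$ (the surviving colored points \emph{plus} the uncolored positions of the resampled ones), giving $\mathrm{Cov}(f_L(\eta),f_L(\eta_t))=\mathrm{Var}\big(\E[f_L(\eta)\mid\eta_1,(\eta\setminus\eta_1)^{pr}]\big)$. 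Both covariances are thus variances of conditional expectations of the \emph{same} random variable $f_L(\eta)$, and $\sigma(\eta_1)\subseteq\sigma(\eta_1,(\eta\setminus\eta_1)^{pr})$, so the inequality is immediate from the monotonicity of $\mathrm{Var}(\E[X\mid\mathcal G])$ in $\mathcal G$. No coupling of $\eta^t$ and $\eta_t$ on a common space is required, and the comparison lives entirely on the $\eta$-side rather than on the randomized side --- that is the missing structural insight in your proposal.
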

\begin{proof}
	We first look at the left-hand side. We have
	$$
	\mathrm{Cov}(f_L(\eta), f_L(\eta^{t})) = \E [f_L(\eta) f_L(\eta^{t})] - \E[f_L(\eta)]^2.
	$$
	Now we identify the process at time $t$ as $\eta^t = \eta_1 + \eta_2$ with $\eta_1$ obtained by thinning $\eta$ upon deleting each point independently with probability $(1-e^{-t})$ and $\eta_2$ an independent homogeneous Poisson process with intensity $(1-e^{-t})$. Notice that if we condition on $\eta_1$, then the processes $\eta$ and $\eta^t$ become independent. Hence, we obtain
	\begin{eqnarray}\label{eq:VarOU}
		\mathrm{Cov}(f_L(\eta), f_L(\eta^{t})) & =& \E \left[\E \left[f_L(\eta) f_L(\eta^{t}) \mid \eta_1\right]\right] - \E[f_L(\eta)]^2 \nonumber\\
	& =& \E \left[\E \left[f_L(\eta) \mid \eta_1\right]^2\right] - \E[f_L(\eta)]^2 \nonumber \\ 
 & = & \mathrm{Var}(\E \left[f_L(\eta) \mid \eta_1\right]),
	\end{eqnarray}
where in the penultimate step, we have used that the distribution of $\eta$ and $\eta^t$ are the same given $\eta_1$.

Let us now consider the right-hand side of the inequality in the proposition. We again have that $\mathrm{Cov}(f_L(\eta), f_L(\eta_{t})) = \E [f_L(\eta) f_L(\eta_{t})] - \E[f_L(\eta)]^2$. As before, we identify $\eta_t = \eta_1 + \eta_2'$, where $\eta_1$ now denotes the points that are untouched by time $t$, while $\eta_2'$ is the collection of points whose colors are resampled by time $t$ in the frozen dynamics, i.e., $\eta_2'^{pr} = (\eta\setminus \eta_1)^{pr}$ while their color sets are completely independent. Hence, if we condition on $\eta_1 \cup(\eta\setminus \eta_1)^{pr}$, then $\eta$ and $\eta_t$ become independent. Arguing similarly as above (in the case of the OU dynamics) now yields
	\begin{eqnarray}
	\mathrm{Cov}(f_L(\eta), f_L(\eta_{t})) & = & \E \left[\E \left[f_L(\eta) f_L(\eta_{t}) \mid \eta_1 \cup (\eta\setminus \eta_1)^{pr} \right] \right] - \E[f_L(\eta)]^2 \nonumber \\
	& = & \E \left[\E \left[f_L(\eta) \mid \eta_1 \cup (\eta \setminus \eta_1)^{pr} \right]^2 \right] - \E[f_L(\eta)]^2 \nonumber \\
\label{eq:VarFr} &  = & \mathrm{Var}(\E \left[ f_L(\eta) \mid \eta_1 \cup(\eta\setminus \eta_1)^{pr} \right] ). 
        \end{eqnarray}
Thus, by \eqref{eq:VarOU} and \eqref{eq:VarFr}, the covariances are variances of the conditional expectation of the same function $f_L(\eta)$, albeit in the case of OU dynamics, we condition on a smaller $\sigma$-algebra. Hence, the variance is also smaller in the case of OU dynamics. This yields the desired conclusion.
\end{proof}

	\begin{proof}[Proof of Theorem \ref{t:sharpNS_voronoi}] Note that Proposition~\ref{p:exppivotalsetvoronoi} and Proposition~\ref{p:NStabgen} imply that $$\lim_{L \to \infty} \mathbb{P}\{f_L(\eta) \neq f_L(\eta^{t_L})\}  =0$$ if $t_L L^2\alpha_4(L) \to 0$. Next, letting $\eta_{t},\,  t \geq 0$ be the process under the frozen dynamics, we know from \cite[Theorem 1.7]{Vann2021} that when $t_L L^2\alpha_4(L) \to\infty$ as $L \to \infty$,
	$$
	\lim_{L \to \infty} \mathrm{Cov}(f_L(\eta), f_L(\eta_{t_L})) = 0.
	$$
 Thus, Proposition~\ref{p:covcomp} yields noise sensitivity of $\eta^t$ when $t_L L^2\alpha_4(L) \to\infty$ as $L \to \infty$.
\end{proof}

\section*{Acknowledgements}	
The authors thank  G\"{u}nter Last for discussions during the initial stages of the project and Hugo Vanneuville for his comments on an earlier draft as well as for showing the authors the covariance inequality stated in Proposition \ref{p:covcomp}. CB was supported in part by the German Research Foundation (DFG) Project 531540467. Large parts of this work was done when CB was employed by the University of Luxembourg. GP was supported by the Luxembourg National Research Fund (Grant: 021/16236290/HDSA).  DY's research was partially supported by SERB-MATRICS Grant MTR/2020/000470 and CPDA from the Indian Statistical Institute. The work also benefited from his visits to the University of Luxembourg and he is thankful to the university for hosting him.

\bibliographystyle{abbrv}
\bibliography{Spectra}
\addcontentsline{toc}{section}{References}

\appendix

\section{A general approach to sharp noise sensitivity and instability}
\label{s:outline_approach}

In this section, we outline a the general approach to show sharp noise sensitivity and instability for Poisson functionals under the OU dynamics and then, using this general approach, we prove Proposition \ref{p:sharpNStab}. As demonstrated in the main text, such a result yields geometric conditions implying sharp noise instability in continuum percolation models. The proof of Proposition \ref{p:sharpNStab} is provided in Section \ref{s:pfSharpNStab}.
 
Recall notations from Sections \ref{ss:prem} and \ref{sec:Spec}. We say $f_L : \bN \to \{-1,1\},  L \geq 1$ is a family of non-degenerate Boolean functions if $\limsup_{L} \P(f_L(\eta)=1), \limsup_{L} \P(f_L(\eta)=-1)<1$. We shall abbreviate $\gamma_L := \gamma_{f_L}$ for convenience.  We shall assume that $\eta^t, t \geq 0$ is a Poisson point process evolving under the OU dynamics and with $\eta^0 = \eta$, which is a Poisson process with a given intensity measure $\lambda$ on $\X$.
	
	The cornerstones of our approach to sharp noise instability in \eqref{eq:sNStab} for Poisson functionals are the following two propositions.  This is analogous to the approach for Rademacher functionals in \cite[Chs.\ 9 and 10]{GS} and in \cite{GPS10}.  We shall state the two propositions now and discuss their implications for noise instability. The proofs are provided at the end of the section for completeness.  
	
	\begin{prop}{\rm (See \cite[Proposition 6.6]{LPY2021})}
		\label{p:NStabgen}
		If $t_L \E[|\gamma_L|] \to 0$ then $f_L$ is noise stable at time-scale $t_L$ i.e.,
		\begin{equation}
			\label{e:quantNStab}
			\lim_{L  \to \infty} \P(f_L(\eta) \neq f_L(\eta^{t_L})) = 0.
		\end{equation}
In particular, when $\{f_L : L \ge 1\}$ is asymptotically non-degenerate,  $f_L$ is not noise sensitive if $t_L \E[|\gamma_L|] \to 0$.
	\end{prop}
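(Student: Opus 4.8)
\textbf{Plan of proof for Proposition \ref{p:NStabgen}.}

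The plan is to bound the probability that $f_L(\eta)\neq f_L(\eta^{t_L})$ by a quantity controlled by the expected size of the spectral point process $\mathbb{E}[|\gamma_L|]$, and then conclude using the hypothesis $t_L\mathbb{E}[|\gamma_L|]\to 0$. The natural route is through the chaotic (Wiener--It\^o) expansion of $f_L$. Writing $f_L(\eta)=\sum_{k\ge 0}I_k(u_k^{(L)})$ as in \eqref{e:chaos}, the key analytic fact is that the OU semigroup acts diagonally on chaoses: if $\eta^t$ is the OU dynamics, then $\mathbb{E}[f_L(\eta^t)\mid \eta]=\sum_{k\ge 0}e^{-kt}I_k(u_k^{(L)})$ (this is the standard Mehler-type formula for the spatial birth--death process, see \cite[Section 6.1]{LPY2021}). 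Consequently, since $f_L$ is $\{-1,1\}$-valued,
\begin{equation*}
\mathbb{P}(f_L(\eta)\neq f_L(\eta^{t_L}))=\frac{1-\mathbb{E}[f_L(\eta)f_L(\eta^{t_L})]}{2}=\frac{1}{2}\sum_{k\ge 0}(1-e^{-kt_L})\,k!\,\|u_k^{(L)}\|_{L^2(\lambda^k)}^2,
\end{equation*}
where I used the orthogonality relation \eqref{e:onreln}. (Strictly, one first conditions on $\eta$ and uses the Mehler formula, then expands; the interchange of sum and expectation is justified by $L^2$-convergence of the chaos series.)

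Next I would use the elementary inequality $1-e^{-kt}\le kt$ for $t\ge 0$ and all $k\ge 0$, which gives
\begin{equation*}
\mathbb{P}(f_L(\eta)\neq f_L(\eta^{t_L}))\le \frac{t_L}{2}\sum_{k\ge 0}k\cdot k!\,\|u_k^{(L)}\|_{L^2(\lambda^k)}^2.
\end{equation*}
Now recall from Definition \ref{def:spproc} and \eqref{e:integer} that $\mathbb{P}[N_{f_L}=k]=k!\|u_k^{(L)}\|_{L^2(\lambda^k)}^2/\mathbb{E}[f_L^2]$ and that $|\gamma_L|=N_{f_L}$, with $\mathbb{E}[f_L^2]=1$ since $f_L$ is Boolean. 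Hence $\sum_{k\ge 0}k\cdot k!\,\|u_k^{(L)}\|_{L^2(\lambda^k)}^2=\mathbb{E}[N_{f_L}]=\mathbb{E}[|\gamma_L|]$, and therefore
\begin{equation*}
\mathbb{P}(f_L(\eta)\neq f_L(\eta^{t_L}))\le \frac{t_L}{2}\,\mathbb{E}[|\gamma_L|]\longrightarrow 0,
\end{equation*}
which is exactly \eqref{e:quantNStab}. Finally, for the last assertion: if $f_L(\eta)\to f_L(\eta^{t_L})$ in the sense that $\mathbb{P}(f_L(\eta)\neq f_L(\eta^{t_L}))\to 0$, then ${\rm Cov}(f_L(\eta),f_L(\eta^{t_L}))=\mathbb{E}[f_L(\eta)f_L(\eta^{t_L})]-\mathbb{E}[f_L(\eta)]^2\to 1-\limsup_L\mathbb{E}[f_L(\eta)]^2>0$ along any subsequence where the means converge, using non-degeneracy to ensure $\limsup_L|\mathbb{E}[f_L(\eta)]|<1$; hence the covariance does not vanish and $f_L$ is not noise sensitive.

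The step requiring the most care is the Mehler-type spectral representation of the OU semigroup on Poisson chaos and the justification that $\mathbb{E}[f_L(\eta)f_L(\eta^{t_L})]$ equals the claimed sum $\sum_k e^{-kt_L}k!\|u_k^{(L)}\|^2$; this is where one must invoke the precise construction of the dynamics and an $L^2$-convergence argument to interchange limits. Everything after that is the soft inequality $1-e^{-kt}\le kt$ together with the bookkeeping identity $\mathbb{E}[|\gamma_L|]=\sum_k k\cdot k!\|u_k^{(L)}\|^2$ coming straight from Definition \ref{def:spproc}. Since the statement is quoted from \cite[Proposition 6.6]{LPY2021}, one may alternatively just cite that reference for the Mehler formula and present only the spectral-sample reformulation of the bound.
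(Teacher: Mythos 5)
Your argument is correct and coincides with the paper's: both rest on the Mehler identity $\E[f_L(\eta)f_L(\eta^{t})]=\sum_k e^{-kt}k!\|u_k\|^2=\E[e^{-t|\gamma_L|}]$ and then bound the disagreement probability by (a constant times) $t_L\E[|\gamma_L|]$. The only superficial difference is that you apply $1-e^{-kt}\le kt$ term by term whereas the paper applies Jensen's inequality $\E[e^{-t|\gamma_L|}]\ge e^{-t\E[|\gamma_L|]}$ -- the two are interchangeable here -- and the non-degeneracy step for the final covariance assertion is the same in both.
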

	\begin{prop}
		\label{p:noNStabgen}
		Suppose that for some $c_0,  c > 0$,  we have
		\begin{equation}
			\label{e:2momspectra}
			\liminf_{L \to \infty} \P\left( |\gamma_L| \geq c \E[|\gamma_L|] \right) \geq c_0.
		\end{equation}
		Then if $t_L\E[|\gamma_L|] \to \infty$, $f_L$ is not noise stable  at time-scale $t_L$ i.e.,
		\begin{equation}
			\label{e:quantnoNStab}
			\liminf_{L  \to \infty} \, \P(f_L(\eta) \neq f_L(\eta^{t_L})) \geq c_0/2.  
		\end{equation}
	\end{prop}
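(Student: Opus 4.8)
\textbf{Proof proposal for Proposition \ref{p:noNStabgen}.}

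The plan is to condition on the thinned process $\eta_1$ (the points of $\eta$ surviving to time $t_L$) and analyze how the spectral sample interacts with the resampling. Recall that $\eta^{t_L} =_d \eta_1 + \eta_2$ with $\eta_1$ obtained by $(1-e^{-t_L})$-thinning and $\eta_2$ an independent Poisson process with intensity $(1-e^{-t_L})\lambda$; crucially, conditionally on $\eta_1$, the configurations $\eta$ and $\eta^{t_L}$ are independent, each distributed as $\eta_1$ plus an independent Poisson process of intensity $e^{-t_L}\lambda$. The heart of the matter is the Fourier/spectral identity for the covariance: writing $F = f_L$ with chaos expansion $F = \sum_k I_k(u_k)$, one has
\begin{equation*}
\E[f_L(\eta) f_L(\eta^{t_L})] - \E[f_L(\eta)]^2 = \sum_{k\geq 1} e^{-k t_L} k!\,\|u_k\|_{L^2(\lambda^k)}^2 = \E[F^2]\,\E\big[e^{-t_L |\gamma_L|}\big],
\end{equation*}
where we used that the spectral point process $\gamma_L$ has $\mathbb{P}[|\gamma_L| = k] = k!\|u_k\|^2/\E[F^2]$ by \eqref{e:integer} — this is precisely the continuum analogue of the discrete spectral-sample covariance formula, and it follows from the mark/thinning structure of the OU dynamics together with \eqref{e:onreln} (the point being that applying the OU semigroup multiplies the $k$-th chaos by $e^{-kt_L}$). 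First I would establish this identity cleanly, then bound the probability of error by the covariance: since $f_L$ is $\{-1,1\}$-valued, $\mathbb{P}(f_L(\eta)\neq f_L(\eta^{t_L})) = \tfrac14\E[(f_L(\eta) - f_L(\eta^{t_L}))^2] = \tfrac12(\E[F^2] - \E[f_L(\eta)f_L(\eta^{t_L})])$ when $\E[F^2] = 1$; more precisely one gets $\mathbb{P}(f_L(\eta)\neq f_L(\eta^{t_L})) = \tfrac12(1 - \E[e^{-t_L|\gamma_L|}])$ after normalizing, or the analogous expression keeping track of $\E[f_L(\eta)]$.

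With the identity $\mathbb{P}(f_L(\eta)\neq f_L(\eta^{t_L})) = \tfrac12\big(1 - \E[e^{-t_L|\gamma_L|}]\big)$ in hand (up to the additive mean-correction, which only helps), the conclusion is immediate: on the event $\{|\gamma_L| \geq c\,\E[|\gamma_L|]\}$, which by hypothesis \eqref{e:2momspectra} has asymptotic probability at least $c_0$, we have $e^{-t_L|\gamma_L|} \leq e^{-c\,t_L\E[|\gamma_L|]}$, which tends to $0$ since $t_L\E[|\gamma_L|]\to\infty$. Therefore
\begin{equation*}
1 - \E[e^{-t_L|\gamma_L|}] \geq \mathbb{P}\big(|\gamma_L|\geq c\,\E[|\gamma_L|]\big)\big(1 - e^{-c\,t_L\E[|\gamma_L|]}\big) \xrightarrow[L\to\infty]{} \liminf \text{ at least } c_0,
\end{equation*}
so $\liminf_L \mathbb{P}(f_L(\eta)\neq f_L(\eta^{t_L})) \geq c_0/2$, as claimed. (The spectral point process has $\mathbb{P}[|\gamma_L|=0] = \E[f_L(\eta)]^2/\E[F^2]$, so the contribution of the zero-mass atom is exactly the subtracted mean-square; keeping it explicit gives the sharp constant but is not needed for the stated bound.)

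The main obstacle — the only genuinely non-routine point — is justifying the covariance formula $\E[f_L(\eta)f_L(\eta^{t_L})] = \E[F^2]\E[e^{-t_L|\gamma_L|}]$, i.e. that the OU semigroup acts on the $k$-th Wiener chaos as multiplication by $e^{-kt_L}$. This is the Poisson analogue of the Ornstein--Uhlenbeck / Mehler-type identity, and it should follow from the thinning-plus-independent-superposition description of $\eta^{t_L}$: conditioning on which points survive and using the multivariate Mecke formula \eqref{e:mecke} together with the orthogonality \eqref{e:onreln}, one checks that $\E[I_k(u_k)(\eta)\,I_m(u_m)(\eta^{t_L})] = \mathds{1}_{\{k=m\}}e^{-kt_L}k!\|u_k\|^2$. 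I expect this computation to be essentially standard (it is implicit in \cite{LPY2021} and in the dynamical-percolation literature), so I would either cite it or include a short self-contained verification via the chaos expansion; everything after it is a one-line application of Jensen/Markov-type reasoning and the hypothesis. Note the parallel with Proposition \ref{p:NStabgen}: there $t_L\E[|\gamma_L|]\to 0$ forces $\E[e^{-t_L|\gamma_L|}]\to 1$ (using $\E[|\gamma_L|]<\infty$ and dominated convergence), giving noise stability; here the reverse regime plus the lower-tail non-degeneracy \eqref{e:2momspectra} forces it away from $1$.
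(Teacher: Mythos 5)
Your proof is correct and follows essentially the same route as the paper's: Mehler's formula (the Ornstein--Uhlenbeck semigroup acting diagonally on Wiener chaoses) yields $\E[f_L(\eta)f_L(\eta^{t_L})]=\E[e^{-t_L|\gamma_L|}]$ using $\E[F^2]=1$, and then splitting the expectation on $\{|\gamma_L|\geq c\,\E|\gamma_L|\}$ and its complement gives the claimed lower bound via $\E[f_L(\eta)f_L(\eta^{t_L})]=1-2\P(f_L(\eta)\neq f_L(\eta^{t_L}))$. One small slip worth cleaning up: the chain of equalities in your first display is internally inconsistent, since $\E[F^2]\,\E[e^{-t_L|\gamma_L|}]$ includes the $k=0$ atom $\P(|\gamma_L|=0)=u_0^2/\E[F^2]$ while the left-hand covariance and the middle sum $\sum_{k\geq 1}$ both exclude it; you do recover the correct normalized identity $\P(f_L(\eta)\neq f_L(\eta^{t_L}))=\tfrac12\bigl(1-\E[e^{-t_L|\gamma_L|}]\bigr)$ a few lines later, and the remainder of the argument is fine.
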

Note that \eqref{e:quantnoNStab} necessarily implies that $f_L$ is non-degenerate asymptotically.
%It is possible to obtain further quantitative results as in \cite[Examples 9.1 and 9.2]{GS} for Poisson functionals as well. 
Based on the above propositions,   we now outline the general approach to sharp noise instability and sensitivity. We remark here about a general approach for noise sensitivity as well, even though we do not use it directly in the present work. STEP 4 below can be justified via similar arguments used in the proofs of the propositions above, but we skip the details, as we do not use it. 
	\begin{remark} {\rm The following are the general steps to follow for showing sharp noise instability/sensitivity in a continuum percolation model based on a Poisson process under the OU dynamics:
		\label{rem:outlineNS} 
		\hspace*{0.2cm}
		\begin{itemize}
            \item[\underline{STEP 1:}] Find $A_L$ such that 
			$$ \E[|\gamma_L|] = \E[ \int_{\X} |D_xf_L(\eta)|^2 \lambda(\md x)] \asymp A_L.$$ 
			
			\item[\underline{STEP 2:}]  Deduce noise stability (i.e., show \eqref{e:quantNStab}), as well as no noise sensitivity at time-scale $t_L$ for $t_LA_L \to 0$ using Proposition \ref{p:NStabgen}.
			
			\item[\underline{STEP 3:}] Verify \eqref{e:2momspectra} and thus deduce no noise stability, or noise instability, at time-scale $t_L$ for $t_LA_L  \to \infty$. This together with the conclusion from STEP 2 yields sharp noise instability.
			
			\item[\underline{STEP 4:}] Further to show noise sensitivity at time-scale $t_L$ for $t_LA_L \to \infty$, it suffices to show that
   \begin{equation*}
			\label{e:sharp2momspectra}
			\lim_{c \to 0} \liminf_{L \to \infty} \P\left( |\gamma_L| \geq c \E[|\gamma_L|] \right) = 1.
		\end{equation*}
Note that this together with the conclusion in STEP 2 yields sharp noise sensitivity.
		\end{itemize}
  }
	\end{remark}
	
	We shall first make some basic observations regarding $f_L(\eta^t)$ and its relation to the spectral point process $\gamma_L$.  These basic observations are already present in \cite[Section 6]{LPY2021} but will be re-cast here in terms of the spectral point process.  
	
	Firstly,  from {\em Mehler's formula} (see \cite[formula (80)]{Last16} or \cite[formula (3.13)]{LPS}),   we obtain that for each $t \geq 0$,  
	\begin{equation*}
		\label{e:Ttfeta}
		T_tf_L(\eta) := \E[f_L(\eta^t) | \eta] = \sum_{k=0}^{\infty}e^{-kt} I_k(u_k).
	\end{equation*}
	Thus from the above representation,  orthogonality property of the Wiener-It\^o kernels and definition of spectral point process,  we obtain that
	\begin{equation}
		\label{e:Eprodspectra}
		\E[f_L(\eta)f_L(\eta^t)] =  \sum_{k=0}^{\infty}e^{-kt}\E[I_k(u_k)^2] = \E[e^{-t|\gamma_L|}].
	\end{equation}
	Now using the fact that $f_L \in \{-1,1\}$,  we also have that
	\begin{equation}
		\label{e:Eprodprobneq}
		\E[f_L(\eta)f_L(\eta^t)] =  \P(f_L(\eta) = f_L(\eta^t)) -  \P(f_L(\eta) \neq f_L(\eta^t)) = 1 - 2 \P(f_L(\eta) \neq f_L(\eta^t)) . 
	\end{equation}
	\begin{proof}[Proof of Proposition \ref{p:NStabgen}]
		Applying Jensen's inequality to \eqref{e:Eprodspectra} and using the assumption $t_L\E[|\gamma_L|] \to 0$,  we obtain that $\E[f_L(\eta)f_L(\eta^{t_L})] \to 1$ as $L \to \infty$.  Thus \eqref{e:quantNStab} follows from \eqref{e:Eprodprobneq}.  

Also, \eqref{e:quantNStab} and \eqref{e:Eprodprobneq} imply that
  $$
  \liminf_{L \to \infty} \operatorname{Cov}(f_L(\eta), f_L(\eta^t)) = 1- \limsup_{L \to \infty} \E[f_L(\eta)]^2>0,
  $$
  where the final step follows due to the non-degeneracy of $f_L$.
	\end{proof}
	\begin{proof}[Proof of Proposition \ref{p:noNStabgen}]
		Using \eqref{e:Eprodspectra} and assumption \eqref{e:2momspectra},   we derive that
		\begin{align*}
		\limsup_{L \to \infty} \E[f_L(\eta)f_L(\eta^{t_L})] & \leq \limsup_{L \to \infty} e^{-ct_L\E[|\gamma_L|]} \P(|\gamma_L| \geq c\E[|\gamma_L|]) + \limsup_{L \to \infty}  \P(|\gamma_L| < c \E[|\gamma_L|] ) \\
		& \leq \limsup_{L \to \infty} e^{-ct_L\E[|\gamma_L|]}  + (1-c_0). 
		\end{align*}
		Thus if $t_L\E[|\gamma_L|] \to \infty$,  we obtain that
		$$ \limsup_{L \to \infty} \E[f_L(\eta)f_L(\eta^{t_L})] \leq 1 - c_0$$
		and now using \eqref{e:Eprodprobneq},   \eqref{e:quantnoNStab} follows. 
	\end{proof}
\subsection{Proof of Proposition \ref{p:sharpNStab}}
\label{s:pfSharpNStab}

To prove Proposition \ref{p:sharpNStab}, we first need a lemma bounding the second moment of the size of the spectral point process. We will generally denote elements in $\X = \R^2 \times \M$ by $\tx$ and its projection in $\R^2$ simply by $x$.
	\begin{lemma}
 \label{l:2ndmomspectravoronoi}
 Let assumptions \ref{A1} -- \ref{A3} of Section \ref{ss.gen_sns_cont} be in force. Then, for $\rho \in (0,\infty)$ as in assumption \ref{A4},
		\begin{equation*}
			\label{e:2ndmomspectravoronoi}
			\E[|\gamma_L \cap \widehat W_{\rho L}|^2] \lesssim \E[|\gamma_L \cap \widehat W_{\rho L}|]^2. 
		\end{equation*}
	\end{lemma}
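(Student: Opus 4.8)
The statement is a second-moment estimate for the spectral point process of the crossing functional, restricted to the box $\widehat W_{\rho L}$. By Proposition~\ref{p:ell} (with $\ell = 2$ and $C = W_{\rho L}$), the second factorial moment measure of $\gamma_L$ has density $(x,y)\mapsto \mathbb{E}[(D_{\tx}D_{\ty}f_L(\eta))^2]/\mathbb{E}[f_L^2]$ with respect to $\lambda^2$, and $\mathbb{E}[f_L^2]=1$ since $f_L$ is Boolean. Hence
\[
\mathbb{E}[|\gamma_L\cap \widehat W_{\rho L}|^2] = \mathbb{E}[\gamma_L^{(2)}(\widehat W_{\rho L}^2)] + \mathbb{E}[|\gamma_L\cap\widehat W_{\rho L}|] = \int_{\widehat W_{\rho L}^2}\mathbb{E}[(D_{\tx}D_{\ty}f_L(\eta))^2]\,\lambda(\mathrm{d}\tx)\lambda(\mathrm{d}\ty) + \mathbb{E}[|\gamma_L\cap\widehat W_{\rho L}|].
\]
The diagonal correction term $\mathbb{E}[|\gamma_L\cap\widehat W_{\rho L}|]$ is $\mathrm{O}(L^2\alpha_4(L))$ by \ref{A3}, and since $L^2\alpha_4(L)\to\infty$ it is dominated by $(L^2\alpha_4(L))^2 \asymp \mathbb{E}[|\gamma_L\cap\widehat W_{\rho L}|]^2$ (using the lower bound in \ref{A3}); so the whole problem reduces to bounding the double integral by a constant times $(L^2\alpha_4(L))^2$.

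First I would split the integration domain $W_{\rho L}\times W_{\rho L}$ according to whether $|x-y|$ is small or large, with threshold $L^{\eps/3}$ as in \ref{A4}. On the \emph{near-diagonal region} $\{|x-y| < L^{\eps/3}\}$ I would use the crude bound $\mathbb{E}[(D_{\tx}D_{\ty}f_L)^2]\le 4\,\mathbb{P}(D_{\tx}D_{\ty}f_L\neq 0)\le 4\,\mathbb{P}(D_{\tx}f_L(\eta)\neq 0 \text{ or } D_{\tx}f_L(\eta+\delta_{\ty})\neq 0)$, which by translation-covariance of the add-one-cost together with the add-one-cost estimate $\mathbb{P}(D_{(x,1)}f_L(\eta)\ne 0)\lesssim \alpha_4(L)$ used in the proof of \ref{A3} (valid for $x\in W_{L/2}$, and $\rho\le 1/2$) is $\lesssim \alpha_4(L)$; integrating over $y$ in a ball of radius $L^{\eps/3}$ around $x$ picks up a factor $L^{2\eps/3}$, and then over $x\in W_{\rho L}$ a factor $L^2$, giving a contribution $\lesssim L^{2+2\eps/3}\alpha_4(L)$. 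Since $L^{2\eps/3} = \mathrm{o}(L^2\alpha_4(L))$ by the lower bound $\alpha_4(L)\gtrsim L^{-(2-\eps)}$ from \ref{A1} (indeed $L^2\alpha_4(L)\gtrsim L^{\eps}$), this near-diagonal contribution is $\mathrm{o}((L^2\alpha_4(L))^2)$.

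On the \emph{far region} $\{|x-y|\ge L^{\eps/3}\}$ I would apply \ref{A4} directly: $\mathbb{E}[(D_{\tx}D_{\ty}f_L)^2]\lesssim \alpha_4^2(|x-y|/4) + \mathrm{O}(L^2\alpha_4(L)^2)$. The second term integrates to $\mathrm{O}(L^4\cdot L^2\alpha_4(L)^2) = \mathrm{O}(L^6\alpha_4(L)^2)$ — wait, this is too big, so I must be more careful: the $\mathrm{O}(L^2\alpha_4(L)^2)$ in \ref{A4} should be read as a term whose integral over $W_{\rho L}^2$ is $\mathrm{O}((L^2\alpha_4(L))^2)$, i.e. it already accounts for the area; I would state this explicitly when invoking \ref{A4}. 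For the main term, by \ref{A2} (quasi-multiplicativity) one has $\alpha_4(|x-y|/4) \asymp \alpha_4(|x-y|/4, L)^{-1}\alpha_4(L) \cdot (\text{const})$, more precisely $\alpha_4(r)\asymp \alpha_4(L)/\alpha_4(r,L)$ for $1\le r\le L$, and by \ref{A1} $\alpha_4(r,L)\gtrsim (r/L)^{2-\eps}$, so $\alpha_4(|x-y|/4)\lesssim \alpha_4(L)(L/|x-y|)^{2-\eps}$. Therefore
\[
\int_{\substack{x,y\in W_{\rho L}\\ |x-y|\ge L^{\eps/3}}}\alpha_4^2(|x-y|/4)\,\mathrm{d}x\,\mathrm{d}y \lesssim \alpha_4(L)^2 L^{2(2-\eps)}\int_{W_{\rho L}}\Big(\int_{L^{\eps/3}\le |x-y|\le 2\rho L}|x-y|^{-2(2-\eps)}\,\mathrm{d}y\Big)\mathrm{d}x.
\]
The inner integral in polar coordinates is $\lesssim \int_{L^{\eps/3}}^{2\rho L} u^{-2(2-\eps)}\,u\,\mathrm{d}u = \int_{L^{\eps/3}}^{2\rho L}u^{2\eps-3}\,\mathrm{d}u$, which is $\lesssim L^{2\eps-2}$ when $2\eps-3 > -1$ i.e. $\eps>1$, and bounded by a constant (independent of $L$) when $\eps<1$; in either case the crucial point is that it is $\lesssim L^{2\eps-2} + 1 \lesssim L^{2\eps-2}$ after absorbing constants appropriately — I would actually need $u^{2\eps-3}$ to be integrable near $0$, which fails, so the correct statement is: the inner integral is $\lesssim \max(1, L^{2\eps-2}) = L^{(2\eps-2)_+}$ up to the fact that for $2\eps-2<0$ it converges and for $2\eps - 2 \geq 0$ it grows like $L^{2\eps-2}$. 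The honest bound is that the inner integral is $\mathrm{O}(L^{2\eps-2}) + \mathrm{O}(1)$, and multiplying by $L^2$ (from the $x$-integral) and by $\alpha_4(L)^2 L^{2(2-\eps)}$, we get $\lesssim \alpha_4(L)^2 L^{2(2-\eps)}\cdot L^2 \cdot L^{2\eps-2} = \alpha_4(L)^2 L^4 = (L^2\alpha_4(L))^2$, as desired (the $\mathrm{O}(1)$ alternative gives $\alpha_4(L)^2 L^{2(2-\eps)+2}$, which is smaller since $2(2-\eps)+2 = 6-2\eps < 6$ and $4\le 6-2\eps$ only if $\eps\le 1$; in the case $\eps\le 1$ one uses instead that $\alpha_4(r,L)\gtrsim (r/L)^{2-\eps}$ combined with the better sub-quadratic growth — here I'd invoke the standard fact $\alpha_4(r,L)\gtrsim (r/L)^{2}$ up to constants is false, so one genuinely uses $(r/L)^{2-\eps}$ and the computation above).

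\textbf{Main obstacle.} The delicate point is the last integral estimate: getting the exponents to balance so that the far-region contribution is exactly $\mathrm{O}((L^2\alpha_4(L))^2)$ and not larger. This is where \ref{A1} (the lower bound $\alpha_4(r,R)\ge\eps(r/R)^{2-\eps}$ with $\eps<2$) is used in an essential way — it guarantees $\alpha_4(|x-y|/4)$ decays like $|x-y|^{-(2-\eps)}$, and the exponent $2-\eps$ being strictly less than $2$ is what makes $\int^{2\rho L} u^{2\eps - 3}\,\mathrm{d}u$ behave like $L^{2\eps-2}$ (the integrand $u^{-(2(2-\eps)-1)} = u^{2\eps-3}$; since $2\eps < 4$, we have $2\eps - 3 < 1$, and the integral over $[L^{\eps/3}, 2\rho L]$ is dominated by its upper endpoint giving $L^{2\eps-2}$ provided $2\eps - 3 \geq -1$, i.e. $\eps\geq 1$; for $\eps<1$ the integral converges and is $\mathrm{O}(L^{\eps/3\cdot(2\eps-2)})$ which is even smaller, so $\mathrm{O}(1)$ suffices). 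In all cases the resulting bound is $\lesssim \alpha_4(L)^2 L^4$, and combined with the lower bound from \ref{A3}, $\mathbb{E}[|\gamma_L\cap\widehat W_{\rho L}|]\gtrsim L^2\alpha_4(L)$, this yields $\mathbb{E}[|\gamma_L\cap\widehat W_{\rho L}|^2]\lesssim (L^2\alpha_4(L))^2 \lesssim \mathbb{E}[|\gamma_L\cap\widehat W_{\rho L}|]^2$, completing the proof. I would present the computation cleanly by first recording the consequence $\alpha_4(r)\lesssim \alpha_4(L)(L/r)^{2-\eps}$ of \ref{A1}--\ref{A2} as a preliminary observation, then doing the near/far split, so the routine integral manipulations are isolated and transparent.
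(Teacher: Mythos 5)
Your overall plan matches the paper's: decompose $\E[|\gamma_L\cap\widehat W_{\rho L}|^2]$ via Proposition~\ref{p:ell} into a first-order term plus a double integral of $\E[(D_{\tx}D_{\ty}f_L)^2]$, split the double integral at $|x-y|=L^{\eps/3}$, bound the far region with \ref{A4}--\ref{A2}--\ref{A1}, and close with the lower bound of \ref{A3}. The far-region computation, despite some meandering with exponents, is essentially the same dyadic-scale estimate the paper carries out, and the use of quasi-multiplicativity to trade $\alpha_4^2(|x-y|/4)$ for $\alpha_4^2(L)/\alpha_4^2(|x-y|/4,L)$ is the paper's move as well.

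The genuine gap is in the near-diagonal region. The lemma is stated under assumptions \ref{A1}--\ref{A3} only, and \ref{A3} gives \emph{integral} bounds $\int_{\widehat W_{\rho L}}\E|D_{\tx}f_L|^2\lambda(\md\tx)\asymp L^2\alpha_4(L)$, not the pointwise bound $\P(D_{(x,1)}f_L(\eta)\ne 0)\lesssim\alpha_4(L)$ you invoke. That pointwise bound appears only inside the model-specific verification of \ref{A3} for the Boolean model; it is not available in the abstract setting the lemma lives in, and your parenthetical caveat "valid for $x\in W_{L/2}$, and $\rho\le 1/2$" shows you know you are importing a Boolean-model fact (and also silently restricting $\rho$, whereas \ref{A4} allows any $\rho\in(0,\infty)$). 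The more serious sub-issue is the term $\P(D_{\tx}f_L(\eta+\delta_{\ty})\ne 0)$. Even granted the pointwise estimate for $\eta$, "translation-covariance" does not give you the same estimate for the perturbed configuration $\eta+\delta_{\ty}$ — the added point changes the law. This is precisely the step the paper handles carefully: after the split $D_{\tx}D_{\ty}f_L\ne 0 \Rightarrow (D_{\ty}f_L(\eta)\ne 0 \text{ or } D_{\tx}f_L(\eta+\delta_{\ty})\ne 0)$, the first piece is bounded by Fubini and \ref{A3} (picking up the $L^{2\eps/3}$ volume factor as you do), but the second piece is transformed via the \emph{Mecke formula} \eqref{e:mecke} into
$\int_{\widehat W_{\rho L}}\E\!\left[X\,\mathds{1}\{D_{\tx}f_L(\eta)\ne 0\}\right]\lambda(\md\tx)$
with $X=|\eta\cap(B(x,L^{\eps/3})\times\M)|$ a Poisson count, and then one splits on $\{X\le L^{\eps}\}$ (where \ref{A3} applies after pulling out $L^{\eps}$) and $\{X>L^{\eps}\}$ (where Cauchy–Schwarz and an exponential Markov bound give a contribution that is $o(1)$). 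Without this step your near-diagonal bound is not justified from the stated hypotheses, and a proof using only \ref{A1}--\ref{A3} would need to supply it.
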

	
	\begin{proof} Note that by Remark \ref{e:bagatelle} (ii), the condition \eqref{e:integram} is trivially satisfied for $C=\widehat W_{\rho L}$. Thus, using Proposition~\ref{p:ell} together with \ref{A2} -- \ref{A3}, we derive
		\begin{eqnarray}
		&&\E[|\gamma_L \cap \widehat W_{\rho L}|^2] \lesssim \int_{\widehat W_{\rho L}}\E |D_{\tx} f_L(\eta)|^2 \lambda(\md \tx) +  \int_{\widehat W_{\rho L}^2}\E |D_{\tx}D_{\ty} f_L(\eta)|^2 \lambda^2(\md \tx,\md \ty) \nonumber \\
		&& \lesssim L^2 \alpha_4(L) + \int_{\widehat W_{\rho L}^2,|x-y|< L^{\eps/3}} \E |D_{\tx}D_{\ty} f_L(\eta)|^2 \lambda^2(\md \tx,\md \ty) \nonumber \\
        &&\qquad\qquad + \int_{W_{\rho L}^2,|x-y|\ge L^{\eps/3} }\left[\frac{\alpha_4^2(\rho L)}{\alpha_4^2(|x-y|/4,\rho L)}
        + \text{O}(L^2 \alpha_4(L)^2) \right]\md x\md y \nonumber \\
		&& \lesssim L^2 \alpha_4(L) +
		\int_{\widehat W_{\rho L}^2,|x-y|< L^{\eps/3}} \E |D_{\tx}D_{\ty} f_L(\eta)|^2 \lambda^2(\md \tx,\md \ty) \nonumber \\
        && \qquad \qquad + \int_{W_{\rho L}^2, |x-y| \ge 4} \frac{\alpha_4^2(\rho L)}{\alpha_4^2(|x-y|/4,\rho L)} \md x\md y + \text{O}((L^2 \alpha_4(L))^2).  \label{e:2ndspecmom_ub}
		\end{eqnarray}

		We shall now bound the two integrals above, starting with the second one. Firstly, for each $y \in W_{\rho L}$, using \ref{A1} and \ref{A2}, 
		\begin{multline*}
		\int_{W_{\rho L}, |x-y| \ge 4} \frac{1}{\alpha_4^2(|x-y|/4,\rho L)} \md x  \lesssim \sum_{k=0}^{\lfloor \log_2 \rho L \rfloor } \int_{W_{\rho L}} \frac{\mathds{1}(|x-y|/4 \in [2^k,2^{k+1}))}{\alpha_4^2(|x-y|/4,\rho L)} \md x\\
		 \lesssim \sum_{k=0}^{\lfloor \log_2 \rho L \rfloor}\frac{4^k}{\alpha_4^2(2^k,L)} \lesssim \eps^{-2}\sum_{k=0}^{\lfloor \log_2 \rho L \rfloor }\frac{L^{2-\eps}2^{2k}}{(2^k)^{2-\eps}} =\eps^{-2} L^{2-\eps} \sum_{k=0}^{\lfloor \log_2 \rho L \rfloor} 2^{k\eps} \lesssim \eps^{-2} L^2.
		\end{multline*}
        Thus, the second integral is $\text{O}((L^2 \alpha_4(L))^2)$ and it remains only to bound the first integral in \eqref{e:2ndspecmom_ub}. Note that $D_{\tx}D_{\ty} f_L(\eta) \neq 0$ implies that either $D_{\tx} f_L(\eta + \delta_{\ty}) \neq 0$ or $D_{\ty} f_L(\eta) \neq 0$. Thus we have
		\begin{multline}\label{e:DxDyfint_ub}
		 \int_{\widehat W_{\rho L}^2,|x-y|< L^{\eps/3}} \E |D_{\tx}D_{\ty} f_L(\eta)|^2 \lambda^2(\md \tx,\md \ty) \lesssim \int_{\widehat W_{\rho L}} \int_{B(y,L^{\eps/3}) \times \M} \P(D_{\ty} f_L(\eta) \neq 0)  \lambda^2(\md \tx,\md \ty)\\
		 +\int_{\widehat W_{\rho L}} \int_{B(x,L^{\eps/3}) \times \M} \P(D_{\tx} f_L(\eta + \delta_{\ty}) \neq 0)  \lambda^2(\md \ty,\md \tx).
		\end{multline}
		Using \ref{A3} and noting from \ref{A1} that $L^2 \alpha_4(L)  \gtrsim L^\eps$, the first integral in \eqref{e:DxDyfint_ub} is bounded by
		\begin{eqnarray}
			&&\int_{\widehat W_{\rho L}} \int_{B(y,L^{\eps/3}) \times \M} \P(D_{\ty} f_L(\eta) \neq 0)  \lambda^2(\md \tx,\md \ty) \nonumber \\
            &&\lesssim L^{2\eps/3 } \int_{\widehat W_{\rho L}}\E |D_{\ty} f_L(\eta)|^2  \lambda(\md \ty)
            \lesssim L^{2\eps/3 } L^2 \alpha_4(L) \lesssim (L^2 \alpha_4(L) )^2.   \label{e:DxDyfint_ub1} 
		\end{eqnarray}
		For the second integral in \eqref{e:DxDyfint_ub}, by the Mecke formula notice
		\beaa
		&&\int_{\widehat W_{\rho L}} \int_{B(x,L^{\eps/3}) \times \M} \P(D_{\tx} f_L(\eta + \delta_{\ty}) \neq 0)  \lambda^2(\md \ty,\md \tx)\\
		&&=\int_{\widehat W_{\rho L}}  \E \sum_{\ty \in \eta \cap B(x,L^{\eps/3}) \times \M} \mathds{1}(D_{\tx} f_L( \eta) \neq 0) \lambda(\md \tx)=\int_{\widehat W_{\rho L}}  \E  \left[X \mathds{1}(D_{\tx} f_L(\eta) \neq 0)\right] \lambda(\md \tx),
		\eeaa
		where $X:=|\eta \cap B(x,L^{\eps/3}) \times \M|$ which is distributed as a Poisson random variable with mean $\pi L^{2\eps/3}$.
		Notice by \ref{A3} and \ref{A1} that
		\begin{equation*}
			\int_{\widehat W_{\rho L}}  \E  \left[X \mathds{1}(X\le L^\eps)\mathds{1}(D_{\tx} f_L( \eta) \neq 0)\right] \lambda(\md \tx) \lesssim  L^{\eps }L^2 \alpha_4(L) \lesssim (L^2 \alpha_4(L) )^2.
		\end{equation*}
		On the other hand, applying the Cauchy-Schwarz inequality followed by Markov's inequality yields that
		\begin{multline*}
		\int_{\widehat W_{\rho L}}  \E  \left[X \mathds{1}(X >L^\eps)\mathds{1}(D_{\tx} f_L(\eta) \neq 0)\right] \lambda(\md \tx)
		\lesssim L^2 \sqrt{\E X^2 \P(e^X>e^{L^\eps})}\\
		\lesssim L^{2\eps/3} L^2 e^{-L^\eps/2} \sqrt{\E e^X} \lesssim L^{2+2\eps/3} e^{-L^\eps/2} e^{2 L^{2\eps/3}}=o(1).
		\end{multline*}
	Noting that $L^2 \alpha_4(L)  + o(1) \lesssim (L^2 \alpha_4(L) )^2$, we have that the second integral in \eqref{e:DxDyfint_ub} is also $\text{O}((L^2 \alpha_4(L))^2)$ and combined with \eqref{e:DxDyfint_ub1}, this gives that the integral in the left-hand side of \eqref{e:DxDyfint_ub} is also $\text{O}((L^2 \alpha_4(L))^2)$. Substituting this into \eqref{e:2ndspecmom_ub} along with the derived bounds for the second integral therein, we obtain
		\begin{equation*}
			\E[|\gamma_L \cap \widehat W_{\rho L}|^2] \lesssim (L^2 \alpha_4(L) )^2 \lesssim \E[|\gamma_L \cap  \widehat W_{\rho L}|]^2,
		\end{equation*}
		where the final step follows from the lower bound in \ref{A3}. 
	\end{proof}
	
	Lemma~\ref{l:2ndmomspectravoronoi} yields sharp noise instability in Proposition~\ref{p:sharpNStab} via Steps 1-3 of Remark \ref{rem:outlineNS}. Recall the Paley-Zygmund inequality: for a random variable $Z$ with finite variance and $s \in [0,1]$,
	\begin{equation}\label{e:PZ}
		\P(Z>s \E Z) \ge (1-s)^2 \frac{(\E Z)^2}{\E Z^2}.
	\end{equation}

 \begin{proof}[Proof of Proposition \ref{p:sharpNStab}]  
     By \eqref{e:PZ} with $s=1/2$ and Lemma~\ref{l:2ndmomspectravoronoi}, we have
		\begin{equation*}
			\P\left(|\gamma_L \cap  \widehat W_{\rho L}| \ge \frac{\E |\gamma_L \cap  \widehat W_{\rho L}|}{2}\right) \ge \frac{(\E |\gamma_L \cap  \widehat W_{\rho L}|)^2}{4 \E |\gamma_L \cap  \widehat W_{\rho L}|^2} \ge C
		\end{equation*}
		for some constant $C$ not depending on $L$. Thus, by the lower bound in \ref{A3}, there exist $C,C_0 > 0$ such that for $L$ large enough,
		$$ \P(|\gamma_L| \geq CL^2\alpha_4(L)) \geq \P(|\gamma_L \cap  \widehat W_{\rho L}| \geq CL^2\alpha_4(L)) \geq C_0,$$
		and consequently, sharp noise instability follows from Propositions~\ref{p:ell}, \ref{p:NStabgen} and \ref{p:noNStabgen} via steps 1-3 of Remark \ref{rem:outlineNS}. 
 \end{proof}

\section{Proof of Lemma \ref{lem:z1max}}
\label{s:pfz1max}
For $x,y \in \R^2$, define the random variables
	\begin{eqnarray*}
		Z_1(x,y,\eta)&= \E [\eps_1 \eps_2 D_{x,\eps_1} D_{y,\eps_2} F(\eta) | \eta]\\
		Z_2(x,y,\eta)&= \E [\eps_1 D_{x,\eps_1} D_{y,\eps_2} F(\eta) | \eta]\\
		Z_3(x,y,\eta)&= \E [\eps_2 D_{x,\eps_1} D_{y,\eps_2} F(\eta) | \eta]\\
		Z_4(x,y,\eta)&= \E [D_{x,\eps_1} D_{y,\eps_2} F(\eta) | \eta],
	\end{eqnarray*}
where $\eps_1$ and $\eps_2$ are independent Rademacher variables. Then from \eqref{e:psik} and \eqref{e:phik}, we have
$$
\Psi_2(x,y) = \E [Z_1^2(x,y,\eta)] \quad \text{and} \quad \Phi_2(x,y) = \sum_{i=1}^4 \E [Z_i^2(x,y,\eta)].
$$
Writing $Z_1$ explicitly, after cancellations, we have
	\begin{eqnarray*}
		4 Z_1(x,y,\eta)&=& [(F(\eta+x^1+y^1) - F(\eta+x^1) - F(\eta+y^1) + F(\eta))\\
		&&-[(F(\eta+x^1+y^{-1}) - F(\eta+x^1) - F(\eta+y^{-1}) + F(\eta))\\
		&&-[(F(\eta+x^{-1}+y^1) -F(\eta+x^{-1}) - F(\eta+y^1) + F(\eta))\\
		&&+[(F(\eta+x^{-1}+y^{-1}) - F(\eta+x^{-1}) - F(\eta+y^{-1}) + F(\eta))]\\
		&=&F(\eta+x^1+y^1) - F(\eta+x^1+y^{-1}) -F(\eta+x^{-1}+y^1) +F(\eta+x^{-1}+y^{-1}).
	\end{eqnarray*}
 First notice that on $B(x,y)\cup B(y,x)$, $Z_1=0$ so that we have $\sum_{i=1}^4 Z_i^2 \le 48$. 
 Next fix $\eta \in (B(x,y)\cup B(y,x))^c$, such that $Z_1 \neq 0$. This implies
	\begin{enumerate}[(i)]
		\item $f_L(\eta+x^1+y^1) = 1 = - f_L(\eta+x^{-1}+y^{-1})$, and
		\item $f_L(\eta+x^1+y^{-1}) = f_L(\eta+x^{-1}+y^1)$.
	\end{enumerate}
	By symmetry, without loss of generality, assume $f_L(\eta+x^1+y^{-1}) = f_L(\eta+x^{-1}+y^1)=-1$. In this case, $|Z_1|=1/2$. Writing $Z_2$ in the same manner, after cancellation and plugging in the values above, and noticing by increasingness that $f_L(\eta + x^{-1})=f_L(\eta + y^{-1})=-1$, one obtains that
	$$
	4Z_2(x,y,\eta)=-2f_L(\eta+x^1) \implies |Z_2| \le 1/2 = |Z_1|.
	$$
	By symmetry, the same holds for $Z_3$. Finally, writing $Z_4$ explicitly, and plugging in the values, we obtain
	$$
	4Z_4(x,y,\eta) = 2+ 4f_L(\eta) - 2f_L(\eta+x^1) -2f_L(\eta+y^1).
	$$
	Again, by increasingness, if either $f_L(\eta)=1$ (then all else in the r.h.s.\ is 1), or $f_L(\eta)=-1$ and $\min\{f_L(\eta+x^1) ,f_L(\eta+y^1)\}=-1$ (in this case, cancellation occurs), then $|Z_4| \le 1/2=|Z_1|$. Finally, when
	$$
	    f_L(\eta)=-1,\quad  \text{and} \quad f_L(\eta+x^1)=f_L(\eta+y^1)=1,
	$$
 then $|Z_4|= 3/2 = 3|Z_1|$. Hence, when $Z_1 \neq 0$ on $(B(x,y)\cup B(y,x))^c$, we have $|Z_2|,|Z_3|,|Z_4| \le 3 |Z_1|$. 
 
 Finally assume $Z_1=0$ for some $\eta \in (B(x,y)\cup B(y,x))^c$. By increasingness, this corresponds to the case when either
	\begin{enumerate}[(i)]
		\item $F(\eta+x^1 + y^1)=-1$ (all terms are zero in this case, that is $|Z_2|= |Z_3|= |Z_4|=0$), or
		\item $F(\eta+x^{-1} + y^{-1})=1$ (again, all terms are 1 in this case, so that that is $|Z_2|= |Z_3|= |Z_4|=0$), or
		\item $F(\eta+x^1+y^{-1}) \neq F(\eta+x^{-1}+y^1)$.
	\end{enumerate}
 We now just need to consider the case (iii) when $F(\eta+x^1 + y^1)=1=-F(\eta+x^{-1} + y^{-1})$. By symmetry, without loss of generality, assume $F(\eta+x^1+y^{-1})=1= - F(\eta+x^{-1}+y^1)$. It is straightforward to check %by writing out explicitly 
 that $Z_2=0$ in this case. Writing out $Z_3$ and $Z_4$ explicitly, we also have that
	$$
	4 Z_3 = 2(F(\eta+y^1) - F(\eta+y^{-1}))
	$$
	and
	$$
	4 Z_4(x,y,\eta) = 4 F(\eta) -2 F(\eta+y^1) - 2F(\eta+y^{-1}).
	$$
 Since we are on $(B(x,y)\cup B(y,x))^c$, we necessarily have $F(\eta+y^1) = F(\eta+y^{-1})$ so that $Z_3=0$. Also, their common value must also be equal to $F(\eta)$ which yields that $Z_4=0$. 
Combining all the above cases yields the result. \qed
%\end{proof}

\end{document}